\theoremstyle{plain}
\newtheorem{lemma}{Lemma}[section]
\newtheorem{theorem}[lemma]{Theorem}
\newtheorem{proposition}[lemma]{Proposition}
\theoremstyle{definition}
\newtheorem{remark}[lemma]{Remark}
\numberwithin{equation}{section}
\def\var{\text{var}}
\newcommand{\GG}{\mathbb{G}}
\newcommand{\ZZ}{\mathbb{Z}}
\newcommand{\RR}{\mathbb{R}}
\newcommand{\EE}{\mathbb{E}}
\newcommand{\LL}{\mathbb{L}}
\newcommand{\VV}{\mathbb{V}}
\newcommand{\mA}{\mathcal{A}}
\newcommand{\mkB}{\mathfrak{B}}
\newcommand{\mC}{\mathcal{C}}
\newcommand{\mE}{\mathcal{E}}
\newcommand{\mF}{\mathcal{F}}
\newcommand{\mG}{\mathcal{G}}
\newcommand{\mH}{\mathcal{H}}
\newcommand{\mI}{\mathcal{I}}
\newcommand{\mJ}{\mathcal{J}}
\newcommand{\mL}{\mathcal{L}}
\newcommand{\mM}{\mathcal{M}}
\newcommand{\mT}{\mathcal{T}}
\newcommand{\bdm}{\mathbf{m}}
\newcommand{\mkm}{\mathfrak{m}}
\newcommand{\mkM}{\mathfrak{M}}
\newcommand{\tp}{\hat{p}}
\newcommand{\tb}{\hat{b}}
\newcommand{\hb}{\hat{b}}
\newcommand{\hT}{\hat{T}}
\newcommand{\hw}{\hat{w}}
\newcommand{\hz}{\hat{z}}
\newcommand{\trho}{\tilde{\rho}}
\newcommand{\ep}{\epsilon}
\newcommand{\pa}{\partial}
\newcommand{\bs}{\backslash}
\newcommand{\Del}{{\rm Del}}
\newcommand{\cur}{{\rm cur}}
\newcommand{\ol}{\overline}
\newcommand{\fatdot}{\boldsymbol{\cdot}}
\newcommand{\tred}{\textcolor{red}}
\begin{document}

\title[Uniform bounds in FPP]{Uniform fluctuation and wandering bounds in first passage percolation}
\author{Kenneth S. Alexander}
\address{Department of Mathematics \\
University of Southern California\\
Los Angeles, CA  90089-2532 USA}
\email{alexandr@usc.edu}

\keywords{first passage percolation, geodesic, exponential concentration}
\subjclass[2010]{60K35 Primary 82B43 Secondary}

\begin{abstract} 
We consider first passage percolation on certain isotropic random graphs  in $\RR^d$.  We assume exponential concentration of passage times $T(x,y)$, on some scale $\sigma_r$ whenever $|y-x|$ is of order $r$, with $\sigma_r$ ``growning like $r^\chi$'' for some $0<\chi<1$.  Heuristically this means transverse wandering of geodesics should be at most of order $\Delta_r = (r\sigma_r)^{1/2}$. We show that in fact uniform versions of exponential concentration and wandering bounds hold:  except with probability exponentially small in $t$, there are no $x,y$ in a natural cylinder of length $r$ and radius $K\Delta_r$ for which either (i) $|T(x,y) - ET(x,y)|\geq t\sigma_r$, or (ii) the geodesic from $x$ to $y$ wanders more than distance $\sqrt{t}\Delta_r$ from the cylinder axis.  We also establish that for the time constant $\mu = \lim_n ET(0,ne_1)/n$, the ``nonrandom error'' $|\mu|x| - ET(0,x)|$ is at most a constant multiple of $\sigma(|x|)$.
\end{abstract}

\maketitle

\tableofcontents

\section{Introduction.}
In i.i.d.~first passage percolation (FPP) on a graph $\GG=(\VV,\EE)$, i.i.d.~\emph{(edge) passage times} $\tred{t_e}$ are attached to the edges $e\in\EE$, and for a path $\Gamma $ in $\GG$, the \emph{(path) passage time} $\tred{T(\Gamma)}$ is the sum of the times $t_e$ over $e\in\Gamma$.  For $x,y\in\VV$, the \emph{passage time} from $x$ to $y$ is 
\begin{equation}\label{Tdef}
  \tred{T(x,y)} = \inf\{T(\Gamma): \Gamma \text{ is a path from $x$ to $y$ in $\GG$}\}.
\end{equation}
The \emph{geodesic} from $x$ to $y$ is the path, denoted $\tred{\Gamma_{xy}}$, which minimizes the path passage time; when $t_e$ is a continuous random variable (as we always assume), a unique geodesic exists a.s.~\cite{Ke86}.

There are two exponents of primary interest in the study of FPP.  First, the fluctuations (i.e.~standard deviation) of passage times $T(x,y)$ for $|y-x|$ of scale $r$ in $\ZZ^d$ are believed to be of order $r^\chi$ for some $\tred{\chi}=\chi_d<1/2$, with $\chi_2=1/3$.  Second, the typical transverse wandering of a geodesic, meaning the maximum distance from any point on $\Gamma_{xy}$ to the straight line (denoted \tred{$\Pi_{xy}$}) from $x$ to $y$, is believed to be of order $r^\xi$ for some $\tred{\xi}=\xi_d$.  For $|y-x|$ of order $r$, if $\Gamma_{xy}$ contains a vertex $z$ at distance of order $r^\xi$ from $\Pi_{xy}$ (not too near $x$ or $y$), then the associated extra distance $|z-x| + |y-z| - |y-x|$ traveled by the geodesic in order to pass through $z$ is of order $r^{2\xi-1}$.  For such wandering to have non-negligible probability, the passage time fluctuations $r^\chi$ should be at least as large as the extra distance; heuristically this leads to the relation $\chi=2\xi-1$.  There are various ways to formally define the exponents $\chi,\xi$; these must allow for the fact that the true scales of fluctuations and wandering are not known to be pure powers of $r$.  Chatterjee \cite{C13} gave a rigorous version of the relationship $\chi=2\xi-1$, under the assumption that multiple possible definitions of each exponent actually agree.  

Looking more finely than just at the level of exponents, the heuristic for $\chi=2\xi-1$ says that if the fluctuation scale is $\tred{\sigma_r}$ for $|y-x|$ of order $r$, then the scale of transverse wandering should be
\[
  \tred{\Delta_r} = \Delta(r) = (r\sigma_r)^{1/2}.
\]
In \cite{Al20} and (for $d=2$) in \cite{Ga19} it was shown that under natural assumptions, the transverse wandering with high probability does not exceed $(r\sigma_r\log r)^{1/2}$.  One of our main results here is an upper bound on wandering: under somewhat weaker assumptions, for all $d\geq 2$, the probability of wandering greater than $s\Delta_r$ decays as $e^{-cs^2}$ or faster, for $s\leq r/\Delta_r$.  Previously such a result has only been known for integrable cases of last passage percolation (LPP) in $d=2$, from \cite{BSS16} (with $e^{-cs}$ in place of $e^{-cs^2}$) and \cite{BSS19}.  The bound is optimal in the sense of being on the scale $\Delta_r$, though the second power of $s$ in the exponent may not be optimal, as suggested by LPP results in $d=2$ in \cite{GH20}.

In fact we have this bound uniformly over many geodesics simultaneously, in the following sense:  Consider a cylinder of length $r$ and radius $K\Delta_r$, and let $\ep>0$ and $s>2K$.  Then under the assumptions we will make, the probability that there exists \emph{any} geodesic $\Gamma_{xy}$, with $x,y$ in the cylinder and $|y-x|\geq\ep r$, which wanders farther than $s\Delta_r$ from the cylinder axis decays as $e^{-cs^2}$ or faster, for $s\leq r/\Delta_r$. The scale $\Delta_r$ here is optimal, though the decay $e^{-cs^2}$ may not be, based on LPP results for $d=2$ in \cite{GH20}.

By comparison, in \cite{Al20} it was shown roughly that if there is exponential concentration on some scale $\sigma_r$ which ``grows like a power of $r$,'' uniformly for passage times over distance $r$, then the probability of a transverse fluctuation of size $t(\log r)^{1/2}\Delta_r$ for a \emph{single} geodesic $\Gamma_{xy}$ is bounded by $C_{1}e^{-C_{2}t^2\log t}$ for all $t>0$. This tells us nothing, though, about transverse fluctuations of size $t\Delta_r$ with $1 \ll t \ll (\log r)^{1/2}$, which should also be subject to exponential concentration, as in our present result.

It should be emphasized that in our transverse wandering (and other) results, $\sigma_r$ is not necessarily the actual scale of the standard deviation---it need only be an upper bound in the sense that exponential concentration holds for passage times $T(x,y)$ on scale $\sigma(|y-x|)$.  Then the corresponding value $\Delta_r$ is the scale that appears in the upper bound for transverse wandering.

Our uniform wandering bound will be a byproduct of another uniform--bound result for passage times; to describe it we first discuss exponential bounds. 
For the lattice $\ZZ^d$, Kesten \cite{Ke93} proved that, assuming
\begin{equation}\label{Kestenhyp}
  Ee^{\lambda t_e}<\infty\ \text{for some $\lambda>0$, \hskip .5 cm and } \quad P(t_e=0)<p_c(\ZZ^d)
\end{equation}
(where $p_c(\ZZ^d)$ is the bond percolation threshold for $\ZZ^d$), there is exponential concentration of $T(x,y)$ on scale $r^{1/2}$ for $|x-y|\leq r$:
\[
  P(|T(x,y) - ET(x,y)|\geq	 tr^{1/2}) \leq C_{3}e^{-C_{4}t} \quad\text{for all } t\leq C_{5}r.
\]
Talagrand \cite{Ta95} improved this:  assuming just an exponential moment for $t_e$,
\[
  P(|T(x,y) - ET(x,y)|\geq	 tr^{1/2}) \leq C_{6}e^{-C_{7}\min(t^2,tr^{1/2}) } \quad\text{for all } t>0.
\]
Damron, Hanson, and Sosoe \cite{DHS14} improved the bound to a subgaussian scale: under \eqref{Kestenhyp},
\[
  P\left(|T(x,y) - ET(x,y)|\geq t\left(\frac{r}{\log r}\right)^{1/2}\right) \leq C_{8}e^{-C_{9}t} \quad\text{for all } t>0.
\]
None of these bounds are near--optimal, though---an optimal bound would be on the scale of the standard deviation of $T(x,y)$.  What we will prove here is roughly as follows.  Suppose passage times satisfy exponential concentration on a scale $\sigma(\cdot)$, uniformly: 
\begin{equation}\label{expbound}
  P\Big( |T(x,y) - ET(x,y)| \geq t\sigma(|y-x|) \Big) \leq C_{10}e^{-C_{11}t} \quad\text{for all } x,y,
\end{equation}
for some $\sigma(r)$ which ``grows like $r^\chi$'' for some $\chi\in(0,1)$, in a sense we will make precise. Then for $\tred{G_r(K)}$ a cylinder of length $r$ and radius $K\Delta_r$ for some fixed $K$, we have concentration on the same scale, uniformly over $x,y\in G_r(K)$:
\begin{equation}\label{Qrunif}
  P\Big( \big| T(x,y) - ET(x,y) \big| \geq  t\sigma_r \text{ for some $x,y \in G_K(r)$ with } |y-x|\geq \ep r \Big) 
    \leq C_{12}e^{-C_{13}t}
\end{equation}
for all $r$ large and $t\geq cK^2$.  This has previously been proved for integrable models of LPP in $d=2$ (\cite{BSS16}, \cite{BHS18}), but even the non-integrable part of the proof there does not carry over to FPP---see Remark \ref{basu}.

For $d\geq 3$ there is no generally-agreed-upon value of $\chi$ in the physics literature.  Heuristics and simulations suggest that $\chi$ should decrease with dimension; simulations in \cite{ROM15} for a model believed to be in the same (KPZ) universality class as FPP show a decrease from $\chi=.33$ to $\chi=.054$ as $d$ increases from 2 to 7.  Some have predicted the existence of a finite upper critical dimension, possibly as low as 3.5, above which $\chi=0$ (\cite{Fo08},\cite{LW05}); others predict that $\chi$ is positive for all $d$ (\cite{AOF14},\cite{MPPR02}), with simulations in \cite{KK14} showing $\chi>0$ all the way to $d=12$, decaying approximately as $1/(d+1)$. Our results here require $\chi>0$ so they only have content below the upper critical dimension, should it be finite. 

In the preceding and throughout the paper, $\tred{c_1,c_2,\dots}$ and $\tred{C_1,C_2,\dots}$, and $\tred{\ep_0,\ep_1,\dots}$ represent unspecified constants which depend only on the graph $\GG$ (or its distribution, if it is random) and the distribution of the passage times $t_e$ (or speeds $\eta_e$, to be given below.) We use $C_i$ for constants which occur outside of proofs and may be referenced later; any given $C_i$ has the same value at all occurrences.  We use $c_i$ for those which do not recur and are only needed inside one proof.  For the $c_i$'s we restart the numbering with $c_0$ in each proof, and the values are different in different proofs.

As is standard, since passage times $T(x,y)$ are subadditive, assumptions much weaker than \eqref{Kestenhyp} guarantee the a.s.~existence (positive and finite for $x\neq 0$) of the limit
\begin{equation}\label{gdef}
  \tred{g(x)} = \lim_n \frac{T(0,nx)}{n} = \lim_n \frac{ET(0,nx)}{n} = \inf_n \frac{ET(0,nx)}{n} \quad\text{a.s.~and in } L^1
\end{equation}
for $x\in\ZZ^d$; $g$ extends to $x$ with rational coordinates by considering only $n$ with $nx\in\ZZ^d$, and then to a norm on $\RR^d$ by uniform continuity.  We write \tred{$\mkB_g$} for the unit ball of this norm. To obtain the optimal uniform results for wandering and \eqref{Qrunif} for fluctuations, we need to understand both parts of the discrepancy
\begin{equation}\label{randomnon}
  T(0,x) - g(x) = \Big( T(0,x) - ET(0,x) \Big) + \Big( ET(0,x) - g(x) \Big).
\end{equation}
Here in the parentheses on the right are the \emph{random part} and \emph{nonrandom part} of the discrepancy.  In \cite{Al97} it was shown that under \eqref{Kestenhyp},
\begin{equation}\label{KAnonrand}
  g(x) \leq ET(0,x) \leq g(x) + C_{14}|x|^{1/2}\log |x| \quad\text{ for all } |x|>1.
\end{equation}
In \cite{Te18} the error term was improved to $C_{14}(|x|\log |x|)^{1/2}$, and in \cite{Ga19} to $c_\eta |x|^{1/2}(\log |x|)^\eta$ for all $\eta>0$.  For the Euclidean first passage percolation of \cite{HN97}, the analog of \eqref{KAnonrand} was proved in \cite{DW16} with an error term of $C_{14}\Psi(|x|)\log^{(k)}|x|$ for arbitrary $k\geq 1$, where $\Psi(|x|)$ is a scale on which an exponential bound is known (analogous to $\sigma(|x|)$ in \eqref{expbound}) and $\log^{(k)}|x|$ is the $k$--times--iterated logarithm.  Here we will obtain an essentially optimal bound for the nonrandom part: if $\sigma(\cdot)$ satisfies certain regularly and \eqref{expbound} holds, then a log factor as in the earlier bounds is unnecessary in our context: we have 
\begin{equation}\label{hmg}
  0 \leq ET(0,x) - g(x) \leq C_{15}\sigma(|x|).
\end{equation}
There is a strong interdependence among this result, our uniform wandering bounds, and \eqref{Qrunif}, as discussed in Remark \ref{strategy}.

Analogs of \eqref{hmg}, of \eqref{Qrunif}, and of of our uniform wandering result, with optimal scale $\sigma(|x|) = \var(T(0,x))^{1/2} \asymp |x|^{1/3}$, are known for certain integrable models of directed last passage percolation (LPP).  We note that an exponential bound like \eqref{expbound}, but with centering at the analog of $g(x)$ instead of at $ET(0,x)$, shows that (i) \eqref{hmg} must hold, and (ii) an exponential bound like \eqref{expbound} must also hold with centering at $g(y-x)$.  Such a recentered bound
appears in \cite{BSS16} (extracted from \cite{BFP14}) and in \cite{LR10} for LPP on $\ZZ^2$ with exponential passage times, in \cite{LM01}, \cite{LMR02} for LPP based on a Poisson process in the unit square, and in \cite{CLW16} for LPP on $\ZZ^2$ with geometric passage times.  An analog of our transverse wandering bound for LPP on $\ZZ^2$ with exponential passage times appears in \cite{BSS19}.  All of these require integral probability methods, which are not available for FPP.

Rather than work on the lattice $\ZZ^d$, we will consider isotropic models, built on a random graph $\tred{\GG = (\VV,\EE)}$ embedded in $\RR^d$.  The  \emph{dilation} of such an embedded graph is the least $C$ such that, for every $x,y\in\VV$ there is a path from $x$ to $y$ in $\GG$ for which the total (Euclidean) length of the edges is at most $C|y-x|$.  
We say that such a graph has \emph{bounded dilation} if there exists $C$ such that with probability one the dilation of $\GG$ is at most $C$.  For $A\subset\RR^d$, the \emph{restriction of $\GG$ to} $A$ is the graph with vertex set $\tred{\VV_A} = \{x\in\VV: \langle x,y \rangle \in \EE$ for some $y\in\VV\cap A\}$ and edge set $\tred{\EE_A} = \{\langle x,y \rangle\in \EE: x\in A\}$.

We require that the graph $\GG$ satisfy the assumptions A1 below, which are somewhat stringent and include bounded dilation, but we will construct an example that works. (We see no need to make the graph as general as possible; we simply need one to know we can work with one that has certain desirable properties.) That example is built roughly as follows.  We first construct a point process $\VV$ to serve as vertices, with $\VV$ satisfying those parts of A1 which involve only the vertices.  To make a graph from $\VV$ we use the Voronoi diagram, which divides $\RR^d$ into closed polyhedrons $\{\tred{Q_x}:x\in \VV\}$ (called \emph{Voronoi cells}), the interior of the polygon $Q_x$ consisting of those points which are strictly closer to $x$ than to any other point of $\VV$. We refer to $x$ as the \emph{center point} of $Q_x$, and define $\varphi$ by $\tred{\varphi(y)}=x$ for $y\in Q_x$. To produce the \emph{Delaunay graph} (or \emph{Delaunay triangulation} in $d=2$) one places an edge between each pair $x,y\in \VV$ for which $Q_x$ and $Q_y$ have a face of positive $(d-1)$--volume in common.  For $d=2$, it is known that the dilation of the Delaunay graph of any locally finite subset of $\RR^2$ is at most 1.998 \cite{Xi98}, ensuring A1 is fully satisfied, but such bounded dilation for $d\geq 3$ is not known.  We therefore modify the Delaunay graph by adding certain non-nearest-neighbor edges of uniformly bounded length, by a deterministic local rule, and show that bounded dilation then holds.  Here and in what follows, by the \emph{length} of an edge $e=\langle x,y \rangle$ we mean the Euclidean distance $|y-x|$, which we denote \tred{$|e|$}.

Our FPP proofs for isotropic random graphs should adapt to $\ZZ^d$, but (we expect) only by assuming two unproven properties of FPP on $\ZZ^d$: first, uniform curvature of $\pa\mkB_g$, and second, a kind of smoothness of the mean as the direction changes:
\[
  \sup\Big\{ |ET(0,x)-ET(0,y)|: \max(|g(x)-r|,|g(y)-r|) \leq C_{16}, |y-x| \leq C_{17}\Delta_r \Big\} = O(\sigma_r)\quad
    \text{as } r\to\infty.
\]
Results in \cite{Al97} show that for lattice FPP, the left side is $O(\sigma_r\log r)$. This is (implicitly) improved to $O(\sigma_r(\log r)^{1/2})$ in \cite{Te18} and to $O(\sigma_r(\log r)^\kappa)$ for all $\kappa>0$, in \cite{Ga19}.

Here then are the assumptions $\GG = (\VV,\EE)$ must satisfy.  \\

\noindent {\bf A1. Acceptability of random graphs.}
\begin{itemize}
\item[(i)] $\GG = (\VV,\EE)$ is isotropic, stationary, and ergodic;
\item[(ii)] Bounded hole size: every open ball in $\RR^d$ of radius 1 contains at least one vertex of $\VV$;
\item[(iii)] Finite range of dependence: there exists \tred{$\beta$} such that if $A,B$ are Lebesgue--measurable subsets of $\RR^d$ separated by distance $d(A,B)\geq \beta$, then the restrictions of $\GG$ to $A$ and to $B$ are independent;
\item[(iv)] Bounded dilation: the dilation of $\GG = (\VV,\EE)$ is bounded a.s.~(and hence equal to some nonrandom $C_{18}$ a.s., by (i));
\item[(v)] Exponential bound for the local density: given $r_0>0$ there exist $C_{19},C_{20}$ such that for all $r>r_0$ and $a\geq 1$, $P(|\VV\cap B_r(0)| \geq ar^d) \leq C_{19}e^{-C_{20}a}$;
\end{itemize}
We say a random graph with these properties is \emph{acceptable}.  We will show that acceptable random graphs exist.  By rescaling, we may replace radius 1 in (ii) by any other positive value.  Condition (v) can be weakened from exponential to stretched exponential; we use exponential to simplify the exposition.

Conditionally on $\GG$ we define a collection of i.i.d.~nonnegative continuous random variables $\eta=\{\tred{\eta_e},e\in\EE\}$.  Formally the pair $\tred{\omega}=(\VV,\eta)$ is defined on a probability space $\tred{(\Omega,\mF,P)}$, with $\GG$ determined by $\VV$.  

In contrast to the usual FPP on a true lattice, here we view $\eta_e$ not as a time but as a speed.  We thus define the passage time of a bond $e$ to be $\eta_e|e|$, and proceed ``as usual'': for $x,y\in \VV$, a \emph{path} $\Gamma$ from $x$ to $y$ is a finite sequence of alternating vertices and edges of $\GG$, of the form $\Gamma=(x=x_0, \langle x_0,x_1 \rangle, x_1,\dots,x_{n-1},\langle x_{n-1},x_n \rangle, x_n=y)$. We may designate a path by specifying only the vertices. The \emph{(path) passage time} of $\Gamma$ is 
\[
  \tred{T(\Gamma)} := \sum_{e\in\Gamma} \eta_e|e|,
\]
and the \emph{passage time} from $x$ to $y$ is
\begin{equation}\label{Tinf}
  \tred{T(x,y)} := \inf\{T(\Gamma): \Gamma \text{ is a path from $x$ to $y$ in } \GG\}.
\end{equation}
More generally, for $x,y\in\RR^d$ we define 
\[
 T(x,y) := T(\varphi(x),\varphi(y)).
\]
For technical convenience we do not require that paths be self-avoiding, but for the moment this is irrelevant because geodesics are always self-avoiding.
For general $x,y$ not necessarily in $\VV$, we take ``a geodesic from $x$ to $y$'' to mean a geodesic from the center point $\varphi(x)$ to $\varphi(y)$.  Let $\tred{\zeta(\lambda)} = Ee^{\lambda\eta_e}$. We assume the following.\\

\noindent{\bf A2. $\eta_e$ properties.}
\begin{itemize}
\item[(i)] $\eta_e$ is a continuous random variable.
\item[(ii)] There exists $\lambda>0$ such that $\zeta(\lambda)<\infty$.
\end{itemize}
Here (i) guarantees that there is at most one geodesic from $x$ to $y$ a.s., for each $x,y$.

We do not assume that $\sigma(r)$ in \eqref{expbound} is a power of $r$, but we do require that it ``grows like $r^\chi$''  for some $0<\chi<1$ in a sense similar to \cite{Al20}, as follows.
We call a nonnegative function $\{\sigma(r)=\sigma_r, r>0\}$ \emph{powerlike (with exponent $\chi$)} if there exist $0<\tred{\chi_1}<\chi<\tred{\chi_2}$ and constants $C_i$ such that 
\begin{equation}\label{powerlike}
  \lim_{r\to\infty} \frac{\log \sigma_r}{\log r} = \chi\quad \text{ and for all } s\geq r\geq C_{21}, \quad 
    C_{22}\left( \frac sr \right)^{\chi_1} \leq \frac{\sigma_s}{\sigma_r}
     \leq C_{23} \left( \frac sr \right)^{\chi_2}.
\end{equation}
If \eqref{powerlike} holds with $\chi_2<1$ we say $\sigma(\cdot)$ is \emph{sublinearly powerlike}. 

The requirement that $\sigma(\cdot)$ be sublinearly powerlike is perhaps not as stringent as it first appears, due to Lemma \ref{sublin} below.   It implies that if one is only interested in fluctuations at the level of exponents $\chi,\xi$, and one knows uniform the exponential tightness \eqref{expbound} but not necessarily the powerlike property for $\sigma(\cdot)$, then there is a sublinearly powerlike function with the same $\chi$ for which \eqref{expbound} holds.

\begin{remark}\label{sigmaprop}
If $\sigma(\cdot)$ is powerlike, then so is the increasing function $\tred{\hat\sigma(r)} = \sup_{s\leq r} \sigma(s)$; by further increasing $\hat\sigma$ (though by at most a constant factor) we may make it strictly increasing and continuous while preserving the powerlike property.  Therefore we may and do without loss of generality always assume $\sigma(\cdot)$ is strictly increasing and continuous.  The inverse function $\Delta^{-1}$ is well-defined, and
for $\xi=(1+\chi)/2 \in (\frac 12,1)$ we have $\Delta(r) \asymp r^\xi$ and $\Delta^{-1}(a)\asymp a^{1/\xi}$ in the sense that
\[
  \lim_{r\to\infty} \frac{\log \Delta(r)}{\log r} = \xi, \qquad \lim_{a\to\infty} \frac{\log \Delta^{-1}(a)}{\log a} = \frac 1\xi.
\]
\end{remark}

The following is proved in section \ref{last}.

\begin{lemma}\label{sublin}
Let $\tred{\rho}: (1,\infty)\to(0,\infty)$ satisfy
\[
  \lim_{r\to\infty} \frac{\log \rho_r}{\log r} = \chi\in (0,\infty).
\]
Then there exist $\tred{\trho}\geq\rho$ which is sublinearly powerlike with the same exponent $\chi$, and given $\ep>0$ we may take $\trho$ to satisfy \eqref{powerlike} with $|\chi_i-\chi|<\ep,\ i=1,2$.
\end{lemma}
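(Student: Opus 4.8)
The plan is to construct $\trho$ as a weighted supremum of $\rho$ against a symmetric two‑power kernel. Given $\ep>0$, fix exponents $\chi_1,\chi_2$ with $\chi-\ep<\chi_1<\chi<\chi_2<\chi+\ep$ and $\chi_1>0$ (and, when $\chi<1$, also $\chi_2<1$, which is what makes the output \emph{sublinearly} powerlike); fix a small $\delta_0\in\bigl(0,\min(\chi-\chi_1,\tfrac12(\chi_2-\chi),\chi)\bigr)$; and, using $\log\rho_r/\log r\to\chi$, fix $r_0$ large enough that $u^{\chi-\delta_0}\le\rho_u\le u^{\chi+\delta_0}$ for all $u\ge r_0$. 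Put $g(v)=\min(v^{\chi_1},v^{\chi_2})$ (so $g(v)=v^{\chi_1}$ for $v\ge1$ and $g(v)=v^{\chi_2}$ for $v\le1$), define
\[
  \trho_r:=\sup_{u\ge r_0}\rho_u\,g\!\left(\frac ru\right)\quad(r\ge r_0),\qquad \trho_r:=\rho_r\quad(1<r\le r_0),
\]
and take $C_{21}=r_0$. Taking $u=r$ in the supremum gives $\trho_r\ge\rho_r$, so $\trho\ge\rho$; and $\trho_r<\infty$ because, splitting the supremum at $u=r$ and using $\rho_u\le u^{\chi+\delta_0}$ with $\chi_1<\chi+\delta_0<\chi_2$, the quantity $\rho_u g(r/u)$ is maximized near $u=r$, where it is $\le r^{\chi+\delta_0}$, and decays as $u\to r_0$ or $u\to\infty$.

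The two‑sided ratio bound in \eqref{powerlike} comes from two elementary monotonicity facts: $v\mapsto g(v)/v^{\chi_1}$ is non‑decreasing and $v\mapsto g(v)/v^{\chi_2}$ is non‑increasing, which together give, for every $a>0$ and $b\ge1$,
\[
  b^{\chi_1}g(a)\ \le\ g(ab)\ \le\ b^{\chi_2}g(a).
\]
Applying this with $a=r/u^\ast$, $b=s/r$ to a near‑maximizer $u^\ast$ of the supremum defining $\trho_r$ (respectively $\trho_s$) yields, for $r_0\le r\le s$,
\[
  \tfrac12\left(\frac sr\right)^{\chi_1}\ \le\ \frac{\trho_s}{\trho_r}\ \le\ 2\left(\frac sr\right)^{\chi_2},
\]
that is, \eqref{powerlike} with $C_{22}=\tfrac12$ and $C_{23}=2$ (if the supremum is attained one gets $C_{22}=C_{23}=1$).

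It remains to check $\log\trho_r/\log r\to\chi$. The bound $\trho_r\ge\rho_r$ gives $\liminf_r\log\trho_r/\log r\ge\chi$. For the matching upper bound I set $\bar\delta(t)=\sup_{u\ge t}|\log\rho_u/\log u-\chi|$, which decreases to $0$, and split the supremum defining $\trho_r$ over $u\in[r_0,\sqrt r]$, $[\sqrt r,r^2]$, and $[r^2,\infty)$. On the middle range $\rho_u\le u^{\chi+\bar\delta(\sqrt r)}$, and the maximum of $\rho_u g(r/u)$ there is $\le r^{\chi+\bar\delta(\sqrt r)}$, attained near $u=r$; the two outer ranges contribute at most $r^{(\chi_1+\chi+\delta_0)/2}$ and $r^{2\chi+2\delta_0-\chi_2}$ respectively, both $\le r^{\chi}$ by the choice of $\delta_0$. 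Hence $\trho_r\le 3\,r^{\chi+\bar\delta(\sqrt r)}$ for $r$ large, so $\limsup_r\log\trho_r/\log r\le\chi$. Together with the ratio bound this shows $\trho$ is powerlike (sublinearly powerlike when $\chi_2<1$) with exponent $\chi$ and $|\chi_i-\chi|<\ep$.

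I expect the only genuine work to be the last step—pinning the log‑limit to exactly $\chi$ rather than $\chi\pm\delta_0$—since it forces the three‑range split and the exponent bookkeeping, and one must verify the two outer ranges are negligible, which is the reason $\delta_0$ is taken below $\min(\chi-\chi_1,\tfrac12(\chi_2-\chi))$. The conceptual point, and the reason the construction tolerates isolated upward spikes of $\rho$ of size $r^{\chi+o(1)}$ with $o(1)\cdot\log r\to\infty$, is the symmetric kernel $g$: a spike at $u^\ast>r$ is still felt by $\trho_r$ through the factor $g(r/u^\ast)=(r/u^\ast)^{\chi_2}$, which is precisely the weight needed for the upper ratio bound, whereas a one‑sided envelope such as $\sup_{r_0\le u\le r}\rho_u(r/u)^{\chi_1}$ would miss such spikes and violate \eqref{powerlike}.
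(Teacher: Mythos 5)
Your proof is correct, and it proceeds by a genuinely different construction from the paper's. The paper works in log–log coordinates: it sets $f(r)=\log\rho(e^r)-\chi r$, takes dyadic suprema $\beta_k=\sup\{f: 2^{k-1}M<r\le 2^kM\}$ (so $\beta_k=o(2^k)$), and then builds a piecewise-linear envelope $\tilde f\ge f$ whose slopes are forced below $\ep$ by the dyadic spacing; with that envelope, the two-sided ratio bound in \eqref{powerlike} is just the slope control, and the log-limit follows because $\tilde f$ sits under $\max(\beta_k,\beta_{k+1})=o(2^k)$ on each dyadic block. Your proposal instead takes a sup-convolution of $\rho$ with the two-power kernel $g(v)=\min(v^{\chi_1},v^{\chi_2})$ (equivalently, in log–log coordinates, a max-plus convolution with the tent $\min(\chi_1 t,\chi_2 t)$). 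That choice trivializes exactly what the paper works hardest for: the ratio bound pops out of the monotonicity of $g(v)/v^{\chi_1}$ and $g(v)/v^{\chi_2}$, with sharp constants $C_{22}=C_{23}=1$ after sending the near-maximizer tolerance to zero. In exchange, you have to do some work to pin down $\log\trho_r/\log r\to\chi$, which is precisely where your three-range split $[r_0,\sqrt r]$, $[\sqrt r,r^2]$, $[r^2,\infty)$ and the constraint $\delta_0<\min(\chi-\chi_1,\tfrac12(\chi_2-\chi))$ earn their keep; I checked each range and the exponent bookkeeping is right. One minor remark: the extra condition $\delta_0<\chi$ in your choice of $\delta_0$ is harmless but never used, since only the upper bound $\rho_u\le u^{\chi+\delta_0}$ enters the argument. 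Your closing observation about why the kernel must be two-sided (so that spikes at $u^\ast>r$ are still weighted into $\trho_r$) is the correct intuition for why a one-sided envelope would fail the upper ratio bound. Overall this is a clean alternative via a standard convex-analysis tool, and it produces the same conclusion: a powerlike $\trho\ge\rho$ with exponent $\chi$ and $|\chi_i-\chi|<\ep$, sublinearly powerlike when $\chi<1$ and $\ep<1-\chi$, matching the paper's intent.
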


For general $x,y\in\RR^d$ not necessarily in $\VV$, we write $\Gamma_{xy}$ for $\Gamma_{\varphi(x),\varphi(y)}$.
In general we view $\Gamma_{xy}$ as an undirected path, but at times we will refer to, for example, the first point of $\Gamma_{xy}$ with some property.  Hence when appropriate, and clear from the context, we view $\Gamma_{xy}$ as a path from $\varphi(x)$ to $\varphi(y)$.

Our final standard assumption is the following.\\

\noindent{\bf A3. Uniform exponential tightness.}\\
For some $\sigma(\cdot)$ which is sublinearly powerlike with exponent $\chi\in(0,1)$,
\begin{equation}\label{expbound2}
  P\Big( |T(x,y) - ET(x,y)| \geq t\sigma(|y-x|) \Big) \leq C_{24}e^{-C_{25}t} \quad\text{for all } x,y\in\RR^d.
\end{equation}

The isotropic property assumed for $\VV$ in A1 means that $g(x)=\mu|x|$ for all $x$, where $\tred{\mu}=g(e_1)$, and $ET(0,x)$ depends only on $|x|$, so we define
\[
  \tred{h(r)} = ET(0,re_1).
\]
Let $\tred{\mkB_{d-1}}$ be the Euclidean unit ball of $\RR^{d-1}$ and define the cylinders
\[
  \tred{G_r(K)} = [0,r]\times K\Delta_r\mkB_{d-1}.
\]
Here is our first main result.

\begin{theorem}\label{nofast}
Suppose $\GG=(\VV,\EE)$ and $\{\eta_e,e\in\EE\}$ satisfy A1, A2, and A3.  Then given $\ep>0$ there exist constants $C_i=C_i(\ep)$ such that for all $r\geq 1, K\geq 1, t\geq C_{26}K^2$,
\begin{align}\label{Qrunif1}
  P\Big( \Big| T(x,y) - ET(x,y) \Big| \geq t\sigma_r \text{ for some $x,y \in G_r(K)$ with } |y-x|\geq \ep r \Big) 
    \leq C_{27}e^{-C_{28}t},
\end{align}
and 
\begin{equation}\label{Qrunif2}
  P\Big( T(x,y) \leq h(|(y-x)_1|) - t\sigma_r \text{ for some } x,y \in G_r(K) \Big) \leq C_{29}e^{-C_{30}t}.
\end{equation}
\end{theorem}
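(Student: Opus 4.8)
\noindent\emph{Proof strategy.} The plan is to split \eqref{Qrunif1} into an upper-tail bound, that $T(x,y)-ET(x,y)\ge t\sigma_r$ for no admissible $x,y$, and a lower-tail bound, and to regard \eqref{Qrunif2} as the natural ``$h$--form'' of the latter. Indeed, for $x,y\in G_r(K)$ with $|y-x|\ge\ep r$ we have $|y-x|-|(y-x)_1|\le (2K\Delta_r)^2/|y-x|\le cK^2\sigma_r$, so $0\le h(|y-x|)-h(|(y-x)_1|)\le c'K^2\sigma_r$ by \eqref{hmg} and the powerlike property; hence the lower-tail event of \eqref{Qrunif1} at level $t$ lies inside the event of \eqref{Qrunif2} at level $t-c'K^2$, and since all the bounds are only asserted for $t\ge C_{26}K^2$, taking $C_{26}$ large makes the loss $c'K^2\le (c'/C_{26})t$ harmless. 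As \eqref{hmg}, the uniform wandering bound, and Theorem \ref{nofast} are interdependent (Remark \ref{strategy}), I would run a single induction on the scale $r$ that proves all three at once, so the estimates just used are the inductive form of \eqref{hmg} at scales $\le r$.

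For \eqref{Qrunif2}, assume $x_1\le y_1$ and cut $[x_1,y_1]$ into $m$ equal pieces, $m$ a large constant. Since the axial coordinate along $\Gamma_{xy}$ runs continuously from $x_1$ to $y_1$, the geodesic meets each of the $m-1$ interior hyperplanes; taking consecutive crossing points $x=z_0,z_1,\dots,z_m=y$ and using that the corresponding sub-arcs of $\Gamma_{xy}$ are edge-disjoint yields $T(x,y)\ge\sum_{i=0}^{m-1}T(z_i,z_{i+1})$ with $|(z_{i+1}-z_i)_1|=|(y-x)_1|/m$ for every $i$. Subadditivity of $h$ gives $m\,h(|(y-x)_1|/m)\ge h(|(y-x)_1|)$, so it suffices to bound, uniformly over all admissible crossing points and outside an event of probability $Ce^{-ct}$, the quantity $\sum_i\bigl(h(|(z_{i+1}-z_i)_1|)-T(z_i,z_{i+1})\bigr)$ by $t\sigma_r$. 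Each $z_i$ lies in a $(d-1)$--disk of radius $K\Delta_r$, and since $|(y-x)_1|\ge\ep r$ and $m$ is bounded, each pair $(z_i,z_{i+1})$ lies in a (possibly tilted, but by isotropy harmlessly so) cylinder whose length is still of order $r$; because the length has not dropped below order $r$, the scale $\Delta$ changes by at most a constant factor, so the ``radius parameter'' $K$ does not inflate and the inductive form of \eqref{Qrunif2} at a scale of order $r$ with parameter of order $K$ applies to each pair. What remains is a uniform control over the disks at resolution $\sigma_r$ together with a summation over the dyadic scales through which the induction runs, the powerlike bound \eqref{powerlike} being exactly what makes the scale-by-scale contributions geometrically summable. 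The upper-tail half of \eqref{Qrunif1} is handled dually — $T(x,y)$ is bounded above by the passage time of a nearly straight path through a coarse net completed by short connectors, with the Lipschitz--in--mean cost of the connectors absorbed by the same iteration — and is subject to the same core difficulty.

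That core difficulty is where I expect the real work to be. The cylinder $G_r(K)$ contains polynomially many (in $r$) points at the resolution $\sigma_r$ on which passage times genuinely fluctuate, so a plain union bound over A3 would give \eqref{Qrunif1}--\eqref{Qrunif2} only for $t\gtrsim\log r$; and because $\sigma$ is sublinear, naively adding up the fluctuations of the $m$ sub-passage-times overwhelms $\sigma_r$. The argument therefore has to use the efficiency of the minimizing geodesic and the near-determinism of the geodesic network at scales $\gg\sigma_r$: the slab decomposition must be organized so that a scale-$r$ fluctuation is explained by a bounded number of scale-$r$ inputs, each handled by a single application of A3 with no union-bound loss, rather than by $\log r$ of them. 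Showing that such a renormalization closes — with the parameters $(r,K,t)$ transforming controllably at each step and consistently with the simultaneously running wandering and nonrandom-error inductions — is the crux.
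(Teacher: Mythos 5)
Your high-level observations are sound — that \eqref{Qrunif2} is essentially the lower-tail half of \eqref{Qrunif1} up to a harmless $O(K^2\sigma_r)$ correction, that a uniform concentration statement cannot come from a naive union bound over $O(\mathrm{poly}(r))$ points at resolution $\sigma_r$, and that the upper tail should be handled by comparison with a piecewise-geodesic path along a coarse net near $\Pi_{xy}$. But the central mechanism you propose, a decomposition of $[x_1,y_1]$ into a \emph{bounded} number $m$ of equal slabs, cannot close. With $m$ fixed, each sub-passage-time $T(z_i,z_{i+1})$ fluctuates on scale $\sigma(r/m)\asymp m^{-\chi}\sigma_r$; summing $m$ such fluctuations already costs $\gg t\sigma_r$ since $\chi<1$, and a union bound over the $\asymp (K\Delta_r/\sigma_r)^{(d-1)(m-1)}$ choices of crossing points compounds this. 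You correctly flag this as the crux, but the remedy you offer — ``organized so that a scale-$r$ fluctuation is explained by a bounded number of scale-$r$ inputs, each handled by a single application of A3 with no union-bound loss'' — is precisely the thing to be proved, and a single-scale slab decomposition has no mechanism to achieve it. The paper instead runs a genuinely multiscale coarse-graining over $O(\log\log r)$ scales, with hyperplane spacing $\delta^j r$ and transverse grid spacing $\beta^j\Delta_r$ at the $j$th scale, and with per-link ``allocations'' $A_j^k$ of the form $\lambda^j\bigl(t\sigma_r/7^j+\delta\mu\,|\Delta^*|^2/\Delta_1\bigr)$: the second term, proportional to the excess Euclidean length of the coarse-grained path, is what pays for entropy. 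The crucial insight is that a coarse-grained path that wanders far enough to create large entropy must be ``bowed'' and therefore carries extra length, which generates extra allocation (Lemma \ref{sumbits2}, the joining hyperplanes, the forward/bowed/totally-unbowed trichotomy in \eqref{Lminus}, and the tracking-error bounds \eqref{trackprob}--\eqref{segmentsmix}). None of that structure appears in your sketch, and it is the whole proof.

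Separately, the ``single simultaneous induction on $r$'' does not match how the dependencies actually resolve, and you don't say how to break the circularity. The paper's order (Remark \ref{strategy}) is strictly linear: the downward-deviations half of \eqref{Qrunif2} is proved first, using only the weaker Proposition \ref{hmu} with its spurious $\log r$ factor (it never needs the optimal \eqref{hmg}, precisely because the allocation scheme has room to spare); then the pointwise wandering bound; then \eqref{hmg} via a fast-source/renormalization argument over a big box tiled by small tubes; and only then the upward-deviations half, which is the easy dual step you describe but which genuinely requires \eqref{hmg} to control $h(|u^i-u^{i-1}|)-\mu(u^i-u^{i-1})_1$ on the coarse net. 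If you attempt a joint induction in $r$, you would need to explain what the induction hypothesis is at scale $r/m$ and why the constants don't degrade across scales, and for the $\log r$-loss issue you would still need a bowedness-style mechanism; the proposal as written leaves both open.
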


Equation \eqref{Qrunif2} is weaker than \eqref{Qrunif1} in the sense that $h(|(y-x)_1|)$ is (up to a constant) smaller than $ET(x,y)$ (see Lemma \ref{monotoneE}), but stronger in that it isn't limited to $|y-x|\geq \ep r$.

We can split \eqref{Qrunif1} into upward and downward deviations:
\[
  T(x,y) \geq ET(x,y) + t\sigma_r \quad\text{and}\quad T(x,y) \leq ET(x,y) - t\sigma_r.
\]
Then the downward--deviations part of \eqref{Qrunif1} is a consequence of \eqref{Qrunif2} and Proposition \ref{hmu} below, because 
\[
  x,y\in G_r(K), |y-x|\geq\ep r \implies \Big| |y-x| - |(y-x)_1| \Big| \leq C_{31}K^2\sigma_r
\]
for some $C_{31}(\ep)$.  In general, if we think of $x$ to $y$ as an increment a path might make within $G_r(K)$ in going from the one end to the other in the $e_1$ direction, then $(y-x)_1$ measures progress made by that increment in the $e_1$ direction, so it is a natural normalization of $T(x,y)$ in the context of such paths. It is also sufficient for application to the next two theorems.

\begin{remark}\label{smalltubes}
For \eqref{Qrunif1} in Theorem \ref{nofast} we can replace the conditions $K\geq 1,t\geq C_{26}K^2$ with $K\geq C_{32},t\geq \ep K^2$.  This is because there exists $m$ such that $G_r(K)$ is contained in $C_{32}\ep^{-m}$ ``thin cylinders'' (not necessarily oriented parallel to $e_1$) of length between $\ep r$ and $r$ and radius $K_{\ep}\Delta_r=\sqrt{\ep/C_{26}}K\Delta_r$ such that every pair $x,y$ as in \eqref{Qrunif1} is contained in one of these cylinders.  Here $C_{32}$ does not depend on $K$ or $r$.  We can apply the theorem to each thin cylinder, since $t \geq C_{26}K_{\ep}^2$, then sum over the thin cylinders.
\end{remark}

We will use Theorem \ref{nofast} together with a coarse-graining scheme in establishing the following.
 
\begin{theorem}\label{ghbound}
Suppose $\GG=(\VV,\EE)$ and $\{\eta_e,e\in\EE\}$ satisfy A1, A2, and A3.  There exists $C_{33}$ such that
\begin{equation}\label{bestnon}
  \mu |x| \leq h(|x|) \leq \mu |x| + C_{33}\sigma(|x|) \quad \text{for all } x\in\RR^d.
\end{equation}
\end{theorem}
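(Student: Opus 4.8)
\medskip
\noindent\textbf{Proof proposal.}
Write $\alpha(r)=h(r)-\mu r$. The lower bound $\mu r\le h(r)$ in \eqref{bestnon} is immediate: $r\mapsto h(r)$ is subadditive, since $T(0,(j{+}1)re_1)\le T(0,jre_1)+T(jre_1,(j{+}1)re_1)$ and, by stationarity, $ET(jre_1,(j{+}1)re_1)=h(r)$, so by Fekete $h(mr)/m\downarrow\inf_m h(mr)/m=\mu r$; in particular $\mu r\le h(r)$ and $\alpha\ge0$. For the upper bound the plan is to prove an \emph{approximate superadditivity} of $h$ along the axis: there are a fixed integer $m$ and a constant $C$ with
\begin{equation}\label{pp-super}
  h(mr)\ \ge\ m\,h(r)-C\,m\,\sigma_r\qquad\text{for all }r\ge r_0 .
\end{equation}
Granting \eqref{pp-super}, replace $r$ by $m^ir$ and iterate to get $\alpha(r)\le m^{-k}\alpha(m^kr)+C\sum_{i=0}^{k-1}m^{-i}\sigma_{m^ir}$. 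Since $\sigma$ is sublinearly powerlike, $m^{-i}\sigma_{m^ir}\le C_{23}m^{i(\chi_2-1)}\sigma_r$ with $\chi_2<1$, so the series converges to at most $C_{23}(1-m^{\chi_2-1})^{-1}\sigma_r$; and $m^{-k}\alpha(m^kr)=r\bigl(h(m^kr)/(m^kr)-\mu\bigr)\to0$ by \eqref{gdef} and Lemma \ref{monotoneE}. Letting $k\to\infty$ gives $\alpha(r)\le C_{33}\sigma_r$ for $r\ge r_0$, and the range $r<r_0$ is absorbed into $C_{33}$ since $\alpha/\sigma$ is bounded on bounded sets. Thus everything reduces to \eqref{pp-super}.

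To prove \eqref{pp-super}, decompose the geodesic $\Gamma_{0,mre_1}$ at its successive crossings $z_0\,(\approx0),z_1,\dots,z_m\,(\approx mre_1)$ of the hyperplanes $\{x_1=jr\}$, so $T(0,mre_1)\ge\sum_{j=1}^m T(z_{j-1},z_j)$ with each $(z_j-z_{j-1})_1=r+O(1)$. Put $D=\bigl(h(mr)-T(0,mre_1)\bigr)^+$; by \eqref{expbound2}, $\PP(D\ge t\sigma_{mr})\le C_{24}e^{-C_{25}t}$, so $\EE[D]\le(C_{24}/C_{25})\sigma_{mr}\le c_0 m^{\chi_2}\sigma_r$. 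The key is to build a ``good'' event $G$ with $\PP(G)\ge p_0>0$ (a fixed constant) on which $T(z_{j-1},z_j)\ge h(r)-c_1\sigma_r$ for a fixed $c_1$ and every $j$. Given such a $G$: on $G$ one has $m h(r)-T(0,mre_1)\le c_1 m\sigma_r$; on $G^c$ one uses $T(0,mre_1)\ge h(mr)-D$ and $h(mr)\le m h(r)$ to get $m h(r)-T(0,mre_1)\le\bigl(m h(r)-h(mr)\bigr)+D$. Taking expectations, $m h(r)-h(mr)\le c_1 m\sigma_r\PP(G)+\bigl(m h(r)-h(mr)\bigr)\PP(G^c)+\EE[D]$, hence $\bigl(m h(r)-h(mr)\bigr)\PP(G)\le c_1 m\sigma_r+\EE[D]$, which is \eqref{pp-super} with $C=(c_1+c_0)/p_0$. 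Observe that the bad event enters only through $\EE[D]=O(\sigma_{mr})=O(\sigma_r)$ (for fixed $m$), not through a term of order $\PP(G^c)\,h(mr)$; this is precisely what prevents a spurious $\log r$ from appearing.

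For the good event take $G=G_{(\mathrm i)}\cap G_{(\mathrm{ii})}$, where $G_{(\mathrm i)}=\{\Gamma_{0,mre_1}$ stays within transverse distance $K_1\Delta_{mr}$ of the axis$\}$ and $G_{(\mathrm{ii})}=\{$the lower bound in \eqref{Qrunif2} holds, at level $t=c_2K_1^2$, on the cylinder of length $\asymp mr$ and radius $K_1\Delta_{mr}$ about the axis$\}$. On $G_{(\mathrm i)}$ each $z_j$ lies in that cylinder, and then on $G_{(\mathrm{ii})}$, using $(z_j-z_{j-1})_1=r+O(1)$, monotonicity of $h$ (Lemma \ref{monotoneE}), $|h(r+O(1))-h(r)|=O(1)=o(\sigma_r)$, and $\sigma_{mr}\le C_{23}m^{\chi_2}\sigma_r$, \eqref{Qrunif2} gives $T(z_{j-1},z_j)\ge h((z_j-z_{j-1})_1)-c_2K_1^2\sigma_{mr}\ge h(r)-c_1\sigma_r$. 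By Theorem \ref{nofast}, $\PP(G_{(\mathrm{ii})}^c)\le C_{24}e^{-c_3K_1^2}\le\tfrac14$ once $K_1$ is a large enough constant. For $G_{(\mathrm i)}$: if the geodesic ever reaches a point $w$ with $|w^{\perp}|=a:=K_1\Delta_{mr}$, then $|w|+|mre_1-w|-mr\ge c\,a^2/(mr)=c\,K_1^2\sigma_{mr}$, so by isotropy of $ET$ and the uniform lower bound \eqref{Qrunif1} applied to $(0,w)$ and $(w,mre_1)$ on a slightly fatter cylinder (the two end-cases, where one sub-piece is too short for \eqref{Qrunif1}, handled by applying \eqref{Qrunif1} to the long sub-piece alone), one gets $T(0,mre_1)\ge h(mr)+c\mu K_1^2\sigma_{mr}-\alpha(mr)-2t\sigma_{mr}$; comparing with $T(0,mre_1)\le h(mr)+s\sigma_{mr}$ (off probability $C_{24}e^{-C_{25}s}$) forces $c\mu K_1^2\le\alpha(mr)/\sigma_{mr}+s+2c_2K_1^2$, which cannot hold for a large constant $K_1$ and moderate $s$ unless \eqref{Qrunif1} itself fails; hence $\PP(G_{(\mathrm i)}^c)\le\tfrac14$ as well, giving $\PP(G)\ge\tfrac12=:p_0$.

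The main obstacle — and the reason Theorem \ref{ghbound} is not proved in isolation but together with \eqref{Qrunif} and the uniform wandering bounds, as discussed in Remark \ref{strategy} — is precisely the circularity visible in the estimate for $G_{(\mathrm i)}$: the constant $K_1$ one may use (and hence $c_1=c_1(K_1)$) depends on an a priori bound for $\alpha(s)/\sigma_s$ at the relevant scales, which is the very content of \eqref{hmg}, and the wandering control feeding \eqref{Qrunif} is in the same loop. The argument must therefore be organized as a simultaneous bootstrap of \eqref{hmg}, \eqref{Qrunif}, and the wandering bounds, whose delicate point is to arrange the coarse-graining underlying \eqref{Qrunif} so that $p_0,c_1,K_1$ above can be taken \emph{uniform} (independent of the scale and of the a priori bound), so that the fixed point of the bootstrap closes at a single $C_{33}$. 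Without this uniformity each pass of the bootstrap merely upgrades a logarithm to an iterated logarithm, recovering at best $\alpha(r)=O\bigl(\sigma_r\log^{(k)}r\bigr)$ for each $k$ as in \cite{DW16}, rather than \eqref{hmg} itself.
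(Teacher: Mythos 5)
Your overall plan — derive an approximate superadditivity $h(mr)\ge m\,h(r)-Cm\sigma_r$ by decomposing the axial geodesic at hyperplane crossings, then iterate \`a la Fekete with a convergent geometric sum from the sublinear growth of $\sigma$ — is correct, and it is a somewhat different route from the paper's. The paper (Section 6) tiles the macroscopic cylinder with $O(k^{1+(d-1)\chi_2})$ blocks $\Lambda_{r,\bdm}$, introduces "fast source" events, applies Lemma \ref{tubes} at level $t=C\log k$ (losing a $\log k$), and closes not by iteration but by a contradiction: \eqref{allr} forces $\mL(k^n r)\ge c^n\mL(r)$ with $c>1$, which collides with the polylogarithmic a priori bound of Proposition \ref{hmu}. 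Your version avoids the $\log k$ loss entirely by invoking the uniform downward-deviation bound of Theorem \ref{nofast} once, on a single macroscopic cylinder of radius $K_1\Delta_{mr}$, rather than block-by-block; in exchange, your superadditivity constant $C$ depends on $m$ via $m^{\chi_2}$, which is harmless since $m$ is fixed. Your conversion of the superadditivity into the final bound, including the treatment of $E[D]$ and the split between $G$ and $G^c$, is sound.

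There is, however, a genuine gap in your estimate for $G_{(\mathrm i)}$, and your diagnosis of it is mistaken. The spurious $\alpha(mr)$ appears because you lower-bound $h(|w|)+h(|mre_1-w|)$ by $\mu(|w|+|mre_1-w|)\ge\mu mr+c\mu K_1^2\sigma_{mr}$ and only then rewrite $\mu mr=h(mr)-\alpha(mr)$. You should instead use superadditivity of $h$ directly: by subadditivity $h(a)+h(b)\ge h(a+b)$, and by Lemma \ref{monotoneE}(ii) $h(mr+\eta)\ge h(mr)+\tfrac12 h(\eta)-C_{56}\ge h(mr)+\tfrac{\mu\eta}{2}-C_{56}$ for $\eta=|w|+|mre_1-w|-mr\ge cK_1^2\sigma_{mr}$. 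This gives $h(|w|)+h(|mre_1-w|)\ge h(mr)+\tfrac{c\mu K_1^2}{2}\sigma_{mr}-C_{56}$, with no $\alpha$ at all. Combined with the downward-deviation bound (applied to the two sub-pieces, using that by isotropy $ET(0,w)=h(|w|)$), and the one-point upper tail from \eqref{expbound2}, the contradiction $(c\mu/2-2c_2)K_1^2\le s+o(1)$ is reached for $K_1$ a sufficiently large \emph{fixed} constant. Even more directly, you can simply cite Lemma \ref{xyfixed} — the fixed-$(x,y)$ wandering bound, which is exactly $P(G_{(\mathrm i)}^c)\le C_{75}e^{-C_{76}K_1^2}$ — since that lemma is proved in Section 5 using only the downward-deviations Theorem \ref{nofast}, Lemma \ref{connect}, Lemma \ref{monotoneE}, and \eqref{expbound2}, i.e., nothing downstream of Theorem \ref{ghbound}. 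Your closing paragraph about a "simultaneous bootstrap" and iterated logarithms is therefore off: the dependency structure in Remark \ref{strategy} is strictly linear, steps (1)--(2) already give you everything you need for step (3), and the only a priori input is the logarithmic bound of Proposition \ref{hmu}, which enters only indirectly through Lemma \ref{monotoneE}. Once you replace the $\mu mr$ comparison by the subadditivity comparison (or cite Lemma \ref{xyfixed}), your proof closes without any bootstrap.
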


As we have noted, for $|y-x|$ of order $r$, if $\Gamma_{xy}$ contains a vertex $z$ at distance of order $\gg\Delta_r$ from $\Pi_{xy}$ (not too near $x$ or $y$), then the associated extra distance $g(z-x)+g(y-z)-g(y-x)$ traveled by the geodesic is of order $\gg\Delta_r^2/r=\sigma_r$, and by Theorem \ref{ghbound} the same is true for $h$ in place of $g$.  Since the corresponding passage times satisfy $T(x,z)+T(z,y)-T(x,y)=0$, this means that either 
\[
  T(x,y) - ET(x,y) \gg \sigma_r, \quad T(x,z) - ET(x,z) \ll -\sigma_r, \quad\text{or}\quad 
    T(z,y) - ET(z,y) \ll -\sigma_r.
\]
The assumption \eqref{expbound2} says the first of these is unlikely, and Theorems \ref{nofast} and \ref{ghbound} can be used to show it is unlikely that there exists a $z$ for which the second or third occurs.  (Not without complications, though, as we cannot assume $z\in G_r$.)  This is the idea behind the following. For $r,s>0$ define intervals enlarging $[0,r]$:
\begin{equation}\label{Irs}
  \tred{\mI_{r,s}} = \begin{cases} 
    [-s^2\sigma_r\log r,r+s^2\sigma_r\log r] &\text{if } s\leq (C_{34}\log r)^{1/2};\\
    [-s^2\sigma_r,r+s^2\sigma_r] &\text{if } (C_{34}\log r)^{1/2}<s\leq r/\Delta_r;\\
    [-s\Delta_r,r+s\Delta_r] &\text{if } s> r/\Delta_r, \end{cases}
\end{equation}
where $C_{34}$ ``sufficiently large'' will be specified later, and
\[
  \tred{G_{r,s}} = \mI_{r,s} \times s\Delta_r\mkB_{d-1}.
\]
For $s>K$ we have $G_r(K)\subset G_{r,s}$; in this case we may view $G_{r,s}$ as being the cylinder $G_r(K)$ fattened transversally to width $s\Delta_r$, and lengthened by an amount which varies with the size of $s$ relative to $r$, chosen to be ``enough to make wandering of geodesics out the cylinder end at least as unlikely as out the sides.''

For $x\in\RR^d$ we write $\tred{x^*}$ for $(x_2,\dots,x_d)$ so $x=(x_1,x^*)$. 

\begin{theorem}\label{tversethm}
Suppose $\GG=(\VV,\EE)$ and $\{\eta_e,e\in\EE\}$ satisfy A1, A2, and A3.  There exist $C_i$ such that for all $K\geq C_{35}$,
\begin{align}\label{tverse3}
  P\Big( \Gamma_{xy} &\not\subset G_{r,s} \text{ for some $x,y\in G_r(K)$ with } |(y-x)^*|\leq (y-x)_1  \Big) \notag\\
  &\leq \begin{cases} C_{36}e^{-C_{37}s^2} &\text{for all } C_{38}K \leq s \leq r/\Delta_r,\\
     C_{36}e^{-C_{37}s\Delta_r/\sigma(s\Delta_r)} &\text{for all } s>r/\Delta_r.  \end{cases}
\end{align}
\end{theorem}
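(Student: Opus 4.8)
The plan is to make the mechanism described just before the statement quantitative and uniform: a geodesic $\Gamma_{xy}$ that leaves $G_{r,s}$ must execute a detour whose extra Euclidean length forces a large deviation of one of three passage times, and these deviations are then controlled, mostly uniformly, by Theorems \ref{nofast} and \ref{ghbound}.

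\emph{The detour estimate.} First I would prove the deterministic fact that if $x,y\in G_r(K)$ satisfy $|(y-x)^*|\le(y-x)_1$ (so $|y-x|\asymp(y-x)_1\le\sqrt2\,r$), and $z$ is any point with $|z^*|\le s\Delta_r$ on $\partial G_{r,s}$, then the detour $D:=|z-x|+|z-y|-|y-x|$ satisfies $D\ge c_1 s^2\sigma_r$ when $s\le r/\Delta_r$, and $D\ge c_1 s\Delta_r$ when $s>r/\Delta_r$, provided $K\ge C_{35}$ and $s\ge C_{38}K$ with $C_{38}$ large. For a side-escape ($|z^*|=s\Delta_r$) this follows from the convexity bound $\sqrt{p^2+h^2}+\sqrt{q^2+h^2}\ge\sqrt{(p+q)^2+4h^2}$ with $h=(s-K)\Delta_r$, $p+q\le|y-x|$, together with $\Delta_r^2/|y-x|\gtrsim\sigma_r$; for an end-escape ($z_1$ at an endpoint of $\mI_{r,s}$) one compares forward-and-back $e_1$-travel against $|y-x|$, using $|x^*|,|y^*|\le K\Delta_r$ and the length of $\mI_{r,s}$ prescribed in each of the three regimes — the $\log r$ factor in $\mI_{r,s}$ for small $s$ only increases $D$.

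\emph{From detour to deviation, and the bulk estimate.} On the bad event let $z$ be the first point of $\Gamma_{xy}$, started at $x$, on $\partial G_{r,s}$, so $T(x,y)=T(x,z)+T(z,y)$. By isotropy $ET(u,v)=h(|v-u|)$, and Theorem \ref{ghbound} gives $\mu a\le h(a)\le\mu a+C_{33}\sigma(a)$, so with the powerlike bound $\sigma(a)\le C\sigma_r$ for all lengths $a\le Cr$ that occur, $ET(x,z)+ET(z,y)-ET(x,y)\ge\mu D-C_{33}C\sigma_r\ge c_2 s^2\sigma_r$ (respectively $\ge c_2 s\Delta_r$ in the last regime) after enlarging $C_{35}$. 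Hence at least one of $T(x,y)\ge ET(x,y)+\tfrac{c_2}{3}s^2\sigma_r$, $T(x,z)\le ET(x,z)-\tfrac{c_2}{3}s^2\sigma_r$, $T(z,y)\le ET(z,y)-\tfrac{c_2}{3}s^2\sigma_r$ holds. When the escape is through the side and all of $|y-x|,|z-x|,|z-y|$ exceed $\ep r$, the three points $x,y,z$ lie in a single cylinder of length of order $r$ and radius $s\Delta_r$ (take the length a suitably large multiple of $r$, so that the scale $t\sigma$ at which \eqref{Qrunif1} applies with $t\asymp s^2$ is a small fraction of $\mu D$); one application of \eqref{Qrunif1} to that cylinder then controls $|T(u,v)-ET(u,v)|$ for all admissible pairs simultaneously, contradicting all three alternatives on an event of probability $\ge1-Ce^{-cs^2}$ — crucially with no union bound over $z$. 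For escapes through the end the same is done on that cylinder lengthened to contain $z$; when $s\le(C_{34}\log r)^{1/2}$ the $\log r$ in $\mI_{r,s}$ makes the forced deviation $\gtrsim s^2\sigma_r\log r$, giving probability $\le Cr^{-cs^2}$, which absorbs the polynomial cost of discretizing $z_1$. For $s>r/\Delta_r$, $G_{r,s}$ has all dimensions $\asymp s\Delta_r$, the forced deviation is $\gtrsim s\Delta_r$ on the scale $\sigma(s\Delta_r)$, and A3 (for $T(x,z),T(z,y)$) and \eqref{Qrunif1} (for $T(x,y)$) give probability $\le C\exp(-c\,s\Delta_r/\sigma(s\Delta_r))$; since $\sigma$ is sublinearly powerlike, $s\Delta_r/\sigma(s\Delta_r)$ is a positive power of $s\Delta_r$, absorbing the polynomial union bound over $z$.

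\emph{Short configurations, and the main obstacle.} It remains to handle configurations in which one of $|y-x|,|z-x|,|z-y|$ is below $\ep r$, and this is the delicate part. If $|y-x|<\ep r$ then $x,y$ lie in a sub-cylinder of length $2\ep r$, and a geodesic between them reaching transverse distance $s\Delta_r$ wanders, relative to the scale $\Delta(|y-x|)\le\Delta(2\ep r)\asymp\ep^\xi\Delta_r$ natural to its length, by a factor $\gtrsim\ep^{-\xi}s>s$; this is an instance of the present theorem at the smaller scale $|y-x|$ with an inflated wandering parameter, and is handled by inducting on $r$ along a geometric sequence (the base $r\le r_0$ being trivial). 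If instead $z$ is within $\ep r$ of $x$ (or of $y$), I would cover $G_r(K)$ by $O_\ep(1)$ balls of radius $\ep r$ and reduce to the statement that no geodesic started in one such ball reaches transverse distance $\asymp s\Delta_r$ while staying within twice the ball; after a change of coordinates this too is the theorem at scale $O(\ep r)$ with the wandering parameter inflated by a factor $>1$, so the induction closes with the same constants. The heart of the matter, which I expect to require the most care, is exactly this: controlling escapes uniformly over the escape point $z$ — which lies outside $G_r(K)$ and ranges over polynomially many positions — \emph{without} a union bound whose polynomial prefactor would swamp the target $e^{-cs^2}$ for $s$ as small as $C_{38}K$, or up to $\sqrt{\log r}$. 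That is precisely why $\mI_{r,s}$ carries the extra $\log r$ for small $s$ and why small-$s$ side escapes must be rescaled down to instances of the theorem at a smaller scale where the effective wandering parameter has grown; getting the geometry of this reduction right and matching $C_{35},C_{37},C_{38}$ (and the exponent constant in \eqref{Qrunif1}) consistently across scales is the bulk of the work, while the detour estimate and the bulk deviation bound are comparatively routine given Theorems \ref{nofast} and \ref{ghbound}.
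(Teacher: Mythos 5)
Your outline captures the main mechanism — detour forces a large deviation of one of $T(x,z), T(z,y), T(x,y)$, and the union over the escape point $z$ is avoided by invoking the uniform bound of Theorem \ref{nofast} on a single cylinder — and that is exactly the engine of the paper's argument. The detour estimates, the use of isotropy ($ET(u,v)=h(|v-u|)$) plus Theorem \ref{ghbound}, and the treatment of the regime $s>r/\Delta_r$ and of end-escapes via the $\log r$ padding of $\mI_{r,s}$ all line up with the paper.

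Where you diverge genuinely is the treatment of ``short'' configurations. You propose a rescaling induction: cover $G_r(K)$ by sub-cylinders of length $O(\ep r)$, observe that the effective wandering parameter $s'$ inflates by $\ep^{-\xi}$, and invoke the theorem at the smaller scale. The paper avoids induction entirely. It sets a threshold $r_0 = c_0\, r/(\log r)^{1/(1+\chi_1)}$ (not $\ep r$), and for $|y-x|\le r_0$ it does not re-invoke the theorem at a smaller scale; it simply re-runs the detour estimate with the smaller denominator: the excess distance is $\gtrsim s^2\Delta_r^2/r_0$, and dividing by the natural fluctuation scale $\sigma(r_0)$ produces an exponent $\gtrsim s^2 (r/r_0)^{1+\chi_1} \asymp s^2 \log r$. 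That $\log r$ factor (coming from both the $r/r_0$ in the detour and the $\sigma_r/\sigma(r_0)$ ratio through the powerlike property) is precisely what overwhelms the polynomial entropy of discretizing $(\hat x,\hat y,\hat u)$, so a plain union bound with Lemma \ref{connect} suffices there, no uniform theorem required. The complementary case $|y-x|>r_0$ and $s\lesssim\sqrt{\log r}$ is the only place Theorem \ref{nofast} is used, applied once to $G_r(2s)$ with Remark \ref{smalltubes}. Your induction plausibly closes (the inflated $s'$ gives a strictly better bound, and $r_0$ serves as a base case), but it drags in real bookkeeping: after rescaling, the excluded region is no longer literally a $G_{r',s'}$ (the $\mI$-interval shape changes with the regime of $s'$ relative to $\log r'$ and $r'/\Delta_{r'}$), and the constants $C_{35},C_{37},C_{38}$ must be held uniform across all scales; the paper's choice of $r_0$ sidesteps all of this.

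One smaller point worth absorbing from the paper: when you split the ``bulk'' case by requiring all three of $|y-x|,|z-x|,|z-y|\ge\ep r$ so that \eqref{Qrunif1} applies to each pair, note that the two sub-increments $T(x,z)$ and $T(z,y)$ only ever need to be prevented from being anomalously \emph{small}, which is exactly what \eqref{Qrunif2} provides with no lower bound on the separation. So the constraint $\ge\ep r$ is only genuinely needed for the pair $(x,y)$ itself, and the ``$z$ near an endpoint'' sub-case you propose to handle by a second induction is largely absorbed into \eqref{Qrunif2}. This simplification, together with the $r_0$ threshold, is what lets the paper finish the proof in a page once Theorem \ref{nofast} and Theorem \ref{ghbound} are in hand.
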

We include the condition $|(y-x)^*|\leq (y-x)_1$ because we are primarily interested in transverse fluctuations of geodesics out the side of $G_{r,s}$, so we wish to avoid $y-x$ oriented in a too--transverse direction.

\begin{remark}\label{strategy}
The strategy for proving Theorems \ref{nofast}--\ref{tversethm} is as follows:
\begin{itemize}
\item[(1)] prove Theorem \ref{nofast} for downward deviations---this is the most difficult part;
\item[(2)] use Theorem \ref{nofast} for downward deviations to prove Theorem \ref{tversethm} restricted to a fixed $(x,y)$;
\item[(3)] use Theorem \ref{nofast} for downward deviations and the restricted Theorem \ref{tversethm} to prove Theorem \ref{ghbound};
\item[(4)] use Theorem \ref{ghbound} to prove Theorem \ref{nofast} for upward deviations;
\item[(5)] use the full Theorem \ref{nofast} to prove the unrestricted Theorem \ref{tversethm}.
\end{itemize}
\end{remark}

\begin{remark}\label{basu}
In \cite{BSS16} and \cite{BHS18}, an alternate strategy was used to prove LPP analogs (in integrable cases) of Theorems \ref{nofast} and \ref{tversethm} in $d=2$. Theorem \ref{ghbound} was already known for that context---see the comments following \eqref{hmg}.  Essentially the strategy for the Theorem \ref{nofast} analog in \cite{BSS16} 
is this, when translated to FPP:  first the easier upward--deviations half of Theorem \ref{nofast} is proved.  For downward deviations, consider the points 0 and $3re_1$, and a cylinder $\mG_r$ of radius $\Delta_r$ with axis from $re_1$ to $2re_1$.  Suppose there are (random) vertices $u,v\in\mG_r$ with $u_1<v_1$ for which the passage time is fast: $T(u,v) \leq h(|v-u|) -3t\sigma_r$ for some large $t$.  From the upward--deviations half of Theorem \ref{nofast}, with high probability we have also $T(x,u) \leq h(|u-x|) + t\sigma_r$ and $T(v,y) \leq h(|y-v|) + t\sigma_r$. From this, using that $\sigma_r$ is proportional to $r^{1/3}$ and $h(r) = \mu r + O(\sigma_r)$, assuming $t$ is large enough,
\begin{equation}\label{basu2}
  T(x,y) \leq	h(|u-x|) + h(|v-u|) + h(|y-v|) - t \sigma_r \leq h(|y-x|) - \frac t2 \sigma_r.
\end{equation}
This has probability exponentially small in $t$, by \eqref{expbound2}, hence so does the probability of such $u,v$ existing.  This strategy does not work for FPP, however, as it requires one to already know Theorem \ref{ghbound} to obtain the second inequality in \eqref{basu2}.  
\end{remark}

\section{Existence of acceptable random graphs}

We construct a random graph $\GG=(\VV,\EE)$ satisfying A1.  We begin by constructing the point process $\VV$ of vertices.
We start with a ``space--time'' Poisson process $\tred{\VV_0}$ (which we view as a random countable set) of density 1 with respect to Lebesgue measure in $\RR^d\times(0,\infty)$.
We say a point $v\in\RR^d$ \emph{appears at time} $t$ in $\VV_0$ if $(v,t)\in\VV_0$, and we say $A\subset\RR^d$ is \emph{empty at time} $t-$ if no point in $A$ appears in $\VV_0$ during $(0,t)$.  We then define
\begin{align*}
  \tred{\VV} = \Big\{ v\in\RR^d: &\text{ for some $t>0$ and $x\in\RR^d$ with $v\in B_1(x)$, 
    $v$ appears at time $t$ in $\VV_0$ and }\\
  &\qquad\text{$B_1(x)$ is empty at time $t-$} \Big\}.
\end{align*}
In other words, we keep in $\VV$ the $\RR^d$ coordinate $v$ of a point of $\VV_0$ if $v$ is the first point to appear in some ball of radius $1$.  Then almost surely, for each $v\in\VV$ there is a unique point $(v,t_v^*) \in \VV_0$, and we view $t_v^*$ as the time at which $v$ appeared in $\VV_0$.  With probability one, every unit ball in $\RR^d$ contains a point of $\VV$.  We call $\VV$ the \emph{available--space point process}.

For the set $\VV$ recall that $\{Q_v,v\in\VV\}$ denotes the corresponding Voronoi cells.  
More generally we write \tred{$Q_x$} for the Voronoi cell containing any $x$ (with some arbitrary convention if $x$ is on the boundary of multiple cells), and $\varphi(x)$ for the unique point of $\VV$ in $Q_x$.  When convenient we view $e=\langle x,y \rangle$ as the line segment joining $x$ and $y$.
Let $\tred{B_r(x)}$ denote the open Euclidean ball of radius $r$ about $x$. For $d=2$, the Delaunay graph of $\VV$ is our graph $\GG$.  For $d\geq 3$ we fix $0<\tred{\delta_{\GG}}<1$ and use
\[
  \tred{\EE} = \{\langle x,y \rangle: d(Q_x,Q_y)\leq\delta_{\GG} \}.
\]
We call $\GG=(\VV,\EE)$ the \emph{augmented Delaunay graph} of $\VV$.  The edges in $\EE$ which are not in the Delaunay graph are called \emph{augmentation edges}. We write \tred{$x\sim y$} to denote that $x,y$ are adjacent vertices in $\GG$, and \tred{$x\sim_\Del y$} to denote adjacency in the Delaunay graph of $\VV$.  If $y\sim_\Del z$, then for all $u\in Q_y\cap Q_z, B_{|y-z|/2}(u)\cap \VV=\emptyset$.  Hence by A1(ii), 
\begin{equation}\label{separation}
  |z-y| < 2 \text{  whenever $y\sim_\Del z$}.
  \end{equation}
Similarly if $z\in Q_y$ then $B_{|z-y|}(z)$ contains no point of $\VV$, so $|z-y|<1$; thus 
\begin{equation}\label{Qxsize}
  Q_y \subset B_1(y).
\end{equation}

\begin{remark}\label{augment}
The purpose of augmentation is roughly the following.  Consider the line segment $\Pi_{xy}$ between $x,y\in\VV$.  It passes through a sequence of Voronoi cells $Q_x=Q_{x_0},Q_{x_1},\dots,Q_{x_m}=Q_y$, and there is a corresponding path $x=x_0 \to x_1 \to\cdots\to x_m=y$ in the Delaunay graph.  If too many of the cells $Q_{x_j}$ are ``thin,'' then the Delaunay path length $\sum_{j=1}^m |x_j-x_{j-1}|$ may be much greater than $|y-x|$, making it difficult to bound the dilation.  The augmentation effectively allows paths in $\GG$ that skip over such problematic sequences of cells, at least for a small distance, enabling us to prove bounded dilation while preserving other properties of the Delaunay triangulation. We can reduce the occurrence of augmentation to involve an arbitrarily small proportion of vertices by using a small enough $\delta>0$.  We will not give details here.
\end{remark}

For $A\subset \RR^d$ and $r>0$ let $\tred{A^r} = \{x: d(x,A)<r\}$.

\begin{proposition}\label{controlled}
The augmented Delaunay graph of the available--space point process satisfies A1.
\end{proposition}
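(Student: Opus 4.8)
The plan is to check the five clauses A1(i)--(v) one at a time, exploiting throughout that the whole construction --- available-space process $\VV$, Voronoi cells $\{Q_v\}$, (augmented) Delaunay graph $\GG$ --- is a deterministic function of the space-time Poisson process $\VV_0$ that is equivariant under rigid motions of $\RR^d$ and local in space. Clause (ii) is almost built in: a.s.\ the spatial projection of $\VV_0$ is dense in $\RR^d$, so for every $x$ the ball $B_1(x)$ contains some $\VV_0$-point, and the first such point in time witnesses its own membership in $\VV$ with center $x$, whence $B_1(x)\cap\VV\neq\emptyset$ for every $x$ simultaneously, a.s. For (i): the law of $\VV_0$ is invariant under the rigid motions of $\RR^d$ acting on the first coordinate, and the maps $\VV_0\mapsto\VV\mapsto\{Q_v\}\mapsto\GG$ commute with those motions --- in particular the augmentation rule $d(Q_x,Q_y)\le\delta_{\GG}$ is motion-invariant --- so the law of $\GG$ is stationary and isotropic, and ergodicity follows since $\GG$ is a measurable factor of $\VV_0$, whose spatial translation action is mixing.

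For (iii) I would make the spatial range quantitative. Membership $v\in\VV$ is determined by $\VV_0$ restricted to $\overline{B_2(v)}\times(0,\infty)$, because any witnessing ball $B_1(x)\ni v$ satisfies $B_1(x)\subseteq B_2(v)$ and ``emptiness before $t_v^*$'' refers only to points inside that ball. By \eqref{Qxsize} every cell obeys $Q_v\subseteq B_1(v)$, so $Q_v$ depends only on $\VV\cap B_2(v)$; and by \eqref{separation} a Delaunay edge has length $<2$ while an augmentation edge has length $<2+\delta_{\GG}$, so in either case $v\sim w$ is decided by $\VV$ in a bounded neighborhood of $\{v,w\}$. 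Chaining these, the restriction of $\GG$ to a set $A$ is a function of $\VV_0$ restricted to $A^{c_0}\times(0,\infty)$ for an explicit constant $c_0$, and taking $\beta=2c_0$ together with independence of $\VV_0$ over disjoint space-time regions gives (iii).

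Clause (iv), bounded dilation, is the reason the augmentation exists, and is where I expect the only genuine subtlety. For $d=2$ the bare Delaunay graph of any locally finite subset of $\RR^2$ has dilation at most $1.998$ by \cite{Xi98}, a nonrandom bound, so (iv) holds there at once. For $d\ge3$ I would follow the segment $\Pi_{xy}$: given $x,y\in\VV$, pick points $p_0=x,p_1,\dots,p_L=y$ on $\Pi_{xy}$ with consecutive spacing at most $\delta_{\GG}$, and set $z_i=\varphi(p_i)$. Since $p_{i-1}\in Q_{z_{i-1}}$ and $p_i\in Q_{z_i}$, we get $d(Q_{z_{i-1}},Q_{z_i})\le|p_{i-1}-p_i|\le\delta_{\GG}$, hence $z_{i-1}\sim z_i$ (or $z_{i-1}=z_i$) by the definition of $\EE$, and $|z_i-p_i|<1$ by \eqref{Qxsize}. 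Thus $z_0=x,\dots,z_L=y$ is a walk in $\GG$ whose total edge length is at most $\sum_{i=1}^L(1+\delta_{\GG}+1)\le C|y-x|$ with $C=C(\delta_{\GG},d)$ (using the direct edge $\langle x,y\rangle$, which lies in $\EE$ since $d(Q_x,Q_y)\le|y-x|$, when $|y-x|\le\delta_{\GG}$); pruning repeated vertices leaves a path, and every edge used has length $<2+\delta_{\GG}$. All of this is deterministic on the a.s.\ event that (ii) holds. The point to get right is exactly that the distance-$\delta_{\GG}$ augmentation threshold, rather than only shared-face adjacency, is what lets a path shadow a finely sampled copy of $\Pi_{xy}$; with the plain Delaunay graph this step fails (and bounded dilation for Delaunay in $d\ge3$ is open), which is precisely why the graph is augmented.

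Finally, for (v) I would compare $\VV$ with a growing Boolean coverage process. Unwinding the definition, $v\in\VV$ iff at its appearance time $t_v^*$ the ball $B_1(v)$ is \emph{not} yet contained in $\bigcup\{B_1(u):(u,s)\in\VV_0,\ s<t_v^*\}$. Tile $\RR^d$ by unit cubes $\{Q_i\}$. Because $B_1(v)\subseteq Q_i^{\,2}$ for $v\in Q_i$, every $v\in\VV\cap Q_i$ has $t_v^*\le T_i$, where $T_i$ is the first time $Q_i^{\,2}$ is fully covered by the radius-one balls around the $\VV_0$-points that have already appeared; hence $|\VV\cap Q_i|\le M_i:=|\VV_0\cap(Q_i\times(0,T_i])|$. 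A standard net argument (a point of $Q_i^{\,2}$ uncovered at time $t$ forces a nearby net point to be uncovered by slightly smaller balls, an event of probability $e^{-ct}$) gives $P(T_i>t)\le c_1 t^d e^{-c_2 t}$; combining this with a Chernoff bound for the Poisson count in $Q_i$ and splitting at a threshold proportional to $a$ yields $P(M_i\ge a)\le c_3 e^{-c_4 a}$, in particular a uniformly bounded exponential moment for $M_i$. Summing over the $O(r^d)$ cubes meeting $B_r(0)$, partitioning them into $O(1)$ families of pairwise well-separated (hence independent) cubes, and applying a Chernoff bound within each family, gives $P(|\VV\cap B_r(0)|\ge ar^d)\le C_{19}e^{-C_{20}a}$ for $a\ge1,\ r\ge1$, which also yields the a.s.\ local finiteness of $\VV$ used above.
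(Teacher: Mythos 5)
Your proof is correct, and for clauses (i)--(iii) it follows essentially the same route as the paper: equivariance of the factor map $\VV_0\mapsto\GG$ for (i), ``first point to appear in any unit ball'' for (ii), and a bounded-range locality chain for (iii) (the paper makes the constant explicit, $\beta=14$, but the mechanism is identical).

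For (iv) and (v) you take genuinely different routes, both valid. For (iv), you sample $\Pi_{xy}$ at fixed stride $\delta_\GG$ and take Voronoi centers $z_i=\varphi(p_i)$, noting $d(Q_{z_{i-1}},Q_{z_i})\leq|p_{i-1}-p_i|\leq\delta_\GG$ so consecutive centers are adjacent in $\EE$, each edge has length at most $2+\delta_\GG$ by \eqref{Qxsize}, and there are $O(|y-x|/\delta_\GG)$ of them. The paper instead follows the actual sequence of Voronoi cells that $\Pi_{xy}$ traverses and adaptively selects a subsequence where the accumulated progress along $\Pi_{xy}$ exceeds $\delta_\GG$; this produces the same kind of bound with a slightly different constant. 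Your fixed-grid version is cleaner than the paper's adaptive stopping, and I consider it a genuine simplification, though the paper's version makes contact more directly with the ``path of cells'' heuristic of Remark \ref{augment}.

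For (v), your argument reduces $|\VV\cap Q_i|$ to $M_i$, the number of $\VV_0$-points appearing in the unit cube $Q_i$ before the coverage time $T_i$ of $Q_i^2$ by unit balls, and then combines a net-argument tail for $T_i$ with Poisson concentration and a finite-range/sublattice Chernoff step. The paper avoids coverage times entirely: it tiles by small cubes $J_x$ with a shell $\mJ_x$ of cubes at bounded distance, observes that any unit ball meeting $J_x$ must contain some shell cube, so at the last appearance time in $J_x$ some $J_y\in\mJ_x$ is still empty, which gives $|\VV\cap J_x|\leq\max_{J_y\in\mJ_x}N_{xy}$ with $N_{xy}$ geometrically tailed by a direct competing-Poisson argument, and then finishes with the same sublattice Chernoff step. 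Your route needs the extra ingredient of a coverage-time tail estimate (which you correctly call standard), whereas the paper's comparison is more elementary; both close the argument soundly. One detail worth making explicit in your writeup is the finite spatial range of $M_i$ (it is measurable with respect to $\VV_0$ restricted to a fixed dilation of $Q_i$), which is what justifies the partition into $O(1)$ independent families; you state it but the reader should see the radius.
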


\begin{proof}
A1(i) for $\VV$ follows from the same properties for $\VV_0$; since $\GG$ is constructed from $\VV$ via isotropic and translation-invariant local rules, A1(i) also holds for $\GG$.  A1(ii) follows from the fact that the first point of $\VV_0$ to appear in any radius-1 ball is always a point of $\VV$. 

To prove A1(iii), observe first that by \eqref{separation}, given $x\in\VV$ the cell $Q_x$ and all Delaunay edges $\langle x,y \rangle$ are determined by $\VV\cap B_2(x)$.  Therefore the collection of Voronoi cells intersecting $B_3(x)$ is determined by $\VV\cap B_5(x)$, and hence so are all augmentation edges $\langle x,y \rangle$.  $\VV\cap B_5(x)$, in turn, is determined by $\VV_0\cap (B_7(x)\times(0,\infty))$, so for $A\subset\RR^d$, the restriction $(\VV_A,\EE_A)$ is determined by $\VV_0\cap (A^7\times(0,\infty))$.  Since $\VV_0$ is independent in disjoint regions, A1(iii) follows with $\beta=14$.

Turning to A1(v), let $q=1/(1+\lfloor 3\sqrt{d} \rfloor)$ and $r>r_0>0$, and let $r_1=r_0\wedge 1$. For $x\in qr_1\ZZ^d$ let $\tred{J_x}$ denote the cube $\prod_{i=1}^d [x_i,x_i+qr_1)$ and $\tred{\mJ_x} = \{J_y: y\in qr_1\ZZ^d, |y-x|_\infty = 2qr_1\}$.  This means the $5^d-3^d$ cubes in $\mJ_x$ form a shell around $J_x$, with a smaller shell of cubes in between, and the diameter of $\mJ_x$ is less than $2r_1$.  Then any radius-$1$ ball intersecting $J_x$ must contain a cube in $\mJ_x$.  Letting $\tred{t^*(J_x)} = \max\{t_v^*:v\in\VV\cap J_x\}$ it follows that at time $t^*(J_x)$, for some $J_y\in\mJ_x$, at least $|\VV\cap J_x|$ points of $\VV_0$ have appeared in $J_x$ but none in $J_y$.  Letting \tred{$N_{xy}$} be the number of points of $\VV_0$ appearing in $J_x$ before the first point of $\VV_0$ appears in $J_y$, this says that $\tred{N_x} :=\max_{J_y\in \mJ_x} N_{xy}\geq |\VV\cap J_x|$.  Since $|\mJ_x|\leq 5^d$ it follows that 
\begin{align}\label{allJx}
  P(|\VV\cap J_x| \geq n) &\leq P(N_x\geq n) \leq \sum_{J_y\in\mJ_x} P(N_{xy}\geq n) \leq 5^d2^{-n},
\end{align}
and hence for $\lambda<\log 2$,
\begin{equation}\label{emom}
  Ee^{\lambda N_x} \leq \frac{5^d}{1-\frac{e^\lambda}{2}}.
\end{equation}

Let $\tred{\mI_0} = \{J_x: J_x\cap [-r,r)^d\neq\emptyset\}$, so $|\mI_0|\leq (2(1+\frac{r}{qr_1}))^d$ and $B_r(0)\subset \cup_{J_x\in \mI_0}\, J_x$.  Now $\mI_0$ is a lattice of cubes, and we divide it into $5^d$ sublattices, each consisting of a cube $J_x$ together with all its translates by vectors in $5qr_1\ZZ^d$ which intersect $[-r,r)^d$.  We label these sublattices of cubes as $\tred{\mI_{0,1},\dots,\mI_{0,5^d}}$, and the cardinality satisfies 
\begin{equation}\label{Isize}
  |\mI_{0,j}|\leq \left(2\left(1+\frac{r}{5qr_1}\right)\right)^d
\end{equation}
for each $j$.  We denote the corresponding union of cubes as $\tred{I_{0,j}} = \cup_{J_x\in\mI_{0,j}} J_x$.  The spacing of the cubes in $\mI_{0,j}$ means that that the shells $\{\mJ_x: J_x\in \mI_{0,j}\}$ are disjoint, so the variables $\{N_x: J_x\in \mI_{0,j}\}$ are i.i.d.

For $a\geq 1$, taking $\lambda=1/5$ in \eqref{emom} we obtain $Ee^{\lambda N_x} \leq 3\cdot 5^d$ and hence using \eqref{Isize}, provided $a$ is large (depending on $r_1$),
\begin{align}\label{manypts}
  P(|\VV\cap B_r(0)| \geq ar^d) &\leq \sum_{j=1}^{5^d} P\left(|\VV\cap I_{0,j}|\geq \frac{ar^d}{5^d}\right) \notag\\
  &\leq \sum_{j=1}^{5^d} P\left(\sum_{J_x\in\mI_{0,j}} N_x \geq \frac{ar^d}{5^d}\right) \notag\\
  &\leq 5^d\left( 3\cdot 5^d \right)^{|\mI_{0,j}|}  e^{-ar^d/5^{d+1}} \notag\\
  &\leq e^{-ar^d/2\cdot 5^{d+1}},
\end{align}
proving A1(v).

Finally we consider A1(iv). Let $x,y\in\VV$. As in Remark \ref{augment}, $\Pi_{xy}$ passes through a sequence of Voronoi cells $Q_x=Q_{x_0},Q_{x_1},\dots,Q_{x_m}=Q_y$, and there is a corresponding path $x=x_0 \to x_1 \to\cdots\to x_m=y$ in the Delaunay graph.  (There is probability 0 that $\Pi_{xy}$ intersects some $Q_u$ in just a single point, so we will ignore this possibility, meaning that ``passes through'' here is unambiguous.) For $j<m$ let $a_j$ be the first point of $Q_{x_j}$ in $\Pi_{xy}$ and let $a_m=y$, so by convexity of cells, $Q_{x_j}\cap\Pi_{xy} = [a_j,a_{j+1}]$ for all $0\leq j<m$.  We select indices $0=j(0)<j(1)<\cdots<j(\ell)=m$ iteratively, taking $j(k+1)$ as the least index $j>j(k)$ for which either $|a_{j+1} - a_{j(k)+1}|> \delta_{\GG}$ or $j=m$.  Then $\langle x_{j(k)},x_{j(k+1)} \rangle$ is always a Delaunay or augmentation edge, so we consider the path $x=x_{j(0)}\to x_{j(1)}\to\cdots\to x_{j(\ell)}=y$ in $\GG$; in particular we want to bound $|x_{j(k+1)} - x_{j(k)}|$ relative to $|a_{j(k+1)+1} - a_{j(k)+1}|$ for $0\leq k\leq\ell-2$.

For $k\leq\ell-1$ we have using \eqref{Qxsize}
\[
  |x_{j(k+1)} - x_{j(k)}| \leq |x_{j(k+1)} - a_{j(k+1)}| + |a_{j(k+1)} - a_{j(k)+1}| + |a_{j(k)+1} - x_{j(k)}| \leq 2+\delta_{\GG}
\]
so for $k\leq \ell-2$,
\[
  |a_{j(k+1)+1} - a_{j(k)+1}| > \delta_{\GG} \geq \frac{\delta_{\GG}}{2+\delta_{\GG}} |x_{j(k+1)} - x_{j(k)}|.
\]
Therefore if $\ell\geq 2$,
\begin{equation}\label{ellnot1}
  \sum_{k=0}^{\ell-2} |x_{j(k+1)} - x_{j(k)}| \leq \frac{2+\delta_{\GG}}{\delta_{\GG}} \sum_{k=0}^{\ell-2} |a_{j(k+1)+1} - a_{j(k)+1}|
    = \frac{2+\delta_{\GG}}{\delta_{\GG}} |a_{j(\ell-1)-1} - a_1| \leq \frac{2+\delta_{\GG}}{\delta_{\GG}}|y-x|.
\end{equation}
Having $\ell\geq 2$ also ensures $|y-x| \geq |a_{j(1)} - a_{j(0)}| >\delta_{\GG}$ so 
\[
  |x_{j(\ell)} - x_{j(\ell-1)}| = |y - x_{j(\ell-1)}| \leq |y - a_{j(\ell-1)+1}| + |a_{j(\ell-1)+1} - x_{j(\ell-1)}| \leq |y-x|+1
    \leq \frac{1+\delta_{\GG}}{\delta_{\GG}}|y-x|
\]
which with \eqref{ellnot1} yields
\begin{equation}\label{dilate}
  \sum_{k=0}^{\ell-1} |x_{j(k+1)} - x_{j(k)}| \leq \frac{3+2\delta_{\GG}}{\delta_{\GG}}|y-x|.
\end{equation}
On the other hand, if $\ell=1$ then
\[
  \sum_{k=0}^{\ell-1} |x_{j(k+1)} - x_{j(k)}| = |y-x|
\]
so \eqref{dilate} still holds. This proves bounded dilation.
\end{proof}

\section{Straightness of geodesics and and regularity of means}
For $q>0$ and $x\in\RR^d$, let 
\begin{equation}\label{psiqF}
  \tred{\psi_q(x)} = \text{ the point of $q\ZZ^d$ closest to $x$ (with ties broken arbitrarily)}, \quad 
    \tred{F_y} = \psi_q^{-1}(y), y\in q\ZZ^d,
\end{equation}
the latter being a cube (ignoring the boundary.)  More generally for $u\in\RR^d$ we define $F_u$ to be the cube $F_y$ containing $u$, with some arbitrary rule for cube--boundary points.

The bound \eqref{expbound} applies to deterministic $x,y$; we cannot for example take $x,y\in \VV$.  Instead for random $x,y$ we can apply \eqref{expbound} to nearby points of $q\ZZ^d$ for some $q$, and use the following.  It is the only place the assumption A2(iv) of bounded dilation is used.

\begin{lemma}\label{neighbortimes}
Suppose $\GG=(\VV,\EE)$ and $\{\eta_e,e\in\EE\}$ satisfy A1, A2, and A3.  There exist constants $C_i$ as follows. 

(i) Let $r\geq 2$ and $t \geq C_{39}\log r$.  Then
\[
  P(\text{there exist $x,y \in B_r(0)\cap \VV$ with } T(x,y) \geq tr) \leq C_{40}e^{-C_{41}t}.
\]

(ii) For all $x,y\in \RR^d$,
\[
  ET(x,y) \leq C_{42}(|y-x|\vee 1).
\]
\end{lemma}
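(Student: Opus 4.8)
The plan is to prove (ii) first as a fairly direct consequence of the bounded dilation hypothesis A1(iv) together with the exponential moment hypothesis A2(ii), and then to obtain (i) by a union bound over a suitably fine lattice of "anchor" points, using (ii) to control the mean and a Bernstein/Chernoff estimate along a single well-chosen path to control the deviation.

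For part (ii): given $x,y\in\RR^d$, let $x'=\varphi(x)$, $y'=\varphi(y)$, so $x',y'\in\VV$ and, by \eqref{Qxsize}, $|x'-x|<1$ and $|y'-y|<1$, hence $|y'-x'|\leq|y-x|+2\leq 3(|y-x|\vee 1)$. By A1(iv), with $C_{18}$ the (a.s.\ constant) dilation, there is a path $\Gamma$ from $x'$ to $y'$ in $\GG$ whose edges have total Euclidean length at most $C_{18}|y'-x'|$. Now I would bound $ET(x,y)=ET(x',y')\leq E\,T(\Gamma)=E\sum_{e\in\Gamma}\eta_e|e|$. The subtlety is that $\Gamma$ is a random path (it depends on $\VV$), so $\eta_e$ and $|e|$ are not independent of the choice of edges; but the speeds $\{\eta_e\}$ are i.i.d., independent of $\GG$, with common mean $E\eta_e=\zeta'(0)<\infty$ (finite by A2(ii), since $\zeta(\lambda)<\infty$ for some $\lambda>0$). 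Conditioning on $\GG$ and using that $\Gamma$ is $\GG$-measurable,
\[
  E\Big[\sum_{e\in\Gamma}\eta_e|e|\ \Big|\ \GG\Big] = (E\eta_e)\sum_{e\in\Gamma}|e| \leq (E\eta_e)\,C_{18}|y'-x'|,
\]
and taking expectations over $\GG$ gives $ET(x,y)\leq 3C_{18}(E\eta_e)(|y-x|\vee 1)$, which is (ii) with $C_{42}=3C_{18}E\eta_e$.

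For part (i): fix $r\geq 2$ and cover $B_r(0)$ by the cubes $F_z$, $z\in q\ZZ^d\cap B_{2r}(0)$, of the lattice $q\ZZ^d$ from \eqref{psiqF} with $q=1$ (say); there are at most $C r^d$ such cubes. For any $x,y\in B_r(0)\cap\VV$ let $z=\psi_q(x)$, $w=\psi_q(y)$, so $|x-z|,|y-w|\leq \sqrt d$; by bounded dilation and \eqref{Qxsize} there are short paths in $\GG$ from $x$ to $\varphi(z)$ and from $\varphi(w)$ to $y$ of total edge-length at most $C$, so $T(x,y)\leq T(x,\varphi(z))+T(\varphi(z),\varphi(w))+T(\varphi(w),y)$ and it suffices to bound $T(\varphi(z),\varphi(w))$ uniformly over the $O(r^{2d})$ pairs $(z,w)$, plus the two short end-pieces. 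For a fixed pair, by A1(iv) there is a $\GG$-measurable path $\Gamma_{zw}$ from $\varphi(z)$ to $\varphi(w)$ of total length at most $C_{18}|w-z|\leq 4C_{18}r$; conditioning on $\GG$, $T(\Gamma_{zw})=\sum_{e\in\Gamma_{zw}}\eta_e|e|$ is a sum of independent terms with $\sum|e|\leq 4C_{18}r$ and each $\eta_e$ having a finite exponential moment, so a Chernoff bound gives $P(T(\Gamma_{zw})\geq tr\mid \GG)\leq e^{-c t r}$ for $t$ larger than an absolute constant times $E\eta_e$ — but this is not quite enough, because the \emph{number} of edges in $\Gamma_{zw}$, hence the variance, is itself random and could in principle be large. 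To fix this I would instead route along $\Pi_{\varphi(z)\varphi(w)}$ through consecutive Voronoi cells as in the proof of Proposition \ref{controlled}: by \eqref{separation} each Delaunay/augmentation step has length $<2+\delta_\GG$, so on the event (controlled by A1(v)) that $|\VV\cap B_{5r}(0)|\leq C r^d$, the number of cells crossed — hence the number of edges — is at most $C r^d$, making the Chernoff bound $P(T(\Gamma_{zw})\geq tr\mid\GG,\ \text{good density})\leq e^{-ctr}$ legitimate for $t\geq C\log r$ (the $\log r$ absorbing the $r^d$ edge count in the exponent after choosing $\lambda$ small). Handling the short end-pieces $T(x,\varphi(z))$ and $T(\varphi(w),y)$ is the same estimate on a ball of radius $O(1)$ and is easier. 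Finally, union-bounding over the $O(r^{2d})$ pairs $(z,w)$ and over the bad-density event $\{|\VV\cap B_{5r}(0)|>Cr^d\}$ (which by A1(v) has probability $\leq C_{19}e^{-C_{20}ar^d}$, negligible) gives $P(\exists x,y\in B_r(0)\cap\VV:\ T(x,y)\geq tr)\leq Cr^{2d}e^{-ctr}\leq C_{40}e^{-C_{41}t}$ once $t\geq C_{39}\log r$.

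The main obstacle is the one just flagged: because the connecting path $\Gamma_{zw}$ has a random number of edges, a naive application of the exponential moment bound to $\sum_{e\in\Gamma_{zw}}\eta_e|e|$ does not immediately yield a clean tail. The remedy is to work on the high-probability event furnished by A1(v) that the vertex count in $B_{5r}(0)$ is $O(r^d)$ — this simultaneously bounds the number of edges in any reasonable path and lets the $\log r$ threshold on $t$ absorb the resulting polynomial-in-$r$ factors — and to be careful that the Chernoff exponent, after picking a small $\lambda$ below the one in A2(ii), still beats $\log r$ times the edge count. Everything else is a routine union bound together with the reduction to $\VV$-vertices via \eqref{Qxsize} and bounded dilation.
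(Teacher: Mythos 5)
Your part (ii) is correct and even a bit cleaner than the paper's, which obtains the mean by integrating the conditional tail bound from part (i); you instead condition on $\GG$ and use only $E\eta_e<\infty$ together with the $\GG$-measurability of the dilation-bounded path.

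For part (i), there is a genuine gap, and it is exactly at the step you yourself flag as the main obstacle. The clean fix, which the paper uses, is the chain of inequalities
\[
  \prod_{e\in\Gamma}\zeta\big(\lambda|e|\big)\ \leq\ \zeta\Big(\lambda\sum_{e\in\Gamma}|e|\Big)\ \leq\ \zeta\big(C_{18}\lambda|y-x|\big).
\]
The first step is the positive-correlation fact $Ee^{a\eta}\cdot Ee^{b\eta}\leq Ee^{(a+b)\eta}$ (both factors are increasing functions of a single $\eta$), iterated over the edges; the second uses A1(iv) and that $\zeta$ is nondecreasing. This bounds the conditional moment generating function of $T(\Gamma)=\sum_{e\in\Gamma}\eta_e|e|$ purely in terms of the total Euclidean length of the path, so the number of edges is simply irrelevant; optimizing over $\lambda$ gives $P(T(x,y)\geq t|y-x|\,|\,\VV)\leq e^{-I_\eta(t/C_{18})}\leq c_0e^{-c_1t}$ uniformly over $\VV$, after which the union bound is routine. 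Your workaround, by contrast, does not close the gap: with only the bound $m\leq Cr^d$ on the number of edges in the Delaunay route (which is all that A1(v) applied to $B_{5r}(0)$ directly gives you), the Chernoff cost $\prod_e\zeta(\lambda|e|)\leq\zeta(3\lambda)^m$ contributes a factor of order $e^{c\lambda r^d}$, which for $d\geq 2$ cannot be absorbed by any threshold $t\geq C\log r$; you would need $t\gtrsim r^{d-1}$. One could rescue the route by noting that every cell center hit by the segment lies within distance $1$ of it (via \eqref{Qxsize}), so a union bound of A1(v) over $O(r)$ unit balls shows only $O(r)$ cells are crossed, but this is far more work than the one-line submultiplicativity of the MGF, and the decay rate $e^{-ctr}$ you quote is both stronger than what the paper's argument gives and stronger than the lemma requires.
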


\begin{proof}
To prove (i) we condition on $\VV$.  Given $x,y \in B_r(0)\cap \VV$, by A1(iv) there exists a path $x=x_0,x_1,..,x_m=y$ in $\GG$ with 
\[
  \sum_{j=1}^m |x_j-x_{j-1}| \leq C_{18}|y-x|\leq 2C_{18}r.
\]
Writing $\eta_j$ for $\eta_{\langle x_{j-1}x_j\rangle}$, for $\lambda>0$ we have
\begin{align}\label{bestpath}
  P\Big(T(x,y)\geq t|y-x| \,\Big|\, \VV\Big) &\leq P\left( \sum_{j=1}^m |x_j-x_{j-1}|\eta_j \geq t |y-x| \,\Bigg|\, \VV \right) \notag\\
  &\leq e^{-\lambda t|y-x|} \prod_{j=1}^m \zeta\big(\lambda |x_j-x_{j-1}|\big) \notag\\
  &\leq e^{-\lambda t|y-x|} \zeta\big(C_{18}\lambda |y-x|\big),
\end{align}
so using A2(ii) it follows that for some $c_i$,
\begin{equation}\label{bestpath2}
  P\Big(T(x,y)\geq t|y-x| \,\Big|\, \VV\Big) \leq e^{-I_\eta(t/C_{18})} \leq c_0e^{-c_1t},
\end{equation}
where $\tred{I_\eta(t)} = \sup_{\gamma>0} (\gamma t - \log \zeta(\gamma))$ is the large-deviations rate function of the variables $\eta_e$.
From this, using A1(v),
\begin{align}\label{existxy}
  P(\text{there exist $x,y \in B_r(0)\cap \VV$ with } T(x,y) \geq tr) &\leq \sum_{n=2}^\infty 
    P(|B_r(0)\cap \VV|=n) {n \choose 2} c_1e^{-c_1t/2} \notag\\
  &\leq c_1E|B_r(0)\cap \VV|^2 e^{-c_1t/2} \notag\\
  &\leq c_2r^4 e^{-c_2t/2} \notag\\
  &\leq c_2e^{-c_3t},
\end{align}
proving (i).  

To prove (ii) we apply \eqref{bestpath2} to $\varphi(x)$ and $\varphi(y)$, and use the fact that $|\varphi(x)-x|\leq 1$ for all $x$.
\end{proof}

Building on Lemma \ref{neighbortimes} we obtain the following.

\begin{lemma}\label{connect}
Suppose $\GG=(\VV,\EE)$ and $\{\eta_e,e\in\EE\}$ satisfy A1, A2, and A3.  There exist constants $C_i$ as follows. For all $r\geq 2, u,v\in\RR^d$ and $t>0$ with $\sigma(|u-v|)\geq r$ and $t\sigma(|u-v|)\geq C_{43}r\log r$,
\begin{align}\label{nearby}
  P\Big(&\text{there exist $x\in B_r(u)\cap \VV, y\in B_r(v)\cap \VV$ with } 
    |T(x,y) - ET(u,v)| \geq t\sigma(|u-v|)\Big) \notag\\
  &\qquad \leq C_{44}e^{-C_{45}t}.
\end{align}
\end{lemma}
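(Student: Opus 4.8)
The plan is to reduce the statement, via the triangle inequality for passage times, to the deterministic exponential bound A3 applied to the single pair $(u,v)$, together with the short-range control provided by Lemma \ref{neighbortimes}(i). Write $\rho = |u-v|$. The first step is to absorb the difference between $T(x,y)$ and $T(u,v)$ into ``local'' passage times near $u$ and near $v$. Set
\[
  M_u = \sup\{T(a,a'): a,a'\in B_r(u)\cap\VV\}, \qquad M_v = \sup\{T(a,a'): a,a'\in B_r(v)\cap\VV\}.
\]
Since $|\varphi(u)-u|<1\le r$, both $\varphi(u)$ and any $x\in B_r(u)\cap\VV$ lie in $B_r(u)\cap\VV$, and similarly near $v$; hence for every admissible $x,y$,
\[
  T(u,v) = T(\varphi(u),\varphi(v)) \le T(\varphi(u),x) + T(x,y) + T(y,\varphi(v)) \le T(x,y) + M_u + M_v,
\]
and, running the triangle inequality the other way, $T(x,y) \le T(u,v) + M_u + M_v$. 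Thus on the event $\{M_u\le \tfrac t3\sigma(\rho)\}\cap\{M_v\le \tfrac t3\sigma(\rho)\}$ we have $|T(x,y)-T(u,v)|\le \tfrac{2t}{3}\sigma(\rho)$ for all $x\in B_r(u)\cap\VV,\ y\in B_r(v)\cap\VV$ simultaneously.

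The second step bounds the probability that this event fails. By Lemma \ref{neighbortimes}(i) applied at $u$ (using stationarity A1(i) to translate from the origin) with ball radius $r$ and threshold parameter $s = t\sigma(\rho)/(3r)$, one gets $P(M_u \ge \tfrac t3\sigma(\rho)) \le C_{40}e^{-C_{41}s}$ provided $s\ge C_{39}\log r$; the hypothesis $t\sigma(\rho)\ge C_{43}r\log r$ guarantees this once $C_{43}\ge 3C_{39}$, and the hypothesis $\sigma(\rho)\ge r$ gives $s\ge t/3$, so $P(M_u\ge\tfrac t3\sigma(\rho))\le C_{40}e^{-C_{41}t/3}$, and likewise for $M_v$. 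The third step is to apply A3 (equation \eqref{expbound2}) directly to the deterministic pair $(u,v)$: since $\sigma(|v-u|)=\sigma(\rho)$,
\[
  P\Big( |T(u,v) - ET(u,v)| \ge \tfrac t3\sigma(\rho) \Big) \le C_{24}e^{-C_{25}t/3}.
\]
Combining the three steps: off an event of probability at most $2C_{40}e^{-C_{41}t/3} + C_{24}e^{-C_{25}t/3}$, we have, for all $x\in B_r(u)\cap\VV$ and $y\in B_r(v)\cap\VV$ at once,
\[
  |T(x,y) - ET(u,v)| \le |T(x,y) - T(u,v)| + |T(u,v) - ET(u,v)| < t\sigma(\rho),
\]
which is the asserted bound after renaming constants $C_{44},C_{45}$.

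I do not expect a genuinely hard step here: the argument is mechanical once Lemma \ref{neighbortimes}(i) and A3 are available. The only points requiring care are the constant bookkeeping --- choosing $C_{43}$ large enough that the threshold $t\sigma(\rho)/(3r)$ in the application of Lemma \ref{neighbortimes}(i) clears that lemma's required lower bound $C_{39}\log r$ --- and the use of the hypothesis $\sigma(\rho)\ge r$, which is exactly what converts the exponential rate from being proportional to $t\sigma(\rho)/r$ into one proportional to $t$, as the statement demands.
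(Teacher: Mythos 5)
Your proof is correct and follows essentially the same approach as the paper's: a triangle-inequality decomposition that isolates the deterministic pair $(u,v)$ for A3 and controls the local discrepancies near $u$ and $v$ via Lemma \ref{neighbortimes}(i), with the hypothesis $\sigma(|u-v|)\geq r$ converting the rate $t\sigma(|u-v|)/r$ into a rate proportional to $t$. The only cosmetic difference is that you phrase the local control through the sup $M_u$ over all pairs in $B_r(u)\cap\VV$, whereas the paper bounds $T(x,u)$ directly; these are interchangeable.
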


\begin{proof}
Using Lemma \ref{neighbortimes}(i) and \eqref{expbound} we have
\begin{align}\label{nearby2}
  P\Big(&\text{there exist $x\in B_r(u)\cap \VV, y\in B_r(v)\cap \VV$ with } |T(x,y) - ET(u,v)| \geq t\sigma(|u-v|)\Big) \notag\\
  &\quad \leq P\Big(\text{there exists $x\in B_r(u)\cap \VV$ with } T(x,u) \geq t\sigma(|u-v|)/3\Big) \notag\\
  &\quad \qquad + P\Big(\text{there exists $y\in B_r(v)\cap \VV$ with } T(y,v) \geq t\sigma(|u-v|)/3\Big) \notag\\
  &\quad \qquad + P\Big(|T(u,v) - ET(u,v)| \geq t\sigma(|u-v|)/3\Big) \notag\\
  &\quad \leq 2C_{40}e^{-C_{41}t\sigma(|u-v|)/3r} + C_{24}e^{-C_{25}t/3} \notag\\
  &\quad \leq c_0e^{-c_1t }.
\end{align}
\end{proof}

We write
\[
  \tred{x\to y_1\to\cdots\to y_k\to y}\quad\text{in } \Gamma_{xy}
\]
to denote that the vertices $y_i\in \VV$ appear in the order $y_1,\dots,y_k$ in traversing $\Gamma_{xy}$ from $x$ to $y$. For $a$ preceding $b$ in $\Gamma_{xy}$ we write $\tred{\Gamma_{xy}[a,b]}$ for the segment of $\Gamma_{xy}$ from $a$ to $b$.  (Here we do not require $a,b\in \VV$.)  For $v$ in a geodesic $\Gamma_{xy}$ and $0<s<|v-x|$, let $u$ be
the first vertex in $\Gamma_{xy}\cap \VV$ before $v$ satisfying $\Gamma_{uv} \subset B_s(v)$.  We then call $\Gamma_{uv}$ the \emph{trailing s-segment} of $v$ in $\Gamma_{xy}$. Note that by \eqref{separation}, $s \geq |u-v|\geq s-2$. Define the hyperplanes, slabs, and halfspaces
\[
  \tred{H_s} = \{(x_1,x^*)\in\RR^d: x_1 = s\}, \quad \tred{H_{[r,s]}} = \{(x_1,x^*)\in\RR^d:r\leq x_1\leq s\},
\]
\[
  \tred{H_s^+} = \{(x_1,x^*)\in\RR^d: x_1\geq s\}, \quad \tred{H_s^-} = \{(x_1,x^*)\in\RR^d: x_1\leq s\}.
\]

We turn next to a weaker version of Theorem \ref{ghbound} similar to \eqref{KAnonrand}; we will need it on the way to the proof of Theorem \ref{ghbound}. The proof of \eqref{KAnonrand} in \cite{Al97} does not carry over immediately to the present situation, as it uses translation invariance of the lattice.  But 
we can use radial symmetry here to give a distinctly shorter proof than that of \eqref{KAnonrand} in \cite{Al97}.

Recall $h(r)-\mu r$ is nonnegative by subadditivity of $h$.  For technical convenience we assume $\beta>4$ in A1(ii).

\begin{proposition}\label{hmu}
Suppose $\GG=(\VV,\EE)$ and $\{\eta_e,e\in\EE\}$ satisfy A1, A2, and A3.  
There exists $C_{46}$ such that for all $r\geq 2$,
\begin{equation}\label{hmu2}
  \mu r \leq h(r) \leq \mu r + C_{46}\sigma_r\log r.
\end{equation}
\end{proposition}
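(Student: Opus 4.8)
The lower bound $\mu r\le h(r)$ is just subadditivity of $h$, so all the content is in the upper bound, and the plan is to bootstrap it from large scales down to scale $r$. Fix once and for all a large integer $N$ (depending only on the exponent $\chi_2$ of \eqref{powerlike}). I claim it suffices to prove the one--scale estimate
\begin{equation}
  h(Nr)\ \ge\ N\,h(r)\ -\ c_0\,N\,\sigma_r\log r\qquad\text{for all }r\ge r_0. \tag{$\star$}
\end{equation}
Indeed $(\star)$ rewrites as $h(r)/r\le h(Nr)/(Nr)+c_0\sigma_r\log r/r$; iterating this along the geometric sequence $r,Nr,N^2r,\dots$, using $h(N^kr)/(N^kr)\to\mu$, and summing the errors with \eqref{powerlike} (the series $\sum_{k\ge0}N^{k(\chi_2-1)}(\log r+k\log N)$ converges because $\chi_2<1$, which also dominates the growth of $\sigma_{N^kr}/\sigma_r$) gives $h(r)\le \mu r+C\sigma_r\log r$ for all $r\ge r_0$; the finitely many remaining scales $2\le r<r_0$ are absorbed using $h(r)\le C_{42}(r\vee1)$ from Lemma~\ref{neighbortimes}(ii).

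To prove $(\star)$, run the geodesic $\gamma=\Gamma_{0,Nre_1}$ and select, via last--exit times into the halfspaces $H_{jr}^{+}$, vertices $w_0=\varphi(0),w_1,\dots,w_N=\varphi(Nre_1)$ occurring in this order along $\gamma$ with $(w_j)_1\in[jr-C_{18},jr]$. Since geodesics are self--avoiding and every subpath of a geodesic is a geodesic, $T(0,Nre_1)=\sum_{j=1}^{N}T(w_{j-1},w_j)$, hence $h(Nr)=\sum_{j=1}^{N}ET(w_{j-1},w_j)$, and $(\star)$ reduces to the per--segment bound $ET(w_{j-1},w_j)\ge h(r)-c_1\sigma_r\log r$ for each $j$.

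For the latter, first establish an a priori confinement: off an event of probability at most $(Nr)^{-10}$ one has $\gamma\subset B_{c_2Nr}(0)$. This is routine from the upward tail in \eqref{expbound2} (so $T(0,Nre_1)\le 2C_{42}Nr$ with high probability, using Lemma~\ref{neighbortimes}(ii)) together with a lower bound $T(0,v)\ge\tfrac{\mu}{2}|v|$ holding simultaneously for all vertices $v\in B_{c_2Nr}(0)$ with $|v|$ not too small, obtained from \eqref{expbound2} and Lemma~\ref{connect} by a union bound over a unit net. On this good event the consecutive crossing points lie in explicit $(d-1)$--dimensional regions of diameter $O(Nr)$ in $H_{(j-1)r}$ and $H_{jr}$, with $r-C_{18}\le|w_j-w_{j-1}|\le c_3Nr$. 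Now cover these two regions by $O\bigl((Nr/\sigma_r)^{d-1}\bigr)$ balls of radius $\rho=\sigma_r/2$; if $T(w_{j-1},w_j)<h(r)-s$ there are balls $B_\rho(u)\ni w_{j-1}$, $B_\rho(v)\ni w_j$ with $|u-v|\ge r-C_{18}-\sigma_r$ and $|u-v|\le c_4Nr$, whence (using Lemma~\ref{monotoneE} and subadditivity, $h(|u-v|)\ge h(r)-c_5\sigma_r$) there exist $x\in B_\rho(u)\cap\VV$, $y\in B_\rho(v)\cap\VV$ with $T(x,y)<h(|u-v|)-(s-c_5\sigma_r)$. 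Since $\sigma_{|u-v|}\asymp\sigma_r$ by \eqref{powerlike}, Lemma~\ref{connect} bounds this (for a fixed pair of balls) by $c_6e^{-c_7 s/\sigma_r}$ once $s\ge c_8\sigma_r\log r$; a union bound over the $O\bigl((Nr)^{2(d-1)}\bigr)$ pairs of balls---this is the \emph{only} place a logarithm is needed, and since $N$ is fixed it costs $O(\log r)$---gives $P\bigl(T(w_{j-1},w_j)<h(r)-s\bigr)\le(Nr)^{2d}c_6e^{-c_7 s/\sigma_r}+(Nr)^{-10}$ for $s\ge c_8\sigma_r\log r$. Integrating this in $s$ over $[0,C_{42}r]$, using $0\le h(r)-T(w_{j-1},w_j)\le h(r)\le C_{42}r$ and enlarging the constant in $c_8\sigma_r\log r$ so that $(Nr)^{2d}e^{-c_7c_8\log r}\le1$, yields $E[h(r)-T(w_{j-1},w_j)]\le c_9\sigma_r\log r$, which is the per--segment bound.

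The main obstacle is exactly the step just sketched: the decomposition points $w_j$ are random and correlated with the environment, so $ET(w_{j-1},w_j)$ cannot be equated with $h(r)$ outright. The cure is the two--part device---an a priori confinement of $\gamma$ to a region of size proportional to $Nr$, then a union bound over a net of that region---and it is this union bound, over polynomially many candidate crossing locations, that produces the $\log r$ factor in \eqref{hmu2}; the more delicate coarse--graining argument behind Theorem~\ref{ghbound} is what later removes it.
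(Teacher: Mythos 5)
Your approach is genuinely different from the paper's, but it has a circularity gap at a load-bearing step.

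The paper fixes $r$, considers $\Gamma_{0,nre_1}$ with $n\to\infty$, decomposes the geodesic into \emph{trailing $(r-2\beta)$-segments} (so by construction each segment has endpoints at distance at most $r-2\beta$), controls the entropy of the decomposition points via a rooted tree of discretized pairs, and runs a Chernoff/exponential-moment argument to contradict $T(0,nre_1)/n\to\mu r$ a.s.\ if $h(r)-\mu r$ were too large. You instead fix $N$, decompose $\Gamma_{0,Nre_1}$ at last-exit points of the hyperplanes $H_{jr}$, prove a per-segment expectation bound by a net/union-bound argument (with Lemma~\ref{connect} at each net pair), and then iterate the resulting one-scale superadditivity estimate. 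The reduction to $(\star)$, the iteration along $r,Nr,N^2r,\dots$, the confinement to $B_{c_2Nr}$, and the integration of the tail are all fine, and both proofs draw the $\log r$ from the polynomial entropy of the decomposition points. But your per-segment bound leans on the inequality $h(|u-v|)\ge h(r)-c_5\sigma_r$, justified by ``Lemma~\ref{monotoneE} and subadditivity.'' Subadditivity covers $|u-v|\le r$; for $|u-v|>r$ you invoke Lemma~\ref{monotoneE}. That lemma's proof, however, explicitly uses Proposition~\ref{hmu} (via \eqref{hgap}), so your argument is circular.

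The gap is not cosmetic: the case $|u-v|>r$ is exactly what your last-exit decomposition fails to control. The crossing points $w_{j-1},w_j$ are only constrained to lie near $H_{(j-1)r}$ and $H_{jr}$; the geodesic can make a large transverse excursion between them, so $|w_{j-1}-w_j|$ is a priori unbounded below $c_3Nr$. Without approximate monotonicity of $h$, the only available lower bound there is $h(|u-v|)\ge\mu|u-v|\ge\mu r$, which gives $h(r)-h(|u-v|)\le h(r)-\mu r$ --- the very quantity you are trying to bound. Feeding this back as a bootstrap ($h(r)-\mu r\le L(r)$ implies a per-segment loss involving $L(r)$ itself) does not improve, because the loss term swallows the gain at each scale. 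The paper's trailing-segment construction is designed precisely to sidestep this: by definition each $\Gamma[u_j,v_j]$ sits inside a ball of radius $r-2\beta$, so $|u_j-v_j|\le r-2\beta$ automatically and the ``wide crossing'' case never arises. To repair your proof you would need either a different decomposition (e.g.\ stopping the geodesic the first time it has traversed Euclidean length $\approx r$, rather than at hyperplane crossings) or an independent proof that wide excursions of $\gamma$ within a slab of width $r$ are negligibly rare --- and the latter essentially amounts to a transverse-wandering estimate proved later in the paper, so it would introduce a second circularity.

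Two small additional points. Your $C_{18}$ is used as a bound on edge lengths, but $C_{18}$ in the paper is the dilation constant; you want the edge-length bound implicit in A1(ii). And $N$ need not be large for the geometric-series step to converge --- any fixed $N\ge 2$ works since $\chi_2<1$; if you meant $N$ large for another reason, that reason should be stated.
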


\begin{proof}
It is sufficient to prove the bound for all sufficiently large $r$, so we will tacitly assume $r$ is large, as needed.

We consider the geodesic $\Gamma_{0,nre_1}$ for a fixed large $n$.  Let $\beta$ be as in A1(iii).  For $0\leq j\leq n-1$ let $\tred{v_j}$ be the first vertex $v$ in $\Gamma_{0,nre_1}\cap \VV$ with the property that the trailing $(r-2\beta)$-segment of $v$ in $\Gamma_{0,nre_1}$ is contained in $H_{jr}^+$, and let $\tred{u_j}$ 
be the starting point of this trailing segment. From \eqref{separation} it is easy to see that we must have $jr\leq (u_j)_1\leq jr+2$. It follows that $\Gamma_{0,nre_1}[u_j,v_j] \subset B_{r-2\beta}(v_j) \cap H_{jr}^+ \cap H_{(j+1)r-2\beta+2}^-$; we denote this last region by $\tred{W_{v_j}}$.  Note that any two sets $W_{v_j}$ are separated by distance at least $2\beta-2$.

We need to control the entropy of the collection of pairs $\{(u_j,v_j): 0\leq j<n\}$.  To do this, we enlarge this collection in such a way that we can put a natural tree structure on it.  To that end, let $\tred{v_n}$  be the first vertex in $\Gamma_{0,nre_1}\cap \VV$, if one exists, which is at distance at least $r$ from $\cup_{0\leq j<n} W_{v_j}$. Then let $\tred{u_n}\in \VV$ be such that $\Gamma_{0,nre_1}[u_n,v_n]$ is the trailing $(r-2\beta)$-segment of $v_n$ in $\Gamma_{0,nre_1}$, and let $W_{v_n}=B_{r-2\beta}(v_n)$, which contains $\Gamma_{0,nre_1}[u_n,v_n]$.  We repeat this to obtain \tred{$(u_{n+1},v_{n+1}),\dots,(u_N,v_N)$}, stopping when no $v_{N+1}$ exists.  Note this preserves the property of separation by distance at least $2\beta-2$.  Also, each new $v_j$ is within distance $2r-2\beta+2$ of some already-existing $v_i$.

We define the discrete approximations $\tred{\hat u_j} = \psi_{1/\sqrt{d}}(u_j), \tred{\hat v_j} =\psi_{1/\sqrt{d}}(v_j)$, and then let 
\[
  \tred{\hat W_j} = \begin{cases} B_{r-2\beta+1}(\hat v_j) \cap 
    H_{jr-1}^+ \cap H_{(j+1)r-2\beta+3}^- &\text{if } 0\leq j < n,\\ 
  B_{r-2\beta+1}(\hat v_j)  &\text{if } n \leq j\leq N. \end{cases}
\]
which contains $W_{v_j}$; the sets $\hat W_j, 0\leq j\leq N$ are separated from each other by distance at least $2\beta-6$.  We call $\{(\hat u_j,\hat v_j): 0\leq j<n\}$ \emph{primary pairs}, and $\{(\hat u_j,\hat v_j): n\leq j\leq N\}$ \emph{secondary pairs}. Since $r-2\beta \geq |u_j-v_j| \geq r-2\beta-2$, we have 
\begin{equation}\label{gapsize}
  r > r-2\beta+2 \geq |\hat u_j - \hat v_j| \geq r-2\beta-4> r-3\beta.
  \end{equation}
We now make a graph with vertices $\{(\hat u_j,\hat v_j): 0\leq j\leq N\}$ by placing an edge between the ith and jth pairs if $|\hat v_j-\hat v_i|\leq 4r$.  The construction ensures that the resulting graph is connected, and it is easy to see that the disjointness of the sets $\hat W_j$ means the number of neighbors of any pair is bounded by some $c_0$.  We label $(u_0,v_0)$ as the root, and by some arbitrary algorithm, we take a spanning tree of the graph, which we denote $\tred{\mT(\Gamma_{0,nre_1})}$.  For counting purposes, we view two such trees as the same if they have the same pairs $\{(\hat u_j,\hat v_j): 0\leq j\leq N\}$, and the same set of primary pairs. We define \emph{parents} and \emph{offspring} in this rooted tree in the usual way: for a given pair $(\hat u_j,\hat v_j)$, its parent is the first pair after $(\hat u_j,\hat v_j)$ in the unique path from $(\hat u_j,\hat v_j)$ to the root, and its offspring are those pairs having $(\hat u_j,\hat v_j)$ as parent.

The tree $\mT(\Gamma_{0,nre_1})$ determines what we will call an abstract tree, in which all that is specified is the number of offspring of the root, then the number of offspring of each of these offspring, etc. The number of possible abstract trees here with $N+1$ vertices is at most $c_0^N$, and, provided $r$ is large, for each such abstract tree the number of corresponding actual trees $\mT(\Gamma_{0,nre_1})$ is at most $((8r)^d)^{2N+2}$, so the number of trees $\mT(\Gamma_{0,nre_1})$ consisting of $N+1$ pairs is at most $(c_1r)^{2d(N+1)}$.

We now consider all possible trees $\mT_0$, with vertices $\{(\hat u_j,\hat v_j): 0\leq j\leq N\}$, and with $\{(\hat u_j,\hat v_j): 0\leq j<n\}$ primary. Let \tred{$v(\mT_0)$} denote the number of vertices in the tree $\mT_0$.  We have 
\begin{align}\label{treedecomp}
  P\Big(T(0,nre_1) \leq n(\mu r&+1) \Big) = \sum_{N\geq n}\, \sum_{\mT_0: v(\mT_0)=N} 
    P\Big(T(0,nre_1) \leq n(\mu r+1), \mT(\Gamma_{0,nre_1}) = \mT_0\Big) \notag\\
  &\leq \sum_{N\geq n} (c_1r)^{2d(N+1)} \max_{\mT_0: v(\mT_0)=N} 
    P\Big(T(0,nre_1) \leq n(\mu r+1), \mT(\Gamma_{0,nre_1}) = \mT_0\Big)
\end{align}
We proceed by contradiction: we want to show that for some $C_{46}$, if $h(r) > \mu r + C_{46}\sigma_r\log r$ then the right side of \eqref{treedecomp} approaches 0 as $n\to\infty$, which means $\limsup_n T(0,nre_1)/n \geq \mu r+1$ a.s.~, contradicting the definition of $\mu$.  

Thus suppose 
\begin{equation}\label{assump}
  h(r) > \mu r + 2C_{46}\sigma_r\log r,
  \end{equation}
with $C_{46}$ to be specified, and fix $\mT_0$ with vertices $\{(\hat u_j,\hat v_j): 0\leq j\leq N\}$ with $N\geq n$.  For $u,v\in \VV\cap \hat W_j$ let 
\[
  \tred{T_{\hat W_j}(u,v)} = \inf\{T(\Gamma): \Gamma \text{ is a path from $u$ to $v$ in } \GG, \Gamma\subset \hat W_j\},
\]
taking the value $\infty$ when there is no such path. 
$T_{\hat W_j}(u,v)$ is determined by the configuration in 
\[
  \tred{\hat W_j^1} = \{u\in\RR^2: d(u,\hat W_j) < 1\}.
\]
There must exist $u_j,v_j\in \hat W_j\cap \VV$ satisfying $|u_j-\hat u_j|\leq 1, |v_j - \hat v_j|\leq 1, \Gamma_{u_j,v_j}\subset \Gamma_{0,nre_1} \cap \hat W_j$ and 
\[
  \sum_{j=0}^N T_{\hat W_j}(u_j,v_j) \leq T(0,nre_1).
\]
It follows that, letting 
\[
  \tred{\hat T_j} = \inf\{ T_{\hat W_j}(u_j,v_j): u_j,v_j\in \VV, |u_j-\hat u_j|\leq 1, |v_j - \hat v_j|\leq 1 \},
\]
the $\hat T_j$'s are independent (since the sets $\hat W_j^1$ are separated from each other by distance more than $2\beta-8>\beta$) and satisfy
\begin{equation}\label{Tjsum}
  \sum_{j=0}^N \hat T_j \leq T(0,nre_1).
\end{equation}
Each $\hat T_j$ is stochastically larger than
\[
  \tred{T_j} = \inf\{ T(u_j,v_j): u_j,v_j\in \VV, |u_j-\hat u_j|\leq 1, |v_j - \hat v_j|\leq 1 \}
\]

From \eqref{powerlike} and \eqref{gapsize} we have for some $c_2$ that $\sigma(|\hat u_j-\hat v_j|) \leq c_2\sigma_r.$  From \eqref{gapsize}, \eqref{assump},  and subadditivity we also have
\[
  ET(\hat u_j,\hat v_j) \geq h(r) - h(r - |\hat u_j-\hat v_j|) \geq h(r) - c_3 
     \geq \mu r + 2C_{46}\sigma_r\log r - c_3 \geq \mu r + C_{46}\sigma_r\log r.
\]
Hence from Lemma \ref{connect} we obtain that provided $r$ is large, for all $t\geq 1$,
\begin{align}\label{Tjtail}
  P\Big(T_j \leq \mu r + C_{46}\sigma_r\log r - t\sigma_r\Big) &\leq P\Big(T_j \leq ET(\hat u_j,\hat v_j) - t\sigma_r\Big) \notag\\
  &\leq P\Big(T_j \leq ET(\hat u_j,\hat v_j) - c_2^{-1}t\sigma(|\hat u_j-\hat v_j|)\Big) \notag \\
   &\leq C_{44}e^{-C_{45}t/c_2}.
\end{align}
By increasing $C_{44}$ we may make this valid for all $t>0$.  We then have for $\tred{\lambda} = C_{45}/2c_2\sigma_r$ and $\tred{M}=\mu r + C_{46}\sigma_r\log r$,
\begin{align}
  Ee^{-\lambda \hat T_j} &\leq Ee^{-\lambda T_j} \notag \\
  &= \int_0^\infty P(T_j \leq x) \lambda e^{-\lambda x}\ dx \notag\\
  &= e^{-\lambda M} \int_{-\infty}^{M/\sigma_r} P(T_j \leq M-t\sigma_r) \lambda\sigma_r e^{\lambda\sigma_r t}\ dt \notag\\
  &\leq e^{-\lambda M} \left[ \int_{-\infty}^0 \lambda\sigma_r e^{\lambda\sigma_r t}\ dt 
    + \int_0^{M/\sigma_r} C_{44}e^{-C_{45}t/c_2} \lambda\sigma_r e^{\lambda\sigma_r t}\ dt \right] \notag\\
  &\leq (1 + C_{44} ) e^{-\lambda M}.
\end{align}
Recalling \eqref{treedecomp} and \eqref{Tjsum} we then have
\begin{align}\label{onetree}
  P\Big(T(0,nre_1) \leq n(\mu r+1), \mT(\Gamma_{0,nre_1}) = \mT_0\Big) &\leq P\left( \sum_{j=0}^N \hat T_j \leq n(\mu r+1) \right) \notag\\
  &\leq e^{\lambda (\mu r+1)n} (1 + C_{44} )^{N+1} e^{-\lambda M(N+1)},
\end{align}
so by \eqref{treedecomp}, provided $r$ (and hence $\lambda M$) and $C_{46}$ (in \eqref{assump}) are large,
\begin{align}\label{treedecomp2}
  P\Big(T(0,nre_1) \leq n(\mu r+1) \Big) &\leq 
    \sum_{N\geq n} (c_1r)^{2d(N+1)} e^{\lambda (\mu r+1)n} (1 + C_{44} )^{N+1} e^{-\lambda M(N+1)} \notag\\
  &\leq 2(c_1r)^{2d(n+1)} e^{\lambda (\mu r+1)n} (1 + C_{44} )^{n+1} e^{-\lambda M(n+1)} \notag\\
  &\leq e^{-n\log r}.
\end{align}
As we have noted, since this approaches 0 as $n\to\infty$, it contradicts the fact that $T(0,nre_1)/n\to \mu r$ a.s.  Thus \eqref{assump} must be false.
\end{proof}

We need to use a result from \cite{Al20} to the effect that ``geodesics are very straight.'' It is proved there for FPP on a lattice, but the proof goes through essentially unchanged for the present context.  The heuristics are as follows: suppose the geodesic $\Gamma_{0,re_1}$ passes through a vertex $u=(u_1,u^*)$ at distance $s$ from $\Pi_{0,re_1}$; by symmetry we may suppose $u\in H_{r/2}^-$.  The geodesic then travels a corresponding extra distance $g(u) + g(re_1-u) - g(re_1)$.  If the angle between $u$ and $re_1$ is small, this extra distance is of order $|u^*|^2/u_1$, and from \eqref{expbound2}, the cost of this (meaning log of the probability) is of order $|u^*|^2/u_1\sigma(u_1)$.  If instead the angle between $u$ and $re_1$ is not small, the extra distance is of order $|u|$ and the cost is of order $|u|/\sigma(|u|)$.  We can combine these into a single statement by saying the cost for general $u$ should be whichever of these two costs is smaller, at least for $u\in H_{[0,r]}$.

The exact formulation of the straightness result contains extra log factors relative to the preceding heuristic, due to the need to bound the probability for all $u$ simultaneously.  It is as follows, using the constants $C_i,\chi_i$ of \eqref{powerlike}.  Define $\sigma^*(s)$ and $\Phi(s)$ by
\[
    \tred{\Phi(s)} = \frac{s}{C_{23}\tred{\sigma^*(s)}\log(2+s)} 
    = \frac{s^{\chi_2}}{C_{23}} \sup_{t\leq s} \frac{t^{1-\chi_2}}{\sigma_t\log(2+t)}.
\]
Here factoring out a power of $s$  on the right, and the use of the sup, ensure that $\Phi$ is strictly increasing. Note that by \eqref{powerlike} we have
\begin{equation}\label{sigmaineq}
  C_{23}^{-1}\sigma_s \leq \sigma^*(s) \leq \sigma_s.
\end{equation}
Then define 
\begin{equation}\label{Ddef}
  \tred{\Xi(s)} = (s\sigma(s) \log(2+s))^{1/2}, \quad
  \tred{D(u)} = \begin{cases} \min\left( \frac{|u^*|^2}{\Xi(u_1)^2}, \Phi\left(\max\big(|u_1|,|u^*|\big)\right) \right) 
    &\text{if } u_1\geq 0,\\
  \Phi\left(\max\big(|u_1|,|u^*|\big)\right) &\text{if } u_1<0. \end{cases}
\end{equation}
Note that by \eqref{powerlike} and \eqref{sigmaineq}, for large $s$,
\begin{equation}\label{XiPhi}
  \frac{1}{C_{23}\Phi(s)} \leq \left( \frac{\Xi(s)}{s} \right)^2 \leq \frac{1}{\Phi(s)}.
\end{equation}
The ``min'' in the definition of $D$ is in accordance with our heuristic: from \cite{Al20},  we have for large $|u|$ that for $C_{23}$ from \eqref{powerlike},
\begin{equation}\label{minwhich}
  D(u) = \begin{cases} \Phi\left(\max\big(|u_1|,|u^*|\big)\right) &\text{if } |u^*|\geq u_1,\\ 
    \frac{|u^*|^2}{\Xi(u_1)^2} &\text{if } |u^*|\leq C_{23}^{-1/2}u_1. \end{cases}
\end{equation}
Finally, define the symmetric version of $D$:
\begin{equation}\label{Dsym}
  \tred{D_r(u)} = \begin{cases} D(u) &\text{if } u_1\leq \frac r2\\ D(re_1-u) &\text{if } u_1 > \frac r2. \end{cases}
\end{equation}
This makes the right half of the region $\{u:D_r(u)\leq c\}$ the mirror image of the left half; this region is a ``tube'' (narrower near the ends) surrounding the line from 0 to $re_1$ bounded by the shell $\{u:|u^*|=c^{1/2}\Xi(u_1)\}$, augmented by a cylinder of radius $\Phi^{-1}(c)$ and length $2\Phi^{-1}(c)$ around each endpoint, so we will call it a \emph{tube--and--cylinders region}. 

We will also consider tube--and--cylinders regions around general pairs $u,v$ in place of $0,re_1$.  To that end, let $\tred{\Theta_{uv}}:\RR^2\to\RR^2$ be translation by $-u$ followed by some unitary transformation which takes $v-u$ to the positive horizontal axis, so that $\Theta_{uv}(u)=0,\Theta_{uv}(v)=|v-u|e_1$.  (The particular choice of unitary transformation does not matter.)  Then $\Theta_{uv}^{-1}(\{w:D_{|v-u|}(w)\leq c\})$ is a tube--and--cylinders region containing $\Pi_{uv}$.

The proof of the acceptable--random--graphs version of the straightness bound is little changed from the lattice-FPP version in \cite{Al20}; we can readily use Lemma \ref{neighbortimes}(i) to change the result from ``point--to--point'' (say, 0 to $re_1$) to ``ball--to--ball,'' with a sup over $x$ and $y$.  We omit the details.

\begin{proposition}\label{transfluct2}
Suppose $\GG=(\VV,\EE)$ and $\{\eta_e,e\in\EE\}$ satisfy A1, A2, and A3.  There exist constants $C_i$ as follows. For all $r,t>0$,
\begin{equation}\label{transfluct1}
  P\left( \sup_{x\in B_1(0)\cap \VV,\ y\in B_1(re_1)\cap \VV}\ \sup_{u\in\Gamma_{xy}} D_r(u) \geq t \right) 
    \leq C_{47}e^{-C_{48}t\log t}.
\end{equation}
\end{proposition}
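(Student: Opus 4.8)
The plan is to adapt the straightness argument of \cite{Al20} to the acceptable--random--graph setting, the only genuinely new ingredient being the passage from a point--to--point statement to the ball--to--ball statement \eqref{transfluct1}, which is handled by Lemma \ref{neighbortimes}. Since $D_r$ varies slowly and every point of a geodesic lies within Euclidean distance $2$ of one of its vertices (by \eqref{separation} and the uniformly bounded length of augmentation edges), it suffices to control $\sup_v D_r(v)$ over $v\in\Gamma_{xy}\cap\VV$. Fixing the deterministic reference points $0$ and $re_1$, I would first restrict to a good event of probability $\ge 1-C e^{-ct\log t}$ on which: (a) $T(x,\varphi(0))$ and $T(y,\varphi(re_1))$ are $O(1)$ for all $x\in B_1(0)\cap\VV$ and $y\in B_1(re_1)\cap\VV$, by Lemma \ref{neighbortimes}(i) applied at scale $O(1)$ --- this is what replaces ``point--to--point'' by ``ball--to--ball'' up to negligible $O(1)$ additive errors; and (b) $T(x,y)\le \mu r + C_{46}\sigma_r\log r + t\sigma_r$ for all such $x,y$, by Lemma \ref{connect} applied at $0,re_1$ together with the mean upper bound $h(r)\le\mu r+C_{46}\sigma_r\log r$ of Proposition \ref{hmu}.

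The heart of the matter is a per--point estimate. Fix a vertex $v$, suppose some geodesic $\Gamma_{xy}$ with $x\in B_1(0)\cap\VV$, $y\in B_1(re_1)\cap\VV$ passes through $v$, and suppose $D_r(v)\ge\tau$ with $\tau$ large. Since sub--paths of geodesics are geodesics, $T(x',v)+T(v,y')=T(x',y')$ for \emph{any} $x',y'$ on $\Gamma_{xy}$ flanking $v$. The key move is to \emph{localize}: choose $x',y'$ at distance from $v$ comparable to the local scale $\max(|v_1|,|v^*|)$ (or $|v|$ when $v_1\notin[0,r]$), and comparably near appropriate lattice points, so that the displacement $|y'-x'|$ is itself of order $\max(|v_1|,|v^*|)$. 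Applying \eqref{expbound2} at the neighbouring lattice points through (ball--to--ball versions of) Lemma \ref{connect}, using the subadditivity bound $ET(0,\cdot)\ge\mu|\cdot|$ for the two lower bounds on $T(x',v),T(v,y')$ and Proposition \ref{hmu} for the upper bound on $ET(x',y')$, one concludes that the excess Euclidean length
\[
  \mu\big(|v-x'|+|y'-v|-|y'-x'|\big)
\]
must be absorbed by passage--time fluctuations measured on the \emph{local} scale $\sigma(\max(|v_1|,|v^*|))$ rather than on $\sigma_r$. The definitions \eqref{Ddef}--\eqref{Dsym} of $\Xi$, $\Phi$ and $D$ are calibrated precisely so that this excess length is at least a constant times $D_r(v)\,\sigma(\max(|v_1|,|v^*|))\log(2+\max(|v_1|,|v^*|))$ --- the two branches of \eqref{minwhich} being the small--angle ($|v^*|^2/\Xi^2$) and large--angle ($\Phi$) regimes --- so the probability that such a $v$ lies on a geodesic is at most $C e^{-c\tau\log(2+\max(|v_1|,|v^*|))}\le C e^{-c\tau\log\tau}$ once $\tau$ exceeds a fixed constant.

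It then remains to sum over $v$ in a lattice $q\ZZ^d$ (and over the finitely many lattice points the auxiliary $x',y'$ can be near): the number of $v$ with $D_r(v)\in[\tau,2\tau]$ is polynomial in $r$ and $\tau$ (it is essentially the volume of the corresponding tube--and--cylinders region), and the per--point bound $e^{-c\tau\log(2+\cdots)}$, together with the log factors built into $\Xi,\Phi$, beats this entropy and the geometric sum over dyadic ranges $\tau\ge t$, giving $P(\sup_u D_r(u)\ge t)\le C_{47}e^{-C_{48}t\log t}$; for $t$ below a fixed constant the stated bound is trivial after enlarging $C_{47}$. I expect the main obstacle to be exactly the localization step of the previous paragraph: comparing against $0$ and $re_1$ directly would force one to overcome a fluctuation budget measured on the scale $\sigma_r\log r$ instead of $\sigma(\max(|v_1|,|v^*|))$, which is far too lossy whenever $v$ is near an endpoint or $|v^*|\ll r$, so one must introduce the random auxiliary points $x',y'$ at the local scale and control them uniformly. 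This, plus matching the three positional regimes (central tube, endpoint cylinders, backward/far region $v_1\notin[0,r]$) to the three pieces of \eqref{Ddef}--\eqref{Dsym} and tracking which $\sigma$--scale dominates the fluctuation sum in each, is where the bulk of the work in \cite{Al20} lies; by contrast the reduction to ball--to--ball via Lemma \ref{neighbortimes}(i) is routine.
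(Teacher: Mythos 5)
The paper itself omits the proof of Proposition~\ref{transfluct2}, stating only that it is ``little changed from the lattice-FPP version in \cite{Al20}'' and that Lemma~\ref{neighbortimes}(i) converts the point--to--point bound into the ball--to--ball form. Your proposal identifies exactly that reduction as the only new ingredient, and your sketch of the underlying \cite{Al20} argument --- localizing the comparison to the scale $\max(|v_1|,|v^*|)$ via flanking points on the geodesic, using the $\log$ factors built into $\Xi$ and $\Phi$ to absorb both the $\sigma_r\log r$ nonrandom error from Proposition~\ref{hmu} and the entropy of the discretization, and matching the three positional regimes to the cases of $D_r$ --- is consistent with the intended approach, so this matches the paper.
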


We next bound transverse increments of passage times from 0, that is, increments which are (approximately) along the boundary of a ball of large radius $r$, over distances $\ll \Delta_r$. 
The following lattice-FPP result from \cite{Al20} carries over straightforwardly to the present context with the help of Lemma \ref{neighbortimes}(i), and again we omit details.  Recalling the cubes $F_u$ of side $q$ define
\[
  \tred{\hT(u,v)} = \min\{T(y,z): y\in F_u,z\in F_v\}, \quad u,v\in \RR^d.
\]

\begin{proposition}\label{transTincr}
Suppose $\GG=(\VV,\EE)$ and $\{\eta_e,e\in\EE\}$ satisfy A1, A2, and A3.  There exist constants $C_i$ as follows. For all $u,v\in q\ZZ^d$ with 
\begin{equation}\label{uvassump}
  |u|\geq C_{49}, \quad |g(u)-g(v)|\leq C_{50}, \quad\text{and}\quad 3\leq |u-v|\leq C_{51}\Delta(|u|),
\end{equation}
and all $\lambda\geq C_{52}$, we have
\begin{equation}\label{Tchange}
  P\Big(\hT(v,0) - \hT(u,0) \geq \lambda\sigma(\Delta^{-1}(|u-v|))\log |u-v| \Big) \leq C_{53}e^{-C_{54}\lambda\log |u-v|}.
\end{equation}
\end{proposition}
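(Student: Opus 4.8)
The plan is to use a near--optimal path for $\hT(u,0)$ to build a competitor for $\hT(v,0)$: follow the geodesic realizing $\hT(u,0)$ from the endpoint near $0$ to within Euclidean distance $\ell:=\Delta^{-1}(|u-v|)$ of the endpoint near $u$, then ``redirect'' the path toward $v$. The choice $\ell=\Delta^{-1}(|u-v|)$ is dictated by two facts. First, by the straightness estimate, Proposition~\ref{transfluct2}, at range $\ell$ from its endpoint the geodesic stays within transverse distance of order $t^{1/2}\Xi(\ell)$ of the axis $\Pi_{0u}$, where $\Xi(\ell)=(\ell\sigma_\ell\log(2+\ell))^{1/2}$ (see \eqref{Ddef}) exceeds $|u-v|=\Delta(\ell)$ only by a logarithmic factor. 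Second, the hypothesis $|g(u)-g(v)|\le C_{50}$ forces $\bigl||u|-|v|\bigr|\le C_{50}/\mu$, so in coordinates placing $u$ on the positive $e_1$--axis, $v$ differs from $u$ by a transverse displacement of size $\le|u-v|$ and a longitudinal displacement of size only $O(\sigma_\ell)$; here one also uses $|u-v|\le C_{51}\Delta(|u|)$, which by \eqref{powerlike} gives $\ell\le|u|/2$ once $C_{51}$ is small enough, so the redirection point exists. Consequently the extra Euclidean length incurred by the detour is a Pythagorean correction of order $|u-v|^2/\ell=\Delta(\ell)^2/\ell=\sigma_\ell$, up to a power of $\log\ell$ from the geodesic's own fluctuation --- precisely the scale $\sigma(\Delta^{-1}(|u-v|))=\sigma_\ell$ of the statement.

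Concretely, let $y^*\in F_u\cap\VV$ and $z^*\in F_0\cap\VV$ realize $\hT(u,0)$, write $\Gamma=\Gamma_{z^*y^*}$, and (in the rotated coordinates, with $r=|u|$) let $p'$ be the last vertex of $\Gamma$ in the slab $\{w_1\le r-\ell\}$, so $|p'-y^*|=\ell+O(1)$ by \eqref{separation}. Since a subpath of a geodesic is a geodesic, $T(\Gamma)=T(z^*,p')+T(p',y^*)$, whence for any $v'\in F_v\cap\VV$
\[
  \hT(v,0)\ \le\ T(z^*,p')+T(p',v')\ =\ \hT(u,0)-T(p',y^*)+T(p',v').
\]
On the event $\{\sup_{w\in\Gamma}D_r(w)<t\}$ the vertex $p'$ lies in a deterministic region of longitudinal extent $O(1)$ and transverse extent $O(t^{1/2}\Xi(\ell))$ about $\Pi_{0u}$; covering it by $O\bigl((t^{1/2}\Xi(\ell))^{d-1}\bigr)$ grid cubes and applying \eqref{expbound2} at their centers --- with Lemma~\ref{neighbortimes} absorbing the $O(1)$ cube--to--vertex corrections --- shows that, except with probability $\le(t^{1/2}\Xi(\ell))^{d-1}C_{24}e^{-C_{25}b}$, simultaneously $T(p',y^*)\ge h(|p'-y^*|)-b\sigma_\ell$ and $T(p',v')\le h(|p'-v'|)+b\sigma_\ell+O(1)$ (here $|p'-y^*|$ and $|p'-v'|$ are both of order $\ell$, so $\sigma$ of either is $\asymp\sigma_\ell$ by \eqref{powerlike}). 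Feeding in Proposition~\ref{hmu} in the form $h(a)-h(a')\le\mu(a-a')_+ +C_{46}(\sigma_a\vee\sigma_{a'})\log(a\vee a')$ and the elementary bound $|p'-v'|-|p'-y^*|\le O\bigl(t^{1/2}(\log\ell)^{1/2}\sigma_\ell\bigr)$, obtained by expanding $|p'-v'|^2-|p'-y^*|^2$ with the displacement estimates above, one gets on the intersection of the two events
\[
  \hT(v,0)-\hT(u,0)\ \le\ c_0\bigl(t^{1/2}(\log\ell)^{1/2}+b+\log\ell\bigr)\sigma_\ell.
\]

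It remains to choose the parameters. Take both $t$ and $b$ of order $\lambda\log|u-v|$. Since $\log\ell\asymp\log|u-v|$, the straightness--failure probability is $C_{47}e^{-C_{48}t\log t}\le Ce^{-c\lambda\log|u-v|}$; the deviation--failure probability $(t^{1/2}\Xi(\ell))^{d-1}C_{24}e^{-C_{25}b}$ is $\le Ce^{-c'\lambda\log|u-v|}$ once the prefactor (polynomial in $|u-v|$ and $\lambda$) is absorbed into the exponential using $\lambda\ge C_{52}$; and on the complement the last display gives $\hT(v,0)-\hT(u,0)\le c_1\lambda\sigma_\ell\log|u-v|$. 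Rescaling $\lambda$ by a constant yields the stated bound. In the leftover regimes --- $|u-v|$ bounded, or $\lambda\sigma_\ell\log|u-v|\ge\mu|u-v|$ --- the bound instead follows crudely from $\hT(v,0)\le\hT(u,0)+T(\varphi(u),\varphi(v))$ together with \eqref{expbound2} and Lemma~\ref{neighbortimes}.

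The step I expect to be the main obstacle is this parameter balancing. Proposition~\ref{transfluct2} only decays like $e^{-ct\log t}$, so to beat $e^{-c\lambda\log|u-v|}$ when $|u-v|\gg\lambda$ one must take the straightness parameter $t$ as large as $\asymp\lambda\log|u-v|$; this is affordable only because the detour cost grows like $t^{1/2}$, so the extra $\log|u-v|$ factor in $t$ costs merely $(\log|u-v|)^{1/2}$ in the detour, which is already dominated by the $\log|u-v|$ present in the target scale. Beyond that, the work is the careful bookkeeping matching the Pythagorean ``extra distance'' to the geodesic's transverse fluctuation, and keeping the $O(1)$ discretization errors (and the passage from $\hT$ to ordinary $T$ near $v$) negligible --- routine once $|u-v|$, equivalently $\ell$, is large, which is arranged by taking $C_{49},C_{52}$ suitably large. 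This is essentially the argument of \cite{Al20} for lattice FPP, now fed by Lemma~\ref{neighbortimes}(i) where lattice translation invariance was used there.
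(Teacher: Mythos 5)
The paper gives no proof of Proposition~\ref{transTincr} at all; it merely remarks that the lattice result of~\cite{Al20} ``carries over straightforwardly\dots with the help of Lemma~\ref{neighbortimes}(i), and again we omit details.'' So there is no in-paper argument to compare against. Judged on its own terms, your reconstruction is the right one and matches the provenance the paper claims: branch the geodesic realizing $\hT(u,0)$ at scale $\ell=\Delta^{-1}(|u-v|)$, control the branch point with the straightness bound (Proposition~\ref{transfluct2}), and pay the Pythagorean detour cost $\asymp|u-v|^2/\ell=\sigma_\ell$ up to the factors forced by $t$ and $b$. The arithmetic that the detour only costs $t^{1/2}$ (so the $\log|u-v|$ in $t$ shows up as $(\log|u-v|)^{1/2}$ in the cost, already absorbed by the $\log|u-v|$ in the target scale) is exactly why the $\sigma_\ell\log|u-v|$ bound is attainable with exponent $\lambda\log|u-v|$, and you have correctly identified this as the crux.

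A few sketch-level imprecisions worth flagging, none of which break the argument. (i) ``$|p'-y^*|=\ell+O(1)$'' is not quite right: the transverse spread of $p'$ is $O(t^{1/2}\Xi(\ell))$, so $|p'-y^*|=\ell+O(t\sigma_\ell\log\ell)$; this is still $\asymp\ell$ in the regime where the main argument is invoked, and the Pythagorean expansion you actually use absorbs this term, so the conclusion stands. (ii) Proposition~\ref{transfluct2} is stated for $B_1$ balls at both ends, while $F_0,F_u$ are cubes of side $q\in[4,5]$; one either notes that the proof goes through for any bounded radius, or covers $F_0,F_u$ by a constant number of unit balls and union-bounds. (iii) The ``crude'' leftover threshold should really be $\lambda\sigma_\ell\log|u-v|\ge C|u-v|$ for a large enough constant $C$ (so that $s/2$ actually exceeds $ET(\varphi(u),\varphi(v))\le C_{42}|u-v|$); the intermediate band between $\mu$ and $C$ is then handled by the main branching argument, which one checks does extend there because $t^{1/2}\Xi(\ell)\ll\ell$ still holds. (iv) When you apply Proposition~\ref{hmu} you pick up a $\sigma_\ell\log\ell$ error, so the clean target is really $c\lambda\sigma_\ell\log|u-v|$; your closing ``rescale $\lambda$'' remark handles this, but it is worth making explicit that $t,b$ must be chosen a fixed constant factor smaller than $\lambda\log|u-v|$ to land inside the stated bound.
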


We next prove a seemingly obvious fact:  $h(r)=ET(0,re_1)$ is approximately increasing in $r$.  

\begin{lemma}\label{monotoneE}
Suppose $\GG=(\VV,\EE)$ and $\{\eta_e,e\in\EE\}$ satisfy A1, A2, and A3.  Given $0<\ep<1$ there exist constants $C_i$ as follows. 

(i) Let $\tred{q}>(1+\chi)/(1-\chi)$. Let $r,s>0$ satisfying $\min(r,s) \geq C_{55}\ep^{-q}$.  Then
\begin{equation}\label{monotoneE2}
  h(r+s) \geq h(r) + (1-\ep)h(s).
\end{equation}
Here $C_{55}=C_{55}(q)$.

(ii) For all $r,s\geq 0$,
\begin{equation}\label{monotoneE3}
  h(r+s) \geq h(r) + (1-\ep)h(s) - C_{56}.
\end{equation}
\end{lemma}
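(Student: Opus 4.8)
The plan is to prove (i) by a cut--point argument on the geodesic $\Gamma_{0,(r+s)e_1}$ at the hyperplane $H_{\min(r,s)}$, and then to deduce (ii) from (i) together with a crude use of subadditivity. Proposition \ref{hmu} enters only to calibrate error scales; the substantive inputs are the straightness bound (Proposition \ref{transfluct2}) and the transverse--increment bound (Proposition \ref{transTincr}). For (ii): if $\min(r,s)\ge C_{55}\ep^{-q}$, apply (i). Otherwise, say $s=\min(r,s)<C_{55}\ep^{-q}$; subadditivity gives $h(r)\le h(r+s)+h(s)$, so $h(r+s)\ge h(r)-h(s)\ge h(r)+(1-\ep)h(s)-2h(s)$, and $h(s)\le C_{42}(s\vee1)\le C_{42}(C_{55}\ep^{-q}\vee1)$ by Lemma \ref{neighbortimes}(ii); this is (ii) with $C_{56}=2C_{42}(C_{55}\ep^{-q}\vee1)$.

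For (i), put $p=\min(r,s)$ and $P=\max(r,s)$, so $\{h(p),h(P)\}=\{h(r),h(s)\}$ and $p\le(r+s)/2$. The path $\Gamma=\Gamma_{0,(r+s)e_1}$ crosses $H_p$ on an edge $\langle a,b\rangle$ with $a_1<p<b_1$; by \eqref{separation} $|a-b|<2$, hence $a,b\in H_{[p-2,p+2]}$, and since $a,b\in\Gamma$,
\[
  h(r+s)=E[T(0,a)]+E[\eta_{\langle a,b\rangle}|a-b|]+E[T(b,(r+s)e_1)]\ \ge\ E[T(0,a)]+E[T(b,(r+s)e_1)].
\]
So it suffices to show $E[T(0,a)]\ge h(p)-\mathrm{err}$ and $E[T(b,(r+s)e_1)]\ge h(P)-\mathrm{err}'$ with $\mathrm{err}+\mathrm{err}'\le\ep\mu p\le\ep h(s)$, where the last inequality uses $h(s)\ge\mu s\ge\mu p$.

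Fix $t_p\asymp\log p/\log\log p$ and let $\mathcal G=\{\sup_{u\in\Gamma}D_{r+s}(u)<t_p\}$; by Proposition \ref{transfluct2} we have $P(\mathcal G^c)\le p^{-10}$, and since $a_1\approx p\le(r+s)/2$, equations \eqref{XiPhi}, \eqref{Ddef}, \eqref{Dsym} force $|a^*|,|b^*|\le L:=c\Delta_p\log p$ on $\mathcal G$. On $\mathcal G$ the vertex $a$ lies in the deterministic region $R=\{v:|v_1-p|<2,\ |v^*|\le L\}$, so
\[
  T(0,a)\ \ge\ T(0,pe_1)-\sup_{v\in\VV\cap R}\big[T(0,pe_1)-T(0,v)\big]^+,
\]
whence $E[T(0,a)]\ge h(p)-E[T(0,pe_1)\mathbf 1_{\mathcal G^c}]-E[\mathcal D]$ with $\mathcal D$ the displayed supremum. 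The term $E[T(0,pe_1)\mathbf 1_{\mathcal G^c}]$ is at most $(E[T(0,pe_1)^2]\,P(\mathcal G^c))^{1/2}$, which is negligible since $E[T(0,pe_1)^2]$ is polynomially bounded in $p$ (Lemma \ref{neighbortimes}) and $P(\mathcal G^c)\le p^{-10}$. For $\mathcal D$, split the displacement from $pe_1$ to $v$ ($v\in R$) as $pe_1\to v^\flat:=|v|e_1\to v$: the first piece has $\big||v|-p\big|\lesssim L^2/p\asymp\sigma_p(\log p)^2$, so $\sup_{v\in R}[T(0,pe_1)-T(0,v^\flat)]^+\le\sup_{x,y\in\VV\cap B_{c\sigma_p(\log p)^2}(pe_1)}T(x,y)$, of mean $\lesssim\sigma_p(\log p)^{O(1)}$ by Lemma \ref{neighbortimes}; the second piece is a pure transverse displacement of length $\lesssim L$ on the sphere of radius $|v|$, controlled---after replacing $T(0,\cdot)$ by the discretized $\hat T(\cdot,0)$ at cost $O(\log p)$, chaining Proposition \ref{transTincr} along a discretized great--circle arc in $O(L/\Delta_p)=O(\log p)$ steps of transverse length $\asymp\Delta_p$, and a union bound over the $O(L^{d-1})$ cubes meeting $R$---by mean $\lesssim\log p\cdot\sigma(\Delta^{-1}(\Delta_p))(\log p)^{O(1)}=\sigma_p(\log p)^{O(1)}$. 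Hence $E[T(0,a)]\ge h(p)-c_0\sigma_p(\log p)^m$ for some $c_0,m$. The bound $E[T(b,(r+s)e_1)]\ge h(P)-c_0\sigma_p(\log p)^m$ follows identically, comparing with $T(pe_1,(r+s)e_1)$ (of mean $h(P)$ by stationarity) and chaining the analogue of Proposition \ref{transTincr} from the endpoint $(r+s)e_1$; here the step size $\Delta_p$ satisfies $\Delta_p\le C_{51}\Delta_P$, so the hypothesis of that proposition holds, and the increment scale $\sigma(\Delta^{-1}(\Delta_p))=\sigma_p$ depends only on the step size $\Delta_p$, not on the base distance $P$.

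Combining, $h(r+s)\ge h(r)+h(s)-2c_0\sigma_p(\log p)^m$. Choose $\chi'\in(\chi,1)$ with $1/(1-\chi')<q$---possible because $q>(1+\chi)/(1-\chi)>1/(1-\chi)$---so that $\sigma_p\le Cp^{\chi'}$ for $p$ large by \eqref{powerlike}; then $2c_0\sigma_p(\log p)^m\le\ep\mu p\le\ep h(s)$ whenever $p\ge C_{55}\ep^{-q}$ for $C_{55}=C_{55}(q)$ chosen large enough to absorb the logarithmic factors, and (i) follows. The main obstacle is exactly the passage from the deterministic estimates of Propositions \ref{transfluct2} and \ref{transTincr}---stated for fixed points and fixed spheres---to a bound on $E[T(0,a)]$ for the \emph{random} crossing vertex $a$: one must confine $a$ via straightness, control passage--time increments uniformly over a net of the confinement region while chaining the transverse--increment estimate over a distance exceeding its single--step range $C_{51}\Delta_p$, and dominate the rare escape event $\mathcal G^c$. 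Keeping the total error at scale $\sigma_p$ times a power of $\log p$, rather than the much larger $\Delta_p$ a naive supremum would yield, is what makes Proposition \ref{transTincr} indispensable and the stated power $q$ of $\ep^{-1}$ sufficient.
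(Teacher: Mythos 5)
Your argument is correct in its essentials but takes a genuinely different route from the paper's. Both proofs cut the geodesic $\Gamma_{0,(r+s)e_1}$ at a hyperplane and both use the straightness bound (Proposition \ref{transfluct2}) to confine the crossing vertex, but there the similarity ends. You cut at the natural hyperplane $H_p$ with $p=\min(r,s)$ and estimate $E\,T(0,a)$ and $E\,T(b,(r+s)e_1)$ separately; controlling how these means degrade as the random crossing vertices $a,b$ wander transversally from $pe_1$ is exactly what forces you to invoke the transverse-increment bound (Proposition \ref{transTincr}) with chaining over a net of the confinement region, plus the discretization $T\leftrightarrow\hT$ and the rare-escape correction, as you correctly identify. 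The paper sidesteps all of this: it works with the difference $T(0,(r+s)e_1)-T(0,re_1)$ and exploits the cancellation $T(0,(r+s)e_1)-T(0,re_1)\ge T(w,(r+s)e_1)-T(w,re_1)$ for any $w$ on $\Gamma_{0,(r+s)e_1}$, so the mean of $T(0,w)$ at a random cut point never has to be estimated. It then cuts near $H_{r+s-m}$ with $m$ chosen adaptively so that $\sigma_m\log m\approx\ep\mu s$, and it needs only the plain fluctuation bound of Lemma \ref{connect} to control $T(w,(r+s)e_1)-T(w,re_1)$---no Proposition \ref{transTincr}, no chaining. Your approach is more direct conceptually and in fact gets away with the weaker hypothesis $q>1/(1-\chi)$ (by taking $\chi'$ near $\chi$), a harmless overshoot of the stated $q>(1+\chi)/(1-\chi)$; the paper's cancellation trick is algebraically slicker and needs lighter probabilistic machinery. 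Two small points to fix: \eqref{separation} gives $|a-b|<2$ only for Delaunay edges, while augmentation edges have length up to $2+\delta_{\GG}<3$, so you should write $|a-b|<3$ and $a,b\in H_{[p-3,p+3]}$ (innocuous); and in (ii), ``say $s=\min(r,s)$'' is not WLOG because the conclusion is asymmetric in $(r,s)$, so the case $r=\min(r,s)<C_{55}\ep^{-q}\le s$ needs its own (equally short) line starting from $h(r+s)\ge h(s)-h(r)$.
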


\begin{remark}\label{monoh}
Lemma \ref{monotoneE}(i) can be reformulated as follows: given $\gamma>\chi/\xi$ there exist $C_i(\gamma)$ such that 
\[
  h(r+s) \geq h(r) + h(s) - C_{57}s^\gamma \quad\text{for all } r\geq s \geq C_{58}.
\]
Also, from Proposition \ref{hmu} we have for some $C_{59}$
\[
  h(r+s) \geq \mu(r+s) \geq h(r) + h(s) - 2C_{46}\sigma_r\log r  \quad\text{for all } r\geq s \geq 2,
\]
but when $s\leq \sigma_r\log r$ this does not yield \eqref{monotoneE2}.
\end{remark}

\begin{proof}[Proof of Lemma \ref{monotoneE}.]
We prove (i), then obtain (ii) as a straightforward consequence. The idea is to show that $\Gamma_{0,(r+s)e_1}$ must with high probability approach $(r+s)e_1$ approximately horizontally, which forces $T(0,(r+s)e_1) - T(0,re_1)$ to be near $h(s)$ with high probability; a non-horizontal approach would force the sup in \eqref{transfluct1} to be large.  

Suppose \eqref{monotoneE2} holds under the added condition $s\leq r/4$.  Then for $s>r/4$ we can take $n$ with $s/n \leq r/4 < s/(n-1)$ and see that the hypotheses are satisfied with $s/n$ in place of $s$.  (This may require increasing $C_{55}$, but without dependence on $n$.)  Applying \eqref{monotoneE2} $n$ times then yields
\[
  h(r+s) = h\left( r + n\frac sn \right) \geq h(r) + n(1-\ep)h\left( \frac sn \right) \geq h(r) + (1-\ep)h(s).
\]
Therefore it is sufficient to prove the lemma for $s\leq r/4$. It is also sufficient to consider $0<\ep<\ep_0$ for any fixed $\ep_0=\ep_0(q)>0$.

With $c_0$ to be specified, define
\begin{equation}\label{mdef}
  \tred{m} = \inf\left\{u>0: \sigma_u\log u > \frac{\ep\mu s}{16c_0} \right\} \wedge \frac{r}{2}, \quad \tred{t}=\ep\Phi(m)^{1/2},
\end{equation}
\[
  \tred{S_1} = \{w\in\RR^d: r+s-m \leq w_1 \leq r+s-m+2, D_{r+s}(w) \leq t\},
\]
\[
  \tred{S_2} = \{w\in\RR^d: r+s-m \leq w_1 \leq r+s-m+2, |w^*| \leq \ep m\}.
\]
Note that provided $C_{55}$ (and hence $s$ and $m$) is large, which we henceforth tacitly assume, and provided $\ep_0$ is small, we have $\Phi^{-1}(t)<m/2$ and $m\geq 2s$.
Suppose $\tred{w}\in S_1$. We have $|w_1-(r+s)| \geq m-2>\Phi^{-1}(t)$ (meaning $w$ lies to the left of the cylinder around $(r+s)e_1$ in the tube-and-cylinders region $\{u:D_{r+s}(u)\leq t\}$) and so by \eqref{XiPhi},
\begin{equation}\label{w2close}
  \frac{|w^*|}{m} \leq \frac{t^{1/2}\Xi(m)}{m} \leq \frac{t}{\Phi(m)^{1/2}} = \ep.
\end{equation}
Thus $S_1\subset S_2$.  
We let $\tred{\hat w} = (r+s-m,w^*)$, so $|w-\hat w| \leq 2$ for $w\in S_2$. See Figure \ref{Lemma3-6}. Then for such $w$, recalling $m\geq 2s$, we have
\[
  |\hat w - (r+s)e_1| - |\hat w-re_1| = (m^2 + |w^*|^2)^{1/2} 
    - ((m-s)^2 + |w^*|^2)^{1/2} \geq \frac{sm}{(m^2 + |w^*|^2)^{1/2}} \geq (1-\ep^2)s,
\]
so assuming $C_{55}$ is large and $\ep_0$ is small,
\begin{equation}\label{wdist}
  |w-(r+s)e_1| - |w-re_1| \geq (1-\ep^2)s - 2 \geq \left( 1 - \frac{\ep}{4} \right) s.
\end{equation}
Also for $w\in S_2$, $m/2\leq m-s\leq m-2 \leq |w-re_1| \leq 2m$ and hence by \eqref{powerlike}, $\sigma(|w-re_1|) \leq 3C_{23}\sigma_{m-2}$.  It follows that provided we choose $c_0$ large enough in \eqref{mdef},
\[
  C_{46} \sigma(|w-re_1|) \log |w-re_1| \leq 4C_{23}C_{46}\sigma_{m-2} \log(m-2) \leq \frac{\ep}{12}\mu s.
\]
With \eqref{wdist} and Proposition \ref{hmu}, this yields that provided $C_{55}$ is large,
\begin{align}\label{hgap}
  h(|w-(r+s)e_1|) - h(|w-re_1|) &\geq \mu |w-(r+s)e_1| - \mu |w-re_1| - C_{46} \sigma(|w-re_1|) \log |w-re_1| \notag\\
  &\geq \left( 1 - \frac{\ep}{3} \right) \mu s \notag\\
  &\geq \left( 1 - \frac{\ep}{2} \right) h(s).
\end{align}

\begin{figure}
\includegraphics[width=16cm]{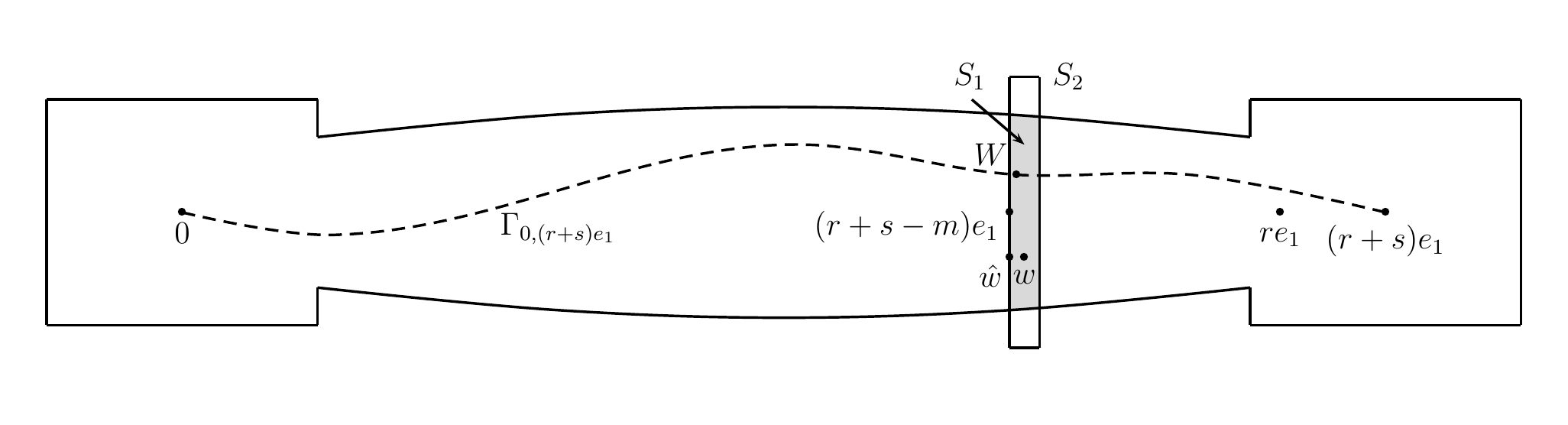}
\caption{ Diagram for the proof of Lemma \ref{monotoneE}. $S_1$ is shaded.}
\label{Lemma3-6}
\end{figure}

We need a lower bound for $t$.  Since $q>\tred{\hat q} = (1+\chi)/(1-\chi)$, we can choose $\tred{b}>0$ small enough so $(q-1)(1-b)^2>(1+b)(\hat q-1)$.
Provided $C_{55}$ is large enough (in the lemma statement, and depending on $b,q$), since $s \geq C_{55}\ep^{-q}$ we have
\begin{equation}\label{mep}
  m \geq (\ep s)^{(1-b)/\chi}, \quad \Phi(m) \geq m^{(1-b)(1-\chi)}, \quad t = \ep \Phi(m)^{1/2} 
    \geq c_1\ep^{1-(1-b)^2(q-1)(1-\chi)/2\chi} \geq c_1\ep^{-b}.
\end{equation}

Observe that for every vertex $w \in \Gamma_{0,(r+s)e_1} \cap \VV$ we have
\begin{equation}\label{0vsw}
  T(0,(r+s)e_1) - T(0,re_1) = T(0,w) + T(w,(r+s)e_1) - T(0,re_1) \geq T(w,(r+s)e_1) - T(w,re_1).
\end{equation}
Let
\[
  \tred{T^*} = \min\{ T(w,(r+s)e_1) - T(w,re_1): w\in S_2\cap V\}
\]
and let $\tred{W}$ be the first vertex in $\Gamma_{0,(r+s)e_1}\cap \VV$ satisfying $r+s-m \leq W_1 \leq r+s-m+2$.
If $W\in S_2$ then by \eqref{0vsw} and subadditivity we have
\[
  T(0,(r+s)e_1) - T(0,re_1) \geq T^*, \qquad |T(0,(r+s)e_1) - T(0,re_1)| \leq T(re_1,(r+s)e_1),
\]
\[
  |T^*| \leq T(re_1,(r+s)e_1).
\]
Hence
\begin{align}\label{Tlower}
  h(r+s) - h(r) &= E[T(0,(r+s)e_1) - T(0,re_1)] \notag\\
  &\geq E(T^*1_{\{W\in S_2\}}) - E(T(re_1,(r+s)e_1)1_{\{W\notin S_2\}}) \notag\\
  &\geq E(T^*) - 2E(T(re_1,(r+s)e_1)1_{\{W\notin S_2\}}).
\end{align}
It follows from \eqref{expbound} that for some $c_2$
\[
  E(T(re_1,(r+s)e_1)^2)^{1/2} \leq c_2s.
\]
Assuming $\ep_0$ is small, by Proposition \ref{transfluct2} and \eqref{mep} we have
\begin{equation}\label{Wloc}
  P(W\notin S_2) \leq P(W\notin S_1) \leq C_{47}e^{-C_{48}t\log t} 
    \leq C_{47}e^{-c_3\ep^{-b}} \leq \left( \frac{\ep\mu}{8c_2} \right)^2.
\end{equation}
Combining these yields
\begin{equation}\label{noS2}
  E(T(re_1,(r+s)e_1)1_{\{W\notin S_2\}}) \leq E(T(re_1,(r+s)e_1)^2)^{1/2} P(W\notin S_2)^{1/2}
    \leq \frac{\ep\mu s}{8}.
\end{equation}

To use \eqref{Tlower} we also need a lower bound for $E(T^*)$.  For $y>0$, using \eqref{hgap},
\begin{align}\label{Tstar}
  P&\left( T^* \leq \left( 1 - \frac{\ep}{2} \right)h(s) - 2y\sigma_m\log m \right) \notag\\
  &\qquad\leq P\Bigg( \text{for some } w\in S_2 \cap \VV,\ T(w,(r+s)e_1) - T(w,re_1) \leq \notag\\
  &\qquad\qquad\qquad\qquad h(|w-(r+s)e_1|) - h(|w-re_1|) - 2y\sigma_m\log m\Bigg) \notag\\
  &\qquad\leq P\Bigg( \text{for some } w\in S_2 \cap \VV,\ T(w,(r+s)e_1) - h(|w-(r+s)e_1|) 
    \leq - y\sigma_m\log m\Bigg) \notag\\
  &\qquad\qquad + P\Bigg( \text{for some } w\in S_2 \cap \VV,\ T(w,re_1) - h(|w-re_1|) \geq y\sigma_m\log m\Bigg).
\end{align}
We consider the first probability on the right in \eqref{Tstar}; the second probability is similar. For $w\in S_2$ we have using \eqref{w2close} that
\[
  m-2 \leq |w - (r+s)e_1| \leq \left( 1 + \ep^2 \right)m,
\]
and then from \eqref{powerlike},
\[
  \frac{\sigma_m}{\sigma(|w - (r+s)e_1|)} \geq \frac{C_{22}}{2}.
\]
From these and Lemma \ref{connect} (see $C_{45}$ there) we obtain that if $c_0$ is large enough, then for $y\geq c_0$,
\begin{align}\label{makesum}
   P&\Bigg( \text{for some } w\in S_2 \cap \VV,\ T(w,(r+s)e_1) - h(|w-(r+s)e_1|) \leq - y\sigma_m\log m\Bigg) \notag\\
   &\leq \sum_{\hw\in S_2\cap d^{-1/2} \ZZ^d} P\Bigg( \text{for some } w\in B_1(\hw) \cap \VV, \notag\\
   &\hskip 3.5cm T(w,(r+s)e_1) - h(|\hw-(r+s)e_1|) \leq h(|w-\hw|) - y\sigma_m\log m\Bigg) \notag\\
   &\leq \sum_{\hw\in S_2\cap d^{-1/2} \ZZ^d} P\Bigg( \text{for some } w\in B_1(\hw) \cap \VV,\ 
     T(w,(r+s)e_1) - h(|\hw-(r+s)e_1|) \leq -\frac y2 \sigma_m\log m\Bigg) \notag\\
   &\leq c_4m^{d-1}\exp\left( -\frac{C_{45}C_{22}y}{4}\log m \right) \notag\\
   &\leq \frac12 e^{-c_5y\log m}.
\end{align}
The same bound holds for the second probability on the right in \eqref{Tstar}. 
With \eqref{Tstar} and the definition of $m$ in \eqref{mdef} this shows that
\[
  ET^* \geq \left( 1 - \frac{\ep}{2} \right)h(s) - 4c_0\sigma_m\log m \geq \left( 1 - \frac{3\ep}{4} \right)h(s).
\]
Combining this with \eqref{Tlower} and \eqref{noS2} yields \eqref{monotoneE2}.

We now prove (ii).  From (i), there exists $c_6$ such that $h(r+s)\geq h(r)+(1-\ep)h(s)$ whenever $r,s\geq c_6$; there then exists $c_7$ such that $h(s)\leq c_7$ whenever $0\leq s\leq c_6$.  We therefore have
\[
  h(r+s) \geq \begin{cases} h(r)+(1-\ep)h(s) &\text{if } r,s \geq c_6 \\ h(r) - h(s) \geq h(r) + (1-\ep)h(s) - 2c_7 &\text{if } s<c_6 \\
    0\geq h(r) +(1-\ep)h(s) - 2c_7 &\text{if } r<c_6 \end{cases}
\]
which proves \eqref{monotoneE3}.
\end{proof}

\section{Proof of Theorem \ref{nofast}---downward deviations}
We use a multiscale argument which is related to chaining.   But first we dispense with simpler cases that only require Lemma \ref{connect} and Proposition \ref{transfluct2}.  The first such case is pairs $x,y$ which are close together.  For technical convenience later, we prove the following also for geodesics with endpoints in the set $G_r^+$, satisfying $G_r^+ \supset G_r(K)$ for $K$ fixed and $r$ large, given by
\begin{equation}\label{Grplus}
  \tred{G_r^+} = [-C_{60}(\log r)^{2/(1-\chi)},r+C_{60}(\log r)^{2/(1-\chi)}]\times C_{60}\Delta_r(\log r) \mkB_{d-1},
\end{equation}
with $C_{60}$ to be specified.

\begin{lemma}\label{shortones}
Suppose $\GG=(\VV,\EE)$ and $\{\eta_e,e\in\EE\}$ satisfy A1, A2, and A3.  There exist constants $C_i$
for all $r\geq 2,K\geq 1,$ and $t>C_{61}K^2$,
\begin{align}\label{Qrshort}
  P&\left( T(x,y) \leq ET(x,y) - t\sigma_r \text{ for some $x,y \in G_r(K)\cup G_r^+$ with $|y-x|
    \leq \frac{C_{62}r}{(\log r)^{1/\chi_1}}$} \right) \notag\\
  &\leq C_{63}e^{-C_{64}t\log r}.
\end{align}
\end{lemma}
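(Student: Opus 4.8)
The plan is a union bound over a polynomial-in-$r$ family of pairs of grid cubes, with each term controlled by Lemma \ref{connect}; the feature that makes the sum decay at the rate $e^{-C_{64}t\log r}$ is the hypothesis $|y-x|\le C_{62}r/(\log r)^{1/\chi_1}$, which via the powerlike property \eqref{powerlike} forces $\sigma(|y-x|)\le \sigma_r/\log r$ once $C_{62}$ is small, so that a gap of size $t\sigma_r$ is a gap of size at least $(t\log r)\,\sigma(|y-x|)$ on the natural scale of Lemma \ref{connect}.

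First I would fix a grid spacing $q\le 2/\sqrt d$ and, given $x,y$ as in \eqref{Qrshort}, pass to the grid points $u=\psi_q(x)$, $v=\psi_q(y)\in q\ZZ^d$. Since $|\varphi(x)-x|\le 1$ and $|x-u|\le q\sqrt d/2$ we have $\varphi(x)\in B_2(u)\cap\VV$, and likewise $\varphi(y)\in B_2(v)\cap\VV$, while $|u-v|\le|y-x|+q\sqrt d<r$ once $r$ is large. Bounding $T(\varphi(x),\varphi(y))$ by the triangle inequality for $T$ through $\varphi(u),\varphi(v)$ and using Lemma \ref{neighbortimes}(ii), one gets $ET(x,y)\le ET(u,v)+c_0$ for a constant $c_0$. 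Hence on the event in \eqref{Qrshort} the event
\[
  E_{u,v}:=\Bigl\{\exists\, x'\in B_2(u)\cap\VV,\ y'\in B_2(v)\cap\VV:\ T(x',y')\le ET(u,v)-(t\sigma_r-c_0)\Bigr\}
\]
holds for the corresponding pair $(u,v)$, so the probability in \eqref{Qrshort} is at most $\sum_{(u,v)}P(E_{u,v})$, summed over grid pairs whose cubes $F_u,F_v$ meet $G_r(K)\cup G_r^+$ and with $|u-v|\le 2C_{62}r/(\log r)^{1/\chi_1}$.

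I would then bound each $P(E_{u,v})$ in two regimes. If $|u-v|$ is below a fixed constant $L_1\ge C_{21}$ chosen so that $\sigma(L_1)\ge 2$, then any candidate pair $x',y'$ in $E_{u,v}$ lies within a bounded distance, so $T(x',y')\ge0$ while $ET(u,v)\le C_{42}(L_1\vee1)$; since $t>C_{61}$ and $\sigma_r\to\infty$, the event $E_{u,v}$ is then empty for $r$ large. If $L_1<|u-v|\le 2C_{62}r/(\log r)^{1/\chi_1}$, then \eqref{powerlike} gives $\sigma(|u-v|)\le C_{22}^{-1}(2C_{62})^{\chi_1}(\log r)^{-1}\sigma_r\le\sigma_r/\log r$ provided $C_{62}\le\tfrac12 C_{22}^{1/\chi_1}$, so Lemma \ref{connect} applied with ball radius $2$ and parameter $t':=(t\sigma_r-c_0)/\sigma(|u-v|)$, which is $\ge\tfrac12 t\log r$ for $r$ large (its two hypotheses holding since $\sigma(|u-v|)\ge2$ and $t'\sigma(|u-v|)=t\sigma_r-c_0$ is large for $r$ large), gives $P(E_{u,v})\le C_{44}e^{-C_{45}t'}\le C_{44}e^{-C_{45}t\log r/2}$.

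Since $G_r(K)\cup G_r^+$ has volume at most a constant times $K^{d-1}$ times a fixed power of $r$, the number of pairs $(u,v)$ in the union bound is at most $c_1K^{d-1}r^{B}$ for constants $c_1,B$, so the probability in \eqref{Qrshort} is at most $c_1K^{d-1}r^{B}C_{44}e^{-C_{45}t\log r/2}$. Using $t>C_{61}K^2$ (hence $\log K\le\tfrac12 t/C_{61}$) and $t\ge C_{61}$, the prefactor $c_1K^{d-1}r^{B}$ is at most $c_1 e^{\delta t\log r}$ with $\delta:=\tfrac{d-1}{2C_{61}\log 2}+\tfrac{B}{C_{61}}\to0$ as $C_{61}\to\infty$, so once $C_{61}$ exceeds a threshold depending on $B,C_{45},d$ (and on the ``$r$ large'' cutoff, below which the full event is empty for $C_{61}$ large by the same $T\ge0$ comparison), the whole bound is at most $c_1C_{44}e^{-(C_{45}/4)t\log r}$, proving \eqref{Qrshort} with $C_{63}=c_1C_{44}$ and $C_{64}=C_{45}/4$. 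I expect the only real difficulty to be bookkeeping: confirming that the loss $c_0$ and the inflation from cubes $F_u$ to balls $B_2(u)$ are uniformly $O(1)$, hence negligible against $t\sigma_r$, and that the $K$-dependent volume factor is genuinely swallowed by the $e^{-C_{45}t\log r/2}$ decay thanks to $t>C_{61}K^2$; there is no conceptual hurdle beyond invoking Lemma \ref{connect}.
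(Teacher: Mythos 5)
Your proof is correct and takes essentially the same approach as the paper: discretize to a fine grid, use the powerlike hypothesis to convert the deficit $t\sigma_r$ into at least order $t\log r$ on the natural scale $\sigma(|u-v|)$ of the pair, apply Lemma \ref{connect}, and absorb the polynomial entropy (and the $K$-dependence, via $t>C_{61}K^2$) into the $e^{-c\,t\log r}$ decay. Your explicit split into a bounded-$|u-v|$ regime is a minor but legitimate addition of rigor that the paper leaves tacit.
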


\begin{proof}
We may assume $t$ is large. We first discretize: by Lemma \ref{neighbortimes}(ii), for $x\in B_1(\hat x),y\in B_1(\hat y)$, we have $|ET(x,y)-ET(\hat x,\hat y)|\leq 2C_{42}\leq t\sigma_r/2$, so for $c_0>0$,
\begin{align}\label{discrect}
  P&\left( T(x,y) \leq ET(x,y) - t\sigma_r \text{ for some $x,y \in G_r^+$ with $|y-x|
    \leq \frac{c_0r}{(\log r)^{1/\chi_1}}$} \right) \notag\\
  &\leq \sum_{\substack{\hat x,\hat y\in d^{-1/2}\ZZ^d\cap (G_r(K)\cup G_r^+) \\ |\hat x-\hat y|\leq 2c_0r/(\log r)^{1/\chi_1}}}
    P\left( T(x,y) \leq ET(\hat x,\hat y) - \frac{t\sigma_r}{2} \text{ for some $x\in B_1(\hat x),y\in B_1(\hat y)$} \right).
\end{align}
From \eqref{powerlike}, 
\[
  |\hat x-\hat y| \leq \frac{2c_0r}{(\log r)^{1/\chi_1}} \implies \frac{\sigma(r)}{\sigma(|\hat x-\hat y|)} 
    \geq \frac{c_1}{(2c_0)^{1/\chi_1}}\log r,
\]
so if we take $c_0$ small enough then by Lemma \ref{connect},
\begin{align}\label{discrect2}
  &\sum_{\substack{\hat x,\hat y\in d^{-1/2}\ZZ^d\cap (G_r(K)\cup G_r^+) \\ |\hat x-\hat y|\leq c_0r/2(\log r)^{1/\chi_1}}}
    P\left( T(x,y) \leq ET(\hat x,\hat y) - \frac{t\sigma_r}{2} \text{ for some $x\in B_1(\hat x),y\in B_1(\hat y)$} \right) \notag\\
  &\qquad\qquad\leq c_2r(K\Delta_r\log r)^{d-1} \max\left\{ e^{-C_{45}t\sigma_r/2\sigma(|\hat x-\hat y|)}:
     |\hat x-\hat y|\leq c_0r/2(\log r)^{1/\chi_1} \right\} \notag\\
  &\qquad\qquad\leq e^{-C_{45}t\log r},
\end{align}
which with \eqref{discrect} proves \eqref{Qrshort}.
\end{proof}

Lemma \ref{shortones} means we need only consider $x,y\in G_r(K)$ satisfying
\begin{equation}\label{xyfar}
  |y-x| > \frac{C_{62}r}{(\log r)^{1/\chi_1}}.
\end{equation}
Writing $\tred{\alpha_{uv}}$ for the angle between nonzero vectors $u,v\in\RR^d$, this means that for large $r$,
\begin{equation}\label{lowtilt}
  \alpha_{y-x,e_1} \leq \tred{\beta_r} := c_0 \frac{\Delta_r(\log r)^{1/\chi_1}}{r} \leq r^{-(1-\chi)/4}.
\end{equation}

A second simple case is small $r$.  For fixed $r_0$ and $1\leq r\leq r_0$, from Lemma \ref{neighbortimes} for all $x,y\in G_r$ we have $ET(x,y)\leq c_1r$, so for $t\geq c_2r_0$ we have $ET(x,y)-t\sigma_r \leq c_1r-t\sigma_r<0$, and hence the probability in \eqref{Qrunif2} is 0.  Therefore there exist $C_{29},C_{30}$ such that \eqref{Qrunif2} is valid for all $1\leq r\leq r_0$ and $t>0$.

A third simple case is $t\geq c_3\log(Kr)$, with $c_3$ large enough. As in \eqref{discrect} and \eqref{discrect2}, for $C_{45}$ from Lemma \ref{connect} we then have
\begin{align}
  P\left( T(x,y) \leq ET(x,y) - t\sigma_r \text{ for some $x,y \in G_r(K)$} \right)
    &\leq c_4r(K\Delta_r)^{d-1} \max_{u,v\in G_r(K)} e^{-C_{45}t\sigma_r/2\sigma(|u-v|)} \notag\\
  &\leq c_4r(K\Delta_r)^{d-1} e^{-c_5t} \notag\\
  &\leq c_6e^{-c_5t/2}.
\end{align}
It follows that, for $C_{26}$ from Theorem \ref{nofast}, we need only consider $C_{26}K^2 \leq t< c_3\log(Kr)$, and therefore also $K\leq c_7(\log r)^{1/2}$, which means $G_r(2K)\subset G_r^+$. 

A fourth simple case is pairs $x,y$ for which $\Gamma_{xy}$ goes well outside $G_r(K)$, when $r$ is large and $t<c_3\log(Kr)$.  Assuming $r$ is large, $C=2c_3$, from the third simple case, is the choice of interest in the following.

\begin{lemma}\label{bigwander}
Suppose $\GG=(\VV,\EE)$ and $\{\eta_e,e\in\EE\}$ satisfy A1, A2, and A3.  Then given $C>0$ there exist constants $C_i=C_i(C)$ such that for $c_7$ as above, for all $r\geq C_{65}, K\leq c_7(\log r)^{1/2}, C_{66}K^2\leq t\leq C\log(Kr)$ we have 
\[
  P\Big(\Gamma_{xy} \not\subset G_r^+ \text{ for some $x,y\in G_r(K)$ satisfying \eqref{xyfar}} \Big)
    \leq C_{68}e^{-C_{69}t}.
\]
\end{lemma}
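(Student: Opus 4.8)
Here is the strategy I would follow.

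\medskip

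The plan is to deduce the lemma from the straightness bound (Proposition~\ref{transfluct2}), used in a general--endpoint, ``ball--to--ball'' form, together with one deterministic geometric fact: there is a constant $c_1>0$ such that for any $a,b\in\RR^d$ lying within distance $1$ of $G_r(K)$ with $|b-a|$ obeying the analogue of \eqref{xyfar}, the tube--and--cylinders region $\Theta_{ab}^{-1}\big(\{w:D_{|b-a|}(w)\le c_1\log r\}\big)$ is contained in $G_r^+$ (for $r$ large and $C_{60}$ chosen large). Granting this, a geodesic $\Gamma_{xy}$ with $x,y\in G_r(K)$ and $|y-x|$ as in \eqref{xyfar} that leaves $G_r^+$ must contain a vertex $w$ with $D_{|v-u|}(\Theta_{uv}(w))>c_1\log r$ for $u=\varphi(x),v=\varphi(y)$; a union bound over an $O(1)$--mesh discretization of $G_r(K)$ and the straightness bound then finish the proof.

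I would first record the set--up already in force: $r$ is large, $K\le c_7(\log r)^{1/2}$, $C_{66}K^2\le t\le C\log(Kr)\le 2C\log r$, and by Lemma~\ref{shortones} only $x,y\in G_r(K)$ with \eqref{xyfar}, hence \eqref{lowtilt}, need be considered. Let $\Lambda\subset\RR^d$ be a grid of spacing $(2\sqrt d)^{-1}$ with $G_r(K)\subset\bigcup_{p\in\Lambda}B_{1/2}(p)$, so $|\Lambda|\le(Kr)^{c_0}$ for a dimensional $c_0$ (using $\Delta_r\le r$ for large $r$); for $x\in G_r(K)$ let $p(x)\in\Lambda$ be nearest to $x$, so that $\varphi(x)\in B_1(x)\subset B_2(p(x))$ by \eqref{Qxsize}. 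I would invoke the general--endpoint ball--to--ball form of Proposition~\ref{transfluct2}: for all $a,b$ and $\tau>0$,
\[
  P\Big(\sup_{x'\in B_2(a)\cap\VV,\ y'\in B_2(b)\cap\VV}\ \sup_{w\in\Gamma_{x'y'}}D_{|b-a|}\big(\Theta_{ab}(w)\big)\ge\tau\Big)\le c_2\,e^{-c_3\tau\log\tau},
\]
obtained from Proposition~\ref{transfluct2} by the change of coordinates $\Theta_{ab}$ and covering $B_2$ by finitely many unit balls.

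The heart is the geometric claim, which I would prove by estimating the transverse and longitudinal extent of $R:=\Theta_{ab}^{-1}(\{D_{|b-a|}\le c_1\log r\})$ separately. Since $a,b$ lie within $1$ of $G_r(K)$, the segment $\Pi_{ab}$ has all transverse coordinates of size $\le K\Delta_r+1$ and all first coordinates in $[-1,r+1]$, and $|b-a|\le r+2$; by \eqref{powerlike}, \eqref{XiPhi} and \eqref{sigmaineq} one has $\Xi(|b-a|)\le\Xi(r+2)\asymp\Delta_r(\log r)^{1/2}$ and $\Phi^{-1}(c_1\log r)=(\log r)^{1/(1-\chi)+o(1)}\ll\Delta_r(\log r)^{1/2}$, so every point of $R$ lies within $O(\Delta_r\log r)$ of $\Pi_{ab}$, hence within $O(\Delta_r\log r)+K\Delta_r+1\le C_{60}\Delta_r\log r$ of the $e_1$--axis once $C_{60}$ and $r$ are large. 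For the longitudinal extent, write a point of $R$ in the tube part as $a+s\hat e+\rho\,n$ with $\hat e=(b-a)/|b-a|$, $n\perp\hat e$, $|n|=1$, $s\in[0,|b-a|]$ and $\rho\le(c_1\log r)^{1/2}\Xi(\tau)$, $\tau:=\min(s,|b-a|-s)$; its first coordinate differs from $a_1+s\cos\alpha$ by at most $\rho\sin\alpha$, where $\alpha=\alpha_{b-a,e_1}\le 2r^{-(1-\chi)/4}$ by \eqref{lowtilt}. Thus the largest upward protrusion past $[a_1\wedge b_1,\,a_1\vee b_1]$ is at most $\max_{\tau\ge0}g(\tau)$ with $g(\tau)=-\tau\cos\alpha+(c_1\log r)^{1/2}\Xi(\tau)\sin\alpha$; since $\Xi'(\tau)\asymp\Xi(\tau)/\tau$ and $\Xi(\tau)/\tau=\tau^{(\chi-1)/2+o(1)}$, the maximizer satisfies $\Xi(\tau^\ast)/\tau^\ast\asymp(\log r)^{-1/2}\alpha^{-1}$, so $\tau^\ast\asymp(\alpha^2\log r)^{1/(1-\chi)}\le r^{-1/2+o(1)}<1$ for $r$ large, whence $g(0)=0$ and $\max_{\tau\ge0}g(\tau)=g(\tau^\ast)=O(\tau^\ast)=o(1)$; the two endpoint balls of radius $\Phi^{-1}(c_1\log r)=(\log r)^{1/(1-\chi)+o(1)}$ contribute at most that much more. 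So $R$ has first coordinate in $[-C_{60}(\log r)^{2/(1-\chi)},\,r+C_{60}(\log r)^{2/(1-\chi)}]$ for $C_{60},r$ large, and with the transverse bound this gives $R\subset G_r^+$.

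Finally, if $\Gamma_{xy}\not\subset G_r^+$ for some $x,y\in G_r(K)$ with \eqref{xyfar}, then putting $p=p(x)$, $p'=p(y)$ we have $\varphi(x)\in B_2(p)\cap\VV$, $\varphi(y)\in B_2(p')\cap\VV$, the pair $p,p'$ again meets the hypothesis of the geometric claim, and some $w\in\Gamma_{xy}$ lies outside $G_r^+\supset\Theta_{pp'}^{-1}(\{D_{|p'-p|}\le c_1\log r\})$, so $D_{|p'-p|}(\Theta_{pp'}(w))>c_1\log r$. Summing the ball--to--ball bound over the at most $|\Lambda|^2\le(Kr)^{2c_0}$ pairs $(p,p')$,
\[
  P\Big(\Gamma_{xy}\not\subset G_r^+\text{ for some }x,y\in G_r(K)\text{ with }\eqref{xyfar}\Big)\le(Kr)^{2c_0}c_2\,e^{-c_3c_1(\log r)\log(c_1\log r)},
\]
and since $c_1\log(c_1\log r)\to\infty$ this is at most $c_2\,e^{-N\log r}$ for any fixed $N$ once $r\ge C_{65}$; taking $N=2C$ and using $t\le2C\log r$ gives the bound $C_{68}e^{-C_{69}t}$ (with $C_{65}$ depending on $C$). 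I expect the delicate point to be the longitudinal part of the geometric claim --- that the slightly tilted tube does not protrude past the ends of $G_r^+$ --- which works because the tilt $\alpha$ is polynomially small in $r$ while the tube widens only polynomially, so the scale $\tau^\ast$ at which widening would offset longitudinal progress is $\ll1$; making this precise rests on the powerlike estimates and the exponent $2/(1-\chi)$ built into $G_r^+$.
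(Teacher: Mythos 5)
Your proposal is correct and follows essentially the same route as the paper: discretize $G_r(K)$, reduce to a deterministic geometric claim that any geodesic vertex outside $G_r^+$ must have a large $D$-value relative to the endpoints, then invoke the (ball-to-ball) straightness bound of Proposition~\ref{transfluct2} and take a union bound over the polynomially many pairs of mesh points, using $t\lesssim\log r$ to convert the superpolynomial decay into $e^{-C_{69}t}$. The only difference is cosmetic: you prove the geometric claim by bounding the transverse and longitudinal extent of the tube-and-cylinders region $\Theta_{ab}^{-1}\{D_{|b-a|}\le c_1\log r\}$ directly (the optimization of $g(\tau)$ replacing what the paper does via a three-way case analysis on the position of $w$ relative to $G_r^+$ and of $\tilde w$ relative to $[0,|\hat y-\hat x|]$), and both arguments rely on the same inputs: the small tilt from \eqref{lowtilt}, the width bound $K\lesssim(\log r)^{1/2}$, the powerlike estimates for $\Xi$ and $\Phi^{-1}$, and the choice of exponent $2/(1-\chi)$ in $G_r^+$.
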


\begin{proof}
Note the assumptions guarantee $G_r(K)\subset G_r^+$. 
Let $\tred{\hat x,\hat y}\in d^{-1/2}\ZZ^d\cap G_r(K)$ with $\hat x_1<\hat y_1$, and let $x,y\in G_r$ satisfy \eqref{xyfar}, with $|x-\hat x|<1,|y-\hat y|<1$. 
Recall the transformation $\Theta_{\hat x\hat y}$ which takes $\Pi_{\hat x\hat y}$ to $\Pi_{0,|\hat y-\hat x|e_1}$. 
Suppose $\tred{w}\in\Gamma_{\hat x\hat y}\bs G_r^+$ and let $\tred{\tilde w} = \Theta_{\hat x\hat y}w$; we may take $w$ with $d(w,G_r^+)\leq 2$.  We claim that, for $D_r$ from \eqref{Dsym}, we have $D_r(\tilde w) \geq C_{60}\log r$, with $C_{60}$ from the definition of $G_r^+$. We consider several cases.

{\it Case 1.} $-C_{60}(\log r)^{2/(1-\chi)} \leq w_1 \leq r+C_{60}(\log r)^{2/(1-\chi)}$, that is, $w$ is to the side of $G_r^+$. See Figure \ref{Lemma4-2}. From symmetry we may assume $-C_{60}(\log r)^{2/(1-\chi)} \leq w_1\leq r/2$. Note that in the definition \eqref{Ddef} of $D(u)$, the case $u_1\geq 0$ is the relevant one here if and only if $\tilde w_1 \in [0,|\hat y-\hat x|]$. 
We have
\begin{equation}\label{farPi}
  d(\tilde w,\Pi_{0,|\hat y-\hat x|e_1}) = d(w,\Pi_{\hat x\hat y}) \geq d(w,G_r(K)) \geq \frac{C_{60}}{2}\Delta_r\log r, \qquad |w^*| 
    \geq C_{60}\Delta_r\log r,
\end{equation}
and $\tilde w_1 \in [0,|\hat y-\hat x|] \implies d(\tilde w,\Pi_{0,|\hat y-\hat x|e_1}) = |\tilde w^*|$, so
\[
  \tilde w_1 \in \left[0,\frac{|\hat y-\hat x|}{2}\right] \implies \frac{|\tilde w^*|^2}{\Xi(\tilde w_1)^2}  
    \geq \frac{d(\tilde w,\Pi_{0,|\hat y-\hat x|e_1})^2}{\Xi\left(\frac 12 |\hat y-\hat x|\right)^2}
    \geq \frac{C_{60}^2\Delta_r^2(\log r)^2}{4\Xi(r)^2} \geq \frac{C_{60}^2}{5}\log r.
\]
Also as in \eqref{farPi},
\[
  \max(|\tilde w_1|,|\tilde w^*|) \geq \frac{1}{\sqrt{2}}|\tilde w| = \frac{1}{\sqrt{2}}|w-\hat x| 
    \geq \frac{1}{\sqrt{2}}d(w,\Pi_{\hat x\hat y})
    \geq \frac{C_{60}}{3}\Delta_r\log r
\]
so
\[
  \Phi\left( \max\big(|\tilde w_1|,|\tilde w^*|\big) \right) \geq C_{60}\log r.
\]
Therefore under Case 1,
\begin{equation}\label{minDr}
  -\infty < \tilde w_1 \leq \frac{|\hat y-\hat x|}{2} \implies D_{|\hat y-\hat x|}(\tilde w) \geq C_{60}\log r,
\end{equation}
and symmetrically the same holds for $\tilde w_1>|\hat y-\hat x|/2$.

{\it Case 2.} $w_1 < -C_{60}(\log r)^{2/(1-\chi)}$ with $\tilde w_1>0$, or $w_1 > r + C_{60}(\log r)^{2/(1-\chi)}$ with $\tilde w_1< |\hat y-\hat x|$, that is, $w$ is past the end of $G_r^+$ but $\tilde w$ is to the side of $\Pi_{0,|\hat y-\hat x|e_1}$.  See Figure \ref{Lemma4-2}. The two ends are symmetric so we need only consider $w_1 < -C_{60}(\log r)^{2/(1-\chi)}$ with $\tilde w_1>0$.  Let $\tred{H_{(\hat x)}}$ be the hyperplane through $\hat x$ perpendicular to $\hat y-\hat x$, and let $\tred{\ol w}$ be the orthogonal projection of $w$ into $H_{(\hat x)}$, so $|\ol w - \hat x| = |\tilde w^*|$ and $|w-\ol w| = |\tilde w_1|$.  Since $K\leq c_7(\log r)^{1/2}$, by \eqref{lowtilt} the angle $\alpha_{\hat y-\hat x,e_1}$ is small enough that we have $\ol w_1 < w_1 < 0$ and hence, for $\beta_r$ from \eqref{lowtilt},
\begin{equation}\label{lowtilt2}
  C_{60}(\log r)^{2/(1-\chi)} \leq \hat x_1 - \ol w_1 \leq |\ol w -\hat x|\sin \alpha_{\hat y-\hat x,e_1} \leq |\tilde w^*|\beta_r;
\end{equation}
therefore
\[
  \Phi\left( \max\big(|\tilde w_1|,|\tilde w^*|\big) \right) \geq \Phi\left( \frac{C_{60}(\log r)^{2/(1-\chi)}}{\beta_r} \right) 
    \geq C_{60}\log r.
\]
Further, similarly to \eqref{lowtilt2}, since the small angle $\alpha_{\hat y-\hat x,e_1}$ ensures $\frac 12 \tilde w_1 = \frac 12 |w-\ol w| \leq (w-\ol w)_1$, we have
\begin{equation}\label{smallangle}
  \frac 12 \tilde w_1 \leq (w-\ol w)_1 \leq (\hat x-\ol w)_1 = |\ol w -\hat x|\sin \alpha_{\hat y-\hat x,e_1} \leq |\tilde w^*|\beta_r
\end{equation}
and therefore using \eqref{powerlike},
\[
  \frac{|\tilde w^*|^2}{\Xi(\tilde w_1)^2} \geq \frac{|\tilde w^*|^2}{\Xi(2\beta_r|\tilde w^*|)^2}
    \geq \frac{C_{22}|\tilde w^*|}{\beta_r^{\chi_1}\sigma(|\tilde w^*|)\log(2+|\tilde w^*|)} \geq \frac{1}{\beta_r^{\chi_1}} \geq C_{60}\log r.
\]
This proves the right side of \eqref{minDr} under Case 2.  

\begin{figure}
\includegraphics[width=9cm]{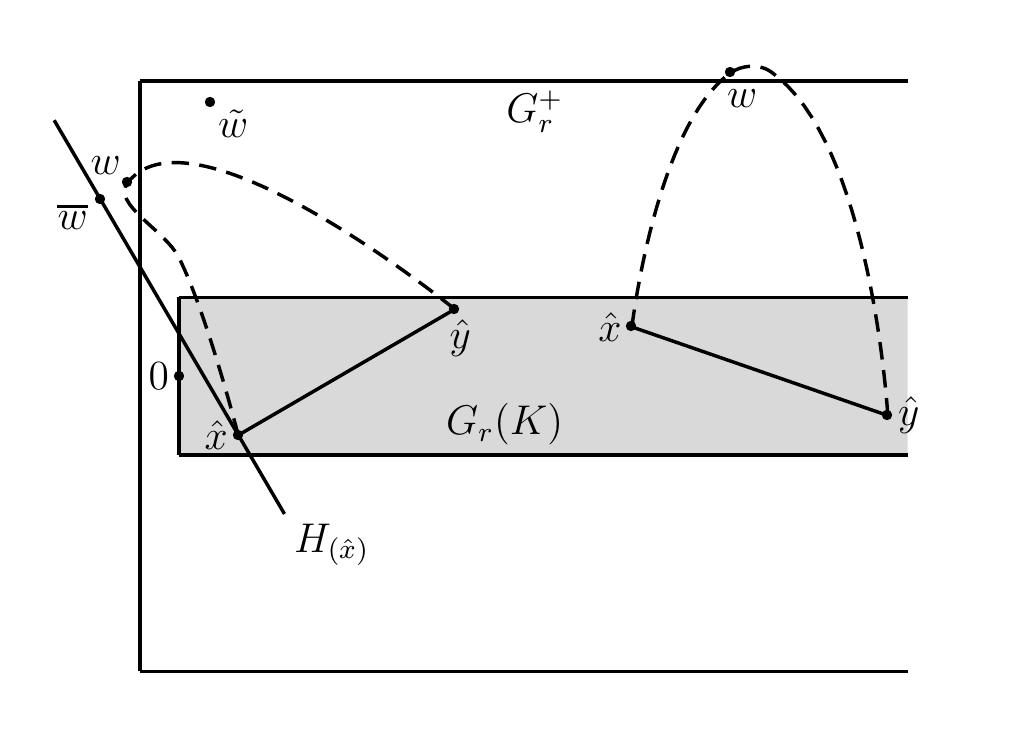}
\caption{ Diagram for Case 1 (right) and Case 2 (left) in the proof of Lemma \ref{bigwander}. On the left, if we translate and rotate the picture so that $\hat x$ becomes 0 and $\hat y - \hat x$ becomes horizontal, then $w$ becomes $\tilde w$. Case 3 is like Case 2 except that $\tilde w$ is to the left of 0.}
\label{Lemma4-2}
\end{figure}

{\it Case 3.} $w_1 < -C_{60}(\log r)^{2/(1-\chi)}$ with $\tilde w_1 \leq 0$, or $w_1 > r + C_{60}(\log r)^{2/(1-\chi)}$ with $\tilde w_1\geq |\hat y-\hat x|$, that is, $w$ is past the end of $G_r^+$ and $\tilde w$ is past the end of $\Pi_{0,|\hat y-\hat x|e_1}$.  The two ends are again symmetric so we need only consider $w_1 < -C_{60}(\log r)^{2/(1-\chi)}$ with $\tilde w_1 \leq 0$.  Then $|\tilde w| = |w-\hat x| \geq \hat x_1-w_1 \geq C_{60}(\log r)^{2/(1-\chi)}$ so
\[
  D_r(\tilde w) = \Phi\left( \max\big(|\tilde w_1|,|\tilde w^*|\big) \right) \geq \Phi\left( \frac{|\tilde w|}{2} \right) 
    \geq C_{60}\log r.
\]

Thus is all cases we have $D_r(\tilde w) \geq C_{60}\log r$,
so by Proposition \ref{transfluct2},
\begin{align*}
  P&\Big(\Gamma_{xy} \not\subset G_r^+ \text{ for some $x,y\in G_r(K)$ satisfying \eqref{xyfar}} \Big) \\
  &\leq \sum_{\hat x,\hat y\in d^{-1/2}\ZZ^d\cap G_r(K)} 
    P\Big(\Gamma_{xy} \not\subset G_r^+ \text{ for some $x,y\in G_r(K)$ satisfying \eqref{xyfar} with } 
    |x-\hat x|<1,|y-\hat y|<1 \Big) \\
  &\leq \sum_{\hat x,\hat y\in d^{-1/2}\ZZ^d\cap G_r(K)} P\left( \sup_{x\in B_1(\hat x)\cap\VV,y\in B_1(\hat y)\cap\VV}\ 
    \sup_{w \in \Gamma_{xy}} D_{|\hat y-\hat x|}(\Theta_{\hat x\hat y}w) \geq C_{60}\log r \right) \\
  &\leq c_0r(K\Delta_r)^{d-1} e^{-C_{47}(\log\log r)\log r} \\
  &\leq c_1e^{-c_2t}.
\end{align*}
\end{proof}

In summary, we may restrict to the following situation (still with $x,y\in G_r(K)$):
\begin{equation}\label{rtxycond}
  r \geq r_0,\quad K\leq c_7(\log r)^{1/2}, \quad c_8 \leq t \leq c_3\log r,\quad |y-x| > \frac{C_{62}r}{(\log r)^{1/\chi_1}},\quad
    \Gamma_{xy}\subset G_r^+,
\end{equation}
with $C_{62}$ from Lemma \ref{shortones}.

\subsection{Step 1. Setting up the coarse-graining.}\label{setup}
For purposes of coarse--graining and multiscale analysis of paths, we build grids inside $G_r^+$ on various scales, using small parameters \tred{$\lambda,\delta,\beta$} satsfying 
\begin{equation}\label{relsize}
  1 \gg \lambda \gg \delta^{\chi_1} \gg \delta^{(1+\chi_1)/2} \gg \beta
\end{equation}
in the sense that the ratio of each term to the one following must be taken sufficiently large, in a manner to be specified. We choose these so $1/\delta$ and $1/\beta$ are integers. There is also a fourth parameter $\tred{\rho}>1$, and we further require
\begin{equation}\label{relsize2}
  \frac{\beta}{\lambda\rho\delta^{(1+\chi_2)/2}} \ll 1,\quad \frac{\rho\beta}{\lambda\delta^{(1-\chi_1)/2}} \ll 1, \quad
    \frac{\beta^2}{\lambda^2\delta} \ll 1,\quad \frac{\beta^{2\chi_1/(1+\chi_1)}}{\lambda\delta^3} \ll 1,\quad
    \rho^2\lambda \gg 1;
\end{equation}
all of \eqref{relsize2} can be satisfied by taking $\beta$ small enough after choosing $\rho,\lambda,\delta$.
For $j\geq 1$, a \emph{jth--scale hyperplane} is one of form $H_{k\delta^jr}, k\in\ZZ$, and the \emph{jth--scale grid} in $G_r^+$ is
\[
  \LL_j = \tred{\LL_j(r)} = \left\{ u\in G_r^+: u_1\in \delta^j r\ZZ, u^* \in \tred{K_0}\beta^j\Delta_r\ZZ^{d-1} \right\},
\]
where $K_0\in[1,2]$ is to be specified.
Note that larger $j$ values correspond to smaller scales, and since $1/\delta$ is an integer, a $j$th--scale hyperplane for some $j$ is also a $k$th--scale hyperplane for $k>j$. A hyperplane is \emph{maximally jth--scale} if it is $j$th--scale but not $(j-1)$th--scale.  

The $j$th--scale grid divides a $j$th--scale hyperplane into cubes which we call \emph{jth--scale blocks}.  For concreteness we take these blocks to be products of left--open--right--closed intervals. Each point $u$ of the hyperplane then lies in a unique such block. 
For a point $u$ in a $j$th--scale hyperplane, the \emph{jth--scale coarse--grain approximation of} $u$ is the point $\tred{V_j(u)}$ which is the unique corner point in the block containing $u$. We abbreviate coarse--grain as CG.  The definition ensures that two points with the same $j$th--scale CG approximation also have the same $k$th--scale CG approximation for all larger scales $k<j$.  

A \emph{transverse step} in the $j$th--scale grid is a step from some $u\in\LL_j$ to some $v\in\LL_j$ satisfying $v_1=u_1, |v^*-u^*| = K_0\beta^j\Delta_r$; a \emph{longitudinal step} is from $u$ to $v$ satisfying $v_1=u_1+\delta^j r, v^*=u^*$.  Define
\[
  \ell_1 = \tred{\ell_1(j)} = \frac{\Delta(\delta^j r)}{\beta^j\Delta(r)}, \qquad \ell_2 = \tred{\ell_2(j)} = \rho^j\ell_1(j),
\]
so from \eqref{powerlike},
\begin{equation}\label{ell12}
  \ell_1 \geq \frac{\delta^{(1+\chi_2)j/2}}{C_{23}\beta^j}, \qquad  \frac{\delta^{(1+\chi_2)j/2}\rho^j}{C_{23}\beta^j}
    \leq \ell_2 \leq \frac{\delta^{(1+\chi_1)j/2}\rho^j}{C_{22}\beta^j}.
\end{equation}
Here $\ell_1(j)$ is chosen so that the typical transverse fluctuation $\Delta(\delta^j r)$ for a geodesic making one longitudinal step is $\ell_1(j)$ transverse steps.  

On short enough length scales, coarse--graining is unnecessary because we can use Proposition \ref{transTincr} and Lemma \ref{shortones}.  More precisely, we will need only consider $j\leq \tred{j_1}=j_1(r)$ where $j_1$ is the least $j$ for which 
\begin{equation}\label{j1}
  \left( \frac{\lambda}{\delta^{\chi_1}} \right)^{j_1} \geq (\log r)^2, \quad\text{so}\quad j_1(r) \asymp \log\log r.
\end{equation}
Provided $r$ is large, this means the spacings $\delta^{j_1}r$ and $K_0\beta^{j_1}\Delta_r$ of the $j_1$th--scale grid are large, and therefore
we can choose $q\in[4,5]$
so that $\delta^{j_1}r$ is an integer multiple of $q$, and then choose $K_0\in[1,2]$ such that $K_0\beta^{j_1}\Delta_r$ is an integer multiple of $q$.  Then (since $1/\delta$ and $1/\beta$ are integers) for all $j\leq j_1$, the $j$th--scale grid in every $j$th--scale hyperplane is contained in $q\ZZ^d$, which we call the \emph{basic grid}. 

We say an interval in $\RR$ has \emph{$k$th--scale length} if its length is between $10\delta^{k+1}r$ and $10\delta^kr$.

Given $x,y\in G_r(K)$ with $x_1<y_1$, satisfying \eqref{rtxycond}, we define a \emph{hyperplane collection} \tred{$\mH_{xy}$}, which depends on the geodesic $\Gamma_{xy}$, constructed inductively as follows.  
All hyperplanes $H_s\in\mH_{xy}$ have $s\in[x_1,y_1]$, and we view the hyperplanes as ordered by their indices. Let $\tred{j_2(x,y)}$ be the least $j$ such that there are at least 4 $j$th--scale hyperplanes between $x$ and $y$.  Subject always to the constraint $s\in[x_1,y_1]$, at scale $j_2$ we put in $\mH_{xy}$ the $j_2$th scale hyperplanes $H_s$ second closest to $x$ and to $y$; we call these \emph{$j_2$--terminal hyperplanes}.  The gap between the $j_2$--terminal hyperplanes is at least $4\delta^{j_2}r$ and at most $5\delta^{j_2-1}r$.
In general, when we have chosen the $j$th--scale hyperplanes in $\mH_{x,y}$ for some $j\geq j_2$, each gap between consecutive ones is called a \emph{$j$th--scale interval}, and the hyperplanes bounding it are the \emph{endpoint hyperplanes} of the interval.  For any interval $I$ we also call $\{H_s:s\in I\}$ an interval; which meaning should be clear from the context.  A $j$th--scale interval is \emph{short} if it has $j$th--scale length, and \emph{long} otherwise.  We then add $(j+1)$th--scale hyperplanes to $\mH_{xy}$, of 3 types.
\begin{itemize}
\item[(i)] The first type consists of two hyperplanes, which are the $(j+1)$th--scale hyperplanes second closest to $x$ and to $y$, which we call \emph{$(j+1)$--terminal hyperplanes}.  A $(j+1)$--terminal hyperplane may also be a $k$th--scale hyperplane on some larger scale $k<j+1$, in which case we call it an \emph{incidental kth--scale hyperplane}.  
\item[(ii)] As a second type, for each non-incidental $j$th--scale hyperplane $H_{k\delta^j r}\in\mH_{xy}$ we put in $\mH_{xy}$ the closest $(j+1)$th--scale hyperplanes on either side of $H_{k\delta^j r}$, that is, $H_{(k\delta^j-\delta^{j+1})r}$ and $H_{(k\delta^j+\delta^{j+1})r}$, which we call \emph{sandwiching hyperplanes}.  
\item[(iii)] The third type is $(j+1)$th--scale \emph{joining hyperplanes}; we place between 1 and 4 of these in each long $j$th--scale interval, depending on the behavior of $\Gamma_{xy}$ in the interval in a manner to be specified below.  Joining hyperplanes are always placed in the ``extremal 10ths'' of the long interval; more precisely, if the $j$th--scale interval has $k$th--scale length then they are placed at distance $\delta^\ell r$ from one of the endpoints for some $k+1\leq\ell\leq j$, with at most 2 such hyperplanes at either end.  
\end{itemize}
We use superscripts $-$ and $+$ for quantities associated with left--end and right--end joining hyperplanes, respectively. We continue adding hyperplanes through all scales from $j_2$ to $j_1$; after adding $j_1$th--scale hyperplanes, $\mH_{xy}$ is complete and we stop.

\begin{figure}
\includegraphics[width=16cm]{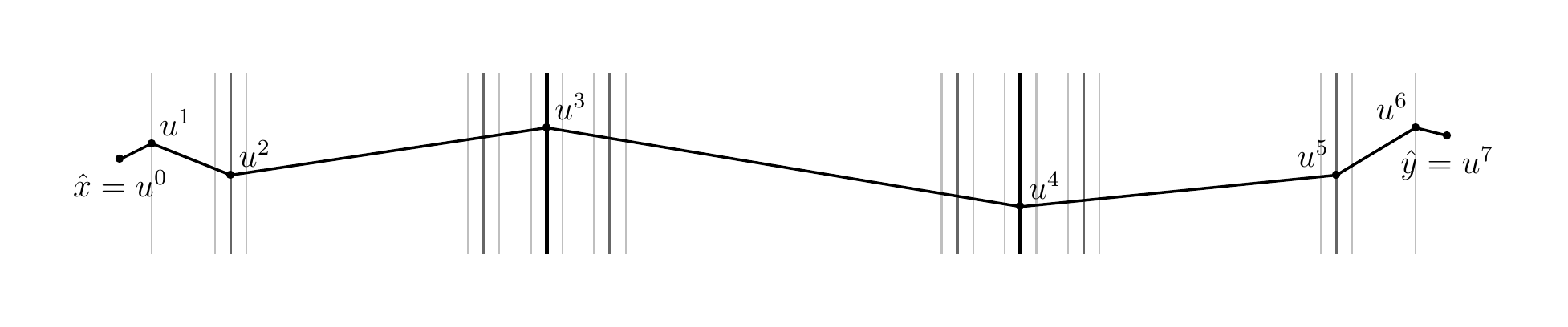}
\caption{ Diagram showing 3 scales of hyperplanes in $\mH_{xy}$: $j$th--scale (black), $(j+1)$th--scale (medium gray), $(j+2)$th--scale (light gray.) Points $u^i, 1\leq i\leq 6,$ are in the terminal hyperplanes; if these are all the scales (i.e.~$j_1=j_2+2$) then the path is an example of a possible final CG path $\Gamma_{xy}^{CG}$, or a path $\Omega_{xy}$ in Section \ref{Th1-2up}. The other hyperplanes are sandwiching ones. Joining hyperplanes are not shown. }
\label{Scales}
\end{figure}

A \emph{terminal jth--scale interval} in $[x_1,y_1]$ is an interval between a terminal $(j+1)$th--scale hyperplane and the terminal $j$th--scale hyperplane closest to it; the length of such an interval is necessarily between $\delta^jr$ and $2\delta^jr$, so it is short. In Figure \ref{Scales}, the interval between the hyperplanes containing $u^2$ and $u^3$ is a terminal $j$th--scale interval.

At most 6 $(j+1)$th--scale hyperplanes are added inside each $j$th--scale interval, and only 1 if the interval is terminal.  Therefore if $\mH_{xy}$ contains $n$ $j$th--scale hyperplanes, then the number of $(j+1)$th--scale hyperplanes is at most $7(n-1)+5$.  It follows that
\begin{equation}\label{hypernum2}
  \Big| \left\{ H_s: H_s \text{ is a $j$th--scale hyperplane in $\mH_{xy}$} \right\} \Big| \leq 7^j-1.
\end{equation}

In keeping with (iii) above, we will designate up to four random values \tred{$\mu_{xy}^{-,1}(I)<\mu_{xy}^{-,2}(I)<\mu_{xy}^{+,2}(I)<\mu_{xy}^{+,1}(I)$} in $I$ for each long $j$th--scale interval $I=[a,b]$, for each $j$, depending on $\Gamma_{xy}$.  These will satisfy
\begin{align*}
  |I| \in &[\delta^kr-4\delta^kr,\delta^kr] \implies \mu_{xy}^{-,1}(I) = a + \delta^{\ell+1} r,\ 
    \mu_{xy}^{-,2}(I) = a + \delta^\ell r \ \ \text{for some } k<\ell\leq j,\\
  &\mu_{xy}^{+,1}(I) = b - \delta^{\ell'+1} r,\ 
    \mu_{xy}^{+,2}(I) = b - \delta^{\ell'} r \ \ \text{for some } k<\ell'\leq j.
\end{align*}
We call the values $\mu_{xy}^{\pm,1}(I)$ \emph{outer joining points}, and $\mu_{xy}^{\pm,2}(I)$ \emph{inner joining points}; the inner ones will represent locations where a certain other path traversing $I$ can be guided to coalesce with $\Gamma_{xy}$, and we call $H_{\mu_{xy}^{\pm,\ep}(I)}$ the (potential) \emph{joining hyperplanes of the interval} $I$.  We say ``potential'' because not all are necessarily actually included in $\mH_{xy}$; which are included, and with what values of $\ell,\ell'$,  depend on rules to be described.

Recall that $\psi_q(u)$ denotes the closest point to $u$ in the basic grid, and $F_y = \psi_q^{-1}(y), y\in q\ZZ^d$. We need only consider the case in which $x,y$ each share a Voronoi cell with a basic grid point, that is,
\begin{equation}\label{startend}
  x = \varphi(\hat x), \quad y = \varphi(\hat y) \quad\text{for some } \tred{\hat x,\hat y} \in q\ZZ^d;
\end{equation}
we readily obtain the general case from this via Lemma \ref{neighbortimes}, since for every $x_0\in\VV$ the point $x=\varphi(\psi_q(x_0))$ satisfies \eqref{startend} and $|x_0-x|\leq q\sqrt{d}$.  Since $q\geq 2$, \eqref{startend} ensures that $\psi_q(x)=\hat x,\psi_q(y)=\hat y$.
For $0\leq s\leq r$ and $\gamma$ a path in $\GG$ from $B_1(0)$ to $B_1(re_1)$, for $v,w$ vertices in $\gamma$, write $\tred{\gamma_{[v,w]}}$ for the segment of $\gamma$ from $v$ to $w$ and let $\tred{u_s(\gamma)}$ denote the \emph{entry point} of $\gamma$ into $H_s^+$, that is, the first vertex of $\gamma$ in $H_s^+$, necessarily next to $H_s$ (in the sense that its Voronoi cell intersects $H_s$.)  Our aim is to approximate a general geodesic $\Gamma_{xy}$ (subject to \eqref{rtxycond}) by a CG one via certain \emph{marked (basic) grid points} which we will designate, lying in hyperplanes $H_s\in\mH_{xy}$. In the geodesic $\Gamma_{xy}$, the first and last marked grid points are $\hat x=\psi_q(x)$ and $\hat y=\psi_q(y)$ (see \eqref{startend}.)
Initially, the ones in between are the discrete approximations $\psi_q(u_s(\gamma))$ of the entry points $u_s(\gamma)$ corresponding to each of the hyperplanes $H_s\in\mH_{xy}$.  Here since $q\geq 4$ and $d(u_s(\gamma),H_s)\leq 2$, we always have $\psi_q(u_s(\gamma))\in H_s$. Later we will remove some of these initial marked grid points, and replace others with their $j$th--scale CG approximations for various $j$.  This means not every $H_s\in\mH_{xy}$ necessarily contains a marked grid point, in all our CG approximations.  When a path $\Gamma$ has a marked grid point in some $H_s$, we denote that marked grid point as \tred{$\mkm_s(\Gamma)$}. For all our CG approximations, the first and last marked grid points are $\psi_q(x)$ and $\psi_q(y)$, and for some $j$ the ones in between each lie in the $j$th--scale grid in some $j$th--scale hyperplane $H_{k\delta^jr}\in\mH_{xy}$. 
If such a CG path has marked grid points $v,w$ in consecutive $j$th--scale hyperplanes of $\mH_{xy}$, we say $\gamma$ makes a \emph{jth--scale transition} from $v$ to $w$.  
Such a transition involves one longitudinal step and some number \tred{$m_i$} of transverse steps in direction $i$ for each $2\leq i\leq d$, going from $v$ to $w$.  
We say a $j$th--scale transition is \emph{normal} if $m_i \in [-\ell_2,\ell_2]$ for all $2\leq i\leq d$, and \emph{sidestepping} otherwise. Even on the smallest scale $j_1$, every $j$th--scale transition with $v,w\in G_r^+$ is nearly in the $e_1$ direction, in that its angle satisfies (similarly to \eqref{lowtilt})
\begin{equation}\label{forward}
  \alpha_{e_1,w-v} \leq \frac{4C_{60}\sqrt{d-1}\Delta_r\log r}{\delta^{j_1}r} \leq r^{-(1-\chi_2)/2},
\end{equation}
provided $r$ is large, with $C_{60}$ from the definition of $G_r^+$.

We refer to a path $\gamma$ from $x$ to $y$, together with its marked grid points, as a \emph{marked path}; the path alone, without the marked grid points, is called the \emph{underlying path}. We may write a marked path as $\psi_q(x)=v^0\to v^1\to \cdots\to v^m\to v^{m+1}=\psi_q(y)$; here the $v^i$ are grid points. The pairs $(v^{i-1},v^i)$ are called \emph{links} of the path. If $\gamma$ is the concatenation of the geodesics $\Gamma_{v^{i-1},v^i}$, we call it a \emph{marked piecewise--geodesic path}; we abbreviate piecewise--geodesic as \tred{PG}.  (Recall that when $v,w\notin \GG$, $\Gamma_{vw}$ denotes $\Gamma_{\varphi(v),\varphi(w)}$.) Unless otherwise specified, when we give a marked path by writing its marked points in this way, we assume the path is the (unique) marked PG path given by those marked points.

Recall that
\[
  \tred{\hT(u,v)} = \min\{T(y,z): y\in F_u,z\in F_v\},
\]
with $F_u,F_v$ being cubes of side $q\in[4,5]$. Suppose we have a marked PG path 
\[
  \Gamma^{CG}: v^0\to v^1\to \cdots\to v^m\to v^{m+1},
\]
contained in $G_r^+$.  We associate four quantities to this path:
\[
  \tred{\Upsilon_{Euc}(\Gamma^{CG})} = \Upsilon_{Euc}(v^0,\dots,v^{m+1}) = \sum_{i=1}^{m+1} |v^i-v^{i-1}|,  
\]
\[
  \tred{\Upsilon_h(\Gamma^{CG})} = \sum_{i=1}^{m+1} h\left( \left| v^i-v^{i-1} \right| \right),\quad
    \tred{\Upsilon_{\hT}(\Gamma^{CG})} = \sum_{i=1}^{m+1} \hT(v^{i-1},v^i),\quad
    \tred{\Psi(\Gamma^{CG})} =  \sum_{i=1}^{m+1} \frac{|(v^i-v^{i-1})^*|^2}{|(v^i-v^{i-1})_1|}.
\]
Using the standard fact that for some $c_9\leq 1$,
\begin{equation}\label{adddist1}
  \frac 38 \frac{|w^*|^2}{|w_1|} \leq |w| - |w_1| \leq \frac{|w^*|^2}{2|w_1|} \quad\text{whenever } \frac{|w^*|}{|w_1|} \leq c_9,
\end{equation}
we see that
provided $r$ is large and all $v^i\in G_r^+$, $\Psi(\Gamma^{CG})$ represents added length in $\Psi(\Gamma^{CG})$ relative to the lower bound $|(v^{m+1}-v^0)_1|$, in that
\begin{equation}\label{adddist}
  \Upsilon_{Euc}(\Gamma^{CG}) 
    \geq \sum_{i=1}^{m+1} |(v^i-v^{i-1})_1| + \frac 38 \Psi(\Gamma^{CG}) = |(v^{m+1}-v^0)_1| + \frac 38 \Psi(\Gamma^{CG}).
\end{equation}
Informally we refer to $\Upsilon_{Euc}(v^0,\dots,v^{m+1}) - |(v^{m+1}-v^0)_1|$ as the \emph{extra length} of the path $\Gamma^{CG}$.
From \eqref{adddist1}, subadditivity of $h$, and Lemma \ref{monotoneE} we obtain, after reducing $c_9$ if necessary,
\begin{equation}\label{addh1}
   \frac \mu 3 \frac{|w^*|^2}{|w_1|} - c_{10} \leq h(|w|) - h(|w_1|) \leq \frac{2\mu}{3} \frac{|w^*|^2}{|w_1|} + c_{10} 
     \quad\text{whenever } \frac{|w^*|}{|w_1|} \leq c_9.
\end{equation}
In our applications of \eqref{addh1}, the last condition will always be satisfied due to \eqref{forward}.
In general, provided $r$ is large and all $v^i$ lie in $G_r^+$, with $(v^i-v^{i-1})_1$ much larger than the width $2C_{60}\Delta_r\log r$ of $G_r^+$, we have 
\begin{equation}\label{addh}
  \Upsilon_h(\Gamma^{CG}) - h((v^{m+1}-v^0)_1) \geq \sum_{i=1}^{m+1} \Big[ h\left( \left| v^i-v^{i-1} \right| \right) 
    - h\left( (v^i-v^{i-1})_1 \right) \Big] \geq \frac{\mu}{3} \Psi(\Gamma^{CG}) - (m+1)c_{10}.
\end{equation}

We can now define the joining points $\mu_{xy}^{\pm,\ep}(I)$ in a long $j$th--scale interval $I=[a,b]$ of $k$th--scale length, where $j_2\leq k<j\leq j_1-1$.  We begin with $\mu_{xy}^{-,\ep}(I), \ep=1,2$, which lie in the left part of $I$. We first define modified values of $t$ which will appear in Lemmas \ref{alltrans} and \ref{sumbits}:
\begin{equation}\label{tstar}
  \tred{t^*(v)} = \frac 13 t + \frac{\delta\mu}{18} \left( \frac{|v^*|}{\Delta_r} \right)^2 \quad\text{and}\quad
    \tred{t^*(v,w)} = \frac 13 t + \frac{\delta\mu}{18} \left[ \left( \frac{|v^*|}{\Delta_r} \right)^2 + \left( \frac{|w^*|}{\Delta_r} \right)^2
    \right].
\end{equation}
We consider a marked PG path with marked points in the $(j+1)$th--scale grid: let 
\[
  \tred{v^0}=\mkm_a(\Gamma_{xy})=V_{j+1}(u_a(\Gamma_{xy})),\quad 
    \tred{v^{fin}}=\mkm_b(\Gamma_{xy})=V_{j+1}(u_b(\Gamma_{xy})),
\]
and for each $k+1\leq\ell\leq j+1$ let 
\[
  \tred{v^\ell} = \mkm_{a+\delta^\ell r}(\Gamma_{xy})=V_{j+1}(u_{a+\delta^\ell r}(\Gamma_{xy})),
    \quad \tred{w^\ell} = \Pi_{v^0v^{fin}} \cap H_{a+\delta^\ell r},
\]
and let
\begin{equation}\label{alkap}
  \tred{\alpha(\ell)} = \frac{|v^\ell-w^\ell |^2}{\delta^\ell r},\quad
  \tred{\kappa(\ell)} = \frac{|v^\ell-w^\ell |^2}{\delta^\ell r\sigma(\delta^\ell r)}.
\end{equation}
We may view $\alpha(\ell)/2$ as an approximation of the ``extra distance at scale $\delta^\ell r$,'' that is, of $|v^\ell-v^0| + |v^{fin} - v^\ell| - |v^{fin}-v^0|$; $\kappa(\ell)/2$ is this extra distance normalized by the fluctuation size.
For $k+1\leq\ell< j+1$ let $\tred{g^{\ell+1}}=\Pi_{v^0v^{\ell}} \cap H_{a+\delta^{\ell+1} r}$, and let
\begin{equation}\label{thdef}
  \tred{\theta(\ell+1)} = \frac{|g^{\ell+1}-w^{\ell+1}|}{|v^{\ell+1}-w^{\ell+1}|} = \delta \frac{|v^{\ell}-w^{\ell}|}{|v^{\ell+1}-w^{\ell+1}|};
\end{equation}
see Figure \ref{JoinPts}. Then
\begin{equation}\label{thetaact}
  \frac{|v^{\ell+1}-g^{\ell+1} |^2}{\delta^{\ell+1} r} = (1-\theta(\ell+1))^2 \alpha(\ell+1)
\end{equation}
so in view of \eqref{forward}, there is an ``extra distance''
\begin{equation}\label{extramain1}
  \Upsilon_{Euc}(v^0,v^{\ell+1},v^{\ell}) - |v^\ell - v^0| \geq \frac 13 \frac{|v^{\ell+1}-g^{\ell+1} |^2}{\delta^{\ell+1} r} 
    = \frac{(1-\theta(\ell+1))^2}{3} \alpha(\ell+1).
\end{equation}
Suppose that for some $\ell\in[k+1,j]$ we have
\begin{equation}\label{kappaord}
  2^{j-\ell}\kappa(j+1) \leq \kappa(\ell+1), \quad 2\kappa(\ell+1) \geq \kappa(\ell),
\end{equation}
as happens if $2^m\kappa(m)$ is maximized at $m=\ell$.
From \eqref{powerlike}, the second inequality ensures that 
\begin{equation}\label{ak}
  \theta(\ell+1)^2 \leq 2\delta \frac{\sigma(\delta^{\ell} r)}{\sigma(\delta^{\ell+1}r)} \leq \frac 14 \quad\text{and}\quad
    2\alpha(\ell+1) \geq \frac{\sigma(\delta^{\ell+1}r)}{\sigma(\delta^\ell r)} \alpha(\ell) \geq C_{23}^{-1}\delta^{\chi_2}\alpha(\ell),
\end{equation}
and the first inequality in \eqref{kappaord} tells us that 
\begin{equation}\label{alphagrow}
  \alpha(\ell+1) \geq 2^{j-\ell}\frac{\sigma(\delta^{\ell+1}r)}{\sigma(\delta^{j+1} r)} \alpha(j+1) 
    \geq C_{22}\left( \frac{2}{\delta^{\chi_1}} \right)^{j-\ell} \alpha(j+1).
\end{equation}
It then follows from \eqref{extramain1}, \eqref{ak}, and \eqref{alphagrow} that the path from $v^0$ to $v^\ell$ is bowed in the sense that
\begin{equation}\label{bowedpath}
  \Upsilon_{Euc}(v^0,v^{\ell+1},v^{\ell}) - |v^\ell - v^0| 
    \geq \frac{C_{22}}{12} \left( \frac{2}{\delta^{\chi_1}} \right)^{j-\ell} \alpha(j+1).
\end{equation}
\begin{figure}
\includegraphics[width=16cm]{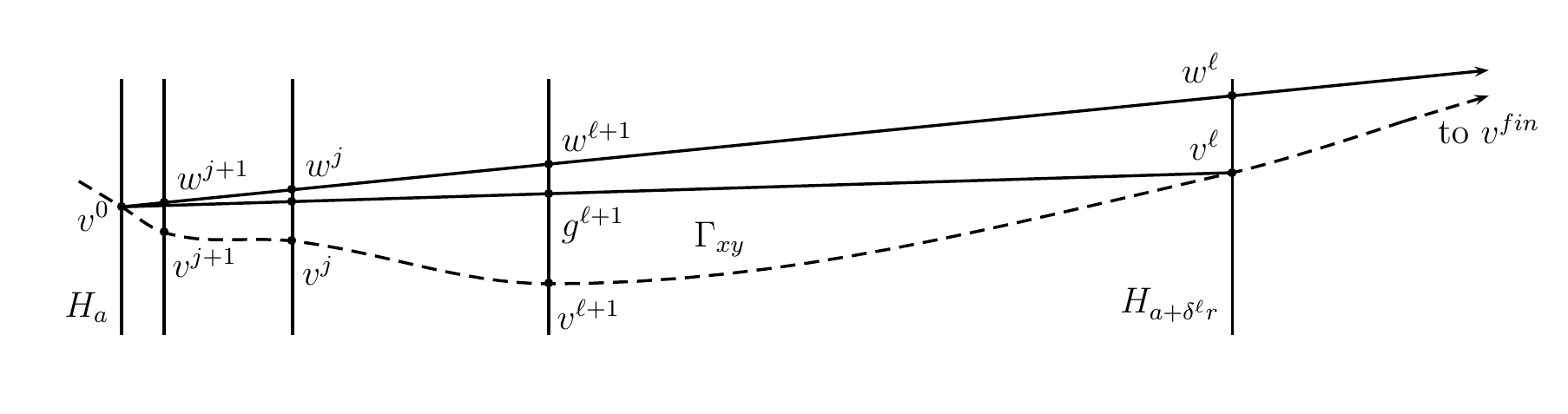}
\caption{The left end of a long $j$th--scale interval in which $L^-(I)=\ell$. Starting from the left, there is an endpoint hyperplane, then one of its sandwiching hyperplanes; the rightmost two are the joining hyperplanes, containing $v^{\ell+1}$ and $v^\ell$. In the bowed case, $v^{\ell+1}$ (at distance $\delta^{\ell+1}r$ from $H_a$) is ``sufficiently far'' from $w^{\ell+1}$, moreso than occurs on other length scales $\delta^ir$.}
\label{JoinPts}
\end{figure}
Motivated by this we let
\begin{equation}\label{Lminus}
  \tred{L^-(I)} = \begin{cases} j &\text{if } \max(\alpha(j+1),\delta^{-1}\alpha(j)) 
    \leq \frac{1}{16\mu}\left( \frac \lambda 7\right)^{j+1}t^*(v^0)\sigma_r, \\ 
    \arg\max_{\ell\in[k+1,j]}2^\ell\kappa(\ell)-1
    &\text{if $\max(\alpha(j+1),\delta^{-1}\alpha(j)) > \frac{1}{16\mu}\left( \frac \lambda 7\right)^{j+1}t^*(v^0)\sigma_r$} \\
    &\text{\hskip.6cm and $\arg\max_{\ell\in[k+1,j]}2^\ell\kappa(\ell) >k+1$},\\
    k+1 &\text{otherwise.} \end{cases}
\end{equation}

We refer to the 3 options in \eqref{Lminus} as the \emph{forward, bowed}, and \emph{totally unbowed} cases, respectively. They may be interpreted as follows.
In the forward case the initial steps $v^0 \to v^1\to v^2$ have little sidestepping, and we will see that this eliminates the need to exploit bowedness; this should be viewed as the ``baseline'' or ``most likely'' case.  Otherwise we look for a scale $\delta^\ell r$, with $k+1\leq \ell\leq j$, on which $\Gamma_{xy}$ is bowed as in \eqref{bowedpath}, by seeking a scale (the arg max) satisfying \eqref{kappaord}.
In the bowed case such a scale exists; see Figure \ref{JoinPts}.  In the totally unbowed case there is no such scale, meaning $2^{\fatdot}\kappa(\cdot)$ is maximized for essentially the full length scale of the interval $I$.  By \eqref{alphagrow} this forces the extra distance $\alpha(k+1)$ to be very large.
We define the inner and outer joining points as
\[
  \tred{\mu_{xy}^{-,2}(I)} = a + \delta^{L^-(I)};\quad
  \tred{\mu_{xy}^{-,1}(I)} = a + \delta^{L^-(I)+1}.
\]

We define \tred{$\mu_{xy}^{+,\ep}(I)$} in a mirror image manner to $\mu_{xy}^{-,\ep}(I)$ in $[a,b]$, going backwards from $b$ to $a$ instead of forward from $a$ to $b$.  That is, we use the points $\tred{\hat v^\ell} = \mkm_{b-\delta^\ell r}(\Gamma_{xy})$ in place of the $v^\ell$'s and $t^*(v^{fin})$ in place of $t^*(v^0)$; otherwise the definition is the same, and the analogs of \eqref{extramain1}, \eqref{ak}, and \eqref{alphagrow} are valid for the analog \tred{$L^+(I)$} of $L^-(I)$.  

We now describe the rules for which of the four $(j+1)$th--scale joining hyperplanes $H_s$ with $s=\mu_{xy}^{\pm,\ep}(I)$, in a long $j$th--scale interval $I$, are included in $\mH_{xy}$.  We note again that the endpoint and sandwiching hyperplanes at both ends of $I$ are always included; in some instances the sandwiching hyperplanes coincide with outer joining hyperplanes, so these criteria never rule out the inclusion of such hyperplanes.
\begin{itemize}
\setlength\itemsep{.3em}
\item[(i)] If both $\pm$ ends of $I$ have the forward case, then we include the inner joining hyperplanes in $\mH_{xy}$; these are at distance $\delta^jr$ from the interval ends.  The outer ones coincide with the sandwiching hyperplanes at distance $\delta^{j+1}r$ from the interval ends, so they are included as well.
\item[(ii)] If both $\pm$ ends have the bowed case, then we include both the inner and outer joining hyperplanes in $\mH_{xy}$.  We note that when either of $L^\pm(I)=j$, the corresponding outer joining hyperplane coincides with the sandwiching hyperplane as in (i), so it is already in $\mH_{xy}$ on that basis.
\item[(iii)] If both $\pm$ ends have the totally unbowed case, then we include only the inner joining hyperplanes.
\item[(iv)] If the two ends have different cases, we determine which end of the interval is \emph{dominant} according to the criterion described next. We then include the joining hyperplane(s) only at the dominant end, 1 or 2 hyperplanes in accordance with (i)---(iii) above.  We call this the \emph{mixed case}.  
\end{itemize}
To determine the dominant end of a long $j$th--scale interval $I=[a,b]$, necessarily having $k$th--scale length for some $k<j$, in the mixed case, we first select those non--sandwiching hyperplanes which are candidates for inclusion in $\mH_{xy}$, in accordance with (i)---(iii) above. (For example, if the path has the bowed case with $L^-(I)\neq j$ at the left end and the forward case at the right, we select the inner and outer joining hyperplanes on the left, and only the inner on the right.)  For these candidate hyperplanes, along with the 4 endpoint and sandwiching hyperplanes in $I$, we consider the corresponding ``tentative'' marked PG path (part of $\Gamma_{xy}$) with a marked point in each of the hyperplanes: $u^0\to\cdots\to u^n$ with $n=5$ or 6, where $u^0 = V_j(\mkm_{(u^0)_1}(\Gamma_{xy}))$ and $u^i = V_{j+1}(\mkm_{(u^i)_1}(\Gamma_{xy})), 1\leq i\leq n,$ are $j$th and $(j+1)$th--scale CG approximations.
The inner joining hyperplanes contain $u^\ell,u^{\ell+1}$, with $\ell=$ 2 or 3; we call this $\ell$ the \emph{central index}; the gap from $u^\ell$ to $u^{\ell+1}$ is the longest in $I$, at least $8\delta^{k+1}r$. Let \tred{$w_\perp^i$} be the orthogonal projection of $u^i$ into the line $\Pi_{u^0u^n}$; see Figure \ref{Dominant}.  We use the fact that the excess length $\tred{\mE(u^0,\dots,u^n)}=\Upsilon_{Euc}(u^0,\dots,u^n) - |u^n-u^0|$ can be approximately split into components associated with the two ends, as follows.  When the central index is $\ell$ we have
\begin{align}\label{splitlength}
   \mE(u^0,\dots,u^n) &= \Big[ \Upsilon_{Euc}(u^0,\dots,u^\ell) - |w_\perp^\ell - u^0| \Big] 
    + \Big[ |u^{\ell+1} - u^\ell| - |w_\perp^{\ell+1} - w_\perp^\ell| \Big] \notag\\
  &\hskip 1.5cm + \Big[ \Upsilon_{Euc}(u^{\ell+1},\dots,u^n) - |w_\perp^{\ell+1} - u^n| \Big].
\end{align}
For the first difference on the right we have from \eqref{adddist1}
\[
  \Upsilon_{Euc}(u^0,\dots,u^\ell) - |w_\perp^\ell - u^0| \geq |u^\ell-u^0| - |w_\perp^\ell - u^0|
    \geq \frac{|u^\ell - w_\perp^\ell|^2}{3\delta^{k+1}r},
\]
and similarly for the third difference, while for the middle one,
\begin{align}\label{middle2}
  |u^{\ell+1} - u^\ell| - |w_\perp^{\ell+1} - w_\perp^\ell| &\leq 
    \frac{(|u^\ell - w_\perp^\ell| + |u^{\ell+1} - w_\perp^{\ell+1}|)^2}{2\cdot 8\delta^{k+1}r}
    \leq \frac 18 \left( \frac{|u^\ell - w_\perp^\ell|^2}{\delta^{k+1}r} + 
    \frac{|u^{\ell+1} - w_\perp^{\ell+1}|^2}{\delta^{k+1}r} \right) \notag\\
\end{align}
so the middle difference in \eqref{splitlength} is only a small fraction of the whole:
\begin{equation}\label{middle}
  |u^{\ell+1} - u^\ell| - |w_\perp^{\ell+1} - w_\perp^\ell| \leq \frac 19 \mE(u^0,\dots,u^n).
\end{equation}
We designate the left end of $I$ as \emph{dominant} if the first of the 3 differences on the right in \eqref{splitlength} is larger than the third difference, and the right end in the reverse case.  As given in (iv) above, we include in $\mH_{xy}$ only the candidate joining hyperplanes from the dominant end; the non-dominant end has its endpoint and sandwiching hyperplanes but no joining ones.

\begin{figure}
\includegraphics[width=16cm]{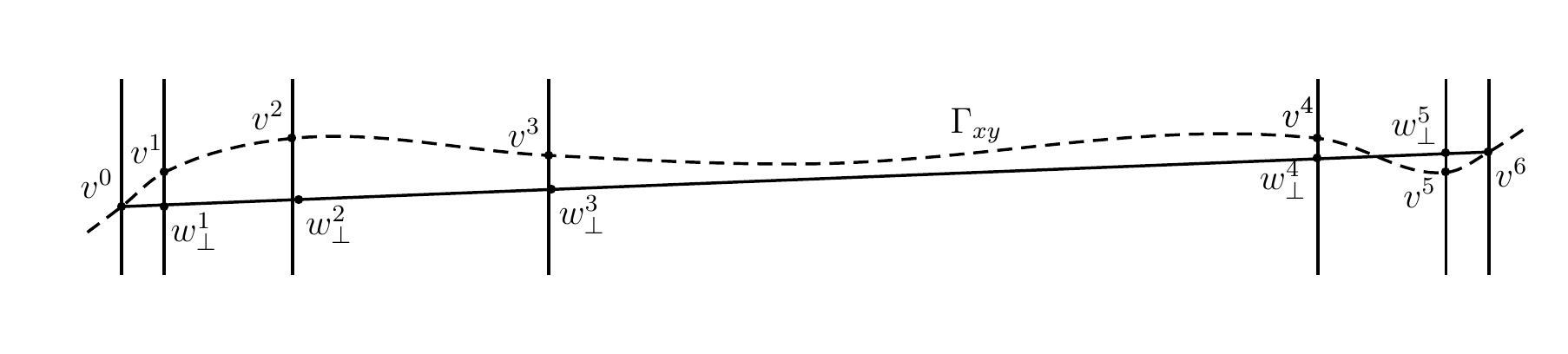}
\caption{ The mixed case with the bowed case at the (dominant) left end of the interval, and the forward case at the right end, showing the candidate hyperplanes (the 3 middle ones) and tentative marked PG path. The central index is 3. Since the right end is not dominant, the outer joining hyperplane there, containing $v^4$, is not included in $\mH_{xy}$. The full length of the interval is between $10\delta^{k+1}r$ and $10\delta^kr$, and all hyperplanes lie in the leftmost or rightmost 1/10 of the interval.} 
\label{Dominant}
\end{figure}

We note that if (for illustration) the left end is dominant, then, after excluding the right-end joining hyperplanes, we are left with the marked PG path $u^0\to\cdots\to u^\ell\to u^{n-1}\to u^n$, for which
the contribution of the right end to the extra length can be bounded: we have 
\begin{equation}\label{nondom}
  |u^n - u^{n-1}| - |u^n - w_\perp^{n-1}| \leq \Upsilon_{Euc}(u^{\ell+1},\dots,u^n) - |w_\perp^{\ell+1} - u^n|
    \leq  \Upsilon_{Euc}(u^0,\dots,u^\ell) - |w_\perp^\ell - u^0| 
\end{equation}
where the last inequality follows from dominance of the left end, so similarly to \eqref{middle},
\begin{equation}\label{nondom2}
  \frac{|u^{n-1} - w_\perp^{n-1}|^2}{3\delta^{j+1}r} \leq |u^n - u^{n-1}| - |u^n - w_\perp^{n-1}| 
    \leq \frac 12 \mE(u^0,\dots,u^\ell,u^{n-1},u^n).
\end{equation}
The same bound with $|u^1 - w_\perp^1|$ in place of $|u^{n-1} - w_\perp^{n-1}|$ holds symmetrically when the right end is dominant.

\begin{remark}\label{novstar}
In the bowed case at the left end of a $j$th--scale interval $I=[a,b]$, with $L^-(I)=\ell$, define
\[
  \tred{z^0}=v^0,\quad \tred{z^m} = \Pi_{v^0v^{\ell}} \cap H_{a+\delta^mr}, \quad \ell\leq m\leq j+1,
\]
and symmetrically at the right end. See Figure \ref{JoinPts}; $g^{\ell+1}$ there is $z^{j+1}$ here.
We have from \eqref{alphagrow} and \eqref{Lminus}
\begin{align}\label{genbowed1}
  \frac{|v^{\ell+1}-w^{\ell+1}|^2}{\delta^{\ell+1}r} \frac{\sigma(\delta^{j+1}r)}{\sigma(\delta^{\ell+1}r)} &\geq
    \max\left( 2^{j-\ell} \frac{|v^{j+1}-w^{j+1}|^2}{\delta^{j+1}r}, 2^{j-\ell-1} 
    \frac{\sigma(\delta^{j+1}r)}{\sigma(\delta^jr)} \frac{|v^j-w^j|^2}{\delta^j r} \right) \notag\\
  &\geq 2^{j-\ell-1} \frac{\delta}{16\mu}\frac{\sigma(\delta^{j+1}r)}{\sigma(\delta^jr)} 
    \left( \frac \lambda 7\right)^{j+1} t^*(v^0)\sigma_r \notag\\
  \text{and}\quad\frac{|v^\ell-w^\ell |^2}{\delta^\ell r} &\leq \frac{2\sigma(\delta^{\ell} r)}{\sigma(\delta^{\ell+1}r)} 
    \frac{|v^{\ell+1}-w^{\ell+1}|^2}{\delta^{\ell+1}r},
\end{align}
and as in \eqref{ak} it follows from these that
\begin{equation}\label{genbowed2}
  \frac{|v^{\ell+1}-z^{\ell+1}|^2}{\delta^{\ell+1}r} \geq 2^{j-\ell}\frac{\delta}{128\mu}\left( \frac \lambda 7\right)^{j+1}
    \frac{\sigma(\delta^{\ell+1}r)}{\sigma(\delta^j r)} t^*(v^0)\sigma_r.
\end{equation}
The advantage of \eqref{genbowed2} is that it depends only on $(v^0,v^1,v^2,v^3)$ and not on $v^{fin}$, whereas in \eqref{genbowed1} the points $w^i$ do depend on $v^{fin}$.  What we have shown is that if there exists $v^{fin}\in G_r^+$ with $(v^{fin}-v^0)_1 \geq \delta^\ell r$ for which \eqref{genbowed1} holds, then \eqref{genbowed2} holds, not involving $v^{fin}$. 
We further have $w^{j+1}-z^{j+1} = \delta^{j-\ell}(w^{\ell+1}-z^{\ell+1})$, while by the first half of \eqref{ak} we have $|v^{\ell+1}-w^{\ell+1}| \leq 2|v^{\ell+1}-z^{\ell+1}|$ and $|w^{\ell+1}-z^{\ell+1}| \leq |v^{\ell+1}-z^{\ell+1}|$, so
\begin{align}\label{firstinc}
  \frac{|v^{j+1}-z^{j+1}|^2}{\delta^{j+1}r} &\leq 2\frac{|v^{j+1}-w^{j+1}|^2}{\delta^{j+1}r}
    + 2\frac{|w^{j+1}-z^{j+1}|^2}{\delta^{j+1}r} \notag\\
  &= 2\frac{|v^{j+1}-w^{j+1}|^2}{\delta^{j+1}r}
    + 2\delta^{2(j-\ell)}  \frac{|v^{\ell+1}-z^{\ell+1}|^2}{\delta^{j+1}r} \notag\\
  &\leq 2^{-(j-\ell-3)} \frac{\sigma(\delta^{j+1}r)}{\sigma(\delta^{\ell+1}r)} \frac{|v^{\ell+1}-z^{\ell+1}|^2}{\delta^{\ell+1}r}
    + 2\delta^{j-\ell}  \frac{|v^{\ell+1}-z^{\ell+1}|^2}{\delta^{\ell+1}r} \notag\\
  &\leq 2^{-(j-\ell-4)} \frac{\sigma(\delta^{j+1}r)}{\sigma(\delta^{\ell+1}r)}  \frac{|v^{\ell+1}-z^{\ell+1}|^2}{\delta^{\ell+1}r}.
\end{align}
Again the left and right expressions in \eqref{firstinc} depend only on $(v^0,v^{j+1},v^{\ell+1},v^\ell)$, not on $v^{fin}$.
\end{remark}

In \eqref{Qrunif2} we may interpret $t\sigma_r$ as a reduction in the time allotted to go from $x$ to $y$, relative to $h(|(y-x)_1|)$.  In place of the reduction $t\sigma_r$ relative to $h(|(y-x)_1|)$, we can consider a modified reduction, call it $R_0$, which is relative to $\Upsilon_h(\Gamma^{CG})$:
\[
  h(|(y-x)_1|) - t\sigma_r = \Upsilon_h(\Gamma^{CG}) - \tred{R_0}.
\]
The modified reduction is larger: using \eqref{addh} we see that 
\[
  R_0\geq t\sigma_r + \frac{\mu}{3}\Psi(\Gamma^{CG}).
\]
We will need to (roughly speaking) allocate pieces of $R_0$ to the various transitions made by $\Gamma^{CG}$ and certain related paths.  Motivated by this, we define the \emph{jth--scale allocation} $A_j^0(v,w)$ of a transition $v\to w$ to be
\begin{equation}\label{Ajdef}
  \tred{A_j^0(v,w)} = \lambda^j\left(\frac{t\sigma_r}{7^j}  + \delta\mu\frac{|(w-v)^*|^2}{|(w-v)_1|} \right).
\end{equation}
In \eqref{Ajdef} the factor $7^j$ is used due to \eqref{hypernum2}.  For a marked PG path $\Gamma^{CG}$ as above, with $m\leq 7^j-1$ marks in the $j$th--scale grid, we have from \eqref{adddist}
\begin{align}\label{basicAsum}
  \sum_{i=1}^{m+1} A_j^0(v^{i-1},v^i) &\leq \lambda^j\left[ t\sigma_r + \delta\mu\Psi\left( \Gamma^{CG} \right) \right] 
    \notag\\
  &\leq  \lambda^j\left[ t\sigma_r  
     + 3\delta\mu\Big( \Upsilon_{Euc}(\Gamma^{CG}) - (v^{m+1}-v^0)_1 \Big) \right].
\end{align}
Also, since $j\leq j_1=O(\log\log r)$, all $A_j^0(v,w)$ are large provided $r$ is large and $t\geq 1$.

\begin{remark}\label{outline}
We now present an outline of the strategy of the proof.  Each geodesic $\Gamma_{xy}$ can be viewed as a marked PG path with marks in the basic grid in each of the hyperplanes of $\mH_{xy}$.  The goal is to gradually coarsen this approximation on successively larger length scales until we obtain a final path \tred{$\Gamma_{xy}^{CG}$}. The number of possible final paths (outside of a collection of ``bad'' paths having negligible probability) is small enough so that a version of \eqref{Qrunif2} can be proved for final paths.

To perform the coarsening we iterate a two-stage process, with the exception that the first iteration has only one stage.  The first iteration is on the $j_1$th scale, the second on the larger $(j_1-1)$th scale, and so on.  For the $j_1$th--scale iteration, we 
perform a set of operations on the original marked PG path (essentially $\Gamma_{xy}$) called \emph{shifting to the $j_1$th--scale grid}, replacing each marked grid point (located in the basic grid) in each hyperplane in $\mH_{xy}$ with a nearby point in the $j_1$th--scale grid.  Each further iteration has two stages. For the $(j_1-1)$th scale (second) iteration, in the first stage we shift those marked points lying in $(j_1-1)$th--scale hyperplanes in $\mH_{xy}$ to the $(j_1-1)$th--scale grid.  In the second stage of the iteration, we remove from the marked PG path those marked points not lying in $(j_1-1)$th--scale hyperplanes, with exceptions for points in terminal hyperplanes. See Figure \ref{Remark4-4}.  In general, for the $j$th--scale iteration ($j\leq j_1$), at the start of the iteration all the marked points in non-terminal hyperplanes are $(j+1)$th--scale grid points in $(j+1)$th--scale hyperplanes; in the first stage we shift the ones in $j$th--scale hyperplanes to the $j$th--scale grid, and in the second stage we remove the ones not in $j$th--scale hyperplanes, again with exceptions in terminal hyperplanes.

\begin{figure}
\includegraphics[width=16cm]{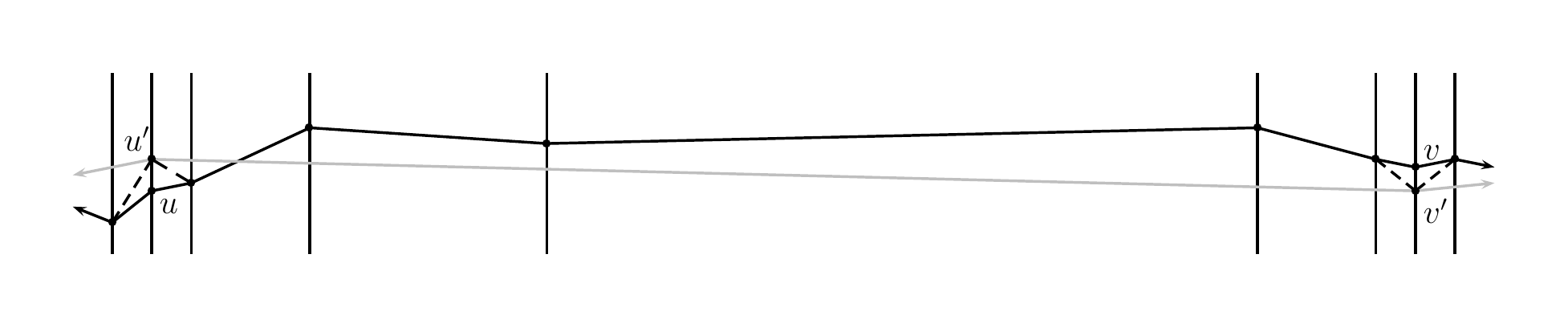}
\caption{ Illustration of the two stages of the $j$th--scale iteration.  The black path is the current one at the start of the iteration. The points $u,u',v,v'$ lie in the endpoint hyperplanes of a long $j$th--scale interval; the adjacent hyperplanes close on either side of these are the sandwiching ones, and the other three are joining hyperplanes.  In the first stage, we shift to the $j$th--scale grid in the endpoint hyperplanes, replacing $u,v$ in the marked PG path with $u',v'$, updating to the dashed path. In the second stage, the marked points between the endpoint hyperplanes are removed, updating to the gray path with marked points $u',v'$. }
\label{Remark4-4}
\end{figure}

The difficulty is that as we alter the marked PG path, the length $\Upsilon_{Euc}(\cdot)$ and corresponding $h$--sum $\Upsilon_h(\cdot) \approx \mu \Upsilon_{Euc}(\cdot)$ change, with marked--point deletions always reducing these sums, and we need to ensure that, with high probability, the corresponding sum of passage times $\Upsilon_{\hT}(\cdot)$ ``tracks'' these changes at least partly, to within a certain allocated error related to the above--mentioned $A_j^0v,w)$.  For shifting to a grid the tracking is not too difficult to achieve, as the allowed error turns out to be larger than the change in $h$--sum being tracked.  But for removal of marked points the tracking requires multiple different strategies, depending on the options in \eqref{Lminus} for the marked grid point locations in the gap between each two successive $j$th--scale hyperplanes in $\mH_{xy}$.  
The particular tracking needed is that, with high probability to within the allocated errors, when marked points are removed from a gap, the decrease in total passage time $\Upsilon_{\hT}(\cdot)$ is at least a positive fraction $\delta$ of the decrease in $h$--sum.  The primary difficulty in achieving this is that if the gap has a large length $L$, then the relevant passage time fluctuation size $\sigma(L)$ may overwhelm both the reduction in $h$--sum and the allocated errors; here the remedy involves the ``joining hyperplanes.'' We also make use of what we call \emph{intermediate paths}, which (in most cases) have total passage time and $h$--sum in between the values that exist before and after the marked--point removal, and are chosen so that they are relatively easy to compare to the pre--removal path.

In \eqref{Qrunif2} the passage time reduction for the full path is $t\sigma_r$, which \emph{a priori} suggests that the total of the allocated errors associated to a path should not exceed this amount.  There is no natural way to work with such a small total error yet achieve bounds uniformly over all $\Gamma_{xy}$.  However, we will see that the tracking enables us to increase the total of the allocated errors by an amount proportional to the extra Euclidean length $\Upsilon_{Euc}(\cdot)$ of the original marked PG path relative to the ``horizontal'' distance $(y-x)_1$, which gives the second term in parentheses in the formula \eqref{Ajdef}.  With this the necessary uniformity can be achieved, both for the tracking and for the fluctuation bounds on passage times of final paths $\Gamma_{xy}^{CG}$.
\end{remark}

\subsection{Step 2. Performing the $j_1$th--scale (first) iteration of coarse--graining.} 
As described in Remark \ref{outline}, for the $j_1$th--scale iteration we 
perform a sequence of operations on marked PG paths called shifting to the jth--scale grid, in the hyperplanes of $\mH_{xy}$.  The $j_1$th--scale iteration is different from those that follow, in that there is no second stage of removing marked points, and no need for the ``tracking'' of Remark \ref{outline}.  In general we will refer to the marked PG path existing before a shift or removal operation as the \emph{current (marked) path}, and the modified one resulting from the operation as the \emph{updated (marked) path}. The allocations $A_j^0(\cdot,\cdot)$ of \eqref{Ajdef} are used only for the $j_1$th--scale iteration; we will define other allocations later.

For the $j_1$th scale, every $H_s\in\mH_{xy}$ is a $j_1$th--scale hyperplane.
Suppose $\mH_{xy} = \{H_{s_i},1\leq i\leq m\}$ with $x_1<\tred{s_1<\cdots<s_m}<y_1$. Let $\tred{x^i}=\mkm_{s_i}(\Gamma_{xy})=\psi_q(u_{s_i}(\Gamma_{xy})),\,1\leq i\leq m$, and define $x^0,p^0,x^{m+1}p^{m+1}$ by $\tred{x^0=p^0}=\hat x=\psi_q(x),\tred{x^{m+1}=p^{m+1}}=\hat y=\psi_q(y)$.
At the start, the current path $\Gamma_{xy}^{j_1,0}$ is $\Gamma_{xy}$ with marks at the grid points $x^i$:
\[
  \tred{\Gamma_{xy}^{j_1,0}}:x^0\to x^1\to\cdots\to x^{m+1}.
\]
Note that $\Gamma_{xy}^{j_1,0}$ is a marked path, but not necessarily a marked PG path, since $\Gamma_{xy}$ need only pass near $\varphi(x^i)$, not necessarily through it.  By near we mean that both $u_{s_i}(\Gamma_{xy})$ and $\varphi(x^i)$ lie in the same cube $F_{x^i}$ of the basic grid.

Recall that the blocks of $\LL_{j_1}$ have side $K_0\beta^{j_1}\Delta_r$. The first shift to the $j_1$th--scale grid happens in $H_{s_1}$, replacing $x^1$ with $\tred{p^1}=V_{j_1}(x^1)$ to produce the updated marked path 
\[
  \tred{\Gamma_{xy}^{j_1,1}}:p^0\to p^1\to x^2\to\cdots\to x^{m+1},
\]
with the underlying path being the concatenation of the geodesics $\Gamma_{xp_1}, \Gamma_{p_1,u_{s_2}(\Gamma_{xy})}, \Gamma_{u_{s_2}(\Gamma_{xy}),y}$.
Next we repeat this in $H_{s_2}$, replacing $x^2$ with $p^2=V_{j_1}(x^2)$.  We continue this way performing shifts to the $j_1$th--scale grid in $H_{s_3},\dots,H_{s_m}$, producing the updated path
\[
  \tred{\Gamma_{xy}^{j_1,m}}: p^0\to p^1\to \cdots\to p^m\to p^{m+1},
\]
with the underlying path now (in view of \eqref{startend}) being the marked PG path given by these points.  

Let us analyze the effect of these shifts on $\Upsilon_h(\Gamma^{CG})$.  Consider the $i$th shift, replacing $x^i$ with $p^i$.  From \eqref{lowtilt} and basic geometry we have
\begin{equation}\label{sideeffect}
  |p^i-p^{i-1}| \geq |x^i-p^{i-1}| - c_{11}\frac{|(p^i-x^i)^*| |(p^i-p^{i-1})^*| + |(p^i-x^i)^*|^2}{(p^i-p^{i-1})_1}.
\end{equation}
Consider first the ``sidestepping'' case: $|(p^i-p^{i-1})^*| \geq \ell_2(j_1)\beta^{j_1}\Delta_r$. Since $|(p^i-x^i)^*| \leq \sqrt{d-1}K_0\beta^{j_1}\Delta_r$, we have from \eqref{relsize2}, \eqref{ell12}, and \eqref{sideeffect} that provided $r$ is large, for $1\leq i\leq m$,
\begin{align}\label{adddist4}
  |p^i-p^{i-1}|  &\geq |x^i-p^{i-1}| - \frac{2c_{11}K_0\sqrt{d-1}}{\ell_2(j_1)}\, \frac{|(p^i-p^{i-1})^*|^2}{(p^i-p^{i-1})_1} \notag\\
  &\geq |x^i-p^{i-1}| - 2c_{11}K_0\sqrt{d-1} \left( \frac{\beta}{\rho\delta^{(1+\chi_2)/2}} \right)^{j_1}
    \frac{|(p^i-p^{i-1})^*|^2}{(p^i-p^{i-1})_1} \notag\\
  &\geq |x^i-p^{i-1}| - \frac{1}{32\mu} A_{j_1}^0(p^{i-1},p^i).
\end{align}
Now consider the ``normal'' case: $|(p^i-p^{i-1})^*| < \ell_2(j_1)\beta^{j_1}\Delta_r$. From \eqref{relsize2}, \eqref{ell12}, and \eqref{sideeffect} we have
\begin{align}\label{adddist2}
  |p^i-p^{i-1}|  &\geq |x^i-p^{i-1}| - 2c_{11}K_0\ell_2(j_1)\sqrt{d-1}\frac{(\beta^{j_1}\Delta_r)^2}{\delta^{j_1}r} \notag\\
  &\geq |x^i-p^{i-1}| - c_{12}\left( \frac{\rho\beta}{\delta^{(1-\chi_1)/2}} \right)^{j_1} \sigma_r \notag\\
  &\geq |x^i-p^{i-1}| - \frac{1}{32\mu} A_{j_1}^0(p^{i-1},p^i).
\end{align}
We can interchange the roles of $p^i$ and $x^i$ and/or replace $p^{i-1}$ with $x^{i+1}$, so it follows from \eqref{adddist4} and \eqref{adddist2} that
\begin{equation}\label{absadd}
  \Big| |p^i-p^{i-1}| - |x^i-p^{i-1}| \Big| \leq  \frac{1}{32\mu} \Big( A_{j_1}^0(p^{i-1},p^i)
    + A_{j_1}^0(p^{i-1},x^i) \Big)
\end{equation}
and
\begin{equation}\label{absadd2}
  \Big| |p^i-x^{i+1}| - |x^i-x^{i+1}| \Big| \leq  \frac{1}{32\mu} \Big( A_{j_1}^0(p^i,x^{i+1}),
    + A_{j_1}^0(x^i,x^{i+1}) \Big)
\end{equation}
and then also
\begin{equation}\label{absaddh}
  \Big| h(|p^i-p^{i-1}|) - h(|x^i-p^{i-1}|) \Big| \leq  \frac{1}{16} \Big( A_{j_1}^0(p^{i-1},p^i)
    + A_{j_1}^0(p^{i-1},x^i) \Big)
\end{equation}
and 
\begin{equation}\label{absaddh2}
  \Big| h(|p^i-x^{i+1}|) - h(|x^i-x^{i+1}|) \Big| \leq  \frac{1}{16} \Big( A_{j_1}^0(p^i,x^{i+1})
    + A_{j_1}^0(x^i,x^{i+1}) \Big).
\end{equation}

We claim that
\begin{equation}\label{lastev}
  A_{j_1}^0(p^{i-1},p^i) \leq \left( 1 + \frac{1}{4}\lambda^{j_1} \right) A_{j_1}^0(x^{i-1},x^i).
\end{equation}
It is enough to show
\begin{equation}\label{pvsw}
  |(p^i-p^{i-1})^*|^2 - |(x^i-x^{i-1})^*|^2 \leq \frac{1}{4\delta\mu}\lambda^{j_1}\frac{t\sigma_r}{7^{j_1}}(x^i-x^{i-1})_1
    + \frac14 \lambda^{j_1}|(x^i-x^{i-1})^*|^2.
\end{equation}
To that end, we have
\[
  |(p^i-p^{i-1})^*| \leq |(x^i-x^{i-1})^*| + |(x^i-p^i)^*| + |(p^{i-1}-x^{i-1})^*| \leq  |(x^i-x^{i-1})^*| 
    + 2K_0\sqrt{d-1}\beta^{j_1}\Delta_r,
\]
so
\begin{equation}\label{sqdif}
  |(p^i-p^{i-1})^*|^2 - |(x^i-x^{i-1})^*|^2 \leq 4K_0\sqrt{d-1}\beta^{j_1}\Delta_r|(x^i-x^{i-1})^*| 
    + 4K_0^2(d-1)\beta^{2j_1}r\sigma_r.
\end{equation}
From \eqref{relsize2}, the last term satisfies
\begin{equation}\label{longlower}
  4K_0^2(d-1)\beta^{2j_1}r\sigma_r \leq \frac18 \left( \frac{\lambda\delta}{7}\right)^{j_1} tr\sigma_r
    \leq \frac18 \left( \frac \lambda 7\right)^{j_1} t\sigma_r(x^i-x^{i-1})_1.
\end{equation}
If the $i$th transition has very small sidestep, that is,
\begin{equation}\label{opt1}
  |(x^i-x^{i-1})^*| \leq 32K_0\sqrt{d-1}\left( \frac{\beta}{\lambda} \right)^{j_1}\Delta_r,
\end{equation}
then provided $r$ is large, since $(x^i-x^{i-1})_1 \geq \delta^{j_1}r$,
using \eqref{relsize2} the first term on the right in \eqref{sqdif} satisfies
\begin{align}\label{smalltrans}
  4K_0\sqrt{d-1}\beta^{j_1}\Delta_r&|(x^i-x^{i-1})^*| \leq 
    128K_0^2(d-1)\left( \frac{\beta^2}{\lambda} \right)^{j_1}r\sigma_r \notag\\
  &\leq 128K_0^2(d-1) \left( \frac{\beta^2}{\lambda\delta} \right)^{j_1}\sigma_r(x^i-x^{i-1})_1
    \leq \frac{1}{8\delta\mu} \left( \frac \lambda 7\right)^{j_1} t\sigma_r(x^i-x^{i-1})_1.
\end{align}
If instead the $i$th transition has larger sidestep, meaning
\begin{equation}\label{opt2}
  |(x^i-x^{i-1})^*| > 32K_0\sqrt{d-1}\left( \frac{\beta}{\lambda} \right)^{j_1}\Delta_r,
\end{equation}
then the first term on the right in \eqref{sqdif} satisfies
\begin{align}\label{bigtrans}
  4K_0\sqrt{d-1}\beta^{j_1}\Delta_r|(x^i-x^{i-1})^*| &\leq \frac18 \lambda^{j_1}|(x^i-x^{i-1})^*|^2.
\end{align}
Together, \eqref{sqdif}---\eqref{bigtrans} prove \eqref{pvsw}, and thus also \eqref{lastev}.  This same proof shows that
\begin{equation}\label{near1}
  A_{j_1}^0(p^{i-1},p^i), A_{j_1}^0(p^{i-1},x^i), A_{j_1}^0(x^{i-1},x^i) \text{ are all within a factor of } 
    1 + \frac{1}{4}\lambda^{j_1} \leq \frac 43,
\end{equation}
and similarly for $A_{j_1}^0(x^i,x^{i+1}),A_{j_1}^0(p^i,x^{i+1}),A_{j_1}^0(p^i,p^{i+1})$.

From \eqref{basicAsum}, \eqref{absadd}, \eqref{absadd2}, and \eqref{near1} we bound the total change in length from all shifts to the $j_1$th--scale grid:
\begin{equation}\label{Philoss3}
  \left| \Upsilon_{Euc}(\Gamma_{xy}^{j_1,m}) - \Upsilon_{Euc}(\Gamma_{xy}^{j_1,0}) \right| \leq 
    \frac{1}{6\mu} \sum_{i=1}^{m+1} A_{j_1}^0(p^{i-1},p^i) \leq 
    \frac{ \lambda^{j_1}}{6\mu} 
    \left[ t\sigma_r  + 3\delta\mu\Big( \Upsilon_{Euc}(\Gamma_{xy}^{j_1,m}) - (\hat y - \hat x)_1 \Big) \right]
\end{equation}
and similarly, using \eqref{absaddh}--\eqref{absaddh2} instead of \eqref{absadd}--\eqref{absadd2},
\begin{equation}\label{Philoss3h}
  \left| \Upsilon_h(\Gamma_{xy}^{j_1,m}) - \Upsilon_h(\Gamma_{xy}^{j_1,0}) \right| \leq 
    \frac{ \lambda^{j_1}}{3} \left[ t\sigma_r  
    + 3\delta\mu\Big( \Upsilon_{Euc}(\Gamma_{xy}^{j_1,m}) - (\hat y - \hat x)_1 \Big) \right].
\end{equation}
In view of \eqref{near1}, the derivation of \eqref{Philoss3} and \eqref{Philoss3h} is also valid if we replace $A_{j_1}^0(p^{i-1},p^i)$ with $A_{j_1}^0(x^{i-1},x^i)$, which gives the alternate bound
\begin{equation}\label{Philoss3ha}
  \left| \Upsilon_h(\Gamma_{xy}^{j_1,m}) - \Upsilon_h(\Gamma_{xy}^{j_1,0}) \right| \leq 
    \frac{ \lambda^{j_1}}{3} \left[ t\sigma_r  
    + 3\delta\mu\Big( \Upsilon_{Euc}(\Gamma_{xy}^{j_1,0}) - (\hat y - \hat x)_1 \Big) \right].
\end{equation}

For basic grid points $u,v$ define
\[
  \tred{M(u)} = \max\{T(y,z): y,z\in F_u\}.
\]
We have
\begin{equation}\label{dize}
  T(x,y) \geq \sum_{i=1}^{m+1} \hT(x^{i-1},x^i) = \Upsilon_{\hT}(\Gamma_{xy}^{j_1,0})
\end{equation}
and a form of approximate subadditivity holds: for basic grid points $u,v,w$,
\begin{equation}\label{nearsub}
  \hT(u,v) \leq \hT(u,w) + \hT(w,v) + M(w).
\end{equation}
From Lemma \ref{neighbortimes} we have for sufficiently large $s$ that the event
\[
  \tred{J^{(0)}(s)}: M(u) \geq s\log r \text{ for some } u\in q\ZZ^d\cap G_r^+
\]
satisfies
\begin{equation}\label{Mmax}
  P\left( J^{(0)}(s) \right) \leq c_{13} |G_r^+| e^{-c_{14}s\log r} \leq c_{13}e^{-c_{15}s\log r}.
\end{equation}
Before proceeding we stress that $m,p^i$, and $x^i$ should always be viewed at functions of $(x,y,\omega)$.  From \eqref{basicAsum} we have
\begin{align}\label{Asum}
  \sum_{i=1}^{m+1} A_{j_1}^0(x^{i-1},x^i) \leq \lambda^{j_1}\Big( t\sigma_r 
    + 3\delta\mu\left[ \Upsilon_{Euc}(\Gamma_{xy}^{j_1,0}) - (\hat y - \hat x)_1 \right] \Big).
\end{align}
Therefore 
\begin{align}\label{mainsplit}
  \Big\{ &T(x,y) \leq h((\hat y - \hat x)_1) - t\sigma_r \Big\} \notag\\
  &\subset \Bigg\{ \Upsilon_{\hT}(\Gamma_{xy}^{j_1,0}) - h((\hat y - \hat x)_1) \leq -\left( 1 - 2\lambda^{j_1} \right)t\sigma_r 
    + 4\delta\mu\lambda^{j_1}\left[ \Upsilon_{Euc}(\Gamma_{xy}^{j_1,m}) - (\hat y - \hat x)_1 \right] \notag\\
  & \hskip 8cm - \sum_{i=1}^{m+1} A_{j_1}^0(x^{i-1},x^i) \Bigg\} \notag\\
  &\subset \Bigg\{ \Upsilon_{\hT}(\Gamma_{xy}^{j_1,m}) - h((\hat y - \hat x)_1) \leq -\left( 1 - 2\lambda^{j_1} \right)t\sigma_r 
    + 4\delta\mu\lambda^{j_1}\left[ \Upsilon_{Euc}(\Gamma_{xy}^{j_1,m}) - (\hat y - \hat x)_1 \right] \Bigg\} \notag\\
  &\qquad\quad \bigcup \left\{ \Upsilon_{\hT}(\Gamma_{xy}^{j_1,m}) - \Upsilon_{\hT}(\Gamma_{xy}^{j_1,0}) > 
    \sum_{i=1}^{m+1} A_{j_1}^0(x^{i-1},x^i) \right\}.
\end{align}
The key inclusion here is the second one, as it takes us from an event involving the passage time of the original path $\Gamma_{xy}^{j_1,0}$ to an event involving the $j_1$th--scale CG approximation $\Gamma_{xy}^{j_1,m}$, up to the ``tracking error event'' given in the last line.  The name is only partly suitable here---bounding the last probability in \eqref{mainsplit} is related to the tracking of Remark \ref{outline}, in that we are ensuring that changing the path from $\Gamma_{xy}^{j_1,0}$ to $\Gamma_{xy}^{j_1,m}$ doesn't change $\Upsilon_{\hT}(\cdot)$ too much, but the change in $h$-sum here is too small to require being tracked.

\subsection{Step 3. Bounding the tracking--error event for the $j_1$th--scale--iteration.}  Consider next the tracking--error event
\[
  \tred{J_{xy}^{(1)}}:\ \Upsilon_{\hT}(\Gamma_{xy}^{j_1,m}) - \Upsilon_{\hT}(\Gamma_{xy}^{j_1,0}) >
    \sum_{i=1}^{m+1} A_{j_1}^0(x^{i-1},x^i)
\]
from the right side of \eqref{mainsplit}. Recalling Remark \ref{outline}, this reflects the failure of the passage time to track well when the path changes from $\Gamma_{xy}^{j_1,0}$ to its $j_1$th--scale CG approximation $\Gamma_{xy}^{j_1,m}$.
We have
\begin{align}\label{splitup}
  \Upsilon_{\hT}(\Gamma_{xy}^{j_1,m}) - \Upsilon_{\hT}(\Gamma_{xy}^{j_1,0}) 
    &\leq \sum_{i=1}^m \left[ \left( \hT(p^{i-1},p^i) - \hT(p^{i-1},x^i) \right) 
    + \left( \hT(p^i,x^{i+1}) - \hT(x^i,x^{i+1}) \right) \right]
\end{align}
and therefore using \eqref{near1},
\begin{align}\label{Jsub}
  J_{xy}^{(1)} &\subset \bigcup_{i=1}^m \left\{ \hT(p^{i-1},p^i) - \hT(p^{i-1},x^i) \geq 
    \frac 38 A_{j_1}^0(p^{i-1},x^i) \right\} \notag\\
  &\qquad \cup \bigcup_{i=1}^m \left\{ \hT(p^i,x^{i+1}) - \hT(x^i,x^{i+1}) \geq \frac 38 A_{j_1}^0(x^i,x^{i+1}) \right\}.
\end{align}

Let us consider any one of the events in the first union on the right in \eqref{Jsub}; we prepare to apply Proposition \ref{transTincr}. We assume $|p^i-p^{i-1}| \geq |x^i-p^{i-1}|$ as the opposite case is similar.  Define $\ep$ by
\[
  (1-\tred{\ep})|p^i-p^{i-1}| = |x^i-p^{i-1}|
\]
and let
\[
  \tred{\ol p^i} = p^{i-1} + (1-\ep)(p^i-p^{i-1}),\qquad \tred{\tilde p^i} = \psi_q(\varphi(\ol p^i)).
\]
We observe that in \eqref{adddist4} and \eqref{adddist2}, one can replace $1/32\mu$ with any given positive constant, if $r$ is large enough.  Consequently we have
\begin{equation}\label{gapsize2}
  |\ol p^i - p^{i-1}| = |x^i-p^{i-1}|, \quad |p^i-\ol p^i| = \ep |p^i-p^{i-1}| = |p^i-p^{i-1}| - |x^i-p^{i-1}| 
    \leq \frac{1}{32\mu} A_{j_1}^0(x^{i-1},p^i).
\end{equation}
We split $\hT(p^{i-1},p^i)$ into two corresponding increments, using \eqref{nearsub}:
\begin{align}\label{mainorcurv}
  &\left\{ \hT(p^{i-1},p^i) - \hT(p^{i-1},x^i) \geq \frac 38 A_{j_1}^0(p^{i-1},x^i) \right\} \notag\\
  &\quad \subset \left\{ \hT(p^{i-1},\tilde p^i) - \hT(p^{i-1},x^i) \geq \frac{3}{16} A_{j_1}^0(p^{i-1},x^i) \right\}
    \cup \left\{ \hT(\tilde p^i,p^i) + M(\tilde p^i) \geq \frac{3}{16} A_{j_1}^0(p^{i-1},x^i) \right\}
\end{align}
and define the corresponding unions 
\[
  \tred{J_{xy}^{(1a)}} = \bigcup_{i=1}^m \left\{ \hT(p^{i-1},\tilde p^i) - \hT(p^{i-1},x^i) \geq 
    \frac{3}{16} A_{j_1}^0(p^{i-1},x^i) \right\},
\]
\[
  \tred{J_{xy}^{(1b)}} = \bigcup_{i=1}^m \left\{ \hT(\tilde p^i,p^i) + M(\tilde p^i) \geq \frac{3}{16} A_{j_1}^0(p^{i-1},x^i) \right\},
\]
so that \eqref{mainorcurv} says the first union in \eqref{Jsub} is contained in $J_{xy}^{(1a)} \cup J_{xy}^{(1b)}$.
Define the set of $(x,y)$ corresponding to \eqref{rtxycond}
\[
  \tred{X_r} = \left\{ (x,y)\in q\ZZ^d\times q\ZZ^d: x,y\in G_r(K), |y-x| > \frac{C_{62}r}{(\log r)^{1/\chi_1}}, \eqref{startend}
    \text{ holds} \right\},
\]
with $C_{62}$ from \eqref{rtxycond}, and define the events
\[
  \tred{J^{(1a)}} = \cup_{(x,y)\in X_r} J_{xy}^{(1a)}, \quad \tred{J^{(1b)}} = \cup_{(x,y)\in X_r} J_{xy}^{(1b)},\quad 
  \tred{J^{(1c)}} = \Big\{ \Gamma_{xy} \not\subset G_r^+ \text{ for some } (x,y)\in X_r \Big\}.
\]
For configurations $\omega\notin J^{(1c)}$, all $p^{i-1},p^i,x^i,\tilde p^i$ lie in $G_r^+$, so the number of possible tuples $(p^{i-1},p^i,x^i,\tilde p^i)$ arising from some $(x,y)\in X_r$ is at most $c_{16}|G_r^+|^4$.
Define \tred{$s,\alpha$} by
\[
  \Delta_s = 2K_0\sqrt{d-1}\beta^{j_1}\Delta_r, \quad \alpha
    = \frac{3\lambda^{j_1}t\sigma_r}{16\sigma_s\log(2K_0\sqrt{d-1}\beta^{j_1}\Delta_r)},
\]
so that $|\tilde p^i-x^i| \leq |\ol p^i-x^i|+|\tilde p^i-\ol p^i| \leq 2|p^i-x^i| +q\sqrt{d-1} \leq \Delta_s$. By \eqref{powerlike},
\[
  \frac{r\sigma_r}{s\sigma_s} = \frac{\Delta_r^2}{\Delta_s^2} \geq \frac{c_{17}}{\beta^{2j_1}} \quad\text{and hence}\quad
    \frac{\sigma_r}{\sigma_s} \geq \frac{c_{18}}{\beta^{2\chi_1j_1/(1+\chi_1)}}.
\]
From this, \eqref{relsize2}, and \eqref{j1},
\begin{equation}\label{alphalow}
  \alpha \geq c_{19} \left( \frac{\lambda}{\beta^{2\chi_1/(1+\chi_1)}} \right)^{j_1} \frac{t}{\log r}
    \geq c_{20}t(\log r)^2.
\end{equation}
Now
\[
  \frac{3}{16} A_{j_1}^0(p^{i-1},x^i) \geq \frac{3\lambda^{j_1}t\sigma_r}{16} = 
    \alpha\sigma_s\log(2K_0\sqrt{d-1}\beta^{j_1}\Delta_r) = \alpha\sigma_s\log\Delta_s
\]
so for $\omega \in J_{xy}^{(1a)}$ we have for some $i$ that
\[
  \hT(p^{i-1},\tilde p^i) - \hT(p^{i-1},x^i) \geq \alpha\sigma_s\log \Delta_s \geq \alpha\sigma(s\log\Delta_s)
    \geq\alpha\sigma\Big( \Delta^{-1}(|\tilde p^i - x^i|) \log |\tilde p^i - x^i| \Big).
\]
In view of \eqref{gapsize2} and \eqref{alphalow} we then get from Proposition \ref{transTincr} and Lemma \ref{bigwander} that
\begin{equation}\label{J1a}
  P\left( J^{(1a)} \cup J^{(1c)} \right) \leq P\left( J^{(1a)} \bs J^{(1c)} \right) + P\left( J^{(1c)} \right) 
    \leq c_{21}|G_r^+|^4 e^{-C_{54}\alpha} + C_{68}e^{-C_{69}t} \leq e^{-c_{22}t}.
\end{equation}

Next, recalling \eqref{gapsize2}, we observe that $|\tilde p^i - \ol p^i| \leq q\sqrt{d-1}$ and provided $r$ is large,
\[
  |u-v|\leq \frac{1}{32\mu} A_{j_1}^0(x^{i-1},p^i) + q\sqrt{d-1} \implies h(|u-v|) \leq \frac{1}{16}A_{j_1}^0(x^{i-1},p^i),
\]
so using \eqref{gapsize2} and the bound on $t$ in \eqref{rtxycond} we have from Lemmas \ref{neighbortimes} and \ref{connect} that
\begin{align}\label{J1b}
  P\left( J^{(1b)}\bs J^{(1c)} \right) &\leq P\bigg( \text{for some $u,v\in q\ZZ^d\cap G_r^+$ and 
    $A\geq \left( \frac\lambda 7\right)^{j_1}t\sigma_r$
    we have $h(|u-v|) \leq \frac{A}{16}$} \notag\\
  &\hskip 2cm \text{and } \hT(u,v) \geq \frac A8 \bigg) \notag\\
  &\qquad + P\bigg( \text{for some $u\in q\ZZ^d\cap G_r^+$ we have } M(u) 
    \geq \frac{1}{16} \left( \frac\lambda 7\right)^{j_1}t\sigma_r 
    \bigg) \notag\\
  &\leq c_{23}|G_r^+|^2 \exp\left( -c_{24}\left[ \left( \frac\lambda 7\right)^{j_1}t\sigma_r \right]^{1-\chi_2} \right)
    + c_{25}|G_r^+|\exp\left( -c_{26}\left( \frac\lambda 7\right)^{j_1}t\sigma_r \right) \notag\\
  &\leq c_{27}e^{-c_{28}t}.
\end{align}

From \eqref{mainorcurv}, \eqref{J1a}, and \eqref{J1b} the first union in \eqref{Jsub} combined over $(x,y)$, together with $J^{(1c)}$, has probability bounded as
\[
  P\left( J^{(1a)}\cup J^{(1b)}\cup J^{(1c)} \right) \leq e^{-c_{22}t} + c_{27}e^{-c_{28}t},
\]
and the same bound applies to the second union in \eqref{Jsub}, while from \eqref{rtxycond} and \eqref{Mmax}, for large $c_{29}$ we have
\[
  P\left( J^{(0)}(c_{29}) \right) \leq c_{30}e^{-c_{31}t}.
\]
Hence from \eqref{mainsplit} we obtain
\begin{align}\label{firstit}
  P&\Big( T(x,y) \leq h((y-x)_1) - t\sigma_r \text{ for some } (x,y)\in X_r \Big) \notag\\
  &\quad \leq P\Bigg( \Upsilon_{\hT}(\Gamma_{xy}^{j_1,m}) - h((y-x)_1) 
    \leq -\left( 1-2\lambda^{j_1} \right) t\sigma_r + 4\delta\mu\lambda^{j_1}\Big( \Upsilon_{Euc}(\Gamma_{xy}^{j_1,m}) -
    (y-x)_1 \Big) \notag\\
  &\qquad\qquad \text{ for some } (x,y)\in X_r;\ \omega\notin J^{(0)}\cup J^{(1c)} \Bigg) + c_{32}e^{-c_{33}t}. 
\end{align}
This completes the $j_1$th--scale (first) iteration.  

It should be noted that, due to \eqref{mainsplit}, the terms with coefficients 2 and 4 in \eqref{firstit} represent a reduction taken from the original bound $t\sigma_r$ in \eqref{Qrunif2}, used to bound errors created in the $j_1$th--scale iteration.

\subsection{Step 4. Further iterations of coarse--graining: preparation.}  
For the $(j_1-1)$th--scale and later iterations of coarse--graining we use allocations $A_j^1(\Gamma_{xy}^{\cur},u^i)$ associated not to a particular transition, but to the shifting of the marked grid point $u^i$ in some $(j+1)$th--scale current marked PG path $\Gamma_{xy}^{\cur}$ to the $j$th--scale grid.  Specifically, in such a shift the marked grid point $u^i$ in the $(j+1)$th--scale grid is replaced by the $j$th--scale CG approximation $V_j(u^i)$, and the other marked grid points are left unchanged.  Consider an initial marked PG path when an iteration of shifts to the $j$th--scale grid begins:
\[
  \Gamma^0: u^0\to u^1\to\cdots\to u^{n+1},
\]
with $n\leq 7^{j+1}-1$.
Suppose that for some $\tred{I}\subset \{1,\dots,n\}$ not containing two consecutive integers, the marked grid points $(u^i,i\in I)$ are the ones shifted, one at a time, in the iteration, updating the path from $\tred{\Gamma^0}$ via a sequence of intermediate paths $\tred{\Gamma^1,\dots,\Gamma^{|I|-1}}$ to a final $\tred{\Gamma^{|I|}}$.
Note that since $I$ does not contain two consecutive integers, the shifts of different $u^i$'s do not ``interact,'' as shifting $u^i$ only affects the path between $\varphi(u^{i-1})$ and $\varphi(u^{i+1})$ and only affects $u^i$ among the marks; we will refer to this aspect as \emph{noninteraction of shifts}.  

Recall $t^*(\cdot,\cdot)$ from \eqref{tstar}. The allocation we use most widely, in both stages of each iteration, is $A_j^1(\cdot,\cdot)$ which appears in the next lemma. $A_j^2(\cdot,\cdot)$ is a variant of $A_j^1(\cdot,\cdot)$ 

\begin{lemma}\label{alltrans}
Let $j,K\geq 1$. Consider a marked PG path
\[
  \Gamma: u^0\to u^1\to\cdots\to u^{n+1}
\]
in $G_r^+$ with $u^0,u^{n+1}\in G_r(K),\,n\leq 7^j-1$, and $(u^0)_1<\cdots<(u^{n+1})_1$, and let $I\subset \{1,\dots,n\}$, not containing two consecutive integers.  Define
\begin{align*}
  \tred{A_j^1(\Gamma,i)} &= \lambda^j \left[ \frac{1}{7^j} t^*(u^{i-1},u^i) \sigma_r + \frac{\delta\mu}{9} 
    \left( \frac{|(u^i-u^{i-1})^*|^2}{|(u^i-u^{i-1})_1|} 
    + \frac{|(u^{i+1}-u^i)^*|^2}{|(u^{i+1}-u^i)_1|} \right) \right],
\end{align*}
and for $v,w\in \LL_j$ with $v_1<w_1$, let
\begin{equation}\label{Aj2}
  \tred{A_j^2(v,w)} = \frac{1}{4}\lambda^j \left[ \frac{1}{7^j} t^*(v,w) \sigma_r 
    + \frac{\delta\mu}{9} \frac{|(w-v)^*|^2}{|(w-v)_1|} \right].
\end{equation}
Then provided $t/K^2$ is sufficiently large,
\begin{equation}\label{Atotal}
  \sum_{i\in I} A_j^1(\Gamma,i) \leq \frac 23 \lambda^j\left[ t\sigma_r 
   + \delta\mu\Big( \Upsilon_{Euc}\left( \Gamma \right) - (u^{n+1}-u^0)_1 \Big)
    \right] 
\end{equation}
and 
\begin{equation}\label{barAtotal}
  \sum_{i=1}^{n+1} A_j^2(u^{i-1},u^i) \leq \frac{1}{4}\lambda^j \Big[ t\sigma_r 
    + \delta\mu\Big(\Upsilon_{Euc}\left( \Gamma \right) - (u^{n+1}-u^0)_1) \Big) \Big].
\end{equation}
\end{lemma}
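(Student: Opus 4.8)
The plan is to estimate each sum by splitting the allocation into its ``$t^*$-part'' and its ``extra-length part'', and then exploiting two structural features: the combinatorial sparsity of $I$ (no two consecutive indices), and the fact that the transverse displacement of every vertex $u^k$ from the axis is controlled, via Cauchy--Schwarz, by the extra Euclidean length of $\Gamma$ together with the endpoint constraint $u^0,u^{n+1}\in G_r(K)$. Throughout write $\Psi(\Gamma)=\sum_{i=1}^{n+1}|(u^i-u^{i-1})^*|^2/|(u^i-u^{i-1})_1|$; since the $u^i$ are ordered in the first coordinate and all lie in $G_r^+$, \eqref{adddist} (with \eqref{forward} supplying the required near-horizontality) gives $\Psi(\Gamma)\le \tfrac83\big(\Upsilon_{Euc}(\Gamma)-(u^{n+1}-u^0)_1\big)$, and $(u^{n+1}-u^0)_1\le r$.

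For \eqref{Atotal}, first handle the extra-length part $\tfrac{\delta\mu}{9}\lambda^j\big(|(u^i-u^{i-1})^*|^2/|(u^i-u^{i-1})_1|+|(u^{i+1}-u^i)^*|^2/|(u^{i+1}-u^i)_1|\big)$ of $A_j^1(\Gamma,i)$: because $I$ contains no two consecutive integers, each link $(u^{l-1},u^l)$ is picked up at most once as $i$ ranges over $I$, so summing over $i\in I$ gives at most $\tfrac{\delta\mu}{9}\lambda^j\Psi(\Gamma)\le\tfrac{8\delta\mu}{27}\lambda^j\big(\Upsilon_{Euc}(\Gamma)-(u^{n+1}-u^0)_1\big)$. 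Next expand $t^*(u^{i-1},u^i)=\tfrac t3+\tfrac{\delta\mu}{18}\big((|(u^{i-1})^*|/\Delta_r)^2+(|(u^i)^*|/\Delta_r)^2\big)$: the $\tfrac t3$ terms sum to at most $|I|\tfrac t3\le 7^j\tfrac t3$, contributing $\le\tfrac13\lambda^j t\sigma_r$ after the $\lambda^j\sigma_r/7^j$ prefactor; and, by the same ``at most once'' observation, the squared-coordinate terms sum to at most $\sum_k|(u^k)^*|^2$.

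The key estimate is thus the bound on $\sum_k|(u^k)^*|^2$. Using $|(u^0)^*|,|(u^{n+1})^*|\le K\Delta_r$ and $|(u^k)^*|\le K\Delta_r+\min\big(\sum_{l\le k}|(u^l-u^{l-1})^*|,\sum_{l>k}|(u^l-u^{l-1})^*|\big)$, combined with $\min(a,b)^2\le ab\le\tfrac14(a+b)^2$ and Cauchy--Schwarz in the form $\big(\sum_l|(u^l-u^{l-1})^*|\big)^2\le(u^{n+1}-u^0)_1\,\Psi(\Gamma)\le r\Psi(\Gamma)$, one obtains $|(u^k)^*|^2/\Delta_r^2\le 2K^2+\Psi(\Gamma)/(2\sigma_r)$ (using $\Delta_r^2=r\sigma_r$). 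Summing over the at most $2\cdot 7^j$ relevant indices and reinstating the prefactors, this contribution is at most a constant times $\delta\mu\lambda^j K^2\sigma_r+\delta\mu\lambda^j\Psi(\Gamma)$, hence at most a constant times $\delta\mu\lambda^j K^2\sigma_r+\delta\mu\lambda^j\big(\Upsilon_{Euc}(\Gamma)-(u^{n+1}-u^0)_1\big)$. Adding the pieces yields $\sum_{i\in I}A_j^1(\Gamma,i)\le\lambda^j\big[\tfrac13 t\sigma_r+c'\delta\mu K^2\sigma_r+c''\delta\mu\big(\Upsilon_{Euc}(\Gamma)-(u^{n+1}-u^0)_1\big)\big]$ with explicit constants, $c''<\tfrac23$; taking $t/K^2$ large enough that $c'\delta\mu K^2\sigma_r\le\tfrac13 t\sigma_r$ gives \eqref{Atotal}. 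Inequality \eqref{barAtotal} follows the same way, now with the sum over all $i\in\{1,\dots,n+1\}$, so each link and each $|(u^k)^*|^2$ is counted at most twice rather than once; the extra overall factor $\tfrac14$ in $A_j^2$ leaves ample slack, and once more ``$t/K^2$ large'' absorbs the $K^2\sigma_r$ term.

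I expect the main obstacle to be controlling $\sum_k|(u^k)^*|^2$ without a logarithmic loss: the crude bound $|(u^k)^*|\le C_{60}\Delta_r\log r$ available from $u^k\in G_r^+$ would introduce a factor $(\log r)^2$, which cannot be absorbed because \eqref{rtxycond} allows $t$ as small as order $\log r$. Tying $|(u^k)^*|^2$ to the extra length of $\Gamma$ itself via Cauchy--Schwarz, and cleanly isolating the genuinely $K^2\sigma_r$-sized remainder --- which is the one place the hypothesis ``$t/K^2$ sufficiently large'' is used --- is the delicate point; everything else is routine constant-tracking using \eqref{adddist}.
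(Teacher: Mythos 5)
Your proof is correct, and it follows the same overall decomposition as the paper's: isolate the constant-$t$ piece of $t^*$, the transverse-displacement pieces $|(u^k)^*|^2$, and the extra-length pieces, using the sparsity of $I$ to avoid double-counting and the hypothesis ``$t/K^2$ large'' to absorb the $K^2\sigma_r$ contribution. The one place where you depart from the paper is the core estimate $\frac{|(u^k)^*|^2}{r}\lesssim K^2\sigma_r+\Psi(\Gamma)$. The paper handles this by a dichotomy: if $u^k\in G_r(2K)$ then $|(u^k)^*|^2/r\le 4K^2\sigma_r$; if $u^k\notin G_r(2K)$ then $d(u^k,\Pi_{u^0u^{n+1}})\ge|(u^k)^*|/2$, and the triangle inequality for the kinked path $u^0\to u^k\to u^{n+1}$ together with \eqref{adddist1} gives $|(u^k)^*|^2/r\le\tfrac32\Psi(\Gamma)$ directly. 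You instead write $|(u^k)^*|\le K\Delta_r+\min\bigl(\sum_{l\le k}|(u^l-u^{l-1})^*|,\sum_{l>k}|(u^l-u^{l-1})^*|\bigr)$, use $\min(a,b)^2\le\tfrac14(a+b)^2$, and then apply Cauchy--Schwarz with weights $(u^l-u^{l-1})_1$ to get $\bigl(\sum_l|(u^l-u^{l-1})^*|\bigr)^2\le(u^{n+1}-u^0)_1\,\Psi(\Gamma)\le r\Psi(\Gamma)$, yielding the same form of bound (in fact with slightly sharper constants, $2K^2$ and $\tfrac12$ in place of $4K^2$ and $\tfrac32$). Both methods extract the same geometric fact — a vertex far from the endpoint line forces a proportional amount of extra Euclidean path length — but the paper's version is more local (one triangle inequality at $u^k$) while yours aggregates transverse travel over the whole path via Cauchy--Schwarz. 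Either works; the paper's avoids Cauchy--Schwarz, yours avoids the case split and the explicit appeal to \eqref{adddist1} for the kinked triangle. Your identification of the $\log r$ pitfall and of where the $t/K^2$ hypothesis enters is accurate.
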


The bound \eqref{Atotal} would be trivial if we used $t$ instead of $t^*(u^{i-1},u^i)$ the definition of $A_j^1(\Gamma,i)$. The point of Lemma \ref{alltrans} is that when the transitions affected by the shifting start or end at points $u^i$ which are at distance a large multiple of $\Delta_r$ from $G_r(K)$, it increases the usable value of $t$ by a multiple of $(|(u^i)^*|/\Delta_r)^2$, manifested in the definition of $t^*(u^{i-1},u^i)$.  Here by ``usable'' we mean ``not so big that \eqref{Atotal} or \eqref{barAtotal} fails.''

\begin{proof}[Proof of Lemma \ref{alltrans}.]

If $u^k\notin G_r(2K)$ for some $k$ then $d(u^k,\Pi_{u^0u^{n+1}}) \geq |(u^k)^*|/2$ so we have from \eqref{adddist1}
\begin{equation}\label{sizecurv}
  |u^{n+1}-u^0| + \frac{|(u^k)^*|^2}{3r} \leq |u^0-u^k| + |u^k-u^{n+1}| \leq \sum_{i=1}^{n+1} |u^i-u^{i-1}| 
    \leq (u^{n+1}-u^0)_1 + \sum_{i=1}^{n+1} \frac{|(u^i-u^{i-1})^*|^2}{2|(u^i-u^{i-1})_1|},
\end{equation}
so
\begin{equation}\label{sizecurv1}
  \frac{\delta\mu|(u^k)^*|^2}{18r} \leq \sum_{i=1}^{n+1} \frac{\delta\mu}{12} \frac{|(u^i-u^{i-1})^*|^2}{|(u^i-u^{i-1})_1|},
\end{equation}
while if $u^k\in G_r(2K)$, since $t/K^2$ is large,
\begin{equation}\label{sizecurv2}
  \frac{\delta\mu|(u^k)^*|^2}{18r} \leq \frac{2\delta\mu K^2\sigma_r}{9} \leq \frac{\delta t\sigma_r}{3},
\end{equation}
so using \eqref{hypernum2} and \eqref{adddist1},
\begin{equation}\label{starbound}
  \sum_{k=0}^n \frac{\delta\mu|(u^k)^*|^2}{18r7^j} \leq  \frac{\delta t\sigma_r}{3} + \sum_{i=1}^{n+1}  
    \frac{\delta\mu}{12} \frac{|(u^i-u^{i-1})^*|^2}{|(u^i-u^{i-1})_1|} \leq \frac 13\Big[ \delta t\sigma_r 
    + \delta\mu\Big( \Upsilon_{Euc}\left( \Gamma \right) - (u^{n+1}-u^0)_1 \Big) \Big].
\end{equation}
From \eqref{adddist1} we also get
\[
  \frac 13 \sum_{i\in I} \left( \frac{|(u^i-u^{i-1})^*|^2}{|(u^i-u^{i-1})_1|} 
    + \frac{|(u^{i+1}-u^i)^*|^2}{|(u^{i+1}-u^i)_1|} \right) \leq \Upsilon_{Euc}\left( \Gamma \right) - (u^{n+1}-u^0)_1
\]
which with \eqref{starbound} yields \eqref{Atotal}. The proof of \eqref{barAtotal} is similar, the only (inconsequential) difference being that when we sum the terms $t^*(u^{i-1},u^i)$ over all $i\leq n$, most terms $\delta\mu|(u^k)^*|^2/18r7^j$ get counted twice, for $k=i$ and $k=i+1$, whereas each was counted at most once when we summed over $i\in I$ in \eqref{Atotal}.
\end{proof}

\begin{lemma}\label{sumbits}
There exist constants $C_i$ such that, provided \eqref{relsize2} holds and $t$ is sufficiently large, we have for all $j\leq j_1$
\begin{align}\label{alllinks}
  P\Bigg( \big| \hT(&v,w) - h(|w-v|) \big| \geq A_j^2(v,w) \text{ for some $v,w\in G_r^+\cap\LL_j$ with } 
    \delta^{j+1}r \leq (w-v)_1 \leq 10\delta^{j-1}r \Bigg) \notag\\
  &\leq C_{70} \exp\left( -C_{71}\left( \frac{\lambda}{7\delta^{\chi_1}} \right)^j t \right). 
\end{align}
\end{lemma}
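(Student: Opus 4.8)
\emph{The plan.} Write $\nu:=\lambda/(7\delta^{\chi_1})$; by \eqref{relsize} we may take $\nu$ as large as we wish, in particular $\nu\ge 2$. For each fixed $j\le j_1$ I would bound the left-hand side of \eqref{alllinks} by a union bound over the admissible pairs $v,w\in G_r^+\cap\LL_j$, control each single pair by Lemma \ref{connect}, and then check that the resulting entropy factors are swallowed by the exponential decay, using the parameter ordering \eqref{relsize}--\eqref{relsize2}. For the single-pair step: since $j\le j_1$ the grid $\LL_j$ lies in the basic grid $q\ZZ^d$, so the cubes $F_v,F_w$ of side $q$ are defined and $\hT(v,w)=\min\{T(a,b):a\in\varphi(F_v),\ b\in\varphi(F_w)\}$, where by \eqref{Qxsize} the finite nonempty sets $\varphi(F_v),\varphi(F_w)$ lie in $\VV$, within distance $q\sqrt d/2+1$ of $v$, resp.\ $w$. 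Stationarity and isotropy of $\GG$ give $ET(v,w)=h(|w-v|)$, so
\[
  \{\,|\hT(v,w)-h(|w-v|)|\ge A_j^2(v,w)\,\}\ \subset\ \Big\{\sup_{a\in B(v)\cap\VV,\ b\in B(w)\cap\VV}|T(a,b)-ET(v,w)|\ge A_j^2(v,w)\Big\}
\]
with $B(v),B(w)$ the balls of radius $q\sqrt d/2+1$ about $v,w$. Applying Lemma \ref{connect} with that radius and with parameter $\tau:=A_j^2(v,w)/\sigma(|w-v|)$ then gives $P\big(|\hT(v,w)-h(|w-v|)|\ge A_j^2(v,w)\big)\le C_{44}e^{-C_{45}\tau}$; its hypotheses hold for $r$ large, uniformly over $j\le j_1$, since $A_j^2(v,w)\ge\frac14(\lambda/7)^j t^*(v,w)\sigma_r\ge\frac1{12}(\lambda/7)^j t\,\sigma_r$ and $(\lambda/7)^{j_1}\sigma_r\to\infty$.

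\emph{Lower bound for $\tau$.} Because $v,w\in G_r^+$, the transverse part of $w-v$ is $O(\Delta_r\log r)$, which is $\ll\delta^{j+1}r$ for $r$ large (as $\Delta_r/r\to 0$ while $\delta^{-j_1}$ is only a power of $\log r$), so $|w-v|\asymp(w-v)_1\in[\delta^{j+1}r,\,20\delta^{j-1}r]$ and \eqref{powerlike} gives $\sigma(|w-v|)\le c_0\,\delta^{j\chi_1}\sigma_r$ for a constant $c_0$. Combining this with $A_j^2(v,w)\ge\frac14(\lambda/7)^j t^*(v,w)\sigma_r$ and the formula \eqref{tstar} for $t^*$ yields
\[
  \tau\ \ge\ \frac{1}{4c_0}\,\nu^j\left[\frac{t}{3}+\frac{\delta\mu}{18}\left(\frac{|v^*|^2}{\Delta_r^2}+\frac{|w^*|^2}{\Delta_r^2}\right)\right].
\]

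\emph{The union bound.} Write $P$ for the left-hand side of \eqref{alllinks} and set $k_v=|v^*|/\Delta_r$, $k_w=|w^*|/\Delta_r$. Summing over admissible pairs, the two previous paragraphs give, for a suitable constant $c_1>0$,
\[
  P\ \le\ C_{44}\,e^{-c_1\nu^j t}\sum_{v,w}e^{-c_1\nu^j(k_v^2+k_w^2)}.
\]
Here $v_1$ ranges over $O(\delta^{-j})$ points of $\delta^j r\ZZ$ meeting $G_r^+$, $(w-v)_1$ over $O(1/\delta)$ values, and $v^*,w^*$ over the transverse grid of spacing $K_0\beta^j\Delta_r$; the $v^*$-sum is a lattice Gaussian sum, bounded---using $\nu\beta^2\ll 1$, which follows from $\beta^2\ll\lambda^2\delta$ in \eqref{relsize2}---by $c_2(\nu\beta^2)^{-j(d-1)/2}$, and likewise for $w^*$. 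Hence the entropy factor is at most $c_3 e^{c_4 j}$ for a constant $c_4$, so $P\le c_5\,e^{\,c_4 j-c_1\nu^j t}$. Finally, $\nu\ge 2$ makes $j\mapsto\nu^j/j$ nondecreasing for $j\ge 1$, so $\nu^j\ge j\nu$ there; taking $t\ge 2c_4/(c_1\nu)$ then gives $c_1\nu^j t-c_4 j\ge\frac12 c_1\nu^j t$ for every $j\le j_1$, which is \eqref{alllinks} with $C_{71}=c_1/2$ and $C_{70}$ absorbing $c_5$.

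\emph{Main obstacle.} The only delicate step is the last one: matching the claimed rate $(\lambda/7\delta^{\chi_1})^j$ forces one to absorb both the longitudinal entropy $\delta^{-j}$ and the transverse entropy of $G_r^+$. The latter is a power of $\log r$, so a naive union bound would leave a $\log\log r$ factor uncontrolled at the smallest scales; the remedy is precisely the ``boost'' $\frac{\delta\mu}{18}(|v^*|/\Delta_r)^2$ built into $t^*(v,w)$, which converts that entropy into a convergent lattice Gaussian sum and leaves only a prefactor $e^{c_4 j}$. That prefactor is swallowed uniformly in $1\le j\le j_1$ only because \eqref{relsize}--\eqref{relsize2} force $\nu=\lambda/(7\delta^{\chi_1})\gg 1$ (so the decay rate grows geometrically in $j$, beating the linear $c_4 j$) and $\nu\beta^2\ll 1$ (so the transverse sums converge with a controlled $j$-dependence). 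Pinning down these inequalities, and checking that $A_j^2(v,w)$ and $\sigma(|w-v|)$ stay large enough for Lemma \ref{connect} throughout $j\le j_1$ once $r$ is large, is where the effort goes; the rest is routine.
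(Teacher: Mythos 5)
Your proposal is correct, and it takes a genuinely different route than the paper's proof. The paper decomposes the admissible pairs $(v,w)$ dyadically into classes $R_{r,j}^{\nu,\ell}$, indexed both by the magnitude of $|v^*|$ (the $\nu$ index) and by the sidestepping $|(w-v)^*|$ (the $\ell$ index); it then uses the \emph{second} term of $A_j^2$, namely $\frac{\delta\mu}{9}|(w-v)^*|^2/(w-v)_1$, together with the parameter $\rho$ and the condition $\rho^2\lambda\gg 1$ from \eqref{relsize2}, to supply the decay that absorbs the $\ell$-entropy (see \eqref{bound11}--\eqref{bound00}). You discard that second term of $A_j^2$ entirely and instead extract Gaussian decay in both $|v^*|/\Delta_r$ and $|w^*|/\Delta_r$ from the $t^*(v,w)$ appearing in the \emph{first} term, reducing the transverse entropy to a product of two lattice Gaussian sums. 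The parameter $\rho$ and the normal-sidestepping threshold $\ell_2$ then play no role in your version. Both arguments close by taking $t$ large enough to absorb the resulting $e^{c_4 j}$ prefactor, and your observation that $\big(\lambda/(7\delta^{\chi_1})\big)^j\ge j\,\lambda/(7\delta^{\chi_1})$ once $\lambda/(7\delta^{\chi_1})\ge 2$ is a convenient way to do this uniformly over $1\le j\le j_1$; your verification that $\lambda/(7\delta^{\chi_1})\cdot\beta^2\ll 1$ (needed to estimate the Gaussian sum) from $\beta^2\ll\lambda^2\delta$ in \eqref{relsize2} is also correct. Because you have discarded a source of decay, the constant $c_4$ you get is somewhat larger than what the paper obtains, so your ``$t$ sufficiently large'' threshold is correspondingly higher, but the lemma's conclusion is unaffected. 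The only care point, which you flag, is that Lemma \ref{connect}'s hypotheses must hold uniformly for all $j\le j_1$; as you say this is ensured because $j_1=O(\log\log r)$ keeps $(\lambda/7)^{j_1}\sigma_r$ a positive power of $r$. Overall a clean alternative proof.
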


\begin{proof}
Let
\[
   \tred{\nu_0} = \nu_0(r) = \min\{k: (2^\nu t)^{1/2} \geq C_{60}\log r\}, 
 \]
\[
     \tred{\ell_0}=\ell_0(r,j+1) = \min\{\ell: 2^\ell \ell_2(j+1)K_0\beta^j\geq 2C_{60}\log r\}
 \]
for $C_{60}$ from the definition \eqref{Grplus} of $G_r^+$.  We decompose the pairs $(v,w)$ appearing in \eqref{alllinks} into subclasses according to the size of $|v^*|$ and the degree of sidestepping $|(w-v)^*|$, as follows. Fix $1\leq j\leq j_1$. Let
\begin{align*}
  \tred{R_{r,j}^{\nu,\ell}} &= \Big\{ (v,w)\in (G_r^+\cap\LL_{j+1})^2: 
    \delta^jr \leq (w-v)_1 \leq 10\delta^{j-1}r, (2^{\nu-1}t)^{1/2}\Delta_r < |v^*| \leq (2^\nu t)^{1/2}\Delta_r,\\
  &\hskip 1cm 2^{\ell-1}\ell_2(j+1)K_0\beta^j\Delta_r < |(w-v)^*| 
    \leq 2^\ell \ell_2(j+1)K_0\beta^j\Delta_r \Big\},\\
  &\hskip 1.5cm \text{for }1\leq k\leq \nu_0, 1\leq\ell\leq\ell_0(r,j+1),
\end{align*}
\begin{align*}
  \tred{R_{r,j}^{\nu,0}} &= \Big\{ (v,w)\in (G_r^+\cap\LL_{j+1})^2: 
    \delta^jr \leq (w-v)_1 \leq 10\delta^{j-1}r, (2^{\nu-1}t)^{1/2}\Delta_r < |v^*| \leq (2^\nu t)^{1/2}\Delta_r,\\
  &\hskip 1.5cm |(w-v)^*| 
    \leq \ell_2(j+1)K_0\beta^j\Delta_r \Big\},\quad \text{for } 1\leq k\leq \nu_0,
\end{align*}
\begin{align*}
  \tred{R_{r,j}^{0,\ell}} &= \Big\{ (v,w)\in (G_r^+\cap\LL_{j+1})^2: 
    \delta^jr \leq (w-v)_1 \leq 10\delta^{j-1}r, |v^*| \leq t^{1/2}\Delta_r,\\
  &\hskip 1cm 2^{\ell-1}\ell_2(j+1)K_0\beta^j\Delta_r < |(w-v)^*|
    \leq 2^\ell \ell_2(j+1)K_0\beta^j\Delta_r \Big\},\quad 1\leq\ell\leq\ell_0(r,j+1),
\end{align*}
\begin{align*}
  \tred{R_{r,j}^{0,0}} &= \Big\{ (v,w)\in (G_r^+\cap\LL_{j+1})^2: 
    \delta^jr \leq (w-v)_1 \leq 10\delta^{j-1}r, |v^*| \leq t^{1/2}\Delta_r, \notag\\
  &\hskip 1.5cm |(w-v)^*|
    \leq \ell_2(j+1)K_0\beta^j\Delta_r \Big\}.
\end{align*}
First, for $1\leq \nu\leq \nu_0, 1\leq\ell\leq\ell_0(r,j+1)$ we have 
\begin{align*}
  \frac{A_j^2(v,w)}{\sigma(10\delta^{j-1}r)} &\geq \frac{1}{4}\lambda^j 
    \left[ \frac{1}{7^j} \frac{\delta\mu}{36}2^\nu t\frac{\sigma_r}{\sigma(10\delta^{j-1}r)} 
    + \frac{ \delta\mu(2^\ell \ell_2(j+1)K_0\beta^j\Delta_r)^2 }{360\delta^{j-1}r\sigma(10\delta^{j-1}r)} \right]\\
  &\geq \lambda^j 
    \left[ \frac{c_0}{7^j} 2^\nu t\frac{1}{\delta^{\chi_1j}} + c_12^{2\ell}\rho^{2j} \right], \quad
    (v,w)\in R_{r,j}^{\nu,\ell}
\end{align*}
and from \eqref{ell12}
\[
   \left| R_{r,j}^{\nu,\ell} \right| \leq \frac{c_2}{\delta^j} \left( \frac{(2^\nu t)^{1/2}}{\beta^j} \right)^{d-1} 
     \left( 2^\ell \ell_2(j+1) \right)^{d-1} \leq c_3 (2^\nu t)^{(d-1)/2} 2^{(d-1)\ell}
     \frac{1}{\delta^j} \left( \frac{\rho\delta^{(1+\chi_1)/2}}{\beta^2} \right)^{(d-1)j}.
\]
Hence and Lemma \ref{connect}, provided $t$ is large,
\begin{align}\label{bound11}
  \sum_{\nu=1}^{\nu_0} &\sum_{\ell=1}^{\ell_0} P\Big( \big| \hT(v,w) - h(|w-v|) \big| 
    \geq A_j^2(v,w) \text{ for some } (v,w) \in R_{r,j}^{\nu,\ell} \Big) \notag\\
  &\leq c_3\frac{1}{\delta^j} \left( \frac{\rho\delta^{(1+\chi_1)/2}}{\beta^2} \right)^{(d-1)j} \notag\\
  &\hskip 1cm \cdot \sum_{\nu=1}^{\nu_0} (2^\nu t)^{(d-1)/2} \sum_{\ell=1}^{\ell_0} 2^{(d-1)\ell} C_{44} 
    \exp\left( -C_{45}\lambda^j 
    \left[ \frac{c_0}{7^j} 2^\nu t\frac{1}{\delta^{\chi_1j}} + c_12^{2\ell}\rho^{2j} \right] \right) \notag\\
  &\leq c_4\frac{1}{\delta^j} \left( \frac{\rho\delta^{(1+\chi_1)/2}}{\beta^2} \right)^{(d-1)j}
    \sum_{\nu=1}^{\nu_0} (2^\nu t)^{(d-1)/2} 
    \exp\left( -c_5\left( \frac{\lambda}{7\delta^{\chi_1}} \right)^j 2^\nu t \right) \notag\\
  &\leq c_4\frac{1}{\delta^j} \left( \frac{\rho\delta^{(1+\chi_1)/2}}{\beta^2} \right)^{(d-1)j}
    \exp\left( -c_5\left( \frac{\lambda}{7\delta^{\chi_1}} \right)^j t \right).
\end{align}
Second, for $1\leq \nu\leq \nu_0, \ell=0$ we can drop the second term in brackets in \eqref{Aj2}:
\begin{align*}
  \frac{A_j^2(v,w)}{\sigma(10\delta^{j-1}r)} &\geq \frac{1}{4}\left( \frac\lambda 7 \right)^j 
    \frac{\delta\mu}{36}2^\nu t\frac{\sigma_r}{\sigma(10\delta^{j-1}r)} \geq c_6
    \left( \frac{\lambda}{7\delta^{\chi_1}} \right)^j 2^\nu t, \quad (v,w)\in R_{r,j}^{\nu,0}
\end{align*}
and from \eqref{ell12}
\[
   \left| R_{r,j}^{\nu,0} \right| \leq \frac{c_7}{\delta^j} \left( \frac{(2^\nu t)^{1/2}\ell_2(j+1)}{\beta^j} \right)^{d-1} 
     \leq c_8 (2^\nu t)^{(d-1)/2} \frac{1}{\delta^j} \left( \frac{\rho\delta^{(1+\chi_1)/2}}{\beta^2} \right)^{(d-1)j}.
\]
Hence again using Lemma \ref{connect}, provided $t$ is large,
\begin{align}\label{bound10}
  \sum_{\nu=1}^{\nu_0} &P\Big( \big| \hT(v,w) - h(|w-v|) \big| 
    \geq A_j^2(v,w) \text{ for some } (v,w) \in R_{r,j}^{\nu,0} \Big) \notag\\
  &\leq c_8\frac{1}{\delta^j} \left( \frac{\rho\delta^{(1+\chi_1)/2}}{\beta^2} \right)^{(d-1)j}
    \sum_{\nu=1}^{\nu_0} (2^\nu t)^{(d-1)/2} 
    \exp\left( -c_9\left( \frac{\lambda}{7\delta^{\chi_1}} \right)^j 2^\nu t \right) \notag\\
  &\leq c_8\frac{1}{\delta^j} \left( \frac{\rho\delta^{(1+\chi_1)/2}}{\beta^2} \right)^{(d-1)j}
    \exp\left( -c_9\left( \frac{\lambda}{7\delta^{\chi_1}} \right)^j t \right).
\end{align}
Third, for $\nu=0,1\leq\ell\leq\ell_0(r,j+1)$ we have
\begin{align*}
  \frac{A_j^2(v,w)}{\sigma(10\delta^{j-1}r)} &\geq \frac{1}{4}\lambda^j 
    \left[ \frac{1}{7^j} \frac t3 \frac{\sigma_r}{\sigma(10\delta^{j-1}r)} 
    +  \frac{ \delta\mu(2^\ell \ell_2(j+1)K_0\beta^j\Delta_r)^2 }{360\delta^{j-1}r\sigma(10\delta^{j-1}r)} \right] \\
  &\geq c_{10}\left( \frac{\lambda}{7\delta^{\chi_1}} \right)^j t 
    + c_{11}2^{2\ell}\left( \lambda\rho^2 \right)^j,
      \quad (v,w)\in R_{r,j}^{0,\ell}
\end{align*}
and from \eqref{ell12}
\[
  \left| R_{r,j}^{0,\ell} \right| \leq \frac{c_{12}}{\delta^j} \left( \frac{t^{1/2}}{\beta^j} \right)^{d-1} 
     \left( 2^\ell \ell_2(j+1) \right)^{d-1} \leq c_{12} t^{(d-1)/2} 2^{(d-1)\ell}
     \frac{1}{\delta^j} \left( \frac{\rho\delta^{(1+\chi_1)/2}}{\beta^2} \right)^{(d-1)j}.
\]
so similarly to \eqref{bound10}
\begin{align}\label{bound01}
  \sum_{\ell=1}^{\ell_0} &P\Big( \big| \hT(v,w) - h(|w-v|) \big| 
    \geq A_j^2(v,w) \text{ for some } (v,w) \in R_{r,j}^{0,\ell} \Big) \notag\\
  &\leq c_{13}t^{(d-1)/2} \frac{1}{\delta^j} \left( \frac{\rho\delta^{(1+\chi_1)/2}}{\beta^2} \right)^{(d-1)j}
    \sum_{\ell=1}^{\ell_0} 2^{(d-1)\ell} \exp\left( -c_{14}\left[ \left( \frac{\lambda}{7\delta^{\chi_1}} \right)^j t 
    + 2^{2\ell}(\lambda\rho^2)^j \right] \right) \notag\\
  &\leq c_{13}\frac{1}{\delta^j} \left( \frac{\rho\delta^{(1+\chi_1)/2}}{\beta^2} \right)^{(d-1)j}
    \exp\left( -c_{14}\left( \frac{\lambda}{7\delta^{\chi_1}} \right)^j t \right).
\end{align}
Finally, for $k=\ell=0$ we can again drop the second term in brackets in \eqref{Aj2}, and we have analogously
\begin{align*}
  \frac{A_j^2(v,w)}{\sigma(10\delta^{j-1}r)} 
    &\geq \frac{1}{4}\left( \frac\lambda 7 \right)^j \frac t3 \frac{\sigma_r}{\sigma(10\delta^{j-1}r)} 
  \geq c_{15}\left( \frac{\lambda}{7\delta^{\chi_1}} \right)^j t, \quad (v,w)\in R_{r,j}^{0,0},
\end{align*}
and 
\[
  \left| R_{r,j}^{0,0} \right| \leq \frac{c_{16}}{\delta^j} \left( \frac{t^{1/2}\ell_2(j+1)}{\beta^j} \right)^{d-1} 
     \leq c_{17} t^{(d-1)/2} \frac{1}{\delta^j} \left( \frac{\rho\delta^{(1+\chi_1)/2}}{\beta^2} \right)^{(d-1)j}.
\]
so
\begin{align}\label{bound00}
  P\Big( &\big| \hT(v,w) - h(|w-v|) \big| 
    \geq A_j^2(v,w) \text{ for some } (v,w) \in R_{r,j}^{0,0} \Big) \notag\\
  &\leq c_{18}t^{(d-1)/2} \frac{1}{\delta^j} \left( \frac{\rho\delta^{(1+\chi_1)/2}}{\beta^2} \right)^{(d-1)j}
    \exp\left( -c_{19}\left( \frac{\lambda}{7\delta^{\chi_1}} \right)^j t \right) \notag\\
  &\leq c_{20}\frac{1}{\delta^j} \left( \frac{\rho\delta^{(1+\chi_1)/2}}{\beta^2} \right)^{(d-1)j}
    \exp\left( -c_{21}\left( \frac{\lambda}{7\delta^{\chi_1}} \right)^j t \right).
\end{align}
Since $t$ is large, \eqref{alllinks} follows from \eqref{relsize2} and \eqref{bound11}---\eqref{bound00}.
\end{proof}

For $j_2\leq j<j_1$ and $j_2<\ell\leq j-1$, a $(j,\ell)$\emph{th--scale joining 4--path} is a marked PG path $\Gamma: v^0\to v^1\to v^2\to v^3$ with $v^i\in\LL_{j+1} \cap G_r^+$, such that for some $j$th--scale hyperplane $H_a$, 
\begin{equation}\label{vloc}
  v^0 \in H_a,\quad v^1 \in H_{a+\delta^{j+1}r},\quad v^2 \in H_{a+\delta^{\ell+1}r}, \quad v^3 \in H_{a+\delta^\ell r}.
\end{equation}
In a $j$th--scale interval with left endpoint $a$, these four hyperplanes represent the hyperplane at $a$, a sandwiching $(j+1)$th--scale hyperplane, and two possible joining hyperplanes.  The pairs $(v^0,v^1),(v^1,v^2)$, $(v^2,v^3)$ are the \emph{links} of the joining 4--path.  Let \tred{$z^i$} be the point where $\Pi_{v^0v^3}$ intersects the hyperplane $H_{\fatdot}$ containing $v^i$. The \emph{intermediate path corresponding to} $\Gamma$ is $\tred{\Gamma^{int}}: z^0\to z^1\to z^2\to z^3$; note these points are collinear.  Recalling Remark \ref{novstar}, we say the $(j,\ell)$th--scale joining 4--path is \emph{internally bowed} if 
\begin{equation}\label{intbowed}
  \frac{|v^2-z^2|^2}{\delta^{\ell+1}r} \geq 2^{j-\ell}\frac{\delta}{128\mu}\left( \frac \lambda 7\right)^{j+1}
    \frac{\sigma(\delta^{\ell+1}r)}{\sigma(\delta^j r)} t^*(v^0)\sigma_r
\end{equation}
(compare to \eqref{genbowed2}) and
\begin{align}
  \frac{|v^1-z^1|^2}{\delta^{j+1}r} 
    \leq 2^{-(j-\ell-4)} \frac{\sigma(\delta^{j+1}r)}{\sigma(\delta^{\ell+1}r)}  \frac{|v^2-z^2|^2}{\delta^{\ell+1}r}
\end{align}
(compare to \eqref{firstinc}) or equivalently
\begin{align}\label{firstinc2}
  \left( \frac{|v^1-z^1|}{\Delta(\delta^{j+1}r)} \right)^2
    \leq 2^{-(j-\ell-4)} \left( \frac{|v^2-z^2|}{\Delta(\delta^{\ell+1}r)} \right)^2.
\end{align}

We define special allocations to deal with internally bowed paths. 
For each $(j,\ell)$th--scale joining 4--path $\Gamma: v^0\to v^1\to v^2\to v^3$ let
\[
  \tred{A_j^3(\Gamma)} = \frac{\delta\mu}{324}\left( \frac{|v^2-z^2|^2}{\delta^{\ell+1}r}
    + \lambda^j\frac{|(v^3-v^0)^*|^2}{\delta^\ell r} \right). 
\]
Note that by \eqref{vloc}, $\ell$ here is a function of $\Gamma$.
We will need an analog of Lemma \ref{sumbits} which enables us to deal with joining 4-paths as single units.  Normally for $\ell$th--scale links (length of order $\delta^\ell r$), where the fluctuations of the passage time $\hT(v^{i-1},v^i)$ are of order $\sigma(\delta^\ell r)$, we need $\ell$th--scale allocations (proportional to $\lambda^\ell$) to get a good bound from Lemma \ref{connect}, but in the next lemma we are able to use $j$th--scale allocations even for much longer $\ell$th--scale lengths, by taking advantage of bowedness.  

\begin{lemma}\label{sumbits2}
There exist constants $C_i$ as follows.  Let $j_2\leq j<j_1$ and $j_2<\ell\leq j-1$.  

(i) For every 
$(j,\ell)$th--scale joining 4--path $\Gamma: v^0\to v^1\to v^2\to v^3$ in $G_r^+$, and $C_{56}$ from \eqref{monotoneE3},
\begin{align}\label{joinalloc}
  6A_{j+1}^3(\Gamma) \leq \sum_{i=1}^3 A_{j+1}^2(v^{i-1},v^i) 
    + \frac{\delta\mu}{18} \Big[ \Upsilon_{Euc}(\Gamma) - |v^3-v^0| \Big] - 6C_{56}.
\end{align}

(ii)
\begin{align}\label{joinbound}
  P&\Big( \text{there exists an internally bowed $(j,\ell)$th--scale joining 4--path 
    $\Gamma: v^0\to v^1\to v^2\to v^3$} \notag\\
  &\hskip 1cm \text{in $G_r^+$, and $1\leq i\leq 3$, for which $\big| \hT(v^{i-1},v^i) - h(|v^i-v^{i-1}|) \big| \geq 
     A_{j+1}^3(\Gamma) \Big)$} \notag\\
  &\leq C_{72}\left( \frac{2}{\delta^{\chi_2+1}} \right)^{j-\ell} 
    \left( \frac{\delta}{\beta^2} \right)^j \exp\left( - C_{73} \left( \frac{\lambda}{\delta^{\chi_1}} \right)^{j+1} 
    2^{j-\ell}t \right).
\end{align}

(iii)
\begin{align}\label{joinbound2}
  P&\Bigg( \text{there exists an internally bowed $(j,\ell)$th--scale joining 4--path $\Gamma$ in $G_r^+$, with} \notag\\
  &\hskip 1cm \text{corresponding intermediate path $\Gamma^{int}: u^0\to u^1\to u^2\to u^3$,} \notag\\
  &\hskip 1cm \text{and $1\leq i\leq 3$, for which $\big| \hT(u^{i-1},u^i) - h(|u^i-u^{i-1}|) \big| 
    \geq A_{j+1}^3(\Gamma^{int}) \Bigg)$} \notag\\
  &\leq C_{72}\left( \frac{2}{\delta^{\chi_2+1}} \right)^{j-\ell} 
    \left( \frac{\delta}{\beta^2} \right)^j \exp\left( - C_{73} \left( \frac{\lambda}{\delta^{\chi_1}} \right)^{j+1} 
    2^{j-\ell}t \right).
\end{align}
\end{lemma}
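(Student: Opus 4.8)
The plan for part~(i) is a purely geometric computation, valid for an arbitrary $(j,\ell)$th--scale joining $4$--path. Set $w^i = v^i - z^i$, so that $w^0 = w^3 = 0$ because $z^0 = v^0$ and $z^3 = v^3$. Since all $v^i \in G_r^+$, the line $\Pi_{v^0v^3}$ is within a negligible angle of $e_1$ (as in \eqref{forward}), so estimates of the type \eqref{adddist1} apply along $\Pi_{v^0v^3}$. First discard $v^1$: $\Upsilon_{Euc}(\Gamma) - |v^3-v^0| \ge |v^0-v^2| + |v^2-v^3| - |v^3-v^0|$, the extra length of the $3$--path $v^0\to v^2\to v^3$. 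On this $3$--path $v^2$ sits, by \eqref{vloc}, at longitudinal fraction $(v^2-v^0)_1/(v^3-v^0)_1 = \delta$ and has perpendicular deviation $|w^2|$ from $\Pi_{v^0v^3}$, so \eqref{adddist1} gives, for $r$ large, extra length at least $|w^2|^2/3\delta^{\ell+1}r$; hence the first term of $6A_{j+1}^3(\Gamma)$, namely $(\delta\mu/54)\,|v^2-z^2|^2/\delta^{\ell+1}r$, is at most $(\delta\mu/18)\,[\Upsilon_{Euc}(\Gamma)-|v^3-v^0|]$. For the second term of $6A_{j+1}^3(\Gamma)$, Cauchy--Schwarz on the $e_1$--increments gives $|(v^3-v^0)^*|^2/\delta^\ell r = |(v^3-v^0)^*|^2/(v^3-v^0)_1 \le \sum_{i=1}^3 |(v^i-v^{i-1})^*|^2/(v^i-v^{i-1})_1$, and since $A_{j+1}^2(v^{i-1},v^i) \ge (\lambda^{j+1}\delta\mu/36)\,|(v^i-v^{i-1})^*|^2/(v^i-v^{i-1})_1$, this term is at most $\frac23\sum_{i=1}^3 A_{j+1}^2(v^{i-1},v^i)$. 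The leftover $\frac13\sum_i A_{j+1}^2(v^{i-1},v^i) \ge \frac1{12}(\lambda/7)^{j+1}t\sigma_r$ absorbs $6C_{56}$ for $r$ large, giving \eqref{joinalloc}.

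For part~(ii), the plan is a union bound over internally bowed $(j,\ell)$th--scale joining $4$--paths, organized dyadically exactly as in the proof of Lemma~\ref{sumbits}: group the tuples $(v^0,v^1,v^2,v^3)$, with $v^i \in \LL_{j+1}\cap G_r^+$ obeying \eqref{vloc}, \eqref{intbowed} and \eqref{firstinc2}, by the dyadic size of $|(v^0)^*|$ and of each transverse increment $|(v^i-v^{i-1})^*|$, and over the $O(\delta^{-j})$ choices of the hyperplane $H_a$. For a fixed tuple and a fixed link $i\in\{1,2,3\}$, after replacing $v^{i-1},v^i$ by nearby basic grid points and using that $h(|v^i-v^{i-1}|)=ET(v^{i-1},v^i)$ by isotropy (with Lemma~\ref{neighbortimes} to pass between $\hT$ and $T$), Lemma~\ref{connect} bounds the relevant probability by $C_{44}\exp(-C_{45}A_{j+1}^3(\Gamma)/\sigma((v^i-v^{i-1})_1))$. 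The crux is that, for each of the three links, \eqref{intbowed} combined with \eqref{powerlike} yields
\[
  \frac{A_{j+1}^3(\Gamma)}{\sigma\big((v^i-v^{i-1})_1\big)} \;\ge\; c_0\, 2^{j-\ell}\left(\frac{\lambda}{7\delta^{\chi_1}}\right)^{j+1} t,
\]
the point being that the link lengths are of orders $\delta^{j+1}r$, $\delta^{\ell+1}r$ and $\delta^\ell r$, and the factor $\sigma(\delta^{\ell+1}r)/\sigma(\delta^j r)$ in \eqref{intbowed} together with the enhancement $\sigma_r/\sigma(\delta^j r)$ (and the analogous ratios) from \eqref{powerlike} upgrades the $j$--scale magnitude $\lambda^{j+1}$ of the allocation so that it dominates the $\ell$--scale fluctuation even of the long links $v^1\to v^2$ and $v^2\to v^3$, with the stated $2^{j-\ell}$ gain. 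Summing over $i$, over the dyadic classes (the allocation grows quadratically in each transverse dyadic scale, so the sums converge), and over $H_a$, and invoking \eqref{relsize2} to absorb the lower--order factors (including the powers of $\log r$ from the counts, which are dominated since $t$ is large), produces \eqref{joinbound} with prefactor $C_{72}(2/\delta^{\chi_2+1})^{j-\ell}(\delta/\beta^2)^j$.

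Part~(iii) would be handled identically. The intermediate path $\Gamma^{int}\colon u^0\to u^1\to u^2\to u^3$ lies on the single near--horizontal line $\Pi_{v^0v^3}$, so its link lengths are of the same three orders $\delta^{j+1}r,\delta^{\ell+1}r,\delta^\ell r$ as those of $\Gamma$, and $h(|u^i-u^{i-1}|) = ET(u^{i-1},u^i)$ still holds by isotropy; the allocation $A_{j+1}^3(\Gamma^{int})$ carries the same bowedness lower bound coming from the $4$--path $\Gamma$ to which it corresponds, so the same application of Lemma~\ref{connect} and the same dyadic summation (now over the same underlying tuples $(v^0,v^1,v^2,v^3)$, hence with the same counts) give \eqref{joinbound2}.

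The main obstacle is the displayed estimate in part~(ii): making the ``scale conversion'' quantitative, i.e.\ verifying that the comparatively small $j$--scale allocation $A_{j+1}^3$ nevertheless beats the $\ell$--scale fluctuation scale $\sigma(\delta^\ell r)$ of the long links by the full factor $2^{j-\ell}(\lambda/7\delta^{\chi_1})^{j+1}$, uniformly in $(j,\ell,r)$ with $j_2 \le j < j_1$ and $j_2 < \ell \le j-1$. Everything else is bookkeeping that runs parallel to Lemma~\ref{sumbits} and to the coarse--graining constructions already in place.
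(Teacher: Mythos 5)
Your proposal follows essentially the same route as the paper's proof: for (i) the paper also splits $6A_{j+1}^3$ into the bowedness term (bounded by the extra length of the 3--path $v^0\to v^2\to v^3$ via \eqref{adddist1}) and the sidestep term (bounded by $\Psi(\Gamma)$, i.e.\ by the second halves of the $A_{j+1}^2$'s), with the leftover $(\lambda/7)^{j+1}t\sigma_r$ piece absorbing $6C_{56}$; for (ii)--(iii) the paper likewise applies a union bound organized by dyadic classes $R_{r,j,\ell}(\nu,m_2,m_3)$ and uses \eqref{intbowed} with \eqref{powerlike} to get the analogue of your displayed bound $A_{j+1}^3(\Gamma)/\sigma(\delta^\ell r)\geq c_0\,2^{j-\ell}(\lambda/7\delta^{\chi_1})^{j+1}t$ (this is precisely the paper's \eqref{link1}), feeding it into Lemma~\ref{connect}. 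The only cosmetic difference is that the paper's dyadic classes are indexed by $|(v^0)^*|$, $|v^2-z^2|^2/\delta^{\ell+1}r$ and $|(v^3-v^0)^*|^2/\delta^\ell r$ (so the allocation is essentially constant within each class), whereas you group by the individual transverse increments; these parameterizations are interchangeable up to constants and yield the same entropy count, so your argument is correct.
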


\begin{proof}
{\it (i).} Using \eqref{adddist},
\begin{align}
  6A_{j+1}^3(\Gamma) &\leq \frac{\delta\mu}{54}\left( 3\Big[ \Upsilon_{Euc}(v^0,v^2,v^3)
    - |v^3-v^0| \Big] + 3\lambda^j\Big[ |v^3-v^0| - (v^3-v^0)_1 \Big] \right) \notag\\
  &\leq \frac{\delta\mu}{18} \left( \Big[ \Upsilon_{Euc}(\Gamma)
    - |v^3-v^0| \Big] + \frac{1}{2}\lambda^j\Psi(\Gamma) \right) \notag\\
  &\leq \sum_{i=1}^3 A_{j+1}^2(v^{i-1},v^i) - \frac14\left( \frac \lambda 7 \right)^j t\sigma_r 
    + \frac{\delta\mu}{18} \Big[ \Psi_{Euc}(\Gamma) - |v^3-v^0| \Big] \notag\\
  &\leq \sum_{i=1}^3 A_{j+1}^2(v^{i-1},v^i) - 6C_{56}
    + \frac{\delta\mu}{18} \Big[ \Psi_{Euc}(\Gamma) - |v^3-v^0| \Big],
\end{align}
where the last inequality follows from $j\leq j_1 = O(\log log r)$.

{\it (ii) and (iii).} The proof of (iii) is a slightly simplified version of the proof of (ii), so we only prove (ii). We proceed as in Lemma \ref{sumbits}.  We decompose the set of joining 4--paths according to the sizes of $|(v^0)^*|$ and $|(v^3-v^0)^*|$, and the degree of bowedness, measured by the left side of \eqref{intbowed}.  From Remark \ref{novstar}, every internally bowed $(j,\ell)$th--scale joining 4--path $\Gamma: v^0\to v^1\to v^2\to v^3$ in $G_r^+$ satisfies
\begin{equation}\label{novstar1}
  \frac{|v^2-z^2|^2}{\delta^{\ell+1}r} \geq \frac{\delta}{128\mu}\left( \frac \lambda 7\right)^{j+1} t^*(v^0)\sigma_r 
    \frac{\sigma(\delta^{\ell+1}r)}{\sigma(\delta^j r)}.
\end{equation}
Thus define for $\nu,m_2,m_3\geq 1$
\begin{align}\label{Rdef}
  &\tred{R_{r,j,\ell}(\nu,m_2,m_3)} \notag\\
  &= \Bigg\{ \Gamma: v^0\to v^1\to v^2\to v^3\Big|\, \Gamma 
    \text{ is an internally bowed $(j,\ell)$th--scale joining 4--path in $G_r^+$ such that} \notag\\
  &\qquad (2^{\nu-1}t)^{1/2}\Delta_r \leq |(v^0)^*| \leq (2^\nu  t)^{1/2}\Delta_r, \notag\\
  &\qquad 2^{m_2-1}2^{j-\ell}\frac{\delta}{128\mu}\left( \frac \lambda 7\right)^{j+1} t^*(v^0)\sigma_r 
    \frac{\sigma(\delta^{\ell+1}r)}{\sigma(\delta^j r)} < \frac{|v^2-z^2|^2}{\delta^{\ell+1}r} 
    \leq 2^{m_2}2^{j-\ell}\frac{\delta}{128\mu}\left( \frac \lambda 7\right)^{j+1} 
    t^*(v^0)\sigma_r\frac{\sigma(\delta^{\ell+1}r)}{\sigma(\delta^j r)},  \notag\\
  &\qquad 2^{m_3-1}2^{j-\ell}t^*(v^0)\sigma_r \frac{\sigma(\delta^{\ell}r)}{\sigma(\delta^{j+1}r)} < 
    \frac{|(v^3-v^0)^*|^2}{\delta^\ell r} 
    \leq 2^{m_3}2^{j-\ell}t^*(v^0)\sigma_r \frac{\sigma(\delta^{\ell}r)}{\sigma(\delta^{j+1}r)} \Bigg\}.
\end{align}
$\tred{R_{r,j,\ell}(0,m_2,m_3)}$ is defined similarly with the first condition in \eqref{Rdef} replaced by $|(v^0)^*| \leq t^{1/2}\Delta_r$.  $\tred{R_{r,j,\ell}(\nu,m_2,0)}$ is defined similarly with the last condition in \eqref{Rdef} replaced by 
\[
  \frac{|(v^3-v^0)^*|^2}{\delta^\ell r} 
    \leq 2^{j-\ell}t^*(v^0)\sigma_r \frac{\sigma(\delta^\ell r)}{\sigma(\delta^{j+1}r)}.
\]
Every internally bowed joining 4--path in $G_r^+$ is in one of these classes, since by \eqref{novstar1} we don't need classes with $m_2=0$.

Suppose $\Gamma: v^0\to v^1\to v^2\to v^3$ is in $R_{r,j,\ell}(\nu,m_2,m_3)$. We have
\begin{align}\label{link1}
  \frac{ A_{j+1}^3(\Gamma)}{\sigma(\delta^{\ell}r)} &\geq 
    \frac{\delta\mu}{324}2^{j-\ell}
    \frac{t^*(v^0)\sigma_r}{\sigma(\delta^j r)} \left( \frac{\delta}{128\mu}\left( \frac \lambda 7\right)^{j+1}  2^{m_2-1}  
    \frac{\sigma(\delta^{\ell+1}r)}{\sigma(\delta^{\ell}r)} + \lambda^{j+1} 2^{m_3-1} \right) \notag\\
  &\geq c_0 \left( \frac{\lambda}{7\delta^{\chi_1}} \right)^{j+1} 2^{j-\ell}(2^{m_2} + 2^{m_3}) 2^\nu  t.
\end{align}
Hence for each $1\leq i\leq 3$, by Lemma \ref{connect},
\begin{align}\label{linkbound}
  P\Big( \big| \hT(v^{i-1},v^i) &- h(|v^i-v^{i-1}|) \big| \geq A_{j+1}^3(\Gamma) \Big)
    \leq C_{44} \exp\left( - C_{45} \frac{ A_{j+1}^3(\Gamma)}{2\sigma(\delta^{\ell}r)} \right) \notag\\
  &\leq C_{44} \exp\left( - c_1 \left( \frac{\lambda}{7\delta^{\chi_1}} \right)^{j+1} 
    2^{j-\ell}(2^{m_2} + 2^{m_3}) 2^\nu  t \right).
\end{align}
Regarding the size of $R_{r,j,\ell}(\nu,m_2,m_3)$, the number of possible $v^0$ (necessarily in $\LL_{j+1}$) in a given $j$th--scale hyperplane is at most
\[
  2\left( \frac{2(2^\nu t)^{1/2}}{K_0\beta^{j+1}} \right)^{d-1}.
\]
The upper bound for $|(v^0)^*|$ in \eqref{Rdef} gives $t^*(v^0)\leq c_2 2^\nu t$, which with the
last upper bound in \eqref{Rdef} yields 
\[
  \left( \frac{|(v^3-v^0)^*|}{\Delta_r} \right)^2 \leq c_32^{m_3} \left( \frac{2}{\delta^{\chi_2}} \right)^{j-\ell} \delta^\ell 2^\nu t,
\]
so for a given $v^0$ the number of possible $v^3$ is at most
\[
  2\left( c_4 2^{m_3} \left( \frac{2}{\delta^{\chi_2}} \right)^{j-\ell} \frac{1}{\beta^{2(j+1)}} \delta^\ell 2^\nu t \right)^{(d-1)/2}
    = 2\left( \frac{c_4}{\beta^2} 2^{m_3} \left( \frac{2}{\delta^{\chi_2+1}} \right)^{j-\ell} 
    \left( \frac{\delta}{\beta^2} \right)^j 2^\nu t \right)^{(d-1)/2}.
\]
The upper bound for $|v^2-z^2|^2/\delta^{\ell+1}r$ in \eqref{Rdef} implies
\[
   \left( \frac{|(v^2-z^2|}{\Delta_r} \right)^2 \leq c_5 2^{m_2} \left( \frac{2}{\delta^{\chi_2}} \right)^{j-\ell} 
     \delta^\ell 2^\nu t
\]
and for given $v^0,v^3$, the point $z^2$ is determined, and then the number of possible $v^2$ is at most
\[
  2\left( \frac{c_6}{\beta^2} 2^{m_2} \left( \frac{2}{\delta^{\chi_2+1}} \right)^{j-\ell} 
    \left( \frac{\delta}{\beta^2} \right)^j 2^\nu t \right)^{(d-1)/2}.
\]
Combining these, we see that 
\[
  |R_{r,j,\ell}(\nu,m_2,m_3)| \leq \frac{c_7}{\delta^j} \left( \frac{(2^\nu t)^{1/2}}{\beta^{j+1}} \right)^{d-1} 
    \left( 2^{m_2/2} 2^{m_3/2} \left( \frac{2}{\delta^{\chi_2+1}} \right)^{j-\ell} 
    \left( \frac{\delta}{\beta^2} \right)^j 2^\nu t \right)^{d-1}.
\]
Multiplying this by \eqref{linkbound} and summing over $\nu,m_2,m_3$ gives \eqref{joinbound}.
\end{proof}

\subsection{Step 5. First stage of the $(j_1-1)$th--scale (second) iteration of coarse--graining: shifting to the $(j_1-1)$th--scale grid.}  
The current marked PG path at the start of the $(j_1-1)$th--scale iteration step is $\Gamma_{xy}^{j_1,m}$. We rename it now as 
\[
  \tred{\Gamma_{xy}^{j_1-1,0}}: p^0\to p^1\to \cdots\to p^m\to p^{m+1}.
\]
We shift certain points to the $(j_1-1)$th grid; the procedure is somewhat different from the $j_1$th--scale iteration step, as we are starting from a $j_1$th--scale marked PG path $\Gamma_{xy}^{j_1-1,0}$.  Fix $(x,y)\in X_r$ and, recalling $\mH_{xy} = \{H_{s_i},1\leq i\leq m\}$, let 
\begin{align*}
  \tred{I_{xy}} &= \Big\{ i\in\{1,\dots,m\}: H_{s_i} \text{ is a non-incidental $(j_1-1)$th--scale hyperplane in $\mH_{xy}$} \Big\}, 
\end{align*}
then relabel $\{s_i:i\in I_{xy}\cup\{1,m\}\}$ as \tred{$a_1<\cdots<a_n$}, so $H_{a_1}=H_{s_1}$ and $H_{a_n}=H_{s_m}$ are the terminal $j_1$th--scale hyperplanes, and define indices \tred{$\gamma(N)$} by $a_N=s_{\gamma(N)}$, so the $p^{\gamma(N)}, 2\leq N\leq n-1,$ are the marked points in non-incidental $(j_1-1)$th--scale hyperplanes. Observe that every $(j_1-1)$th--scale interval, including terminal ones, has at least one $j_1$th--scale hyperplane in its interior; hence $\gamma(N+1)\geq \gamma(N)+2$ for all $1\leq N<n$.

For each non-incidental $(j_1-1)$th--scale hyperplane in $\mH_{xy}$ let $\tred{b^N}=V_{j_1-1}(p^{\gamma(N)})$.  Also let $\tred{b^0}=p^0=\hat x,\tred{b^1}=p^1,\tred{b^n}=p^m$, and $\tred{b^{n+1}}=p^{m+1}=\hat y$. We shift to the $(j_1-1)$th scale grid in each non-incidental $(j_1-1)$th--scale hyperplane $H_{s_i},i\in I_{xy}$, replacing $p^{\gamma(N)}$ with $b^N$ for $2\leq N\leq n-1$, to create the updated path which we denote \tred{$\Gamma_{xy}^{j_1-1,1}$}.
Letting $\tred{\tp^i} = \mkm_{s_i}(\Gamma_{xy}^{j_1-1,1})$ (so $\tp^i=p^i$ if $i\notin I_{xy},\, \tp^i = b^{\eta^{-1}(i)}$ if $i\in I_{xy}$) , we may equivalently write this as
\[
  \Gamma_{xy}^{j_1-1,1}: \tp^0\to \tp^1\to \tp^2\to \cdots\to \tp^{m+1}.
\]
The black path from $u'$ to $v'$ in Figure \ref{Remark4-4} illustrates a segment of such a path, with $u'=b^{N-1}=\tp^{\gamma(N-1)},v'=b^N=\tp^{\gamma(N)}$ for some $N$ there; the vertices between $u'$ and $v'$ are points $p^i=\tp^i$ with $\gamma(N-1)<i<\gamma(N)$.
In the second stage of the iteration we will remove marked points in non-terminal $j_1$th--scale hyperplanes to create the path
\[
  \tred{\Gamma_{xy}^{j_1-1,2}}: b^0 \to b^1\to \cdots \to b^n\to b^{n+1}.
\]

By Lemma \ref{alltrans} we have
\begin{equation}\label{Atotal2}
  \sum_{i\in I_{xy}} A_{j_1}^1(\Gamma_{xy}^{j_1-1,0},i) \leq \frac{2}{3}\lambda^{j_1}\Big[ t\sigma_r 
   + \delta\mu\Big( \Upsilon_{Euc}\left( \Gamma_{xy}^{j_1-1,0} \right) - (\hat y - \hat x)_1 \Big)
    \Big], 
\end{equation}
while similarly to \eqref{Philoss3} and \eqref{Philoss3ha}, 
\begin{equation}\label{upgap}
  \Big| \Upsilon_{Euc}\left( \Gamma_{xy}^{j_1-1,1} \right) -\Upsilon_{Euc}\left( \Gamma_{xy}^{j_1-1,0} \right) \Big|
    \leq \frac{ \lambda^{j_1}}{2\mu} \left[ \frac{t\sigma_r}{3}
    + \delta\mu\Big( \Upsilon_{Euc}(\Gamma_{xy}^{j_1-1,0}) - (\hat y - \hat x)_1 \Big) \right]
\end{equation}
and
\begin{equation}\label{upgaph}
  \Big| \Upsilon_h\left( \Gamma_{xy}^{j_1-1,1} \right) -\Upsilon_h\left( \Gamma_{xy}^{j_1-1,0} \right) \Big|
    \leq \lambda^{j_1} \left[ \frac{t\sigma_r}{3}
    + \delta\mu\Big( \Upsilon_{Euc}(\Gamma_{xy}^{j_1-1,0}) - (\hat y - \hat x)_1 \Big) \right].
\end{equation}
(The only notable change from the derivation of \eqref{Philoss3} and \eqref{Philoss3ha} is that now, since we are shifting to the $(j_1-1)$th scale, we have the bound
\[
  |(p^i-\tp^i)^*| \leq \sqrt{d-1}K_0\beta^{j_1-1}\Delta_r,
\]
larger only by a constant $\beta^{-1}$ compared to the analogous bound in \eqref{adddist4}--\eqref{adddist2}.)
Then \eqref{Atotal2} and \eqref{upgaph} give
\begin{equation}\label{Atotal3}
  \sum_{i\in I_{xy}} A_{j_1}^1(\Gamma_{xy}^{j_1-1,0},i) + \Upsilon_h\left( \Gamma_{xy}^{j_1-1,1} \right) 
    -\Upsilon_h\left( \Gamma_{xy}^{j_1-1,0} \right) \leq \lambda^{j_1}\left[ t\sigma_r 
   + \frac 53 \delta\mu\Big( \Upsilon_{Euc}\left( \Gamma_{xy}^{j_1-1,0} \right) - (\hat y - \hat x)_1 \Big) \right].
\end{equation}
From \eqref{upgap} and \eqref{Atotal3}, analogously to \eqref{mainsplit}, we have for the event on the right in \eqref{firstit} that
\begin{align}\label{mainsplit2}
  &\bigg\{ \Upsilon_{\hT}\left( \Gamma_{xy}^{j_1-1,0} \right) - h((y-x)_1) 
    \leq -\left( 1-2\lambda^{j_1} \right) t\sigma_r + 4\delta\mu\lambda^{j_1}\Big( 
    \Upsilon_{Euc}\left( \Gamma_{xy}^{j_1-1,0} \right) - (\hat y - \hat x)_1 \Big) \bigg\} \notag\\
  &\subset \bigg\{ \Upsilon_{\hT}\left( \Gamma_{xy}^{j_1-1,1} \right) - h((y-x)_1) \leq -\left( 1-4\lambda^{j_1} \right) t\sigma_r 
    + 6\delta\mu\lambda^{j_1}\Big( \Upsilon_{Euc}\left( \Gamma_{xy}^{j_1-1,0} \right) - (\hat y - \hat x)_1 \Big)\notag\\
  &\qquad\qquad 
    + \delta\mu\Big( \Upsilon_{Euc}\left( \Gamma_{xy}^{j_1-1,1} \right) 
    -\Upsilon_{Euc}\left( \Gamma_{xy}^{j_1-1,0} \right) \Big) \bigg\} \notag\\
  &\qquad \bigcup \left\{  \Upsilon_{\hT}\left( \Gamma_{xy}^{j_1-1,0} \right) - \Upsilon_{\hT}\left( \Gamma_{xy}^{j_1-1,1} \right) 
    \leq \Upsilon_h\left( \Gamma_{xy}^{j_1-1,0} \right) 
    -\Upsilon_h\left( \Gamma_{xy}^{j_1-1,1} \right) -\sum_{i\in I_{xy}} A_{j_1}^1(\Gamma_{xy}^{j_1-1,0},i) \right\}.
\end{align}

\begin{remark}\label{tracking}
In view of Remark \ref{outline}, in the context of shifting to the grid, we can think of ``tracking'' as corresponding showing that a probability of form
\begin{align}\label{shiftnotr}
  P\Bigg( \Upsilon_{\hT}\left( \Gamma_{xy}^{j_1-1,0} \right) - &\Upsilon_{\hT}\left( \Gamma_{xy}^{j_1-1,1} \right) 
    \leq \delta\Big[ \Upsilon_h\left( \Gamma_{xy}^{j_1-1,0} \right) - \Upsilon_h\left( \Gamma_{xy}^{j_1-1,1} \right) \Big]
    - \text{ (allocations)} \notag\\
  &\text{ for some } (x,y)\in X_r \Bigg),
\end{align}
is small, or similarly with $\Upsilon_h$ replaced by its approximation $\mu\Upsilon_{Euc}$.  In \eqref{mainsplit2} the particular allocations chosen are $A_{j_1}^1(\Gamma_{xy}^{j_1-1,0},i)$.  The event in \eqref{shiftnotr} says that, as the shifting--to--the--grid process changes the $h$--length of the path, the change in $\hT$--sum fails to track even a small fraction $\delta$ of the change in $h$--length, to within the error given by the allocations.  Further, in \eqref{firstit} and on the left in \eqref{mainsplit2}, 
\begin{equation}\label{accum}
  2\lambda^{j_1} t\sigma_r + 4\delta\mu\lambda^{j_1}\Big( 
    \Upsilon_{Euc}\left( \Gamma_{xy}^{j_1-1,0} \right) - (\hat y - \hat x)_1 \Big)
\end{equation}
represents roughly the accumulated error allocations used in the completed $j_1$th--scale iteration.  Thus 
what \eqref{upgap} and \eqref{upgaph} say is that for the current shift to the grid, the change in $h$--length (or in its approximation $\mu\Upsilon_{Euc}$) is negligible in the sense of being small relative to the accumulated allocations, making tracking only a minor issue here.  This negligible--ness will not be valid for the marked--point--removal stage of the iteration, however.  

It should also be noted that from the first to second lines in \eqref{mainsplit2}, the coefficients 2, 4 become 4, 6. The difference, if it is negative, represents a reduction of the original value 

Observe that failure to track is a one--sided phenomenon---we only care about failure of the $\hT$--sum to track decreases in $h$--length created by shifting to the grid, not increases. In the point--removal stage, the $h$--length always decreases, and we care that the $\hT$--sum decreases at least proportionally.
\end{remark}

Using Lemma \ref{sumbits} we have for the last event in \eqref{mainsplit2}
\begin{align}\label{badtrans}
  P&\Bigg(  \Upsilon_{\hT}\left( \Gamma_{xy}^{j_1-1,0} \right) - \Upsilon_{\hT}\left( \Gamma_{xy}^{j_1-1,1} \right) 
    \leq \Upsilon_h\left( \Gamma_{xy}^{j_1-1,0} \right) 
    -\Upsilon_h\left( \Gamma_{xy}^{j_1-1,1} \right) - \sum_{i\in I_{xy}} A_{j_1}^1(\Gamma_{xy}^{j_1-1,0},i) \notag\\
  &\hskip 1.5cm \text{ for some } (x,y)\in X_r;\ \omega\notin J^{(0)}\cup J^{(1c)} \Bigg) \notag\\
  &\leq P\Bigg( \max\Big( \big|\hT(\tp^{i-1},\tp^i) - h(|\tp^i-\tp^{i-1}|)\big|, \big|\hT(p^{i-1},p^i) - h(|p^i-p^{i-1}|)\big| \Big) 
    \geq \frac 14 A_{j_1}^1(\Gamma_{xy}^{j_1-1,0},i)  \notag\\
  &\hskip 1.5cm \text{ for some $1\leq i\leq m+1$ and $(x,y)\in X_r$ with $\{i-1,i\}\cap I_{xy}\neq\emptyset$};\, 
    \omega\notin J^{(0)}\cup J^{(1c)} \Bigg) \notag\\
  &\leq P\Bigg( \big| \hT(v,w) - h(|w-v|) \big| \geq  A_{j_1}^2(v,w) \notag\\
  &\hskip 1.5cm \text{ for some $j_1$th--scale transition $v\to w$ with } v,w\in G_r^+\cap\LL_{j_1} \Bigg) \notag\\
  &\leq  C_{70} \exp\left( -C_{71}\left( \frac{\lambda}{7\delta^{\chi_1}} \right)^{j_1} t \right),
\end{align}
establishing tracking as desired.  Combining this with \eqref{firstit} and \eqref{mainsplit2} yields
\begin{align}\label{secondit}
  &P\Big( T(x,y) \leq h((y-x)_1) - t\sigma_r \text{ for some } (x,y)\in X_r \Big) \notag\\
  &\leq P\Bigg( \Upsilon_{\hT}(\Gamma_{xy}^{j_1-1,1}) - h((y-x)_1) \leq -\left( 1-4\lambda^{j_1} \right) t\sigma_r
    + 6\delta\mu\lambda^{j_1}\Big(  \Upsilon_{Euc}\left( \Gamma_{xy}^{j_1-1,0} \right) - (\hat y - \hat x)_1 \Big)  \notag\\
  &\hskip 2cm + \delta\mu\Big( \Upsilon_{Euc}\left( \Gamma_{xy}^{j_1-1,1} \right) 
    -\Upsilon_{Euc}\left( \Gamma_{xy}^{j_1-1,0} \right) \Big) \text{ for some } 
    (x,y)\in X_r;\ \omega\notin J^{(0)}(c_{29})\cup J^{(1c)} \Bigg) \notag\\
  &\hskip 1.2cm + c_{32}e^{-c_{33}t} + C_{70} \exp\left( -C_{71}\left( \frac{\lambda}{7\delta^{\chi_1}} \right)^{j_1} t \right).
\end{align}

Analogously to the comment after \eqref{firstit}, the increase of the coefficients 2, 4 in \eqref{firstit} to be 4, 6 in \eqref{secondit}, together with the introduction of the additional term with coefficient $\delta\mu$, represent a further reduction taken from the original bound $t\sigma_r$ in \eqref{Qrunif2}, allocated to bound errors in the first stage of the $(j_1-1)$th--scale iteration, just completed.   

\subsection{Step 6. Second stage of the $(j_1-1)$th--scale (second) coarse--graining iteration: removing marked points.} For the next update of the current marked PG path $\Gamma_{xy}^{j_1-1,1}$, we remove all the marked points in non-terminal $j_1$th--scale hyperplanes to create the updated marked PG path
\[
  \tred{\Gamma_{xy}^{j_1-1,2}: b^0 \to b^1\to \cdots \to b^n\to b^{n+1}}.
\]
Here we recall that $b^0 = \hat x, b^{n+1}=\hat y,\,b^1$ and $b^n$ lie in terminal $j_1$th--scale hyperplanes, and $b^2,\dots,b^{n-1}$ lie in $(j_1-1)$th--scale hyperplanes.  As with $p^i,\tp^i$, etc., $n$ and $b^i$ should be viewed as functions of $x,y,\omega$. 

In this second stage, the removal  of marked points always reduces the Euclidean length of the path (see Figure \ref{Dominant}), meaning $\Upsilon_{Euc}( \Gamma_{xy}^{j_1-1,1}) -\Upsilon_{Euc}( \Gamma_{xy}^{j_1-1,2} )\geq 0$, and on average the reduction in passage time, $\Upsilon_{\hT}(\Gamma_{xy}^{j_1-1,1}) - \Upsilon_{\hT}(\Gamma_{xy}^{j_1-1,2})$, should be about $\mu$ times the reduction in length.  Here a tracking failure means the actual reduction in passage time is at most $\delta\mu$ times the reduction in length, to within an allocated error; see the last probability in \eqref{secondit2}.

From \eqref{upgap} we have
\[
  \Upsilon_{Euc}\left( \Gamma_{xy}^{j_1-1,2} \right) - (\hat y - \hat x)_1
    \leq \Upsilon_{Euc}\left( \Gamma_{xy}^{j_1-1,1} \right) - (\hat y - \hat x)_1 
    \leq \frac 12 t\sigma_r + \frac 32 \Big( \Upsilon_{Euc}\left( \Gamma_{xy}^{j_1-1,0} \right) - (\hat y - \hat x)_1 \Big)
\]
and therefore, assuming $\delta$ is small,
\[
  \frac 13 \Big[ 2t\sigma_r 
    + \delta\mu\Big( \Upsilon_{Euc}\left( \Gamma_{xy}^{j_1-1,1} \right) - (\hat y - \hat x)_1 
    +  \Upsilon_{Euc}\left( \Gamma_{xy}^{j_1-1,2} \right) - (\hat y - \hat x)_1 \Big) \Big]
    \leq t\sigma_r + \delta\mu \Big(  \Upsilon_{Euc}\left( \Gamma_{xy}^{j_1-1,0} \right) - (\hat y - \hat x)_1 \Big).
\]
From this and \eqref{secondit} we obtain the setup to establish tracking:
\begin{align}\label{secondit2}
  P&\Big( T(x,y) \leq h((y-x)_1) - t\sigma_r \text{ for some } (x,y)\in X_r \Big) \notag\\
  &\leq P\bigg( \Upsilon_{\hT}(\Gamma_{xy}^{j_1-1,1}) - h((y-x)_1) \leq -\left( 1-4\lambda^{j_1} \right) t\sigma_r
    + 6\delta\mu\lambda^{j_1}\Big(  \Upsilon_{Euc}\left( \Gamma_{xy}^{j_1-1,0} \right) - (\hat y - \hat x)_1 \Big)  \notag\\
  &\hskip 2cm + \delta\mu\Big( \Upsilon_{Euc}\left( \Gamma_{xy}^{j_1-1,1} \right) 
    -\Upsilon_{Euc}\left( \Gamma_{xy}^{j_1-1,0} \right) \Big) \text{ for some } 
    (x,y)\in X_r;\ \omega\notin J^{(0)}(c_{29})\cup J^{(1c)} \bigg) \notag\\
  &\hskip 1.2cm + c_{32}e^{-c_{33}t} 
    + C_{70} \exp\left( -C_{71}\left( \frac{\lambda}{7\delta^{\chi_1}} \right)^{j_1} t \right) \notag\\
  &\leq P\bigg( \Upsilon_{\hT}(\Gamma_{xy}^{j_1-1,2}) - h((y-x)_1) \leq -\left( 1-5\lambda^{j_1} \right) t\sigma_r
    + 7\delta\mu\lambda^{j_1}\Big(  \Upsilon_{Euc}\left( \Gamma_{xy}^{j_1-1,0} \right) - (\hat y - \hat x)_1 \Big)  \notag\\
  &\hskip 2cm + \delta\mu\Big( \Upsilon_{Euc}\left( \Gamma_{xy}^{j_1-1,2} \right) 
    -\Upsilon_{Euc}\left( \Gamma_{xy}^{j_1-1,0} \right) \Big)
    \text{ for some } (x,y)\in X_r;\ \omega\notin J^{(0)}(c_{29})\cup J^{(1c)} \bigg) \notag\\
  &\hskip .8 cm + P\bigg( \Upsilon_{\hT}(\Gamma_{xy}^{j_1-1,1}) - \Upsilon_{\hT}(\Gamma_{xy}^{j_1-1,2}) \notag\\
  &\hskip 2 cm \leq - \frac{1}{3}\lambda^{j_1} \Big[ 2t\sigma_r 
    + \delta\mu\Big( \Upsilon_{Euc}\left( \Gamma_{xy}^{j_1-1,1} \right) - (\hat y - \hat x)_1 
    +  \Upsilon_{Euc}\left( \Gamma_{xy}^{j_1-1,2} \right) - (\hat y - \hat x)_1 \Big) \Big] \notag\\
  &\hskip 2cm + \delta\mu\Big( \Upsilon_{Euc}\left( \Gamma_{xy}^{j_1-1,1} \right) 
    -\Upsilon_{Euc}\left( \Gamma_{xy}^{j_1-1,2} \right) \Big)
    \text{ for some } (x,y)\in X_r;\ \omega\notin J^{(0)}(c_{29})\cup J^{(1c)} \bigg) \notag\\
  &\hskip 1.2cm + c_{32}e^{-c_{33}t} 
    + C_{70} \exp\left( -C_{71}\left( \frac{\lambda}{7\delta^{\chi_1}} \right)^{j_1} t \right).
\end{align} 

Let us consider the contribution to the difference of sums $\Upsilon_{Euc}\left( \Gamma_{xy}^{j_1-1,1} \right) -\Upsilon_{Euc}\left( \Gamma_{xy}^{j_1-1,2} \right)$, appearing in the last probability in \eqref{secondit2}, from a single $(j_1-1)$th--scale interval $\tred{I_N}=[b^N,b^{N+1}] = [\tp^{\gamma(N)},\tp^{\gamma(N+1)}]$.  Removing the marked points from the hyperplanes in the interior of $I_N$ changes the marked PG path from the \emph{full path}
\[
  \tred{\Gamma^{N,full}}:\ \tp^{\gamma(N)}\to \tp^{\gamma(N)+1}\to \tp^{\gamma(N)+2}\to \tp^{\gamma(N+1)} 
\]
to the \emph{direct path} $\tp^{\gamma(N)}\to \tp^{\gamma(N+1)}$ (that is, $b^N\to b^{N+1}$.)  We then have
\begin{equation}\label{intsplit}
  \Upsilon_{\hT}(\Gamma_{xy}^{j_1-1,1}) - \Upsilon_{\hT}(\Gamma_{xy}^{j_1-1,2}) = \sum_{N=1}^{n+1}
    \left[ \Upsilon_{\hT}\left( \Gamma^{N,full} \right) - \hT( \tp^{\gamma(N)},\tp^{\gamma(N+1)} ) \right].
\end{equation}
Given $n\geq 1$ and $\Gamma: u_1\to\dots\to u_n$ with all $u_i$ in some grid $\LL_j$, we define
\[
    S_j(u_0,\dots,u_n) = \tred{S_j(\Gamma)} = \sum_{i=1}^n A_j^2(u_{i-1},u_i).
\]
From Lemma \ref{alltrans} we have
\begin{align}\label{Sjbound}
  \sum_{N=0}^n S_{j_1}\left( \Gamma^{N,full} \right) &= \sum_{i=1}^{m+1} A_{j_1}^2(\tp^{i-1},\tp^i) 
    \leq \frac{1}{3}\lambda^j \Big[ t\sigma_r 
    + \delta\mu\Big(\Upsilon_{Euc}\left( \Gamma_{xy}^{j_1-1,1} \right) - (\hat y - \hat x)_1) \Big) \Big],
\end{align}
so the last probability in \eqref{secondit2} is bounded above by
\begin{align}\label{trackprob}
  P\Bigg( &\Upsilon_{\hT}(\Gamma_{xy}^{j_1-1,1}) - \Upsilon_{\hT}(\Gamma_{xy}^{j_1-1,2}) 
    \leq \delta\mu\Big( \Upsilon_{Euc}\left( \Gamma_{xy}^{j_1-1,1} \right) 
    -\Upsilon_{Euc}\left( \Gamma_{xy}^{j_1-1,2} \right) \Big) \notag\\
  &- \sum_{N=0}^n S_{j_1}\left( \Gamma^{N,full} \right) - \frac{1}{3}\lambda^{j_1} \Big[ t\sigma_r 
    + \delta\mu\Big( \Upsilon_{Euc}\left( \Gamma_{xy}^{j_1-1,2} \right) - (\hat y - \hat x)_1 \Big) \Big] \notag\\
  &\text{ for some } (x,y)\in X_r;\ \omega\notin J^{(0)}(c_{29})\cup J^{(1c)} \Bigg).
\end{align}
This is the tracking--failure event (see Remark \ref{outline}) for the marked--point--removal stage of the iteration, and our main task is to bound its probability. The last of the 3 terms on the right inside the probability can be viewed as part of the allocation of allowed errors. As noted in Remark \ref{tracking}, the quantity
\[
  4\lambda^{j_1} t\sigma_r
    + 6\delta\mu\lambda^{j_1}\Big(  \Upsilon_{Euc}\left( \Gamma_{xy}^{j_1-1,0} \right) - (\hat y - \hat x)_1 \Big)
\]
in the second probabilty in \eqref{secondit2} represents the accumulated error allocations used in the 1.5 iterations completed so far; the allocations for the present stage increase the 4 and 6 to 5 and 7 in the third probability in \eqref{secondit2}.  Our ability to bound the tracking--failure event is what allows us to replace the quantity
\[
  \delta\mu\Big( \Upsilon_{Euc}\left( \Gamma_{xy}^{j_1-1,1} \right) 
    -\Upsilon_{Euc}\left( \Gamma_{xy}^{j_1-1,0} \right) \Big) \quad\text{with}\quad
    \delta\mu\Big( \Upsilon_{Euc}\left( \Gamma_{xy}^{j_1-1,2} \right) 
    -\Upsilon_{Euc}\left( \Gamma_{xy}^{j_1-1,0} \right) \Big)
\]
in that second probability in \eqref{secondit2} to obtain the third probability.  We need each iteration to involve similar such replacement, so that when the iterations are complete this term becomes
\[
  \delta\mu\Big( \Upsilon_{Euc}\left( \Gamma_{xy}^{CG} \right) 
    -\Upsilon_{Euc}\left( \Gamma_{xy}^{j_1-1,0} \right) \Big)
\]
which is typically negative and can in part cancel the accumulated error allocations (see \eqref{cancel}.)

Let $\tred{\hw^i}$ be the point $\Pi_{\tp^{\gamma(N)},\tp^{\gamma(N+1)}}\cap H_{(\tp^i)_1}$; note $\hw^i$ does not necessarily lie in any $j$th--scale grid. Let $\tred{w_\perp^i}$ be the orthogonal projection of $\tp^i$ into $\Pi_{\tp^{\gamma(N)},\tp^{\gamma(N+1)}}$, noting that by \eqref{forward}, $|w_\perp^i-\hw^i|$ is much smaller than $|\tp^i-\hw^i|$.  (Note the indexing of marked points differs here from that used in Step 1 in defining $L^-(I)$. Our $\hw^i$ here has index $i$ matching that of the point $\tp^i$ in the hyperplane, whereas $w^\ell$ in Step 1 has index corresponding to the distance $\delta^\ell r$ from the left end of the interval.)

We continue considering the contribution to the difference of sums $\Upsilon_{Euc}\left( \Gamma_{xy}^{j_1-1,1} \right) -\Upsilon_{Euc}\left( \Gamma_{xy}^{j_1-1,2} \right)$ in \eqref{trackprob} from a single $(j_1-1)$th--scale interval $\tred{I_N}=[b^N,b^{N+1}] = [\tp^{\gamma(N)},\tp^{\gamma(N+1)}]$.  We split into cases according to the type of interval $I_N$.

\emph{Case 1.} $I_N$ is a short non--terminal $(j_1-1)$th--scale interval. Here $\mH_{xy}$ includes no joining hyperplanes in the interval, so it includes exactly two maximally $j_1$th--scale (sandwiching) hyperplanes there, at distance $\delta^{j_1}r$ from each end; see Figure \ref{Case1}.  We introduce the \emph{intermediate path} 
\[
  \tred{\Gamma^{N,int}}:\ w_\perp^{\gamma(N)}\to w_\perp^{\gamma(N)+1}\to 
    w_\perp^{\gamma(N)+2}\to w_\perp^{\gamma(N+1)}
\]
which has the same endpoints and satisfies $\Upsilon_{\hT}(\Gamma^{N,int}) \geq \hT(\tp^{\gamma(N)},\tp^{\gamma(N+1)})$.  To bound \eqref{trackprob} we use the expression on the right in \eqref{intsplit}.  Applying Lemma \ref{monotoneE} with $\ep=1-\delta$ yields
\[
  h(|\tp^i - \tp^{i-1}|) - h(|w_\perp^i - w_\perp^{i-1}|) 
    \geq \delta\mu\left( |\tp^i - \tp^{i-1}|-|w_\perp^i - w_\perp^{i-1}| \right) -C_{56}
\]
and therefore
\begin{equation}\label{posfrac}
  \Upsilon_h\left( \Gamma^{N,full} \right) - \Upsilon_h\left( \Gamma^{N,int} \right) \geq
    \delta\mu\left( \Upsilon_{Euc}\left( \Gamma^{N,full} \right) - | \tp^{\gamma(N+1)} - \tp^{\gamma(N)} | \right) - 3C_{56}.
\end{equation}
\begin{figure}
\includegraphics[width=16cm]{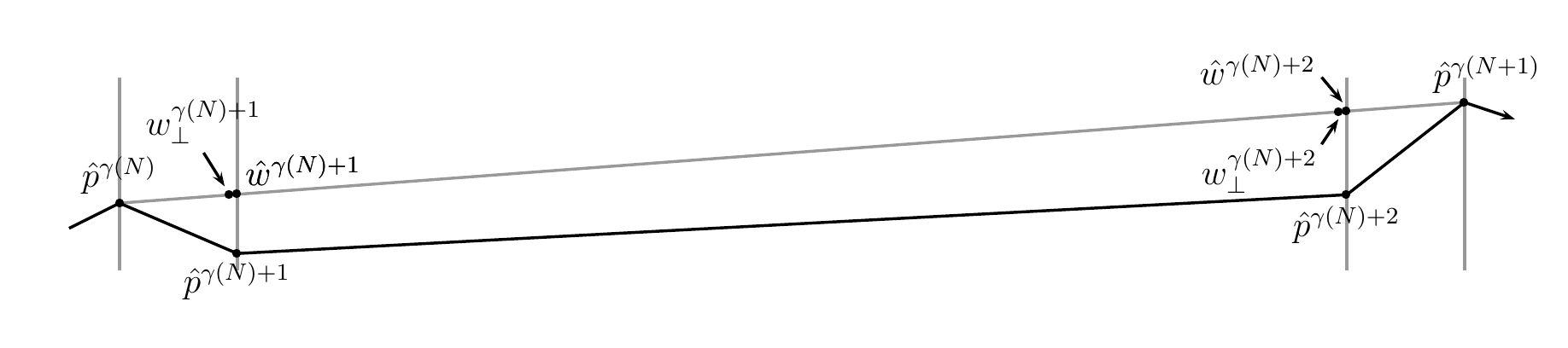}
\caption{ Diagram for Case 1 showing $(j_1-1)$th--scale endpoint hyperplanes and $j_1$th--scale sandwiching hyperplanes in a short $(j_1-1)$th--scale interval. The black path is $\Gamma^{N,full}$ and the gray is $\Gamma^{N,int}$.}
\label{Case1}
\end{figure}
This is the essential property of the intermediate path: when we look at the bowedness of the full path relative to the intermediate path (represented by the left side of \eqref{posfrac}), and relative to the direct path (right side of \eqref{posfrac}, without $\delta$), the first is at least $\delta$ fraction of the second, to within a constant. By \eqref{nearsub}, for $\omega \notin J^{(0)}(c_{29})$ the intermediate path also satisfies
\begin{equation}\label{addM}
  \hT( \tp^{\gamma(N)},\tp^{\gamma(N+1)} ) \leq \Upsilon_{\hT}\left( \Gamma^{N,int} \right) 
    + (\gamma(N+1) - \gamma(N) -1)c_{29}\log r.
\end{equation}
Similarly to \eqref{Sjbound}, since in Case 1 $\Upsilon_{Euc}\left( \Gamma^{N,int} \right) = | \tp^{\gamma(N+1)} - \tp^{\gamma(N)} |= |b^{N+1}-b^N|$, we have
\begin{align}\label{Sjbound2}
  \sum_{N=0}^n S_{j_1}\left( \Gamma^{N,int} \right) &= \sum_{i=1}^{m+1} A_{j_1}^2(w_\perp^{i-1},w_\perp^i) 
    \leq \frac{1}{4}\lambda^{j_1} \Big[ t\sigma_r 
    + \delta\mu\Big(\Upsilon_{Euc}\left( \Gamma_{xy}^{j_1-1,2} \right) - (\hat y - \hat x)_1) \Big) \Big].
\end{align}
From \eqref{intsplit}---\eqref{Sjbound2} it follows that the contribution to the tracking--failure probability \eqref{trackprob} from short non--terminal intervals is bounded by
\begin{align}
  &P\Bigg( \hT( \tp^{\gamma(N)},\tp^{\gamma(N+1)} ) - \Upsilon_{\hT}\left( \Gamma^{N,full} \right) \geq  
    S_{j_1}\left( \Gamma^{N,full} \right) + S_{j_1}\left( \Gamma^{N,int} \right) + \frac{1}{12}\lambda^{j_1}t\sigma_r  \notag\\
  &\hskip 1.5cm + \delta\mu\Big( | \tp^{\gamma(N+1)} - \tp^{\gamma(N)} | - \Upsilon_{Euc}\left( \Gamma^{N,full} \right) \Big)
    \text{ for some $N$ with $I_N$ short non--terminal}\notag\\
  &\hskip 1.5cm \text{$(j_1-1)$th--scale, and some } (x,y)\in X_r;\ \omega\notin J^{(0)}(c_{29})\cup J^{(1c)} \Bigg) \notag
\end{align}
\begin{align}
  &\leq P\Bigg( \Upsilon_{\hT}\left( \Gamma^{N,int} \right) - \Upsilon_{\hT}\left( \Gamma^{N,full} \right) \geq  
    S_{j_1}\left( \Gamma^{N,full} \right) + S_{j_1}\left( \Gamma^{N,int} \right) + \frac{1}{12}\lambda^{j_1}t\sigma_r
    - 2c_{29}\log r \notag\\
  &\hskip 1.5cm +  \Upsilon_h\left( \Gamma^{N,int} \right) 
    - \Upsilon_h\left( \Gamma^{N,full} \right) - 2C_{56} 
    \text{ for some $N$ with $I_N$ short non--terminal}\notag\\
  &\hskip 1.5cm \text{$(j_1-1)$th--scale, and some } (x,y)\in X_r;\ \omega\notin J^{(0)}(c_{29})\cup J^{(1c)} \Bigg) \notag\\
  &\leq P\Bigg( \left| \Upsilon_{\hT}\left( \Gamma^{N,full} \right) - \Upsilon_h\left( \Gamma^{N,full} \right) \right| 
    \geq S_{j_1}\left( \Gamma^{N,full} \right)
    \text{ or } \Big| \Upsilon_{\hT}\left( \Gamma^{N,int} \right) - \Upsilon_h\left( \Gamma^{N,int} \right) \Big| \notag\\
  &\hskip 1.5cm  \geq S_{j_1}\left( \Gamma^{N,int} \right) \text{ for some $N$ with $I_N$ short non--terminal}\notag\\
  &\hskip 1.5cm \text{$(j_1-1)$th--scale, and some } (x,y)\in X_r;\ \omega\notin J^{(0)}(c_{29})\cup J^{(1c)} \Bigg) \notag\\
  &\leq P\Bigg( \left| \hT(\tp^{i-1},\tp^i) - h(|\tp^i - \tp^{i-1}|) \right| \geq A_{j_1}^2(\tp^{i-1},\tp^i) \text{ or } 
    \left| \hT(w_\perp^{i-1},w_\perp^i) - h(|w_\perp^i - w_\perp^{i-1}|) \right| \notag\\
  &\hskip 1.5cm \geq A_{j_1}^2(w_\perp^{i-1},w_\perp^i) \text{ for some } \gamma(N)+1\leq i \leq \gamma(N+1), 
    \text{ for some $N$}\notag\\
  &\hskip 1.5cm \text{ with $I_N$ short non--terminal $(j_1-1)$th--scale, and some } 
    (x,y)\in X_r;\ \omega\notin J^{(0)}(c_{29})\cup J^{(1c)} \Bigg).\label{segments}
\end{align}
The terms $C_{56}$ and $c_{29}\log r$ are negligible in \eqref{segments} relative to $\lambda^{j_1}t\sigma_r$, because the latter is of the order of a power of $r$, due to $j_1=O(\log\log r)$.
All of the increments $\hT(u,v)$ in the last event have $\delta^{j_1}r\leq (v-u)_1\leq\delta^{j_1-1}r$. We can therefore bound the last probability similarly to Lemma \ref{sumbits}, with the main difference being that in place of pairs $(u,v)$ in the definitions of $R_{r,j}^{*,*}$ we need to consider 4--tuples $(u,v,w,z)$ corresponding to values of $(\tp^{\gamma(N)},\tp^{\gamma(N)+1},\tp^{\gamma(N)+2}, \tp^{\gamma(N+1)}) \in (\LL_{j_1}\cap G_r^+)^4$.  This means the exponents $d-1$ in the bounds on $|R_{r,j}^{*,*}|$ become $3(d-1)$.  The values $w_\perp^i$ are determined by $(\tp^{\gamma(N)},\tp^{\gamma(N)+1},\tp^{\gamma(N)+2}, \tp^{\gamma(N+1)})$ so their presence does not increase the necessary size of $R_{r,j}^{*,*}$ in the lemma.  As with the sets $R_{r,j}^{\nu,\ell}$ in the lemma proof, we can decompose the possible 4--tuples $(\tp^{\gamma(N)},\tp^{\gamma(N)+1},\tp^{\gamma(N)+2}, \tp^{\gamma(N+1)})$ according to the size of $|(\tp^{\gamma(N)+1}-\tp^{\gamma(N)})^*|,\,|\tp^{\gamma(N)+1} - w_\perp^{\gamma(N)+1}|$, and $|\tp^{\gamma(N)+2} - w_\perp^{\gamma(N)+2}|$ and sum over the possible size ranges.
Otherwise the proof remains the same, and we get that the last probability in \eqref{segments} is bounded by
\begin{equation}\label{segments2}
   c_{34} \exp\left( -c_{35}\left( \frac{\lambda}{7\delta^{\chi_1}} \right)^{j_1} t \right).
\end{equation}

\emph{Case 2.} $I_N$ is a terminal $(j_1-1)$th--scale interval (meaning either $[\tp^{\gamma(1)},\tp^{\gamma(2)}]$ or $[\tp^{\gamma(n)-1},\tp^{\gamma(n)}]$). 
The proof is similar to Case 1, except that the interval includes only one maximally $j_1$th--scale (sandwiching) hyperplane between the two terminal hyperplanes that are at the ends of the interval, so the full path in the interval has form $\tp^{\gamma(N)}\to \tp^{\gamma(N)+1}\to \tp^{\gamma(N+1)}$.  We obtain for the terminal--interval contribution to the tracking--failure probability \eqref{trackprob} the bound
\begin{align}\label{terminal}
  &P\Bigg( \hT( \tp^{\gamma(N)},\tp^{\gamma(N+1)} ) - \Upsilon_{\hT}\left( \Gamma^{N,full} \right) \geq  
    S_{j_1}\left( \Gamma^{N,full} \right) + S_{j_1}\left( \Gamma^{N,int} \right) \notag\\
  &\hskip 1.5cm + \delta\mu\Big( | \tp^{\gamma(N+1)} - \tp^{\gamma(N)} | - \Upsilon_{Euc}\left( \Gamma^{N,full} \right) \Big)
    \text{ for some $N$ with $I_N$ terminal}\notag\\
  &\hskip 1.5cm \text{$(j_1-1)$th--scale, and some } (x,y)\in X_r;\ \omega\notin J^{(0)}(c_{29})\cup J^{(1c)} \Bigg) \notag\\
  &\leq c_{34} \exp\left( -c_{35}\left( \frac{\lambda}{7\delta^{\chi_1}} \right)^{j_1} t \right).
\end{align}

\emph{Case 3.} $I_N$ is a long non--terminal $(j_1-1)$th--scale interval.  Such an interval, and thus also the middle increment $\tp^{\gamma(N+1)-1} - \tp^{\gamma(N)+1}$ of the 3 comprising $\Gamma^{N,full}$ in Case 1,
may be much longer than $\delta^{j_1-1}r$.  Therefore the quantity $A_{j_1}^2(\tp^{\gamma(N)+1},\tp^{\gamma(N+1)-1})$ used on the right side of \eqref{segments} for that increment is no longer large enough to give a useful bound on the probability.  To avoid this problem we will sometimes use a different intermediate path in $I_N$ which coincides with the full path between the inner joining hyperplanes, so differs from the full path only near the ends of $I_N$, while preserving the property \eqref{posfrac} (this preservation being the purpose of our choice of $L^\pm(I)$.)  Equation \eqref{posfrac} represents what we may informally call \emph{deterministic tracking}, a nonrandom analog of the tracking of Remark \ref{outline} which facilitates our desired (random) form of tracking.

Fix a long non--terminal $(j_1-1)$th--scale interval $I_N$, and suppose it has $k$th--scale length for some $k<j_1-1$. 
The hyperplanes of $\mH_{xy}$ in $I_N$ are the $(j_1-1)$th--scale ones at each endpoint, two sandwiching $j_1$th--scale hyperplanes at distance $\delta^{j_1}r$ from each end, and between 2 and 4 $j_1$th--scale joining hyperplanes between these.  These hyperplanes are at the joining points $(\tp^{\gamma(N)})_1 + \delta^{L^-(I)+1}r, (\tp^{\gamma(N)})_1 + \delta^{L^-(I)}r, (\tp^{\gamma(N+1)})_1-\delta^{L^+(I)}r$, and $(\tp^{\gamma(N+1)})_1-\delta^{L^+(I)+1}r$, with two exceptions.  First, 
if $L^-(I_N) = j_1-1$ then the first of these 4 joining hyperplanes coincides with the left--end sandwiching one, and similarly for $L^+(I_N)$, which reduces the number of $j_1$th--scale joining hyperplanes to fewer than 4, as discussed in criterion (ii) after \eqref{Lminus}.  Second, in the totally unbowed case (third option in \eqref{Lminus}) at either end of $I_N$, there is no outer joining hyperplane at that end, as in criterion (iii).
We define $4\leq \tred{V(N)}\leq 6$ to be the number of $j_1$th--scale hyperplanes in the interior of $I_N$.  

{\it Case 3a.} The bowed case (second option in \eqref{Lminus}) for both $L^\pm(I)$, with $V(N)=6$. Since $V(N)=6$ we must have $L^-(I_N) <j_1-1$ and $L^+(I_N) <j_1-1$.  Here the full path is
\[
  \Gamma^{N,full}:\ \tp^{\gamma(N)}\to \tp^{\gamma(N)+1}\to \cdots\to \tp^{\gamma(N)+6}\to \tp^{\gamma(N+1)} 
\]
and the direct path again is $\tp^{\gamma(N)}\to \tp^{\gamma(N+1)}$.  Define
\[
  \tred{\hz^i} = \begin{cases} \text{the point } \Pi_{\tp^{\gamma(N)},\tp^{\gamma(N)+3}} \cap H_{(\tp^i)_1} 
    &\text{if } i=\gamma(N)+1,\gamma(N)+2,\\
    \text{the point } \Pi_{\tp^{\gamma(N)+4},\tp^{\gamma(N+1)}} \cap H_{(\tp^i)_1} &\text{if } i=\gamma(N)+5,\gamma(N)+6,\\
    \tp^i &\text{if } i = \gamma(N),\gamma(N)+3,\gamma(N)+4,\gamma(N+1);
    \end{cases}
\]
see Figure \ref{Case3a}. This time the intermediate path is defined as
\[
 \tred{\Gamma^{N,int}}: \hz^{\gamma(N)}\to \hz^{\gamma(N)+1}\to \cdots\to \hz^{\gamma(N)+6}\to \hz^{\gamma(N+1)},
\]
which coincides with $\Gamma^{N,full}$ between the inner joining hyperplanes, which is ``most'' of $I_N$. We denote the parts of the paths outside the inner joining hyperplanes by
\[
  \tred{\Gamma^{N,full,-}}:\ \tp^{\gamma(N)}\to \tp^{\gamma(N)+1}\to \tp^{\gamma(N)+2}\to \tp^{\gamma(N)+3},
\]
\[
  \tred{\Gamma^{N,full,+}}:\ \tp^{\gamma(N)+4}\to \tp^{\gamma(N)+5}\to \tp^{\gamma(N)+6}\to \tp^{\gamma(N+1)},
\]
with $\Gamma^{N,int,\pm}$ defined similarly with $\hz^i$ in place of $\tp^i$,
so that
\begin{align}
  \Upsilon_{\hT}\left( \Gamma^{N,full} \right) - \Upsilon_{\hT}\left( \Gamma^{N,int} \right)
    &= \Big[ \Upsilon_{\hT}\left( \Gamma^{N,full,-} \right) 
    - \Upsilon_{\hT}\left(  \Gamma^{N,int,-} \right) \Big]
    + \Big[ \Upsilon_{\hT}\left( \Gamma^{N,full,+} \right) 
    - \Upsilon_{\hT}\left(  \Gamma^{N,int,+} \right) \Big]. 
\end{align}
For the corresponding quantities for means, similarly to \eqref{posfrac}, but using $\ep=1/2$ in Lemma \ref{monotoneE}, we have
\begin{align}\label{endsonly}
  \Upsilon_h&\left( \Gamma^{N,full} \right) - \Upsilon_h\left( \Gamma^{N,int} \right) \notag\\
  &= \Big[ \Upsilon_h\left( \Gamma^{N,full,-} \right) - \Upsilon_h\left(  \Gamma^{N,int,-} \right) \Big] + 
    \Big[ \Upsilon_h\left( \Gamma^{N,full,+} \right) - \Upsilon_h\left(  \Gamma^{N,int,+} \right) \Big] \notag\\
  &\geq \frac \mu 2\left[ \Upsilon_{Euc}\left( \Gamma^{N,full,-} \right) - | \tp^{\gamma(N)+3} - \tp^{\gamma(N)} | \right]
    + \frac\mu 2 \left[ \Upsilon_{Euc}\left( \Gamma^{N,full,+} \right) - | \tp^{\gamma(N+1)} - \tp^{\gamma(N)+4} | \right]
    - 6C_{56} \notag\\
  &= \frac\mu 2 \Big[\Upsilon_{Euc}\left( \Gamma^{N,full} \right) - \Upsilon_{Euc}\left( \Gamma^{N,int} \right) \Big] -6C_{56}.
\end{align}

\begin{figure}
\includegraphics[width=16cm]{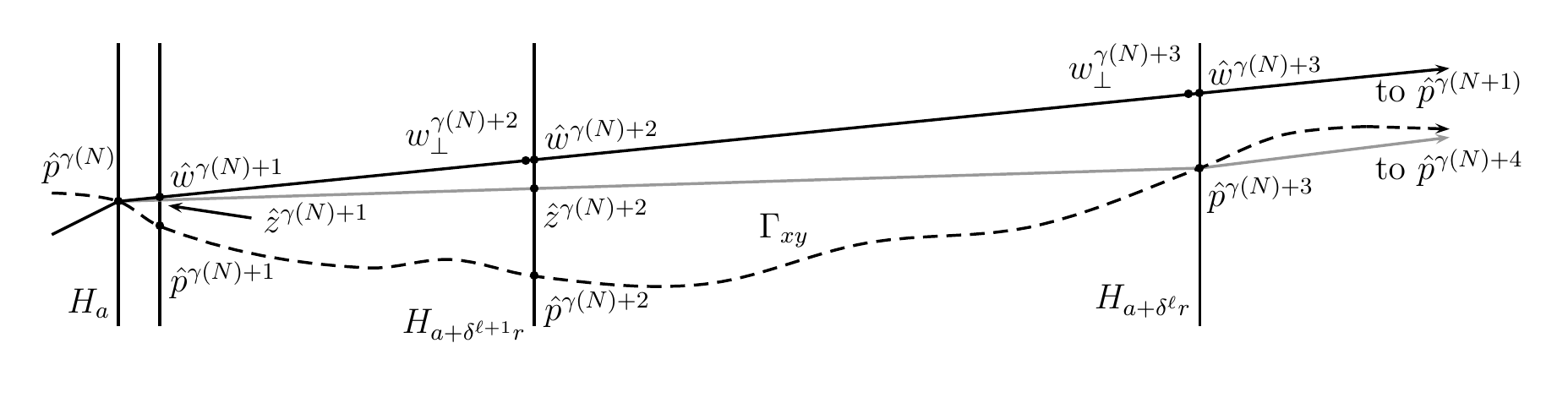}
\caption{ The left end of a long $(j_1-1)$th--scale interval $I_N=[a,b]$ with the bowed case, with $L^-(I_n)=\ell$, showing the direct path (black) and intermediate path (gray.) The full path has marked points at the $\hat p^{\gamma(N)+i}$. }
\label{Case3a}
\end{figure}
   
We claim that deterministic tracking holds in the sense that
\begin{align}\label{partbowed}
  \Upsilon_{Euc}\left( \Gamma^{N,full} \right) - \Upsilon_{Euc}\left( \Gamma^{N,int} \right) 
    \geq 3\delta \left[ \Upsilon_{Euc}\left( \Gamma^{N,full} \right) - | \tp^{\gamma(N+1)} - \tp^{\gamma(N)} | \right].
\end{align}
This and \eqref{endsonly} give the full analog of \eqref{posfrac} for Case 3a. To prove the claim, suppose $L^-(I_N)=\ell^-$ for some $k+1 \leq \ell^- < j_1-1$.  Recall $\alpha(\cdot)$ and $\theta(\cdot)$ from \eqref{alkap} and \eqref{thdef}, noting that in our present notation, $v,w,z$ there have become $\tp,\hw,\hz$, with different superscript labeling.
Using \eqref{extramain1}---\eqref{ak} with $j=j_1-1$,
\[
  \alpha(\ell^-+1) = \frac{|\tp^{\gamma(N)+2} - \hw^{\gamma(N)+2}|^2}{\delta^{\ell^-+1}r},\quad 
  \theta(\ell^-+1) = \frac{|\hz^{\gamma(N)+2} - \hw^{\gamma(N)+2}|}{|\tp^{\gamma(N)+2} - \hw^{\gamma(N)+2}|} \leq \frac 12
\]
and hence
\begin{align}\label{bowedend}
  \Upsilon_{Euc}\left( \Gamma^{N,full,-} \right) - | \tp^{\gamma(N)+3} - \tp^{\gamma(N)} | 
    &\geq \Upsilon_{Euc}\left( \tp^{\gamma(N)},\tp^{\gamma(N)+2},\tp^{\gamma(N)+3} \right) - | \tp^{\gamma(N)+3} - \tp^{\gamma(N)} | \notag\\
  &\geq \frac{|\tp^{\gamma(N)+2} - \hz^{\gamma(N)+2}|^2}{3\delta^{\ell^-+1}r} \notag\\
  &\geq \frac{\alpha(\ell^-+1)}{12}. 
\end{align}
Here the second inequality uses the fact that $\tp^{\gamma(N)+2} - \hz^{\gamma(N)+2}$ is nearly perpendicular to $\Pi_{\tp^{\gamma(N)},\tp^{\gamma(N)+3}}$, by \eqref{forward}.
In the other direction, recalling $I_N$ has $k$th--scale length (so $(\tp^{\gamma(N)+4} - \tp^{\gamma(N)+3})_1 \geq 8\delta^{k+1}r$) and 
supposing $L^+(I_N)=\ell^+$, since $k+1\leq\ell^+$ and $k+1\leq\ell^-$ we have similarly to \eqref{middle2} and \eqref{middle}
\begin{align}\label{lowbow}
  \Upsilon_{Euc}\left( \Gamma^{N,int} \right) - | \tp^{\gamma(N+1)} - \tp^{\gamma(N)} | 
    &\leq \frac{|\tp^{\gamma(N)+3} - w_\perp^{\gamma(N)+3}|^2}
    {2\delta^{\ell^-} r} + \frac{|\tp^{\gamma(N)+4} - w_\perp^{\gamma(N)+4}|^2}{2\delta^{\ell^+} r} \notag\\
  &\qquad + \frac{|(\tp^{\gamma(N)+3} - w_\perp^{\gamma(N)+3}) - (\tp^{\gamma(N)+4} - w_\perp^{\gamma(N)+4})|^2}
    {16\delta^{k+1}r} \notag\\
  &\leq \frac{|\tp^{\gamma(N)+3} - \hw^{\gamma(N)+3}|^2}{\delta^{\ell^-} r} + \frac{|\tp^{\gamma(N)+4} - \hw^{\gamma(N)+4}|^2}{\delta^{\ell^+} r}.
\end{align}
Now the first ratio on the right is $\alpha(\ell^-)$, so it follows from \eqref{ak} and \eqref{bowedend} that
\[
   \frac{|\tp^{\gamma(N)+3} - \hw^{\gamma(N)+3}|^2}{\delta^{\ell^-} r} \leq 24C_{23}\delta^{-\chi_2}
     \Big[ \Upsilon_{Euc}\left( \Gamma^{N,full,-} \right) - | \tp^{\gamma(N)+3} - \tp^{\gamma(N)} | \Big],
\]
and similarly
\[
   \frac{|\tp^{\gamma(N)+4} - \hw^{\gamma(N)+4}|^2}{\delta^{\ell^-} r} \leq 24C_{23}\delta^{-\chi_2}
     \Big[ \Upsilon_{Euc}\left( \Gamma^{N,full,+} \right) - | \tp^{\gamma(N+1)} - \tp^{\gamma(N)+4} | \Big].
\]
Hence from \eqref{lowbow} and the second equality in \eqref{adddist1},
\begin{align}
  \Upsilon_{Euc}\left( \Gamma^{N,int} \right) - | \tp^{\gamma(N+1)} - \tp^{\gamma(N)} | &\leq 24C_{23}\delta^{-\chi_2}
    \Big[ \Upsilon_{Euc}\left( \Gamma^{N,full} \right) - \Upsilon_{Euc}\left( \Gamma^{N,int} \right) \Big]
\end{align}
which implies
\begin{align}\label{bowed3}
  \Upsilon_{Euc}\left( \Gamma^{N,full} \right) - | \tp^{\gamma(N+1)} - \tp^{\gamma(N)} | &\leq (1+24C_{23}\delta^{-\chi_2})
    \Big[ \Upsilon_{Euc}\left( \Gamma^{N,full} \right) - \Upsilon_{Euc}\left( \Gamma^{N,int} \right) \Big],
\end{align}
proving the claim \eqref{partbowed} when we take $\delta$ small.  

From Lemma \ref{sumbits2}(i) we have
\begin{equation}\label{AvsS}
  6A_{j_1}^3\left(\Gamma^{N,full,-}\right) \leq S_{j_1}\left( \Gamma^{N,full,-} \right) 
    + \frac{\delta\mu}{18} \Big[ \Psi_{Euc}(\Gamma^{N,full,-}) - |\tp^{\gamma(N)+3}-\tp^{\gamma(N)}| \Big] - 6C_{56},
\end{equation}
and the equivalent ``mirror image'' of Lemma \ref{sumbits2} covers $\Gamma^{N,full,+}$ symmetrically, incorporating a symmetric definition of joining 4--path.

The analog of \eqref{addM} remains valid, and we now have the ingredients \eqref{endsonly}, \eqref{partbowed} and \eqref{AvsS} for the analog of \eqref{segments}, bounding the bowed--case contribution to the tracking--failure probability \eqref{trackprob}---see \eqref{calcs} below:
\begin{align}
  &P\Bigg( \Upsilon_{\hT}\left( \Gamma^{N,full} \right) - \hT( \tp^{\gamma(N)},\tp^{\gamma(N+1)} ) \leq  
    -S_{j_1}\left( \Gamma^{N,full,-} \right) - S_{j_1}\left( \Gamma^{N,full,+} \right) 
    - \frac{1}{3}\lambda^{j_1}t\sigma_r \notag\\
  &\hskip 0.6cm 
    + \delta\mu\Big( \Upsilon_{Euc}\left( \Gamma^{N,full} \right) - | \tp^{\gamma(N+1)} - \tp^{\gamma(N)} | \Big)
    \text{ for some $N$ with $V(N)=6$}\notag\\
  &\hskip 0.6cm \text{and $I_N$ long non--terminal $(j_1-1)$th--scale, in the bowed case of \eqref{Lminus}} \notag\\
  &\hskip .6cm \text{for both of $L^\pm(I_N)$, for some } (x,y)\in X_r;\ \omega\notin J^{(0)}(c_{29})\cup J^{(1c)} \Bigg) \notag\\
  &\leq P\Bigg(  \Big[ \Upsilon_{\hT}\left( \Gamma^{N,full,-} \right) - \Upsilon_{\hT}\left( \Gamma^{N,int,-} \right) \Big] +
    \Big[ \Upsilon_{\hT}\left( \Gamma^{N,full,+} \right) - \Upsilon_{\hT}\left( \Gamma^{N,int,+} \right) \Big] \notag\\
  &\hskip 1 cm \leq  -6A_{j_1}^3(\Gamma^{N,full,-}) - 6A_{j_1}^3(\Gamma^{N,full,+}) 
    + \Upsilon_h\left( \Gamma^{N,full} \right) - \Upsilon_h\left( \Gamma^{N,int} \right) +6c_{29}\log r + 18C_{56} \notag\\
  &\hskip 1cm 
    \text{$- \frac{1}{3}\lambda^{j_1}t\sigma_r$ for some $N$ with $V(N)=6$ and $I_N$ long non--terminal $(j_1-1)$th--scale, 
    in the}\notag\\
  &\hskip 1cm \text{bowed case of \eqref{Lminus} for both of $L^\pm(I_N)$, and for some } 
    (x,y)\in X_r;\ \omega\notin J^{(0)}(c_{29})\cup J^{(1c)} \Bigg) \notag\\
  &\leq \sum_{\ell^-=k+1}^{j_1-2}\,\sum_{\ell^+=k+1}^{j_1-2}
    P\Bigg( \left| \hT(\tp^{i-1},\tp^i) - h(|\tp^i - \tp^{i-1}|) \right| \geq A_{j_1}^3(\Gamma^{N,full,\pm})  \text{ or } \notag\\
  &\hskip 1cm \left| \hT(\hz^{i-1},\hz^i) - h(|\hz^i - \hz^{i-1}|) \right| \geq A_{j_1}^3(\Gamma^{N,full,\pm}) \text{ for some } 
    \gamma(N)+1\leq i \leq \gamma(N+1) \notag\\
  &\hskip 1cm \text{with $i\neq\gamma(N)+4$, for some $N$ with $V(N)=6$ and $I_N$ long non--terminal}  \notag\\
  &\hskip 1cm \text{$(j_1-1)$th--scale with $L^-(I_N)=\ell^-$ and $L^+(I_N)=\ell^+$, 
    in the bowed case of  } \notag\\
  &\hskip 1cm \text{\eqref{Lminus} for both of $L^\pm(I_N)$, and some } 
    (x,y)\in X_r;\ \omega\notin J^{(0)}(c_{29})\cup J^{(1c)} \Bigg). \label{segments6}
\end{align}
Here the first inequality uses that 
\[
  \hT( \tp^{\gamma(N)},\tp^{\gamma(N+1)} ) \leq \Upsilon_{\hT}\left( \Gamma^{N,int,-} \right) 
    + \Upsilon_{\hT}\left( \Gamma^{N,int,+} \right) + 6c_{29}\log r,
\]
and that from \eqref{AvsS}, \eqref{partbowed}, and then \eqref{endsonly},
\begin{align}\label{calcs}
  -S_{j_1}&\left( \Gamma^{N,full,-} \right) -S_{j_1}\left( \Gamma^{N,full,+} \right) 
    +\delta\mu\Big( \Upsilon_{Euc}\left( \Gamma^{N,full} \right) - | \tp^{\gamma(N+1)} - \tp^{\gamma(N)} | \Big) \notag\\
  &\leq -6A_{j_1}^3(\Gamma^{N,full,-}) - 6A_{j_1}^3(\Gamma^{N,full,+})
    +\frac{10}{9} \delta\mu\Big( \Upsilon_{Euc}\left( \Gamma^{N,full} \right) - | \tp^{\gamma(N+1)} - \tp^{\gamma(N)} | \Big)
    + 12C_{56}\notag\\
  &\leq -6A_{j_1}^3(\Gamma^{N,full,-}) - 6A_{j_1}^3(\Gamma^{N,full,+})
    +  \frac{10}{27}\mu \Big[ \Upsilon_{Euc}\left( \Gamma^{N,full} \right) 
    - \Upsilon_{Euc}\left( \Gamma^{N,int} \right) \Big] + 12C_{56}\notag\\
  &\leq -6A_{j_1}^3(\Gamma^{N,full,-}) - 6A_{j_1}^3(\Gamma^{N,full,+})
    + \Upsilon_h\left( \Gamma^{N,full} \right) - \Upsilon_h\left( \Gamma^{N,int} \right) + 18C_{56}.
\end{align}
For the $\pm$ in the last event in \eqref{segments6}, the $-$ applies to $\gamma(N)+1\leq i\leq\gamma(N)+3$ and the $+$ applies to $\gamma(N)+5\leq i\leq\gamma(N+1)$.  An application of Lemma \ref{sumbits2}(ii) and (iii) with $j=j_1-1$, followed by summing over $\ell^\pm$, bounds the last (tracking--failure) probability in \eqref{segments6} by
\begin{align}\label{case3a}
  &\left[ \sum_{\ell=1}^{j_1-2} C_{72}\left( \frac{2}{\delta^{\chi_2+1}} \right)^{j_1-1-\ell} 
    \left( \frac{\lambda\delta}{\beta^2} \right)^{j_1-1} \exp\left( - C_{73} \left( \frac{\lambda}{\delta^{\chi_1}} \right)^{j_1-1}
    2^{j_1-1-\ell}t \right) \right]^2 \notag\\
  &\qquad \leq c_{36} \left( \frac{\lambda\delta}{\beta^2} \right)^{2(j_1-1)} \exp\left( - C_{73}
    \left( \frac{\lambda}{\delta^{\chi_1}} \right)^{j_1-1} t \right).
\end{align}

{\it Case 3b.} The bowed case (second option in \eqref{Lminus}) for both $L^\pm(I)$, with $V(N)<6$.  This means we have at least one of $L^\pm(I_N)=j_1-1$.  If for example $L^-(I_N)=j_1-1$, then what were in Case 3a the two points $\tp^{\gamma(N)+1},\tp^{\gamma(N)+2}$ are now the same point, so effectively there is no separate point $\tp^{\gamma(N)+1}$. Instead we have only the equivalent of $\tp^{\gamma(N)},\tp^{\gamma(N)+2},\tp^{\gamma(N)+3}$, so joining 4--paths become joining 3--paths.  This has no significant effect on the arguments, including Lemma \ref{sumbits2}, other than some simplifications, and the bound in \eqref{case3a} still applies for the corresponding contribution to the tracking--failure probability \eqref{trackprob}:
\begin{align}\label{case3b}
  &P\Bigg( \hT( \tp^{\gamma(N)},\tp^{\gamma(N+1)} ) - \Upsilon_{\hT}\left( \Gamma^{N,full} \right) \geq  
    S_{j_1}\left( \Gamma^{N,full,-} \right) + S_{j_1}\left( \Gamma^{N,full,+} \right) \notag\\
  &\hskip 0.6cm 
    + \delta\mu\Big( | \tp^{\gamma(N+1)} - \tp^{\gamma(N)} | - \Upsilon_{Euc}\left( \Gamma^{N,full} \right) \Big)
    \text{ for some $N$ with $V(N)<6$}\notag\\
  &\hskip 0.6cm \text{and $I_N$ long non--terminal $(j_1-1)$th--scale, in the bowed case of \eqref{Lminus}} \notag\\
  &\hskip .6cm \text{for both of $L^\pm(I_N)$, and for some } (x,y)\in X_r;\ \omega\notin J^{(0)}(c_{29})\cup J^{(1c)} \Bigg) \notag\\
  &\leq c_{36} \left( \frac{\lambda\delta}{\beta^2} \right)^{2(j_1-1)} \exp\left( - C_{73}
    \left( \frac{\lambda}{\delta^{\chi_1}} \right)^{j_1-1} t \right).
\end{align}

{\it Case 3c.} The forward case (first option in \eqref{Lminus}) for both $L^\pm(I_N)$.  Here we have $V(N)=4$ as there are only inner joining points, at distance $\delta^{j_1}r$ from each end of $I_N$.  As before the direct path is $\tp^{\gamma(N)}\to \tp^{\gamma(N+1)}$, and the full path is 
\[
  \tred{\Gamma^{N,full}}:\ \tp^{\gamma(N)}\to \tp^{\gamma(N)+1}\to \cdots\to \tp^{\gamma(N)+4}\to \tp^{\gamma(N+1)};
\]
by \eqref{nearsub}, for $\omega\notin J^{(0)}(c_{29})$ we have
\begin{equation}\label{nearsub2}
  \Upsilon_{\hT}\left( \Gamma^{N,full} \right) - \hT( \tp^{\gamma(N)},\tp^{\gamma(N+1)} ) \geq -4c_{29}\log r.
\end{equation}
We claim that in the forward case we have
\begin{equation}\label{notrack}
  S_{j_1}\left( \Gamma^{N,full} \right) \geq
    \delta\mu\Big( \Upsilon_{Euc}\left( \Gamma^{N,full} \right) - | \tp^{\gamma(N+1)} - \tp^{\gamma(N)} | \Big)
    + 4c_{29}\log r.
\end{equation}
By \eqref{nearsub2}, this means that in the forward case there is no tracking failure:
\begin{align}\label{nofail}
  &P\Bigg( \Upsilon_{\hT}\left( \Gamma^{N,full} \right) - \hT( \tp^{\gamma(N)},\tp^{\gamma(N+1)} ) \leq  
    -S_{j_1}\left( \Gamma^{N,full} \right) \notag\\
  &\hskip 0.6cm + \delta\mu\Big( \Upsilon_{Euc}\left( \Gamma^{N,full} \right) - | \tp^{\gamma(N+1)} - \tp^{\gamma(N)} | \Big)
    \text{ for some $N$ with}\notag\\
  &\hskip 0.6cm \text{$I_N$ long non--terminal $(j_1-1)$th--scale, in the forward case of \eqref{Lminus}} \notag\\
  &\hskip .6cm \text{for both of $L^\pm(I_N)$, and for some } (x,y)\in X_r;\ 
    \omega\notin J^{(0)}(c_{29})\cup J^{(1c)} \Bigg) = 0.
\end{align}
To prove \eqref{notrack}, we first observe that from the definition \eqref{Aj2},
\begin{align}\label{Sjlower}
  S_{j_1}\left( \Gamma^{N,full} \right) \geq \frac{1}{4}
    \left( \frac \lambda 7 \right)^{j_1} \left( t^*(\tp^{\gamma(N)}) + t^*(\tp^{\gamma(N+1)} \right) \sigma_r.
\end{align}
From \eqref{adddist1} and \eqref{Lminus}, since $k<j_1-1$,
\begin{align}\label{pieces}
  \Upsilon_{Euc}&\left( \Gamma^{N,full} \right) - | \tp^{\gamma(N+1)} - \tp^{\gamma(N)} | \notag\\
  &\leq \frac 12 \sum_{i=\gamma(N)+1}^{\gamma(N+1)} \frac{ |(\tp^i - w_\perp^i) - (\tp^{i-1} - w_\perp^{i-1})|^2 }
    { |w_\perp^i - w_\perp^{i-1}| } \notag\\
  &\leq (1-2\delta) \Bigg[ \frac{|\tp^{\gamma(N)+1} - w_\perp^{\gamma(N)+1}|^2}{\delta^{j_1}r}
    + \frac{( |\tp^{\gamma(N)+1} - w_\perp^{\gamma(N)+1}| + |\tp^{\gamma(N)+2} - w_\perp^{\gamma(N)+2}| )^2}
    {(1-\delta)\delta^{j_1-1}r} \notag\\
  &\hskip 2cm + \frac{( |\tp^{\gamma(N)+2} - w_\perp^{\gamma(N)+2}| + |\tp^{\gamma(N)+3} - w_\perp^{\gamma(N)+3}| )^2}
    {(1-2\delta)10\delta^{k+1}r} \notag\\
  &\hskip 2cm + \frac{( |\tp^{\gamma(N)+3} - w_\perp^{\gamma(N)+3}| + |\tp^{\gamma(N)+4} - w_\perp^{\gamma(N)+4}| )^2}
    {(1-\delta)\delta^{j_1-1}r} + \frac{|\tp^{\gamma(N)+4} - w_\perp^{\gamma(N)+4}|^2}{\delta^{j_1}r} \Bigg] \notag\\
  &\leq \frac{|\tp^{\gamma(N)+1} - w_\perp^{\gamma(N)+1}|^2}{\delta^{j_1}r}
    + \frac{|\tp^{\gamma(N)+4} - w_\perp^{\gamma(N)+4}|^2}{\delta^{j_1}r}
    + \frac{6}{\delta^{j_1-1}r} \max_{1\leq i\leq 4} |\tp^{\gamma(N)+i} - w_\perp^{\gamma(N)+i}|^2 \notag\\
  &\leq \frac{1+6\delta}{16\mu} \left( \frac \lambda 7\right)^{j_1}
    \left( t^*(\tp^{\gamma(N)}) + t^*(\tp^{\gamma(N+1)} \right)\sigma_r.
\end{align}
Since \eqref{j1} ensures the right side of \eqref{Sjlower} is much larger than $\log r$, the claim \eqref{notrack} follows from \eqref{Sjlower} and \eqref{pieces}.

{\it Case 3d}.  The totally unbowed case (third option in \eqref{Lminus}) for both $L^\pm(I_N)$.  Here we have $V(N)=4$ as there are the sandwiching hyperplanes at distance $\delta^{j_1}r$ from each end of $I_N$, and inner joining hyperplanes at distance $\delta^{k+1}r$ from each end, with $k+1\leq j_1-1$ such that $I_N$ is of $k$th--scale length; there are no outer joining hyperplanes.  See Figure \ref{Case3d}. From \eqref{alphagrow} (valid now for $k+1$ in place of $\ell+1$) and \eqref{Lminus}, this means that
\begin{align}\label{totun}
  \frac{|\tp^{\gamma(N)+2} - w_\perp^{\gamma(N)+2}|^2}{\delta^{k+1}r} 
    &\geq \frac 12 \max\Bigg( 2^{j_1-k-1} \frac{\sigma(\delta^{k+1}r)}{\sigma(\delta^{j_1}r)} 
    \frac{|\tp^{\gamma(N)+1} - w_\perp^{\gamma(N)+1}|^2}{\delta^{j_1}r}, \notag\\
  &\hskip 1.8cm 2^{j_1-k-2} \frac{\sigma(\delta^{k+1}r)}{\sigma(\delta^{j_1-1}r)}
    \frac{\delta}{16\mu}\left( \frac \lambda 7\right)^{j_1}t^*(\tp^{\gamma(N)})\sigma_r \Bigg).
\end{align}
Here the second term in the max comes from the fact that to come under the third (totally unbowed) option in \eqref{Lminus}, we must have $2^{k+1}\kappa(k+1) \geq 2^{j_1-1}\kappa(j_1-1)$.
We note that the definition of the totally unbowed case gives \eqref{totun} for $\hw^i$ in place of $w_\perp^i$, but by \eqref{forward} this only changes each of the 4 terms in \eqref{totun} by a factor $1+o(1)$ as $r\to\infty$, so we have accounted for this via the factor 1/2 in front of the max.  The bound \eqref{totun} is for the left end of $I_N$; a symmetric bound is valid for the right end.
As before, the direct path is $\tp^{\gamma(N)}\to \tp^{\gamma(N+1)}$, and the full path is 
\[
  \tred{\Gamma^{N,full}}:\ \tp^{\gamma(N)}\to \tp^{\gamma(N)+1}\to \cdots\to \tp^{\gamma(N)+4}\to \tp^{\gamma(N+1)},
\]
but this time the intermediate path is 
\[
  \tred{\Gamma^{N,int}}:\ w_\perp^{\gamma(N)}\to w_\perp^{\gamma(N)+1}\to \cdots\to 
    w_\perp^{\gamma(N)+4}\to w_\perp^{\gamma(N+1)}
\]
These points are collinear so $\Upsilon_{Euc}(\Gamma^{N,int}) = |\tp^{\gamma(N+1)} - \tp^{\gamma(N)}|$.

\begin{figure}
\includegraphics[width=17cm]{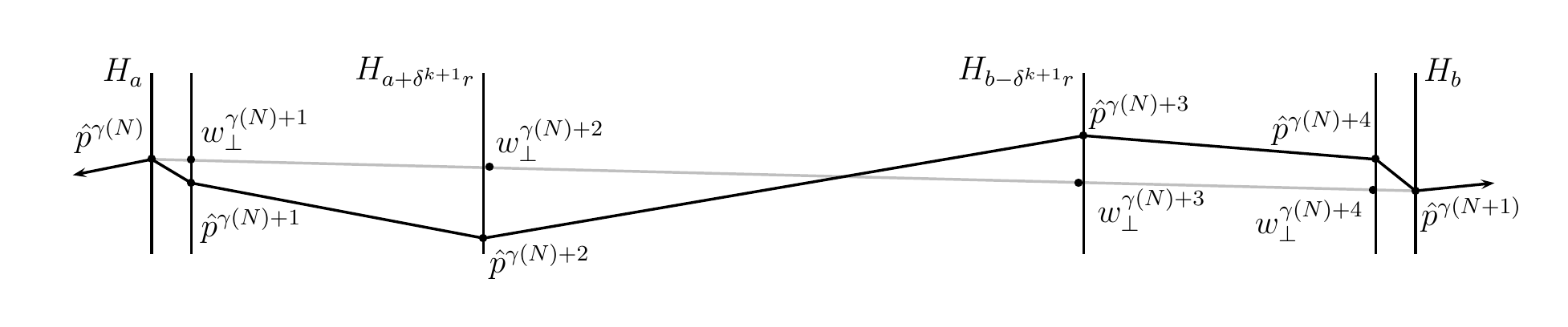}
\caption{ A long $(j_1-1)$th--scale interval $I_N=[a,b]$ of $k$th--scale length with the totally unbowed case, with $L^-(I_n)=k+1$, showing the full path (black) and intermediate path (gray.) The full path has marked points at the $\hat p^{\gamma(N)+i}$. }
\label{Case3d}
\end{figure}

We use a special allocation for the totally unbowed case, the same for all 10 links of $\Gamma^{N,full}$ and $\Gamma^{N,int}$, defined (when $I_N$ is of $k$th--scale length) as
\[
  \tred{A_j^4(\Gamma^{N,full})} = \frac{\delta\mu}{540} 
    \left( \frac{|\tp^{\gamma(N)+2} - w_\perp^{\gamma(N)+2}|^2}{\delta^{k+1}r}
    + \frac{|\tp^{\gamma(N)+3} - w_\perp^{\gamma(N)+3}|^2}{\delta^{k+1}r} + \lambda^j 
    \frac{|(\tp^{\gamma(N+1)} - \tp^{\gamma(N)})^*|^2}{3\delta^k r} \right).
\]
Now $\Upsilon_{Euc}\left( \Gamma^{N,int} \right) = |\tp^{\gamma(N+1)} - \tp^{\gamma(N)}|$ so by Lemma \ref{monotoneE},
\[
  \Upsilon_h\left( \Gamma^{N,full} \right) - \Upsilon_h\left( \Gamma^{N,int} \right)
    \geq \frac \mu 2 \Big[ \Upsilon_{Euc}\left( \Gamma^{N,full} \right)
    - |\tp^{\gamma(N+1)} - \tp^{\gamma(N)}| \Big] - 4C_{56}.
\]
while from\eqref{adddist1},
\begin{equation}\label{Sjlower2}
  S_{j_1}(\Gamma^{N,full}) \geq \lambda^{j_1}\frac{\delta\mu}{18} \Big[ \Upsilon_{Euc}\left( \Gamma^{N,full} \right)
    - (\tp^{\gamma(N+1)} - \tp^{\gamma(N)})_1 \Big] + \frac{1}{3}\left(\frac \lambda 7 \right)^{j_1}t\sigma_r .
\end{equation}
From these we get deterministic tracking:
\begin{align}\label{dtrac}
  10&A_{j_1}^4(\Gamma^{N,full}) \notag\\
  &\leq \frac{\delta\mu}{54} \Bigg( 3\Big[ \Upsilon_{Euc}\left( \Gamma^{N,full} \right)
    - |\tp^{\gamma(N+1)} - \tp^{\gamma(N)}| \Big] \notag\\
  &\hskip 2cm + 3\lambda^{j_1}\Big[ |\tp^{\gamma(N+1)} - \tp^{\gamma(N)}|
    - (\tp^{\gamma(N+1)} - \tp^{\gamma(N)})_1 \Big] \Bigg) \notag\\
  &\leq \left( \frac 12 - \delta \right)\mu \Big[ \Upsilon_{Euc}\left( \Gamma^{N,full} \right)
    - |\tp^{\gamma(N+1)} - \tp^{\gamma(N)}| \Big] \notag\\
  &\hskip 2cm + \lambda^{j_1}\frac{\delta\mu}{18} \Big[ \Upsilon_{Euc}\left( \Gamma^{N,full} \right)
    - (\tp^{\gamma(N+1)} - \tp^{\gamma(N)})_1 \Big] \notag\\
  &\leq \Upsilon_h\left( \Gamma^{N,full} \right) - \Upsilon_h\left( \Gamma^{N,int} \right) 
    - \delta\mu \Big[ \Upsilon_{Euc}\left( \Gamma^{N,full} \right) - |\tp^{\gamma(N+1)} - \tp^{\gamma(N)}| \Big]  \notag\\
  &\hskip 2cm + S_{j_1}(\Gamma^{N,full}) - \frac{1}{6}\left(\frac \lambda 7 \right)^{j_1}t\sigma_r.
\end{align}
We now can establish the analog of \eqref{segments} and \eqref{segments6} for the totally--unbowed--case contribution to the tracking--failure probability \eqref{trackprob}, using \eqref{nearsub} and \eqref{dtrac}:
\begin{align}
  &P\Bigg( \Upsilon_{\hT}\left( \Gamma^{N,full} \right) - \hT( \tp^{\gamma(N)},\tp^{\gamma(N+1)} ) \leq  
    - S_{j_1}\left( \Gamma^{N,full} \right) + \delta\mu\Big( \Upsilon_{Euc}\left( \Gamma^{N,full} \right)  
    - | \tp^{\gamma(N+1)} - \tp^{\gamma(N)} | \Big) \notag\\
  &\hskip 0.6cm \text{ for some $N$ with $I_N$ long non--terminal $(j_1-1)$th--scale, in the totally unbowed case of}\notag\\
  &\hskip 0.6cm \text{\eqref{Lminus} for both of $L^\pm(I_N)$, 
    for some } (x,y)\in X_r;\ \omega\notin J^{(0)}(c_{29})\cup J^{(1c)} \Bigg) \notag\\
  &\leq P\Bigg( \Upsilon_{\hT}\left( \Gamma^{N,full} \right) - \Upsilon_{\hT}\left( \Gamma^{N,int} \right) \leq  
    \Upsilon_h\left( \Gamma^{N,full} \right) - \Upsilon_h\left( \Gamma^{N,int} \right) 
    - 10A_{j_1}^4(\Gamma^{N,full})  - \frac{1}{6}\left(\frac \lambda 7 \right)^{j_1}t\sigma_r \notag\\
  &\hskip 1.2cm 
     + 4c_{29}\log r \text{ for some $N$ with $I_N$ long non--terminal $(j_1-1)$th--scale, in the totally unbowed}\notag\\
  &\hskip 1.2cm \text{case of \eqref{Lminus} for both of $L^\pm(I_N)$, for some } 
    (x,y)\in X_r;\ \omega\notin J^{(0)}(c_{29})\cup J^{(1c)} \Bigg) \notag
\end{align}
\begin{align}
  &\leq P\Bigg( \Big| \hT( \tp^{\gamma(N)+i-1},\tp^{\gamma(N)+i} ) - h\big(|\tp^{\gamma(N)+i} - \tp^{\gamma(N)+i-1}|\big) \Big|  \geq  
    A_{j_1}^4(\Gamma^{N,full}) \text{ or} \notag\\
  &\hskip 1.2cm \Big| \hT( w_\perp^{\gamma(N)+i-1},w_\perp^{\gamma(N)+i} ) - h\big(|w_\perp^{\gamma(N)+i} - w_\perp^{\gamma(N)+i-1}|\big) 
    \Big|  \geq  A_{j_1}^4(\Gamma^{N,full}) \text{ for some $1\leq i\leq 5$}\notag\\
  &\hskip 1.2cm \text{and some $N$ with $I_N$ long non--terminal $(j_1-1)$th--scale, in the totally unbowed case} \notag\\
  &\hskip 1.2cm \text{of \eqref{Lminus} for both of $L^\pm(I_N)$, for some } 
    (x,y)\in X_r;\ \omega\notin J^{(0)}(c_{29})\cup J^{(1c)} \Bigg).  \label{segmentsun}
\end{align}
Here we have again used $j_1=O(\log \log r)$, from \eqref{j1}, to ensure $(\lambda/7)^{j_1}t\sigma_r/6\geq 4c_{29}\log r$.
With the second term of the max in \eqref{totun} in mind, suppose that for some $N$ and some $\nu,m_0,m^*\geq 1$ we have
\begin{align}\label{nusize}
  (2^{\nu-1}t)^{1/2}\Delta_r < |(\tp^{\gamma(N)})^*| \leq (2^\nu t)^{1/2}\Delta_r,
\end{align}
\begin{align}\label{m0size}
  2^{m_0-1} 2^{j_1-k-2} \frac{\sigma(\delta^{k+1}r)}{\sigma(\delta^{j_1-1}r)}
    \frac{\delta}{16\mu}\left( \frac \lambda 7\right)^{j_1}t^*(\tp^{\gamma(N)})\sigma_r
    &\leq \frac{|\tp^{\gamma(N)+2} - w_\perp^{\gamma(N)+2}|^2}{\delta^{k+1}r}
    + \frac{|\tp^{\gamma(N)+3} - w_\perp^{\gamma(N)+3}|^2}{\delta^{k+1}r} \notag\\
  &\leq 2^{m_0} 2^{j_1-k-2} \frac{\sigma(\delta^{k+1}r)}{\sigma(\delta^{j_1-1}r)}
    \frac{\delta}{16\mu}\left( \frac \lambda 7\right)^{j_1}t^*(\tp^{\gamma(N)})\sigma_r,
\end{align}
and 
\begin{align}\label{mstar}
  2^{m^*-1} 2^{j_1-k-2} \frac{\sigma(\delta^{k+1}r)}{\sigma(\delta^{j_1-1}r)}
    \frac{\delta}{16\mu} \left( \frac \lambda 7\right)^{j_1} t^*(\tp^{\gamma(N)})\sigma_r 
    &\leq  \lambda^{j_1-1} \frac{|(\tp^{\gamma(N+1)} - \tp^{\gamma(N)})^*|^2}{\delta^k r} \notag\\
  &\leq 2^{m^*} 2^{j_1-k-2} \frac{\sigma(\delta^{k+1}r)}
    {\sigma(\delta^{j_1-1}r)}
    \frac{\delta}{16\mu}\left( \frac \lambda 7\right)^{j_1}t^*(\tp^{\gamma(N)})\sigma_r
\end{align}
Then using \eqref{powerlike},
\begin{align}\label{expcost}
  \frac{ A_{j_1}^4(\Gamma^{N,full}) }{\sigma(\delta^kr)} &\geq c_{37} (2^{m_0} + 2^{m^*}) 2^{j_1-k} 
    \left( \frac{\lambda}{7\delta^{\chi_1}} \right)^{j_1} 2^\nu t,
\end{align}
and by Lemma \ref{connect}, for every possible value $(u^1,\dots,u^6)$ of $(\tp^{\gamma(N)},\dots,\tp^{\gamma(N+1)})$ and every link $(u^{i-1},u^i)$, we have 
\begin{equation}\label{eachlink}
  P\Bigg( \Big| \hT(u^i-u^{i-1}) - h\big(|u^{i-1}-u^i|\big) \Big|  \geq  A_{j_1}^4(u^1,\dots,u^6) \Bigg)
    \leq C_{44}\exp\left( - C_{45} \frac{ A_{j_1}^4(u^1,\dots,u^6) }{\sigma(\delta^kr)} \right).
\end{equation}
It is important here that the lower bound \eqref{expcost} not depend on the length scale $k$ of $I_N$, except through the factor $2^{j_1-k}$ which is always at least 1.
Similarly to the entropy bounds in Lemmas \ref{sumbits} and \ref{sumbits2}, and in Case 3c, we see that the number of possible choices of $\Gamma^{N,full}$ satisfying \eqref{nusize}---\eqref{mstar} is at most
\begin{equation}\label{entbd}
  \frac{c_{38}}{\delta^{2j_1}}\cdot
  \left( \frac{2^\nu t}{\beta^{2j_1}} \right)^{(d-1)/2}\cdot \left( 2^{m^*}2^\nu \left( \frac{2}{\delta^{1+\chi_2}} \right)^{j_1-k} 
    \left( \frac{\delta}{7\beta^2} \right)^{j_1} \right)^{(d-1)/2}
    \cdot \Bigg( 2^{m_0}2^\nu \left( \frac{2}{\delta^{1+\chi_2}} \right)^{j_1-k} 
    \left( \frac{\lambda\delta}{7\beta^2} \right)^{j_1}
    t \Bigg)^{2(d-1)}.
\end{equation}
Here the dots separate bounds for the number of choices (up to a constant) of endpoint hyperplanes $((\tp^{\gamma(N)})_1,(\tp^{\gamma(N+1)})_1)$ and then of $\tp^{\gamma(N)},\tp^{\gamma(N+1)}$, and finally of $(\tp^{\gamma(N)+1},\dots,\tp^{\gamma(N)+4})$. For $|(\tp^{\gamma(N)})^*| \leq t^{1/2}\Delta_r$, \eqref{expcost} and \eqref{entbd} remain valid for $m_0,m^*\geq 1$ with $2^\nu$ replaced by 1. For $\tp^{\gamma(N)},\tp^{\gamma(N+1)}$ with
\[
   \lambda^{j_1-1} \frac{|(\tp^{\gamma(N+1)} - \tp^{\gamma(N)})^*|^2}{\delta^k r} \notag\\
    \leq 2^{j_1-k-2} \frac{\sigma(\delta^{k+1}r)}{\sigma(\delta^{j_1-1}r)} 
    \frac{\delta}{16\mu}\left( \frac \lambda 7\right)^{j_1}t^*(\tp^{\gamma(N)})\sigma_r
\]
(i.e.~too small to satisfy \eqref{mstar} for any $m^*\geq 1$) 
they remain valid with $2^{m^*}$ replaced by 1. (Note that from the definition \eqref{Lminus} of the totally unbowed case, $m_0\geq 1$ covers all cases.) Hence as in previous cases, inserting the bound \eqref{expcost} into \eqref{eachlink}, multiplying by the entropy factor \eqref{entbd}, and summing over $\nu\geq 0, m^*\geq 0, m_0\geq 1$, and $k\leq j_1-2$ we get that the right side of \eqref{segmentsun} is bounded by 
\begin{equation}\label{case3d}
  c_{39}\left( \frac{2\lambda}{ 7^5\beta^{12}\delta^{5\chi_2} } \right)^{(d-1)j_1/2}
    \exp\left( - c_{40}  \left( \frac{\lambda}{7\delta^{\chi_1}} \right)^{j_1}t \right).
\end{equation}
We have thus bounded the totally--unbowed--case contribution to the tracking--failure probability \eqref{trackprob}.

{\it Case 3e.}  Mixed cases, which we subdivide as mixed forward case, mixed bowed case, and mixed totally unbowed case, according to the condition at the dominant end (as defined after \eqref{middle}) of $I_N$.  In previous cases we have assumed that the same option in \eqref{Lminus} occurs at both ends of the interval $I_N$, but this is strictly for clarity of exposition.  As explained in Step 1, in mixed cases we have joining hyperplanes only at the dominant end of $I_N$.  The situation is symmetric so let us assume the left end is dominant.  In mixed cases the full path in $I_N$ is always $\tp^{\gamma(N)}\to \tp^{\gamma(N)+1}\to\cdots\to\tp^{\gamma(N+1)}$, with $\gamma(N+1)-\gamma(N)=4$ or 5.  Suppose $I_N$ is a $(j_1-1)$th--scale interval of $k$th--scale length.  The intermediate paths are analogous to those in cases 3a---3d, as follows.
If the dominant left end has the forward case then there is one joining hyperplane in $I_N$, containing $\tp^{\gamma(N)+2}$, at the left end at distance $\delta^{j_1+1}r$ from $\tp^{\gamma(N)}$, and the intermediate path is
\[
  \tred{\Gamma^{N,int}}:\ \tp^{\gamma(N)}\to w_\perp^{\gamma(N)+1}\to\tp^{\gamma(N)+2}\to 
    \tp^{\gamma(N)+3}\to \tp^{\gamma(N+1)}.
\]
If the (dominant) left end has the totally unbowed case then there is one (inner) joining hyperplane in $I_N$, containing $\tp^{\gamma(N)+2}$, at the left end at distance $\delta^{k+1}r$ from $\tp^{\gamma(N)}$, and the intermediate path follows the line from $\tp^{\gamma(N)}$ to $\tp^{\gamma(N)+1}$:
\[
  \tred{\Gamma^{N,int}}:\ \tp^{\gamma(N)}\to w_\perp^{\gamma(N)+1}\to\cdots\to 
    w_\perp^{\gamma(N)+3}\to \tp^{\gamma(N+1)}.
\]
If the left end has the bowed case with $L^-(N)=\ell<j_1-1$ then there are two joining hyperplanes in $I_N$, containing $\tp^{\gamma(N)+2}$ and $\tp^{\gamma(N)+3}$, at the left end at distances $\delta^{\ell+1}r$ and $\delta^\ell r$ from $\tp^{\gamma(N)}$, and the intermediate path is
\[
  \tred{\Gamma^{N,int}}:\ \tp^{\gamma(N)}\to \hz^{\gamma(N)+1}\to\hz^{\gamma(N)+2}\to \tp^{\gamma(N)+3}\to 
    \tp^{\gamma(N)+4} \to \tp^{\gamma(N+1)};
\]
see Figure \ref{Case3a}. 
If the left end has the bowed case with $L^-(N)=j_1-1$ then there is one (inner) joining hyperplane in $I_N$, containing $\tp^{\gamma(N)+2}$, at the left end at distance $\delta^{j_1-1}r$ from $\tp^{\gamma(N)}$, and the intermediate path is
\[
  \tred{\Gamma^{N,int}}:\ \tp^{\gamma(N)}\to \hz^{\gamma(N)+1}\to\tp^{\gamma(N)+2}\to 
    \tp^{\gamma(N)+3} \to \tp^{\gamma(N+1)},
\]
similar to Figure \ref{Case3a} but with the middle two hyperplanes coinciding.
In each case, we define the subpaths $\tred{\Gamma^{N,full,-}}$ and $\tred{\Gamma^{N,int,-}}$ between the left end of the interval and the left inner joining hyperplane.
In all cases the arguments are essentially the same as in the analogous non-mixed case, with the addition that for the final transition ending at $\tp^{\gamma(N+1)}$, one uses \eqref{nondom2} to bound $|\tp^{\gamma(N+1)-1} - w_\perp^{\gamma(N+1)-1}|$.  We will not reiterate the arguments here, but simply state that the result is again the analog of \eqref{segments} and \eqref{segments6}, bounding the mixed--case contribution to the tracking--failure probability \eqref{trackprob}:
\begin{align} \label{segmentsmix}
  &P\Bigg( \hT( \tp^{\gamma(N)},\tp^{\gamma(N+1)} ) - \Upsilon_{\hT}\left( \Gamma^{N,full} \right) \geq  
    S_{j_1}\left( \Gamma^{N,full} \right) + \delta\mu\Big( | \tp^{\gamma(N+1)} - \tp^{\gamma(N)} | 
    - \Upsilon_{Euc}\left( \Gamma^{N,full} \right) \Big) \notag\\
  &\hskip 0.6cm + \frac{1}{3}\lambda^{j_1} \Big[ t\sigma_r 
    + \delta\mu\Big( \Upsilon_{Euc}\left( \Gamma_{xy}^{j_1-1,2} \right) - (\hat y - \hat x)_1 \Big) \Big]
    \text{ for some $N$ with $I_N$ long non--terminal} \notag\\
  &\hskip 0.6cm \text{$(j_1-1)$th--scale, in the mixed case of \eqref{Lminus} for some } 
    (x,y)\in X_r;\ \omega\notin J^{(0)}(c_{29})\cup J^{(1c)} \Bigg) \notag\\
  &\leq P\Bigg( \Big| \hT(u,v) - h(|v-u|) \Big| \geq A \text{ for some link $(u,v)$ of $\Gamma^{N,full,-}$ or $\Gamma^{N,int,-}$
    (or $\Gamma^{N,full},\Gamma^{N,int}$} \notag\\
  &\hskip 1.3cm \text{in the mixed totally unbowed case) for some $N$ with $I_N$ long non--terminal}
    \notag\\
  &\hskip 1.3cm \text{$(j_1-1)$th--scale, in the mixed case of \eqref{Lminus} for some}
    (x,y)\in X_r;\ \omega\notin J^{(0)}(c_{29})\cup J^{(1c)} \Bigg),
\end{align}
where
\[
  \tred{A} = \begin{cases} \infty 
    &\text{in the mixed forward case for $I_N$},\\
  A_{j_1}^3(\Gamma^{N,full,-}) &\text{in the mixed bowed case for $I_N$}, \\
  A_{j_1}^4(\Gamma^{N,full}) &\text{in the mixed totally unbowed case for $I_N$}. \end{cases}
\]
As in Cases 3a--3d, the right side of \eqref{segmentsmix}, and thus the mixed--case contribution to the tracking--failure event \eqref{trackprob}, is bounded above by 
\begin{equation}\label{case3e}
  f(\lambda,\delta,\beta)^{j_1}\exp\left( -c_{41} \left( \frac{\lambda}{7\delta^{\chi_1}} \right)^{j_1}t \right)
\end{equation}
for some $\tred{f(\lambda,\delta,\beta)}$.

We have now bounded all contributions to \eqref{trackprob}, by the sum of \eqref{segments2}, \eqref{terminal}, \eqref{case3a}, \eqref{case3b}, \eqref{nofail}, \eqref{case3d}, and \eqref{case3e}.  From this and \eqref{secondit2} we have the completion of the $(j_1-1)$th--scale iteration:
\begin{align}\label{secondit3}
    P&\Big( T(x,y) \leq h((y-x)_1) - t\sigma_r \text{ for some } (x,y)\in X_r \Big) \notag\\
  &\leq P\Bigg( \Upsilon_{\hT}\left( \Gamma_{xy}^{j_1-1,2} \right) - h((y-x)_1) \leq -\left( 1-5\lambda^{j_1} \right) t\sigma_r
    + 7\delta\mu\lambda^{j_1}\Big(  \Upsilon_{Euc}\left( \Gamma_{xy}^{j_1-1,0} \right) - (\hat y - \hat x)_1 \Big)  \notag\\
  &\hskip 2cm + \delta\mu\Big( \Upsilon_{Euc}\left( \Gamma_{xy}^{j_1-1,2} \right) 
    -\Upsilon_{Euc}\left( \Gamma_{xy}^{j_1-1,0} \right) \Big)
    \text{ for some } (x,y)\in X_r;\ \omega\notin J^{(0)}(c_{29})\cup J^{(1c)} \Bigg) \notag\\
  &\hskip 1.2cm + c_{32}e^{-c_{33}t} 
    + c_{42} \exp\left( -c_{43}\left( \frac{\lambda}{7\delta^{\chi_1}} \right)^{j_1} t \right).
\end{align} 

As in the comments after \eqref{firstit} and \eqref{secondit}, the increase of the coefficients 4, 6 in \eqref{firstit} to be 5, 7 in \eqref{secondit}, together with an increment to the ``length--change'' term with coefficient $\delta\mu$, represent a further reduction taken from the original bound $t\sigma_r$ in \eqref{Qrunif2}, and allocated to bound errors in the second stage of the $(j_1-1)$th--scale iteration, just completed. In this second stage, however, the increment of the length--change term, due to replacing $\Gamma_{xy}^{j_1-1,1}$ with $\Gamma_{xy}^{j_1-1,2}$, is negative (removing points reduces path length) and may compensate at least in part for the increases from 4, 6 to 5, 7.  This compensation, made possible by control of tracking errors, is what allows the total allocation through all iterations to potentially exceed the entire original bound $t\sigma_r$---see \eqref{cancel}.

\subsection{Step 7. Further iterations of coarse--graining.}  The further iterations proceed quite similarly to the $(j_1-1)$th--scale iteration; for the most part, to do the $j$th--scale iteration we simply replace $j_1-1,j_1$ throughout by $j,j+1$. We will sketch the $(j_1-2)$th--scale (third) iteration to make the pattern clear.

For the first stage of the $(j_1-2)$th--scale iteration, the current marked PG path at the start is $\Gamma_{xy}^{j_1-1,2}: b^0 \to b^1\to \cdots \to b^{n+1}$, which we rename $\tred{\Gamma_{xy}^{j_1-2,0}}$.  
Letting 
\[
  \tred{I_{xy}^2} = \{i\in \{3,\dots,n-2\}: b^i \text{ lies in a non-incidental $(j_1-2)$th--scale hyperplane in $\mH_{xy}$}\}
\]
we shift each $b^i,\,i\in I_{xy}^2$, to the $(j_1-2)$th--scale grid, creating the updated marked PG path
\[
  \tred{\Gamma_{xy}^{j_1-2,1}: \tb^0 \to \tb^1 \to\cdots\to \tb^n\to \tb^{n+1}}.
\]
Removing those $b^i$ which lie in non-terminal $(j_1-1)$th--scale hyperplanes then creates the marked PG path
\[
  \tred{\Gamma_{xy}^{j_1-2,2}: \zeta^0 \to \zeta^1 \to\cdots\to \zeta^{n_2}\to \zeta^{n_2+1}},
\]
in which (recalling \eqref{startend}) $\zeta^0=\hat x,\,\zeta^{n_2+1}=\hat y$, $\zeta^1$ and $\zeta^{n_2}$ lie in terminal $j_1$th--scale hyperplanes, $\zeta^2,\zeta^{n_2-1}$ lie in terminal $(j_1-1)$th--scale hyperplanes, and $\zeta^3,\dots,\zeta^{n_2-2}$ lie in $(j_1-2)$th--scale hyperplanes.
Noninteraction of shifts again applies (see Step 4), since if $b^i$ and $b^k$ both lie in $(j_1-2)$th--scale hyperplanes then there are sandwiching hyperplanes, at least, in between, ensuring $|i-k|\geq 2$.  For $N\leq n_2$ we write $\tred{\Gamma^{2,N,full}}$ for the portion of $\Gamma_{xy}^{j_1-2,1}$ in the interval (denoted $\tred{I_N^2}$) from $(\zeta^N)_1$ to $(\zeta^{N+1})_1$, and define $\tred{\gamma(2,N)}$ by $\zeta^N=\tb^{\gamma(2,N)}, N\leq n_2+1$, analogous to $\gamma(N)$ used in the previous iteration.

Similarly to \eqref{Atotal2}---\eqref{upgaph} we have
\begin{equation}\label{AtotalG}
  \sum_{i\in I_{xy}} A_{j_1-1}^1(\Gamma_{xy}^{j_1-2,0},i) \leq \frac{2}{3}\lambda^{j_1-1}\Big[ t\sigma_r 
   + \delta\mu\Big( \Upsilon_{Euc}\left( \Gamma_{xy}^{j_1-2,0} \right) - (\hat y - \hat x)_1 \Big)
    \Big], 
\end{equation}
\begin{equation}\label{upgapG}
  \delta\mu\Big| \Upsilon_{Euc}\left( \Gamma_{xy}^{j_1-2,1} \right) -\Upsilon_{Euc}\left( \Gamma_{xy}^{j_1-2,0} \right) \Big|
    \leq \frac{\delta \lambda^{j_1-1}}{2} \left[ \frac{t\sigma_r}{3}
    + \delta\mu\Big( \Upsilon_{Euc}(\Gamma_{xy}^{j_1-2,0}) - (\hat y - \hat x)_1 \Big) \right],
\end{equation}
and
\begin{equation}\label{upgaphG}
  \Big| \Upsilon_h\left( \Gamma_{xy}^{j_1-2,1} \right) -\Upsilon_h\left( \Gamma_{xy}^{j_1-2,0} \right) \Big|
    \leq  \lambda^{j_1-1} \left[ \frac{t\sigma_r}{3}
    + \delta\mu\Big( \Upsilon_{Euc}(\Gamma_{xy}^{j_1-2,0}) - (\hat y - \hat x)_1 \Big) \right].
\end{equation}
These lead to a tracking bound like \eqref{badtrans} for the first stage (shifting to the $(j_1-2)$th--scale grid) using Lemma \ref{sumbits}:
\begin{align}\label{badtransG}
  P&\Bigg( \Upsilon_{\hT}(\Gamma_{xy}^{j_1-2,0}) -  \Upsilon_{\hT}(\Gamma_{xy}^{j_1-2,1})
    < - \sum_{i\in I_{xy}^2} A_{j_1-1}^1(\Gamma_{xy}^{j_1-2,0},i) + \Upsilon_h\left( \Gamma_{xy}^{j_1-2,0} \right) 
    -\Upsilon_h\left( \Gamma_{xy}^{j_1-2,1} \right) \notag\\
  &\hskip 1.5cm \text{ for some } (x,y)\in X_r;\ \omega\notin J^{(0)}(c_{29})\cup J^{(1c)} \Bigg) \notag\\
  &\leq P\Bigg( \max\Big( \big|\hT(\hb^{i-1},\hb^i) - h(|\hb^i-\hb^{i-1}|)\big|, \big|\hT(b^{i-1},b^i) - h(|b^i-b^{i-1}|)\big| \Big) 
    > \frac 14 A_{j_1-1}^1(\Gamma_{xy}^{j_1-2,0},i)  \notag\\
  &\hskip 1.5cm \text{ for some $1\leq i\leq n+1$ and $(x,y)\in X_r$ with $\{i-1,i\}\cap I_{xy}^2\neq\emptyset$};\, 
    \omega\notin J^{(0)}(c_{29})\cup J^{(1c)} \Bigg) \notag\\
  &\leq P\Bigg( \big| \hT(v,w) - h(|w-v|) \big| \geq  A_{j_1-1}^2(v,w) \notag\\
  &\hskip 1.5cm \text{ for some $(j_1-1)$th--scale transition $v\to w$ with } v,w\in G_r^+\cap\LL_{j_1-1} \Bigg) \notag\\
  &\leq  C_{70} \exp\left( -C_{71}\left( \frac{\lambda}{7\delta^{\chi_1}} \right)^{j_1-1} t \right).
\end{align}
As with \eqref{firstit} and \eqref{secondit}, with \eqref{secondit3} this yields
\begin{align}\label{seconditG}
  &P\Big( T(x,y) \leq h((y-x)_1) - t\sigma_r \text{ for some } (x,y)\in X_r \Big) \notag\\
  &\leq P\Bigg( \Upsilon_{\hT}(\Gamma_{xy}^{j_1-2,1}) - h((y-x)_1) \leq -\left( 1-2\lambda^{j_1} \right) t\sigma_r
    + 4\delta\mu\lambda^{j_1}\Big(  \Upsilon_{Euc}\left( \Gamma_{xy}^{j_1-1,0} \right) - (\hat y - \hat x)_1 \Big)  \notag\\
  &\hskip 2cm + 3 \lambda^{j_1}\Big( t\sigma_r 
    + \delta\mu\Big[  \Upsilon_{Euc}\left( \Gamma_{xy}^{j_1-1,0} \right) - (\hat y - \hat x)_1 \Big] \Big) \notag\\
  &\hskip 2cm + 2 \lambda^{j_1-1}\Big( t\sigma_r 
    + \delta\mu\Big[  \Upsilon_{Euc}\left( \Gamma_{xy}^{j_1-2,0} \right) - (\hat y - \hat x)_1 \Big] \Big) \notag\\
  &\hskip 2cm + \delta\mu\Big( \Upsilon_{Euc}\left( \Gamma_{xy}^{j_1-2,1} \right) 
    -\Upsilon_{Euc}\left( \Gamma_{xy}^{j_1-1,0} \right) \Big)
    \text{ for some } (x,y)\in X_r;\ \omega\notin J^{(0)}(c_{29})\cup J^{(1c)} \Bigg) \notag\\
  &\hskip 1.2cm + c_{32}e^{-c_{33}t} + c_{42} \exp\left( -c_{43}\left( \frac{\lambda}{7\delta^{\chi_1}} \right)^{j_1} t \right)
    + C_{70} \exp\left( -C_{71}\left( \frac{\lambda}{7\delta^{\chi_1}} \right)^{j_1-1} t \right).
\end{align}

\begin{remark}\label{totalloc}
The second probability here contains three quantities of the form 
\[
  \lambda^{j+1}\Big( t\sigma_r 
    + \delta\mu\Big[  \Upsilon_{Euc}\left( \Gamma_{xy}^{j,0} \right) - (\hat y - \hat x)_1 \Big] \Big),
\]
with certain additional integer coefficients: 2 and 4 in the first quantity, 3 in the second quantity, and 2 in the third.
As noted in Remark \ref{tracking} and after \eqref{secondit}, \eqref{trackprob}, and \eqref{secondit3}, these represent the accumulated error allocations from the $j_1$th, $(j_1-1)$th, and (first--stage) $(j_1-2)$th--scale iterations, respectively. We have split these out here for clarity; in \eqref{secondit2} and \eqref{secondit3} the first two of the three quantities are combined, giving the terms with coefficients 5 and 7.  In general after the first stage of the $j$th--scale iteration (which is the source of the third quantity), the second quantity would represent allocations from all stages $j_1-1$ through $j+1$.  With this in mind we define the accumulated--allocations upper bounds
\begin{align}\label{Aj1acc}
  \tred{\mA_j^1(\Gamma_{xy})} = &2 \lambda^{j_1}\Big( t\sigma_r 
    + 2\delta\mu\Big[  \Upsilon_{Euc}\left( \Gamma_{xy}^{j_1-1,0} \right) - (\hat y - \hat x)_1 \Big] \Big) \notag\\
  &+ \sum_{k=j+1}^{j_1-1} 3 \lambda^{k+1}\Big( t\sigma_r 
    + \delta\mu\Big[  \Upsilon_{Euc}\left( \Gamma_{xy}^{k,0} \right) - (\hat y - \hat x)_1 \Big] \Big) \notag\\
  &+ 2 \lambda^{j+1}\Big( t\sigma_r 
    + \delta\mu\Big[  \Upsilon_{Euc}\left( \Gamma_{xy}^{j,0} \right) - (\hat y - \hat x)_1 \Big] \Big)
\end{align}
and
\begin{align}\label{Aj2acc}
  \tred{\mA_j^2(\Gamma_{xy})} = &2 \lambda^{j_1}\Big( t\sigma_r 
    + 2\delta\mu\Big[  \Upsilon_{Euc}\left( \Gamma_{xy}^{j_1-1,0} \right) - (\hat y - \hat x)_1 \Big] \Big) \notag\\
  &+ \sum_{k=j}^{j_1-1} 3 \lambda^{k+1}\Big( t\sigma_r 
    + \delta\mu\Big[  \Upsilon_{Euc}\left( \Gamma_{xy}^{k,0} \right) - (\hat y - \hat x)_1 \Big] \Big),
\end{align}
with $\mA_j^1(\Gamma_{xy})$ a valid upper bound after the first stage of the $j$th--scale iteration, and $\mA_j^2(\Gamma_{xy})$ valid after the second stage.  Comparing \eqref{Aj1acc} to \eqref{secondit}, in this case we have $j=j_1-1$, the sum in \eqref{Aj1acc} has no terms, and the other two terms on the right in \eqref{secondit} merge into one. Comparing \eqref{Aj2acc} to \eqref{secondit3}, in this instance also $j=j_1-1$, and the sum in \eqref{Aj2acc} has one term which is merged with the other term in \eqref{secondit3}.  Equation \eqref{Aj2acc} may also be compared to \eqref{firstit}, where $j=j_1$ and the sum in \eqref{Aj2acc} has no terms.
We can rewrite the second probability in \eqref{seconditG} as
\begin{align}\label{seconditG2}
  P&\Bigg( \Upsilon_{\hT}\left( \Gamma_{xy}^{j_1-2,1} \right) - h((y-x)_1) \leq -t\sigma_r + \mA_{j_1-2}^1(\Gamma_{xy}) \notag\\
  &\hskip 1cm + \delta\mu\Big( \Upsilon_{Euc}\left( \Gamma_{xy}^{j_1-2,1} \right) 
    -\Upsilon_{Euc}\left( \Gamma_{xy}^{j_1-1,0} \right) \Big)
    \text{ for some } (x,y)\in X_r;\ \omega\notin J^{(0)}(c_{29})\cup J^{(1c)} \Bigg).
\end{align}
\end{remark}

Moving on to the second stage of the $(j_1-2)$th--scale iteration, from \eqref{seconditG}, using the form \eqref{seconditG2}, we have similarly to \eqref{secondit2} and \eqref{trackprob}
\begin{align}\label{basictrack}
  P&\Big( T(x,y) \leq h((y-x)_1) - t\sigma_r \text{ for some } (x,y)\in X_r \Big) \notag\\
  &\leq P\Bigg( \Upsilon_{\hT}\left( \Gamma_{xy}^{j_1-2,2} \right) - h((y-x)_1) \leq -t\sigma_r 
    + \mA_{j_1-2}^2(\Gamma_{xy}) \notag\\
  &\hskip 1cm + \delta\mu\Big( \Upsilon_{Euc}\left( \Gamma_{xy}^{j_1-2,2} \right) 
    -\Upsilon_{Euc}\left( \Gamma_{xy}^{j_1-1,0} \right) \Big)
    \text{ for some } (x,y)\in X_r;\ \omega\notin J^{(0)}(c_{29})\cup J^{(1c)} \Bigg) \notag\\
  &\qquad + P\Bigg( \Upsilon_{\hT}\left( \Gamma_{xy}^{j_1-2,1} \right) - \Upsilon_{\hT}\left( \Gamma_{xy}^{j_1-2,2} \right)
    \leq -\sum_{N=0}^{n_2} S_{j_1-1}(\Gamma^{2,N,full}) \notag\\
  &\qquad\hskip 1cm -\frac{1}{3} \lambda^{j_1-1}\Big( t\sigma_r 
    + \delta\mu\Big[  \Upsilon_{Euc}\left( \Gamma_{xy}^{j_1-2,2} \right) - (\hat y - \hat x)_1 \Big] \Big) \notag\\
  &\qquad\hskip 1cm + \delta\mu\Big( \Upsilon_{Euc}\left( \Gamma_{xy}^{j_1-2,1} \right) 
    -\Upsilon_{Euc}\left( \Gamma_{xy}^{j_1-2,2} \right) \Big)
    \text{ for some } (x,y)\in X_r;\ \omega\notin J^{(0)}(c_{29})\cup J^{(1c)} \Bigg) \notag\\
  &\hskip 1cm + c_{32}e^{-c_{33}t} + c_{42} \exp\left( -c_{43}\left( \frac{\lambda}{7\delta^{\chi_1}} \right)^{j_1} t \right)
    + C_{70} \exp\left( -C_{71}\left( \frac{\lambda}{7\delta^{\chi_1}} \right)^{j_1-1} t \right),
\end{align}
the last probability being the tracking--failure probability.  We now divide into Cases 1--5 as in the $(j_1-1)$th--scale iteration.  In each case (excluding 3c) there is an intermediate path $\Gamma^{2,N,int}$ in each $(j_1-2)$th--scale interval $I_N^2$, which is slower than the direct path (up to a multiple of log $r$), satisfying
\[
  \Upsilon_{\hT}(\Gamma^{2,N,int}) \geq \hT(\hb^{\gamma(2,N)},\hb^{\gamma(2,N+1)}) 
    - \big(\gamma(2,N+1)-\gamma(2,N)-1\big)c_{29}\log r
\]
assuming $\omega\notin J^{(0)}(c_{29})$, and hence
\[
    \sum_{N=0}^{n_2} \Upsilon_{\hT}(\Gamma^{2,N,int}) \geq \Upsilon_{\hT}\left( \Gamma_{xy}^{j_1-2,2} \right)
      - (n-n_2)c_{29}\log r,
\]
and which satisfies deterministic tracking in a form like \eqref{posfrac}, sometimes with extra terms in the form of error allocations, as in \eqref{dtrac}; this deterministic tracking says that the intermediate path is sufficiently shorter relative to the full path, to within the error given by the allocations.  This enables us to bound the last probability in \eqref{basictrack} by a tracking--failure probability of the form
\begin{align}\label{basictf}
  &P\Bigg( \Upsilon_{\hT}\left( \Gamma^{2,N,full} \right) - \Upsilon_{\hT}\left( \Gamma^{2,N,int} \right) \leq  
    \Upsilon_h\left( \Gamma^{2,N,full} \right) - \Upsilon_h\left( \Gamma^{2,N,int} \right) - \text{ (allocations)}\notag\\
  &\hskip 1.2cm 
    \text{ for some $N$ for some } (x,y)\in X_r;\ \omega\notin J^{(0)}(c_{29})\cup J^{(1c)} \Bigg),
\end{align}
differently for each of the 5 cases; see for example \eqref{segmentsun} for the totally--unbowed case.  This is then bounded by summing probabilities of form
\[
  P\Big( \Big| \hT(v,w) - h(|w-v|) \Big| \geq A)
\]
over all possible links $(v,w)$ of paths $\Gamma^{2,N,full}, \Gamma^{2,N,int}$ for all $N$, using Lemmas \ref{sumbits} and \ref{sumbits2}.  The end result of the $(j_1-2)$th--scale iteration is that
\begin{align}\label{basictrack2}
  P&\Big( T(x,y) \leq h((y-x)_1) - t\sigma_r \text{ for some } (x,y)\in X_r \Big) \notag\\
  &\leq P\Bigg( \Upsilon_{\hT}\left( \Gamma_{xy}^{j_1-2,2} \right) - h((y-x)_1) \leq -t\sigma_r 
    + \mA_{j_1-2}^2(\Gamma_{xy}) \notag\\
  &\hskip 1cm + \delta\mu\Big( \Upsilon_{Euc}\left( \Gamma_{xy}^{j_1-2,2} \right) 
    -\Upsilon_{Euc}\left( \Gamma_{xy}^{j_1-1,0} \right) \Big)
    \text{ for some } (x,y)\in X_r;\ \omega\notin J^{(0)}(c_{29})\cup J^{(1c)} \Bigg) \notag\\
  &\hskip 1cm + c_{32}e^{-c_{33}t} + \sum_{j=j_1-2}^{j_1-1} 
    c_{42} \exp\left( -c_{43}\left( \frac{\lambda}{7\delta^{\chi_1}} \right)^{j+1} t \right).
\end{align}

After all iterations are completed through the $j_2$th scale, this becomes
\begin{align}\label{finalit}
  P&\Big( T(x,y) \leq h((y-x)_1) - t\sigma_r \text{ for some } (x,y)\in X_r \Big) \notag\\
  &\leq P\Bigg( \Upsilon_{\hT}\left( \Gamma_{xy}^{j_2,2} \right) - h((y-x)_1) \leq -t\sigma_r 
    + \mA_{j_2}^2(\Gamma_{xy}) + \delta\mu\Big( \Upsilon_{Euc}\left( \Gamma_{xy}^{j_2,2} \right) 
    -\Upsilon_{Euc}\left( \Gamma_{xy}^{j_1-1,0} \right) \Big) \notag\\
  &\hskip 1.6cm \text{ for some } (x,y)\in X_r;\ \omega\notin J^{(0)}(c_{29})\cup J^{(1c)} \Bigg) \notag\\
  &\hskip .6cm + c_{32}e^{-c_{33}t} + \sum_{j=j_2}^{j_1-1} 
    c_{42} \exp\left( -c_{43}\left( \frac{\lambda}{7\delta^{\chi_1}} \right)^{j+1} t \right).
\end{align}
We claim that 
\begin{equation}\label{cancel}
  \mA_{j_2}^2(\Gamma_{xy}) + \delta\mu\Big( \Upsilon_{Euc}\left( \Gamma_{xy}^{j_2,2} \right) 
    -\Upsilon_{Euc}\left( \Gamma_{xy}^{j_1-1,0} \right) \Big) \leq \frac 12 t\sigma_r 
    + \delta\mu\Big( \Upsilon_{Euc}\left( \Gamma_{xy}^{j_2,2} \right) - (\hat y - \hat x)_1 \Big).
\end{equation}
This says that the (typically negative) second term on the left, representing part of the cumulative effects of tracking, is enough to adequately cancel the (positive) cumulative allocations $\mA_{j_2}^2(\Gamma_{xy})$; see the comments after \eqref{trackprob} and \eqref{secondit3}. To prove \eqref{cancel} we make the subclaim
\begin{equation}\label{cancel2}
  \tred{D}:= \max_{j_2\leq j\leq j_1-1} \Upsilon_{Euc}\left( \Gamma_{xy}^{j,0} \right) - (\hat y - \hat x)_1
    \leq 2\Big(  \Upsilon_{Euc}\left( \Gamma_{xy}^{j_1-1,0} \right) - (\hat y - \hat x)_1 \Big) + t\sigma_r.
\end{equation}
Assuming \eqref{cancel2} and assuming $\lambda$ satisfies $7\lambda/(1-\lambda)<1/4$ we have from the definition \eqref{Aj2acc}
\begin{align}
  \mA_{j_2}^2(\Gamma_{xy}) &\leq \frac{7\lambda}{1-\lambda} \Bigg( t\sigma_r 
    + \delta\mu\bigg[  \max_{j_2\leq j\leq j_1-1} \Upsilon_{Euc}\left( \Gamma_{xy}^{j,0} \right) - (\hat y - \hat x)_1 \bigg] \Bigg)
    \notag\\
  &\leq \frac{1 + \delta\mu}{4}t\sigma_r 
    + \delta\mu\Big(  \Upsilon_{Euc}\left( \Gamma_{xy}^{j_1-1,0} \right) - (\hat y - \hat x)_1 \Big)
\end{align}
and \eqref{cancel} follows. To prove \eqref{cancel2} we use \eqref{upgap} (which generalizes to all $j$ in place of $j_1-1$) and the fact that, since removing points reduces length, we have $\Upsilon_{Euc}(\Gamma_{xy}^{j,2}) \leq \Upsilon_{Euc}(\Gamma_{xy}^{j,1})$ for all $j$. This yields that for all $j\geq 1$,
\begin{align}
  \Upsilon_{Euc}\left( \Gamma_{xy}^{j,0} \right) &\leq \Upsilon_{Euc}\left( \Gamma_{xy}^{j_1-1,0} \right)
    + \sum_{k=j+1}^{j_1-1} \Big| \Upsilon_{Euc}\left( \Gamma_{xy}^{k,1} \right)
    - \Upsilon_{Euc}\left( \Gamma_{xy}^{k,0} \right) \Big| \notag\\
  &\leq \Upsilon_{Euc}\left( \Gamma_{xy}^{j_1-1,0} \right)
    + \sum_{k=j+1}^{j_1-1} \frac{\lambda^{k+1}}{2\mu}\Big[ \frac{t\sigma_r}{3} + \delta\mu D \Big]
\end{align}
and therefore
\begin{align}
  D \leq \Upsilon_{Euc}\left( \Gamma_{xy}^{j_1-1,0} \right) - (\hat y - \hat x)_1 
     + \frac{\lambda^3}{6\mu(1-\lambda)} t\sigma_r + \frac{\delta\lambda^3}{2(1-\lambda)} D,
\end{align}
from which \eqref{cancel2} follows, and hence also \eqref{cancel}. The right side of \eqref{finalit} is therefore bounded above by
\begin{align}\label{finalit2}
  P&\Bigg( \Upsilon_{\hT}\left( \Gamma_{xy}^{j_2,2} \right) - h((y-x)_1) \leq -\frac 12 t\sigma_r 
    + \delta\mu\Big( \Upsilon_{Euc}\left( \Gamma_{xy}^{j_2,2} \right) - (\hat y - \hat x)_1 \Big) \notag\\
  &\hskip 1cm    \text{ for some } (x,y)\in X_r;\ \omega\notin J^{(0)}(c_{29})\cup J^{(1c)} \Bigg) 
    + c_{44} \exp\left( -c_{45}\left( \frac{\lambda}{7\delta^{\chi_1}} \right)^{j_2+1} t \right).
\end{align}

\subsection{Step 8. Final marked CG paths.}
In $ \Gamma_{xy}^{j_2,2}$, only terminal hyperplanes contain marked points (excluding $\hat x,\hat y$), one at each end of $[\hat x_1,\hat y_1]$ for each scale $j_2\leq j\leq j_1$.  The marked point in the $j$--terminal hyperplane is in the grid $\LL_j$ for all $j$.   
As noted in Step 1, the gap between the $j_2$--terminal hyperplanes is at least $4\delta^{j_2}r$ and at most $5\delta^{j_2-1}r$.  Now $\Gamma_{xy}^{j_2,2}$ is our final CG path, so we rename it 
\[
  \tred{\Gamma_{xy}^{CG}}:\, \hat x = u^0\to\cdots\to u^{R+1} = \hat y,
\]
where $\tred{R} = 2(j_1-j_2+1)$.  
We also define a projected path with collinear marked points
\[
  \tred{\Gamma_{xy}^{CG,pr}}:\, v^0\to\cdots\to v^{R+1},
\]
where $\tred{v^i} = (u^i)_1e_1$ is the projection onto the $e_1$ axis. 
For $j_1-1\leq j\leq j_2$, the links $(u^{j_1-j},u^{j_1-j+1})$ and $(u^{R-j_1+j},u^{R-j_1+j+1})$ each have one end in a $j$--terminal hyperplane and the other in a $(j+1)$--terminal hyperplane; these will be called the \emph{jth--scale links} of $ \Gamma_{xy}^{CG}$.
The links $(u^0,u^1)$ and $(u^R,u^{R+1})$ are called \emph{final links}, and the link $(u^{R/2},u^{R/2+1})$ between $j_2$--terminal hyperplanes is called a \emph{macroscopic link}. See Figure \ref{Scales} in Section \ref{setup}, where $(u^3,u^4)$ is the macroscopic link, which (by definition of $j_2$) always covers at least 1/5 the length of $[\hat x_1,\hat y_1]$.

We have $\Upsilon_{Euc}\left( \Gamma_{xy}^{CG,pr} \right) = (\hat y - \hat x)_1$ so by Lemma \ref{monotoneE}, since $R\leq 2j_1$,
\[
  \Upsilon_h\left( \Gamma_{xy}^{CG} \right) - \Upsilon_h\left( \Gamma_{xy}^{CG,pr} \right)
    \geq \left( \frac 12 + \delta\right)\mu \Big( \Upsilon_{Euc}\left( \Gamma_{xy}^{CG} \right) 
    - (\hat y - \hat x)_1 \Big)-2j_1C_{56}.
\]
Therefore, using \eqref{nearsub}, the probability in \eqref{finalit2} is bounded above by
\begin{align}\label{finalit3}
  P&\Bigg( \Upsilon_{\hT}\left( \Gamma_{xy}^{CG} \right) - \Upsilon_h\left( \Gamma_{xy}^{CG} \right) 
    \leq -\frac 13 t\sigma_r 
    + \delta\mu\Big( \Upsilon_{Euc}\left( \Gamma_{xy}^{CG} \right) - (\hat y - \hat x)_1 \Big) 
    - \Big[ \Upsilon_h\left( \Gamma_{xy}^{CG} \right) - \Upsilon_h\left( \Gamma_{xy}^{CG,pr} \right) \Big] \notag\\
  &\hskip 1cm \text{ for some } (x,y)\in X_r;\ \omega\notin J^{(0)}(c_{29})\cup J^{(1c)} \Bigg) \notag\\
  &\leq P\Bigg( \Upsilon_{\hT}\left( \Gamma_{xy}^{CG} \right) - \Upsilon_h\left( \Gamma_{xy}^{CG} \right) 
    \leq -\frac 13 t\sigma_r 
    - \frac \mu 2 \Big( \Upsilon_{Euc}\left( \Gamma_{xy}^{CG} \right) - (\hat y - \hat x)_1 \Big) \notag\\
  &\hskip 1.5cm \text{ for some } (x,y)\in X_r;\ \omega\notin J^{(0)}(c_{29})\cup J^{(1c)} \Bigg).
\end{align}
To bound this we use allocations 
\[
  \tred{A_j^{CG}(v,w)} = \frac{1}{4}\lambda^j \left[ t^*(v,w) \sigma_r
    + \frac{\delta\mu}{18} \frac{|(w-v)^*|^2}{|(w-v)_1|} \right]
\]
for $j$th--scale links, $A_{j_1}^{CG}(v,w)$ for final links, and $A_{j_2}^{CG}(v,w)$ for macroscopic links. From \eqref{sizecurv1} and \eqref{sizecurv2} we have
\[
  \frac{\delta\mu}{18}\max_{k\leq R+1} \frac{|(u^k)^*|^2}{r\sigma_r} \leq \frac{\delta t}{3} + 
     \frac{\delta\mu}{12\sigma_r} \sum_{i=1}^{R+1} \frac{|(u^{i-1}-u^i)^*|^2}{|(u^{i-1}-u^i)_1|}
\]
Therefore using also \eqref{adddist} the total of these for all links in a path $\Gamma_{xy}^{CG}$ satisfies
\begin{align}
  A_{j_2}^{CG}&(u^{R/2},u^{R/2+1}) + A_{j_1}^{CG}(u^0,u^1) + A_{j_1}^{CG}(u^R,u^{R+1}) \notag\\
  &\qquad + \sum_{j=j_2}^{j_1-1} 
    \Big[ A_{j+1}^{CG}(u^{j_1-j},u^{j_1-j+1}) + A_{j+1}^{CG}(u^{R-j_1+j},u^{R-j_1+j+1}) \Big] \notag\\
  &\leq 4 \sum_{j=j_2-1}^{j_1-1} \frac{\lambda^{j+1}}{4} \left[ 
    \left( \frac t3 + 2\frac{\delta\mu}{18}\max_{k\leq R+1} \frac{|(u^k)^*|^2}{r\sigma_r}\right)\sigma_r 
    +  \frac{\delta\mu}{18} \sum_{i=1}^{R+1} \frac{|(u^{i-1}-u^i)^*|^2}{|(u^{i-1}-u^i)_1|} \right]\notag\\
  &\leq \sum_{j=j_2}^{j_1} \frac 13 \lambda^j \left( 1 + 2\delta \right)t\sigma_r
    + \sum_{j=j_2}^{j_1} \lambda^j \left( \frac{\delta\mu}{3} + \frac{\delta\mu}{6} \right) 
    \sum_{i=1}^{R+1} \frac{|(u^{i-1}-u^i)^*|^2}{|(u^{i-1}-u^i)_1|} \notag\\
  &\leq \frac{\lambda}{1-\lambda} t\sigma_r + \frac{3\delta\mu\lambda}{2(1-\lambda)} 
    \Big( \Upsilon_{Euc}\left( \Gamma_{xy}^{CG} \right) - (\hat y - \hat x)_1 \Big) \notag\\
  &\leq \frac 13 t\sigma_r 
    + \frac \mu 2 \Big( \Upsilon_{Euc}\left( \Gamma_{xy}^{CG} \right) - (\hat y - \hat x)_1 \Big).
\end{align}
Therefore the last probability in \eqref{finalit3} is bounded above by 
\begin{align}\label{final}
  &\sum_{j=1}^{j_1} P\bigg( \Big| \hT(u^{i-1},u^i) - h(|u^{i-1}-u^i|) \Big| \geq A_{j+1}^{CG}(u^{i-1},u^i)
    \text{ for some $j$th--scale link} \notag\\
  &\hskip 2cm (u^{i-1},u^i) \text{ of $\Gamma_{xy}^{CG}$ and some } (x,y)\in X_r;\ \omega\notin J^{(0)}(c_{29})\cup J^{(1c)} \bigg)
    \notag\\
  &\qquad + P\bigg( \Big| \hT(u^{i-1},u^i) - h(|u^{i-1}-u^i|) \Big| \geq A_{j_2}^{CG}(u^{i-1},u^i)
    \text{ for some macroscopic link} \notag\\
  &\hskip 2cm (u^{i-1},u^i) \text{ of $\Gamma_{xy}^{CG}$ and some } (x,y)\in X_r;\ \omega\notin J^{(0)}(c_{29})\cup J^{(1c)} \bigg)
    \notag\\
  &\qquad + P\bigg( \Big| \hT(u^{i-1},u^i) - h(|u^{i-1}-u^i|) \Big| \geq A_{j_1}^{CG}(u^{i-1},u^i)
    \text{ for some final link} \notag\\
  &\hskip 2cm (u^{i-1},u^i) \text{ of $\Gamma_{xy}^{CG}$ and some } (x,y)\in X_r;\ \omega\notin J^{(0)}(c_{29})\cup J^{(1c)} \bigg).
\end{align}
Considering the probability for final links $(v,w)$, the number of possible such links in $G_r^+$ is at most $c_{46}r^{2d}$ and for each such link we have from Lemma \ref{connect} and \eqref{j1}
\begin{align}
  P\Big( \Big| \hT(v,w) &- h(|w-v|) \Big| \geq A_{j_1}^{CG}(v,w) \Big) \leq 
    C_{44}\exp\left( - C_{45}\frac{A_{j_1}^{CG}(v,w)}{\sigma(2\delta^{j_1}r)} \right) \notag\\
  &\leq c_{47}\exp\left( -\frac{c_{48}}{j_1}\left( \frac{\lambda}{\delta^{\chi_1}} \right)^{j_1} t \right)
    \leq c_{47}\exp\left( -c_{49}t\log r \right).
\end{align}
The probabilities for $j$th--scale and macroscopic links can be bounded using minor modifications of Lemma \ref{sumbits}, showing that \eqref{final} (and hence also the probability on the right in \eqref{finalit}) is bounded above by
\begin{align}
  \sum_{j=1}^{j_1} &c_{50}\exp\left( -c_{51}\left( \frac{\lambda}{7\delta^{\chi_1}} \right)^j t \right)
    + c_{50}\exp\left( -c_{51} \frac{\lambda}{7\delta^{\chi_1}} t \right) 
    + c_{46}c_{47} r^{2d} \exp\left( -c_{49}t\log r \right) \notag\\
  &\leq c_{52}e^{-c_{53}t}.
\end{align}
With \eqref{finalit} this completes the proof of \eqref{Qrunif2}.  As noted after Theorem \eqref{nofast}, the downward--deviations part of \eqref{Qrunif1} is a consequence, so the proof of the downward--deviations part of Theorem \eqref{nofast} is complete.

\section{Proof of Theorem \ref{tversethm} for fixed $(x,y)$.}\label{yxfixed}
We move on to step (2) of the strategy in Remark \ref{strategy}.
As with the proof of Theorem \ref{nofast} (downward deviations), there are simpler cases which can be proved from Lemma \ref{connect} and do not require Theorem \ref{nofast}.  These we can handle uniformly over $(x,y)$; after dealing with these, in Lemma \ref{xyfixed} we will handle the main case for the moment only for fixed $(x,y)$.  

Let 
\begin{equation}\label{r0}
  \tred{r_0} = \frac{c_0r}{(\log r)^{1/(1+\chi_1)}},
\end{equation}
with $c_0$ to be specified;
we will consider separately short geodesics (meaning $|\hat y - \hat x| \leq r_0$) and longer ones.
When $\Gamma_{xy} \not\subset G_{r,s} \text{ for some } x,y\in \VV\cap G_r(K)$ with $|(y-x)^*|\leq (y-x)_1$, taking $\tred{u}$ to be the first vertex in $\Gamma_{xy}$ outside $G_{r,s}$, we see that one of the following cases must hold.\\

\noindent {\bf Transverse wandering cases (see Figure \ref{Theorem1-5}):}
\begin{itemize}
\item[(i)] ({\it non-transverse wandering}) there exist $\tred{\hat x,\hat y,\hat u}\in q\ZZ^d$ with $\hat x,\hat y \in G_r(K), \hat u\notin G_{r,s}$ with $d(\hat u,G_{r,s})\leq 5$, and $\tred{x,y,u}\in\VV$ with 
\[
  \max(|x-\hat x|,|y-\hat y|,|u-\hat u|) \leq 5\sqrt{d-1}, \quad u\in\Gamma_{xy}, \quad \hat u_1 \notin [\hat x_1,\hat y_1],
\]
\item[(ii)] ({\it wandering by short geodesics}) there exist $\hat x,\hat y,\hat u\in q\ZZ^d$ with $\hat x,\hat y \in G_r(K), \hat u\notin G_{r,s}$ with $d(\hat u,G_{r,s})\leq 5$, and $x,y,u\in\VV$ with
\[
  \max(|x-\hat x|,|y-\hat y|,|u-\hat u|) \leq 5\sqrt{d-1}, \quad u\in\Gamma_{xy}, \quad \hat u_1 \in [\hat x_1,\hat y_1],
    \quad |\hat y - \hat x| \leq r_0,
\]
\item[(iii)] ({\it large wandering}) $s> c_1(\log r)^{1/2}$ and there exist $\hat x,\hat y,\hat u\in q\ZZ^d$ with $\hat x,\hat y \in G_r(K), \hat u\notin G_{r,s}$ with $d(\hat u,G_{r,s})\leq 5$, and $x,y,u\in\VV$ with
\begin{equation}\label{twiii}
  \max(|x-\hat x|,|y-\hat y|,|u-\hat u|) \leq 5\sqrt{d-1}, \quad 
    |y-x| > r_0,\quad u\in\Gamma_{xy}, \quad u_1\in [x_1,y_1],
\end{equation}
\item[(iv)] ({\it moderate wandering}) $s\leq c_1(\log r)^{1/2}$ and there exist $x,y,u\in\VV$ with 
\begin{equation}\label{twiv}
  x,y \in G_r(K),\quad 
    u \notin G_{r,s},\quad d(u,G_{r,s})\leq 2\quad |y-x| > r_0,\quad u\in\Gamma_{xy}, \quad u_1\in [x_1,y_1],
\end{equation}
\end{itemize}
where $c_1$ is to be specified. In all cases (i)--(iv) we assume $|(y-x)^*|\leq (y-x)_1$. \\

\begin{figure}
\includegraphics[width=17cm]{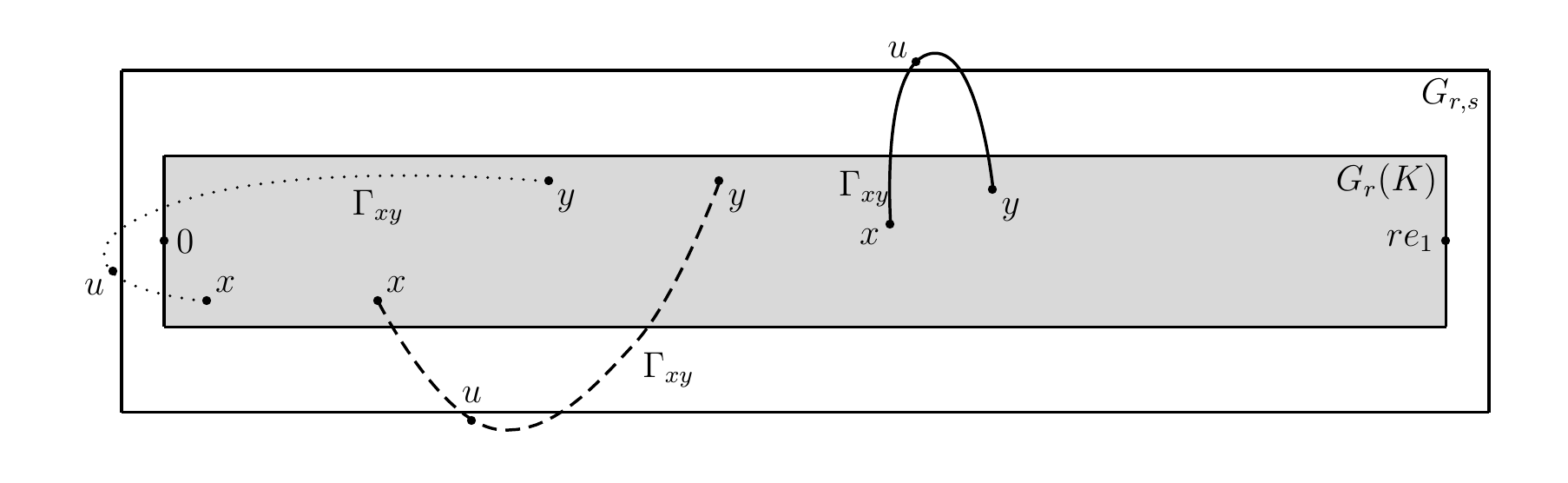}
\caption{ The inner box $G_r(K)$ has height $2K\Delta_r$, and the outer box $G_{r,s}$ has height $2s\Delta_r$.  The dotted line shows $\Gamma_{xy}$ in transverse wandering case (i), the solid line shows case (ii), and the dashed line shows cases (iii) and (iv), for large and small $s$, respectively. }
\label{Theorem1-5}
\end{figure}

Cases (i)---(iii) are the ones which do not require Theorem \ref{nofast} and can be proved from Lemma \ref{connect}. This is because the exponent obtained from that lemma is at least of order $\log r$, meaning it dominates the entropy from the number of possible choices of $\hat x,\hat y,\hat u$.

It follows from routine geometry that under (i), for $C_{34}$ from \eqref{Irs} we have
\[
  |\hat u -\hat x| +|\hat y - \hat u| - |\hat y - \hat x| \geq d(G_r(K),G_{r,s}^c) \geq
    \begin{cases} s^2\sigma_r\log r &\text{if } s \leq (C_{34}\log r)^{1/2},\\
    s^2\sigma_r &\text{if } (C_{34}\log r)^{1/2} < s \leq r/\Delta_r,\\
    s\Delta_r &\text{if } s > r/\Delta_r \end{cases}
\]
so also, using Lemma \ref{monotoneE},
\begin{equation}\label{hextra}
  h(|\hat u -\hat x|) + h(|\hat y - \hat u|) - h(|\hat y - \hat x|) \geq \begin{cases} \frac \mu 2 s^2\sigma_r\log r 
    &\text{if } s \leq (C_{34}\log r)^{1/2}, \\
    \frac \mu 2 s^2\sigma_r &\text{if } (C_{34}\log r)^{1/2} < s \leq r/\Delta_r,\\
    \frac \mu 2 s\Delta_r &\text{if } s > r/\Delta_r, \end{cases}
\end{equation}
while 
\[
  \Big| \hT(\hat x,\hat u) + \hT(\hat u,\hat y) - \hT(\hat x,\hat y) \Big| \leq M(\hat x)+M(\hat y)+M(\hat u).
\]
Therefore one of the following 6 quantities
\[
  \Big| \hT(\hat x,\hat u) - h(|\hat u -\hat x|) \Big|,\quad \Big| \hT(\hat u,\hat y) - h(|\hat y -\hat u|) \Big|,\quad
    \Big| \hT(\hat x,\hat y) - h(|\hat y -\hat x|) \Big|,\quad M(\hat x),\ \ M(\hat y),\ \ M(\hat u)
\]
exceeds 1/6 of the right side of \eqref{hextra}.  There are at most $c_2s^d r^{2d}$ possible values of $(\hat x,\hat y,\hat u)$ under (i), so from Lemmas \ref{neighbortimes} and  \ref{connect}, provided $C_{34}$ is sufficiently large we have
\begin{align}\label{casei}
  P\Big( (i) \text{ holds} \Big) &\leq \begin{cases}  c_2s^d r^{2d}\cdot 
    C_{44}\exp\left( - c_3 \frac{s^2\sigma_r\log r}{\sigma(2r)} \right) \leq c_4e^{-c_5s^2} 
    &\text{if } s \leq (C_{34}\log r)^{1/2}\\
    c_2s^d r^{2d}\cdot C_{44}\exp\left( - c_3 \frac{s^2\sigma_r}{\sigma(2r)} \right) \leq c_4e^{-c_5s^2} 
    &\text{if } (C_{34}\log r)^{1/2} < s \leq r/\Delta_r, \\
    c_2s^d r^{2d}\cdot C_{44}\exp\left( - c_3 \frac{s\Delta_r}{\sigma(s\Delta_r)} \right) 
    \leq c_4e^{-c_5s\Delta_r/\sigma(s\Delta_r)} &\text{if } s> r/\Delta_r. \end{cases}
\end{align}

Under (ii) we have for some $c_6$
\[
  |\hat u -\hat x| +|\hat y - \hat u| - |\hat y - \hat x| \geq 
    \begin{cases} c_6 s^2\Delta_r^2/r_0 &\text{if } s \leq r_0/\Delta_r,\\
    c_6 s\Delta_r &\text{if } s > r_0/\Delta_r. \end{cases}
\]
Further, for $s\leq r/\Delta_r$ we have $1/s\Delta_r\sigma(s\Delta_r) \leq c_7/\Delta_r^2$ or equivalently $s\Delta_r/\sigma(s\Delta_r) \geq c_7s^2$.
Hence similarly to (i), provided we take $c_0$ large in \eqref{r0},
\begin{align}\label{caseii}
  P\Big( (ii) \text{ holds} \Big) &\leq \begin{cases}  c_1s^d r^{2d}\cdot 
    C_{44}\exp\left( - c_8 \frac{ s^2r\sigma_r }{r_0\sigma(r_0)} \right) \leq c_9e^{-c_9s^2\log r} 
    &\text{if } s \leq r_0/\Delta_r \\
    c_1s^d r^{2d}\cdot C_{44}\exp\left( - c_8\frac{s\Delta_r}{\sigma(s\Delta_r)} \right) \leq c_9e^{-c_9s^2} 
    &\text{if } r_0/\Delta_r < s \leq r/\Delta_r,  \\
    c_1s^d r^{2d}\cdot C_{44}\exp\left( - c_8\frac{s\Delta_r}{\sigma(s\Delta_r)} \right) \leq c_9e^{-c_9s\Delta_r/\sigma(s\Delta_r)} 
    &\text{if } s > r/\Delta_r.  \end{cases}
\end{align}

Under (iii) we have
\[
  |\hat u -\hat x| +|\hat y - \hat u| - |\hat y - \hat x| \geq 
    \begin{cases} c_{11} s^2\sigma_r &\text{if } c_0(\log r)^{1/2} \leq s \leq r/\Delta_r,\\
    c_{11} s\Delta_r &\text{if } s > r/\Delta_r. \end{cases}
\]
Hence as in (i) and (ii) we get
\begin{align}\label{caseiii}
  P\Big( (iii) \text{ holds} \Big) &\leq \begin{cases}  c_1s^d r^{2d}\cdot 
    C_{44}\exp\left( - c_{12} \frac{ s^2\sigma_r }{\sigma(2r)} \right) \leq c_{13}e^{-c_{14}s^2} 
    &\text{if } c_0(\log r)^{1/2} \leq s \leq r/\Delta_r \\
    c_1s^d r^{2d}\cdot C_{44}\exp\left( - c_{12}\frac{s\Delta_r}{\sigma(s\Delta_r)} \right) 
     \leq c_{13}e^{-c_{14}s\Delta_r/\sigma(s\Delta_r)} 
    &\text{if } s > r/\Delta_r.  \end{cases}
\end{align}

To deal with (iv) and complete the proof for fixed $(x,y)$, we have the following.  We note the difference from Lemma \ref{bigwander}, which covered many $x,y$ simultaneously but considered only larger transverse wandering, of order $\Delta_r\log r$ or more.

\begin{lemma}\label{xyfixed}
Suppose $\GG=(\VV,\EE)$ and $\{\eta_e,e\in\EE\}$ satisfy A1, A2, and A3.  There exist $C_i$ such that for all $K\geq C_{74}$ and all $x,y\in G_r(K)$ with $|(y-x)^*|\leq (y-x)_1$,
\begin{align}
  P\Big( \Gamma_{xy} \not\subset G_{r,s} \Big) 
    \leq C_{75}e^{-C_{76}s^2} \quad\text{for all } C_{77}K \leq s \leq C_{78}(\log r)^{1/2}.
\end{align}
\end{lemma}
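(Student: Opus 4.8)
The plan is to reduce to transverse wandering case (iv), which is the only case not already handled by \eqref{casei}--\eqref{caseiii}, and then to handle (iv) by a one--step argument using the downward--deviations half of Theorem \ref{nofast} (which, per step (1) of Remark \ref{strategy}, is already available), together with Proposition \ref{hmu} for the nonrandom error and the straightness bound Proposition \ref{transfluct2}. The key point, and the reason this lemma is stated only for fixed $(x,y)$ with $s\leq C_{78}(\log r)^{1/2}$, is that when $s$ is only of logarithmic size the entropy cost of ranging over all possible wandering points $u$ is a power of $r$, which cannot be beaten by the $e^{-cs^2}$ bound we are aiming for; restricting to a single pair $(x,y)$ and a single $u$ removes that entropy. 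First I would record that for $C_{77}K\le s\le C_{78}(\log r)^{1/2}$ and $r$ large, $G_r(K)\subset G_r^+$ and $G_{r,s}\subset G_r^+$ (with $C_{60}$ in \eqref{Grplus} chosen appropriately), so all the relevant points lie in the enlarged cylinder where Theorem \ref{nofast} and Proposition \ref{hmu} apply.

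The core estimate is as follows. Suppose $\Gamma_{xy}\not\subset G_{r,s}$ with $x,y\in G_r(K)$, $|(y-x)^*|\le(y-x)_1$, and let $u$ be the first vertex of $\Gamma_{xy}$ outside $G_{r,s}$, which we may assume has $u_1\in[x_1,y_1]$ (the complementary range being covered by case (i)). Since $u\notin G_{r,s}$ and $u_1\in[x_1,y_1]\subset\mI_{r,s}$, we must have $|u^*|\ge s\Delta_r$, and a short computation with $d(u,G_{r,s})\le 2$ gives $d(u,\Pi_{xy})\ge \tfrac12 s\Delta_r$ (using $|(y-x)^*|\le (y-x)_1$ to control the tilt of $\Pi_{xy}$). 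Hence the extra Euclidean length forced on the geodesic is
\[
  |u-x|+|y-u|-|y-x| \ \ge\ c_0\,\frac{(s\Delta_r)^2}{r} \ =\ c_0\,s^2\sigma_r,
\]
and by Lemma \ref{monotoneE} the same lower bound (up to a constant, and modulo an additive $C_{56}$ which is negligible here) holds for $h(|(u-x)_1|)+h(|(y-u)_1|)-h(|(y-x)_1|)$, after accounting for the transverse components via \eqref{addh1}. But $T(x,u)+T(u,y)=T(x,y)$, so at least one of
\[
  T(x,y)\ge h(|(y-x)_1|)+c_1 s^2\sigma_r,\quad T(x,u)\le h(|(u-x)_1|)-c_1 s^2\sigma_r,\quad T(u,y)\le h(|(y-u)_1|)-c_1 s^2\sigma_r
\]
must hold, for suitable $c_1>0$ depending on the constants in Lemma \ref{monotoneE}. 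The upward deviation of $T(x,y)$ above $h(|(y-x)_1|)$ is controlled directly by \eqref{expbound2} together with Proposition \ref{hmu} (which bounds $ET(x,y)-h(|(y-x)_1|)$ by a constant multiple of $\sigma_r\log r$, but for the purposes of a deviation of order $s^2\sigma_r$ with $s^2\ge C_{77}^2K^2$ large this is absorbed if $C_{77}$ is large enough), giving probability at most $C_{24}e^{-c_2 s^2}$. The two downward--deviation events are exactly of the form handled by \eqref{Qrunif2} of Theorem \ref{nofast}: applying it with the cylinder enlarged to contain $u$ and with $t\asymp s^2$, we get probability at most $C_{29}e^{-c_3 s^2}$ for each. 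Since $(x,y)$ is fixed, to range over the possible $u$ I only need an entropy factor $c_4 r^{d}$ (or, more efficiently, I first apply Proposition \ref{transfluct2} to confine $\Gamma_{xy}$ to the tube--and--cylinders region, so that the relevant $u$ with $|u^*|\ge s\Delta_r$ lie in a set of size polynomial in $s$ and $\log r$, not in $r$); either way the factor is at most $e^{c_5\log r}\le e^{c_5 s^2/C_{78}^2\cdot(\log r)}$... which is \emph{not} small. This is the crux.

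The main obstacle, therefore, is precisely the entropy--versus--exponent tradeoff when $s$ is only logarithmic: a naive union bound over $u$ contributes $e^{+c\log r}$ while the per--$u$ probability is only $e^{-cs^2}$, and for $s\asymp(\log r)^{1/2}$ these are of the same order, so the union bound barely closes, and for $s\ll(\log r)^{1/2}$ it fails outright. I expect this is resolved in the same way the analogous issue is resolved in Lemma \ref{bigwander} and in the simpler cases \eqref{casei}--\eqref{caseiii}: one does \emph{not} range over $u$ at all beyond a bounded number of ``scales.'' Instead, I would use the straightness bound Proposition \ref{transfluct2} to assert that, except on an event of probability $\le C_{47}e^{-C_{48}t\log t}$, the only way $\Gamma_{xy}$ can reach distance $s\Delta_r$ from $\Pi_{xy}$ is through a point $u$ with $D_r(\Theta_{xy}u)\ge c_6 s^2$ (this is the content of \eqref{minwhich}: transverse displacement $s\Delta_r=s\Xi(u_1)/(\log(2+u_1))^{1/2}$ at longitudinal position of order $r$ corresponds to $D_r\asymp s^2/\log r$, so more care is needed and one wants $D_r\gtrsim s^2$, which holds once $s\gtrsim(\log r)^{1/2}\cdot(\text{const})$ — exactly the regime of this lemma versus case (iv)'s $s\le c_1(\log r)^{1/2}$). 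Combining: the event $\{\Gamma_{xy}\not\subset G_{r,s}\}$ is contained in $\{\sup_{u\in\Gamma_{xy}}D_r(\Theta_{xy}u)\ge c_6 s^2\}$ up to the small correction above, and Proposition \ref{transfluct2} bounds this by $C_{47}e^{-C_{48}c_6 s^2\log(c_6 s^2)}\le C_{75}e^{-C_{76}s^2}$. In fact this shows the lemma may be obtained from Proposition \ref{transfluct2} alone, without invoking Theorem \ref{nofast} at all for this particular range — which is consistent with the remark in the text that cases (i)--(iii) need only Lemma \ref{connect}, and that case (iv)/Lemma \ref{xyfixed} is the delicate one where one pays attention to whether $s$ is above or below $(\log r)^{1/2}$. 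The one genuinely new ingredient beyond Proposition \ref{transfluct2} is checking that for $C_{77}K\le s$, with $K\ge C_{74}$, the geometry forces $D_r\ge c_6 s^2$ rather than merely $D_r\ge c_6 s^2/\log r$; this is where the hypothesis $s\le C_{78}(\log r)^{1/2}$ is NOT needed for the wandering estimate itself but rather marks the boundary with case (iv), and where one must be careful that $s^2\le C_{78}^2\log r$ keeps $D_r\asymp s^2$ comparable to $\log r$ so the $\log t$ factor in Proposition \ref{transfluct2}'s exponent is bounded and can be discarded into the constant.
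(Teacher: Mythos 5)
Your proposal starts in the right place but then derails at exactly the step where the paper's argument is simplest. The decomposition into $T(x,u)+T(u,y)=T(x,y)$, the use of Lemma \ref{monotoneE} and \eqref{adddist1} to get a lower bound of order $s^2\sigma_r$ on $h(|u-x|)+h(|y-u|)-h(|y-x|)$, and the reduction to three deviation events are all correct and match the paper. The error comes next: you write that \eqref{Qrunif2}/\eqref{Qrunif1} gives probability $C_{29}e^{-c_3 s^2}$ \emph{for each} $u$, and then you multiply by an entropy factor of order $r^d$ to range over $u$. But \eqref{Qrunif1} and \eqref{Qrunif2} are \emph{uniform} statements: the event already reads ``for some $x,y\in G_r(K)$,'' so a single application of Theorem \ref{nofast} (downward deviations) controls the downward deviation $T(x_0,\tilde u)\leq h(\cdot)-t\sigma_r$ simultaneously over \emph{all} $\tilde u$ in a suitable cylinder. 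No union bound over $u$ is needed, and none appears in the paper; this is precisely what the uniform formulation of Theorem \ref{nofast} buys, and it is the reason step (1) of Remark \ref{strategy} must precede step (2). The paper's proof just rescales: with $r_1=(y-x)_1$ it finds an $s_2\geq s$ such that a length--$2r_1$ slab around $[x_1,y_1]$ containing all candidate $u$ is a translate of $G_{2r_1}(s_2)$, then applies Theorem \ref{nofast} in that cylinder with $t\asymp s_2^2$ (the constraint $t\geq C_{26}K'^2$ with $K'=Ks_2/2s$ is exactly what forces $s\geq C_{77}K$), and finishes immediately with probability $C_{27}e^{-c_1 s_2^2}\leq C_{27}e^{-c_1 s^2}$.

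Your proposed rescue --- replacing Theorem \ref{nofast} with Proposition \ref{transfluct2} --- does not close the gap; in fact it is exactly backwards for the regime of this lemma. You correctly compute that a point at longitudinal distance $\asymp r$ and transverse distance $s\Delta_r$ has $D_r\asymp s^2\Delta_r^2/\Xi(r)^2 = s^2/\log r$, because of the $\log$ built into $\Xi$. When $s\leq C_{78}(\log r)^{1/2}$ this quantity is $O(1)$, so Proposition \ref{transfluct2} gives $P(\sup D_r\geq c_6 s^2/\log r)\leq C_{47}e^{-C_{48}(s^2/\log r)\log(s^2/\log r)} = O(1)$, which is not $e^{-cs^2}$ and is in fact useless. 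You acknowledge this (``one wants $D_r\gtrsim s^2$, which holds once $s\gtrsim(\log r)^{1/2}$'') but then misread which side of the threshold Lemma \ref{xyfixed} lives on: the lemma covers $s\leq C_{78}(\log r)^{1/2}$, i.e., the side where $D_r\asymp s^2/\log r$ is small. The regime $s\gtrsim(\log r)^{1/2}$ where straightness helps is precisely case (iii), already handled by \eqref{caseiii}. So the closing claim that ``the lemma may be obtained from Proposition \ref{transfluct2} alone, without invoking Theorem \ref{nofast}'' is false; the uniform downward--deviations bound of Theorem \ref{nofast} is essential here, and the reason this lemma is isolated is not that it is delicate to union over $u$, but that it cannot be proven uniformly over $(x,y)$ at this stage (that comes later in section \ref{last} once the full Theorem \ref{nofast} is available).
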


\begin{proof}
Fix $\tred{x,y}\in G_r(K)$ and let $\tred{r_1} = (y-x)_1$. We consider first the case of $r_1<r/2$. Let $\tred{m}\geq 0$ satisfy $mr_1 \leq x_1<y_1\leq (m+2)r_1$, and for $C_{22}$ from \eqref{powerlike} let 
\[
  \tred{s_1} = 2C_{22}s\left( \frac{r}{2r_1} \right)^{(1+\chi_1)/2},\quad 
    \tred{G_{r,2s}^{(m)}} = [mr_1,(m+2)r_1] \times 2s\Delta_r\mkB_{d-1},
\]
the latter being a slice of $G_{r,2s}$. Define $\tred{s_2}$ by $2s\Delta_r=s_2\Delta_{2r_1}$.
Then $s_2\geq s$, and from \eqref{powerlike} we have $s_2\geq s_1$, and $G_{r,2s}^{(m)}$ is a translate of $G_{2r_1}(s_2)$.

In view of \eqref{casei} and \eqref{caseii}, we need only consider $(x,y,u)$ as in \eqref{twiv}; in particular $r_1\geq r_0/2$.
Here we have for some $c_0$
\[
  |u-x| +|y - u| - |y - x| \geq  c_0 s_2^2\sigma(2r_1) \quad\text{and hence}\quad 
    h(|u-x|) + h(|y-u|) - h(|y-x|) \geq \frac{c_0\mu}{2}s_2^2\sigma(2r_1),
\]
so as in transverse wandering cases (i)--(iii), since $u\in\Gamma_{xy}$ one of the quantities
\[
  h(|u-x|) - T(x,u),\quad h(|y-u|) - T(u,y),\quad T(x,y) - h(|y-x|)
\]
must exceed $c_0\mu s_2^2\sigma(2r_1)/6$. It follows from \eqref{expbound2} and the downward--deviations part of Theorem \ref{nofast} (with $\ep,r,K,t$ there taken as $1/2, 2r_1,Ks_2/2s,c_0\mu s_2^2/6$ here) that for $\tred{x_0} = x - mr_1e_1,\tred{y_0} =y-mr_1e_1$ we have
\begin{align}
  P&\left( \Gamma_{xy} \text{ contains a vertex $u$ as in \eqref{twiv}} \right) \notag\\
  &\leq P\bigg( \max\Big( h(|u-x|) - T(x,u),h(|y-u|) - T(u,y) \Big) \geq \frac{c_0\mu}{6}s_2^2\sigma(2r_1) 
    \text{ for some $u\in G_{r,2s}^{(m)}$} \bigg) \notag\\
  &\quad + P\bigg( T(x,y) - h(|y-x|) \geq \frac{c_0\mu}{6}s_2^2\sigma(2r_1) \bigg) \notag\\
  &\leq P\bigg( \max\Big( h(|\tilde u-x_0|) - T(x_0,\tilde u),h(|y_0-\tilde u|) - T(\tilde u,y_0) \Big) 
    \geq \frac{c_0\mu}{6}s_2^2\sigma(2r_1) 
    \text{ for some $\tilde u\in G_{2r_1}(s_2)$} \bigg) \notag\\
  &\quad + P\bigg( T(x,y) - h(|y-x|) \geq \frac{c_0\mu}{6}s_2^2\sigma(2r_1) \bigg) \notag\\
  &\leq C_{27}e^{-c_1 s_2^2} \notag\\
  &\leq C_{27}e^{-c_1 s^2}.
\end{align}
\end{proof}

This completes the proof of Theorem \ref{tversethm} for fixed $(x,y)$, and thus of (2) in Remark \ref{strategy}.

\section{Proof of Theorem \ref{ghbound}.}
We move to (3) of Remark \ref{strategy}.
We begin with an extension of a consequence of Theorem \ref{nofast}, removing the requirement that $y$ lie in the same tube $G_r(K)$ as $x$, to create a bound on the probability of a fast ``hyperplane--block to hyperplane'' passage time. Define
\[
  \tred{\Lambda_r} = [0,2]\times [-\Delta_r,\Delta_r]^{d-1}.
\]

\begin{lemma}\label{tubes}
Suppose $\GG=(\VV,\EE)$ and $\{\eta_e,e\in\EE\}$ satisfy A1, A2, and A3.  There exist constants $C_i$ such that for all $r\geq C_{79}$ and $t\geq C_{80}$,
\begin{equation}\label{tubes1}
  P\Big( T(x,y) \leq h(r) - t\sigma_r \text{ for some $x\in \Lambda_r\cap \VV$ and $y\in H_r^+$} \Big)
    \leq C_{81}e^{-C_{82}t}.
\end{equation}
\end{lemma}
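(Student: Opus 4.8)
The plan is to reduce the ``hyperplane-block to hyperplane'' statement to the cylinder statement \eqref{Qrunif2} of Theorem \ref{nofast} (downward deviations), which has already been established. The key point is that a fast passage time from some $x\in\Lambda_r\cap\VV$ to some $y\in H_r^+$ forces either a fast passage within a long thin cylinder of the type handled by Theorem \ref{nofast}, or else the geodesic to wander far transversally, which is controlled by Theorem \ref{tversethm} for fixed endpoints (Lemma \ref{xyfixed}), together with the easier transverse-wandering cases \eqref{casei}--\eqref{caseiii}. First I would discretize: replace $x$ by $\hat x = \psi_q(x)$ and, letting $\Gamma_{xy}$ exit $H_r$ at entry point $u_r(\Gamma_{xy})$, replace it by $\hat u = \psi_q(u_r(\Gamma_{xy}))\in H_r$ (up to distance $2$ from $H_r$, so harmless by Lemma \ref{neighbortimes} and the event $J^{(0)}$ of \eqref{Mmax}). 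Since $T(x,y)\ge T(x,u_r(\Gamma_{xy}))$ and $h(r)$ compares to $h((\hat u-\hat x)_1)$ up to $O(1)$ via Lemma \ref{monotoneE} (because $(\hat u - \hat x)_1 \in [r-c, r+c]$), the event in \eqref{tubes1} is contained, up to a term $c e^{-ct}$, in the event that $\hT(\hat x,\hat u)\le h((\hat u-\hat x)_1) - \frac t2\sigma_r$ for some $\hat x\in\Lambda_r\cap q\ZZ^d$ and some $\hat u\in H_r\cap q\ZZ^d$ lying on the geodesic from $\hat x$.

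Next I would split on the transverse displacement $|(\hat u-\hat x)^*|$. If $|(\hat u-\hat x)^*|\le K\Delta_r$ for a suitable fixed $K$ (depending only on $C_{80}$ via $t\ge C_{80}$, e.g. $K$ a fixed multiple of $\sqrt{t}$ truncated — or more simply, decompose the range $|(\hat u-\hat x)^*|\le \sqrt t\,\Delta_r$ into $O(1)$ dyadic shells and in each shell $\hat x,\hat u$ both lie in a common translate/rotate of some $G_r(K')$ with $K'\asymp 2^{\text{shell}}\sqrt t$ and $t\ge C_{26}K'^2$ fails only for the largest shells), I would apply \eqref{Qrunif2} directly: there the pair $(\hat x,\hat u)$ lies in a cylinder $G_r(K')$ of length $\asymp r$, and \eqref{Qrunif2} gives probability $\le C_{29}e^{-C_{30}t/2}$, with the $O(1)$ number of shells absorbed into the constants. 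If instead $|(\hat u-\hat x)^*|$ is large — say larger than $\sqrt t\,\Delta_r$ — then the geodesic from $\hat x$ necessarily wanders transversally out of the cylinder $G_{r}(K)$ by an amount of order $|(\hat u-\hat x)^*|\ge\sqrt t\Delta_r$, and I would bound this using the fixed-endpoint transverse bound (Lemma \ref{xyfixed}, or \eqref{casei}--\eqref{caseiii} for the larger regime), which gives a bound $Ce^{-ct}$ for the probability that $\Gamma_{\hat x\hat u}\not\subset G_{\cdot,\cdot}$; summing over the $O(r^{2d})$ choices of $(\hat x,\hat u)$ is harmless since the exponent there is at least of order $t$ and, for $|(\hat u - \hat x)^*|\gg\sqrt t\Delta_r$, in fact much larger — of order $(|(\hat u-\hat x)^*|/\Delta_r)^2 \gg t\log r$ once one pushes to the genuinely large regime, which dominates the entropy $r^{2d}$.

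The main obstacle I expect is the bookkeeping at the ``matching scale'': reconciling the $t\ge C_{26}K^2$ constraint in \eqref{Qrunif1}/\eqref{Qrunif2} with the need to let the transverse scale $|(\hat u-\hat x)^*|$ range up to a multiple of $\sqrt t\,\Delta_r$, and then crossing over cleanly to the regime where fixed-endpoint transverse wandering bounds take over. The remedy is exactly the dyadic-shell decomposition together with Remark \ref{smalltubes} (which lets one trade a factor in $K$ against $t$ by covering $G_r(K)$ by $O(1)$ thinner tilted cylinders), so that in each shell one is always in the regime where the already-proven theorem applies, and the finitely many shells only cost constants. The remaining pieces — the discretization, the comparison of $h(r)$ with $h((\hat u-\hat x)_1)$, and absorbing the $M(\cdot)$ terms — are routine applications of Lemma \ref{neighbortimes}, Lemma \ref{monotoneE}, and \eqref{Mmax}, exactly as in the proof of \eqref{Qrunif2}.
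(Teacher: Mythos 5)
There is a genuine gap in the middle transverse-displacement regime. After discretizing, the main event to rule out is $\hT(\hat x,\hat u) \le h(r) - \tfrac{t}{2}\sigma_r$ for some $\hat x\in\Lambda_r\cap q\ZZ^d$ and $\hat u\in H_r\cap q\ZZ^d$. You split on the transverse size $L := |(\hat u-\hat x)^*|/\Delta_r$ at $L\asymp\sqrt t$: below this, apply \eqref{Qrunif2}; above, apply a union bound together with Lemma \ref{connect}. But when $t\ll\log r$ the second half fails for $\sqrt t \lesssim L \lesssim (\log r)^{1/2}$: there the curvature gain gives an exponent of order $L^2+t$, which can be $\ll\log r$, while the entropy over $(\hat x,\hat u)$ is of order $r^{2d}$, so the union bound does not close. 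You seem to sense this (``of order $(|(\hat u-\hat x)^*|/\Delta_r)^2\gg t\log r$ once one pushes to the genuinely large regime''), but the regime where $L$ is just above $\sqrt t$ is exactly where the argument breaks. The correct threshold for switching over to the direct union bound plus Lemma \ref{connect} argument is $L\asymp(\log r)^{1/2}$, not $L\asymp\sqrt t$, precisely so that the curvature gain supplies a $\log r$ factor that dominates the entropy.

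To handle $\sqrt t \lesssim L \lesssim(\log r)^{1/2}$ the paper does something you have not proposed: partition $H_{[r,r+2]}$ into blocks $Y_\bdm$ of transverse side $K\Delta_r$ with $K\asymp\sqrt t$, and for each block apply the uniform downward-deviation bound \eqref{Qrunif2} (via rotational invariance) to a cylinder $\mC_\bdm$ whose axis is \emph{tilted} so as to pass through both $\Lambda_r$ and $Y_\bdm$. The tilt is the crucial point: the radius of $\mC_\bdm$ stays $\asymp\sqrt t\,\Delta_r$ independent of how far $Y_\bdm$ sits from the $e_1$-axis, so the hypothesis $t\ge C_{26}K^2$ is always satisfied, while the curvature contributes an extra $\asymp(jK)^2\sigma_r\asymp j^2 t\sigma_r$ to the deficit for a block at transverse distance $jK\Delta_r$, and this beats the polynomial-in-$j$ entropy over blocks. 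Your dyadic-shell scheme instead covers each shell by a single fattened cylinder with $K'\asymp 2^{\mathrm{shell}}\sqrt t$, for which $t\ge C_{26}K'^2$ fails already for the second shell (not ``only for the largest shells''), and Remark \ref{smalltubes} does not rescue it since it still requires $t\gtrsim\ep K'^2$. A secondary confusion: in the large-$L$ regime you propose invoking Lemma \ref{xyfixed} to bound the probability that $\Gamma_{\hat x\hat u}\not\subset G_{\cdot,\cdot}$, but the event to control there is that $\hT(\hat x,\hat u)$ is small, not that the geodesic between the two \emph{fixed} endpoints wanders; the right tool is the curvature inequality $h(|\hat u-\hat x|)\ge h(r)+cL^2\sigma_r$ followed directly by Lemma \ref{connect}, as in \eqref{largey}--\eqref{verylgy}.
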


\begin{proof}
It is enough to consider $y\in H_{[r,r+2]}$ and $t\leq h(r)/\sigma_r$. We first deal with large $|y^*|$.  With $c_0$ to be specified we have
\begin{align}\label{angles}
  P\Bigg( &T(x,y) \leq h(r) - t\sigma_r \text{ for some $x\in \Lambda_r\cap \VV$ and $y\in H_{[r,r+2]}$ 
    with $|y^*|> c_0(\log r)^{1/2} \Delta_r$} \Bigg) \notag\\
  &\leq \sum_{k=1}^\infty P\Bigg( T(x,y) \leq h(r) - t\sigma_r \text{ for some $x\in \Lambda_r\cap \VV$ and $y\in H_{[r,r+2]}$}
    \notag\\ 
  &\hskip 2cm \text{with $2^{k-1}c_0(\log r)^{1/2}\Delta_r < |y^*| \leq 2^kc_0(\log r)^{1/2} \Delta_r$} \Bigg) \notag\\
  &\leq \sum_{k=1}^\infty P\Bigg( \hT(\hat x,\hat y) \leq h(r) - t\sigma_r 
    \text{ for some $\hat x\in \Lambda_r\cap q\ZZ^d$ and $\hat y\in H_r\cap q\ZZ^d$}
    \notag\\ 
  &\hskip 2cm \text{with $2^{k-1}c_0(\log r)^{1/2}\Delta_r < |\hat y^*| \leq 2^kc_0(\log r)^{1/2} \Delta_r$} \Bigg).
\end{align}
For $c_9$ from \eqref{adddist1}, let $\tred{k_0}=\max\{k: 2^kc_0(\log r)^{1/2} \Delta_r \leq c_9r\}$.
For $k\leq k_0$, and for $\hat x,\hat y$ of the last event, we have from \eqref{adddist1} and Lemma \ref{monotoneE}
\begin{equation}\label{yxvsr}
  |\hat y-\hat x| \geq r + \frac{|\hat y^*|^2}{3r} \geq r + \frac{2^{2k}c_0^2\sigma_r\log r}{12}
    \quad\text{so}\quad h(|\hat y-\hat x|) \geq h(r) + \frac{\mu c_0^2}{24}2^{2k}\sigma_r\log r,
\end{equation}
and the number of allowed $\hat x,\hat y$ is at most $c_1 (2^k(\log r)^{1/2})^{d-1} \Delta_r^{2(d-1)}\leq 2^{kd}r^{2d}$. Therefore by Lemma \ref{connect}, provided $c_0$ is large, the sum up to $k_0$ on the right side of \eqref{angles} is bounded above by
\begin{align}\label{largey}
  \sum_{k=1}^{k_0} P&\Bigg( \hT(\hat x,\hat y) \leq h(|\hat y-\hat x|) - t\sigma_r - \frac{\mu c_0^2}{24}2^{2k}\sigma_r\log r
    \text{ for some $\hat x\in \Lambda_r\cap q\ZZ^d$ and $\hat y\in H_r\cap q\ZZ^d$}
    \notag\\ 
  &\hskip .5cm \text{with $2^{k-1}c_0(\log r)^{1/2}\Delta_r < |\hat y^*| \leq 2^kc_0(\log r)^{1/2} \Delta_r$} \Bigg) \notag\\
  &\leq \sum_{k=1}^{k_0} 2^{kd}r^{2d} C_{44} \exp\left( -C_{45}\frac{\sigma_r}{\sigma(2r)}
    \left( t+2^{2k}\frac{\mu c_0^2}{24}\log r \right) \right) \notag\\
  &\leq c_2 e^{-c_3t}.
\end{align}
For $k>k_0$, in place of \eqref{yxvsr} we have 
\[
   |\hat y-\hat x| \geq r+c_4 |\hat y^*| \geq r+c_4 2^{k-1}c_0(\log r)^{1/2}\Delta_r
     \quad\text{so}\quad h(|\hat y-\hat x|) \geq h(r) + \frac{c_4c_0\mu}{4} 2^k(\log r)^{1/2}\Delta_r,
\]
and we obtain similarly that the sum from $k_0+1$ to $\infty$ on the right side of \eqref{angles} is bounded above by
\begin{align}\label{verylgy}
  \sum_{k=k_0+1}^\infty 
    P&\Bigg( \hT(\hat x,\hat y) \leq h(|\hat y-\hat x|) - t\sigma_r - \frac{c_4c_0\mu}{4} 2^k(\log r)^{1/2}\Delta_r
    \text{ for some $\hat x\in \Lambda_r\cap q\ZZ^d$ and $\hat y\in H_r\cap q\ZZ^d$}
    \notag\\ 
  &\hskip .5cm \text{with $2^{k-1}c_0(\log r)^{1/2}\Delta_r < |\hat y^*| \leq 2^kc_0(\log r)^{1/2} \Delta_r$} \Bigg) \notag\\
  &\leq \sum_{k=k_0+1}^\infty 2^{kd}r^{2d} C_{44} \exp\left( -\frac{C_{45}}{\sigma(r+2^kc_0(\log r)^{1/2}\Delta_r)}
    \left( t\sigma_r+\frac{c_4c_0\mu}{4} 2^k(\log r)^{1/2}\Delta_r \right) \right) \notag\\
  &\leq c_5 \exp\left( -\frac{c_6(t\sigma_r+r)}{\sigma(2r)} \right) \notag\\
  &\leq c_2 e^{-c_3t}.
\end{align}
The last inequality uses $t\leq h(r)/\sigma_r$.

We now deal with the remaining case $|y^*|\leq c_0(\log r)^{1/2} \Delta_r$.
For $C_{26}$ from Theorem \ref{nofast}, we take $\tred{K}=\sqrt{t/C_{26}(d-1)}$ to be specified and subdivide $H_{[r,r+2]}$ into blocks
\[
  \tred{Y_{\bdm}} = [r,r+2] \times \prod_{i=1}^{d-1} [(m_i-1)K\Delta_r,m_iK\Delta_r], \quad \bdm\in\ZZ^{d-1}.
\]
Let $\tred{z_{\bdm}}$ denote the center point of $Y_{\bdm}$ and let
\[
  \tred{\mkM} = \Big\{\bdm\in\ZZ^{d-1}: 0\notin Y_{\bdm},\,
    \exists y\in Y_{\bdm} \text{ with } |y^*| \leq c_0(\log r)^{1/2} \Delta_r \Big\}.  
\]
Given $\bdm\in\mkM$, there exists a cylinder $\mC_{\bdm}$, with axis containing $\Pi_{0z_{\bdm}}$, radius $2\sqrt{d-1}K\Delta_r$, and length $2r$, which contains $\Lambda_r$ and $Y_{\bdm}$. See Figure \ref{Lemma6-1}. Further, $d_\infty(Y_{\bdm},[r,r+2]\times\{0\})$ is a positive integer multiple of $K\Delta_r$, and for $x\in\Lambda_r$ and $y\in Y_{\bdm}$ satisfying $d_\infty(Y_{\bdm},[r,r+2]\times\{0\}) = jK\Delta_r$ for some $j$, we have using \eqref{adddist1} and Lemma \ref{monotoneE}(i) that
\[
  |y-x| \geq r + \frac{(jK\Delta_r)^2}{3r} = r + \frac{(jK)^2}{3}\sigma_r \quad\text{and hence}\quad h(|y-x|) \geq h(r) + 
    \frac{\mu(jK)^2}{6}\sigma_r.
\]
\begin{figure}
\includegraphics[width=15cm]{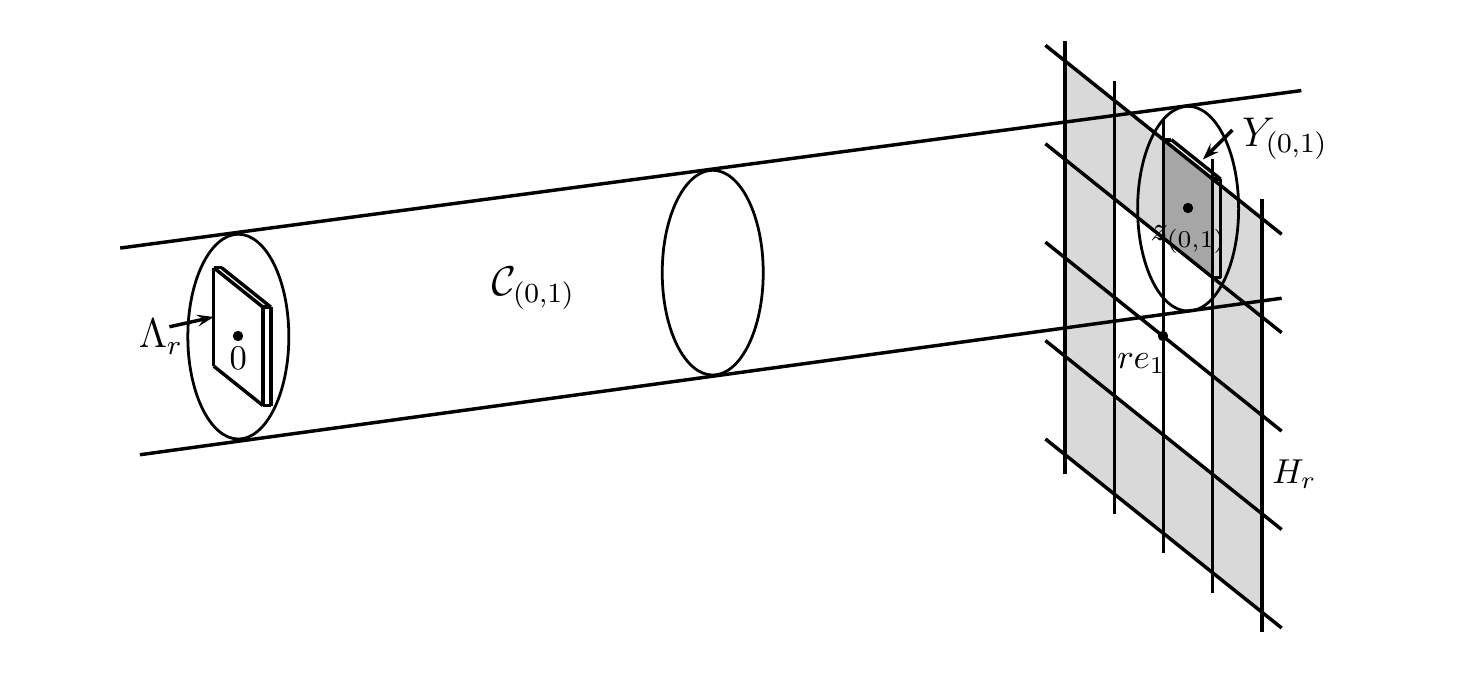}
\caption{ The block $\Lambda_r$ in $H_0$ (``fattened'' slightly to thickness 2), the similar block $Y_{\bdm}$ of $H_r$, and the cylinder $\mC_{\bdm}$ containing both, for $\bdm=(0,1)$. The gray blocks in $H_r$ are those with $j=1$. }
\label{Lemma6-1}
\end{figure}
The definition of $K$ says $t=C_{26}(d-1)K^2$, so
by rotational invariance we can apply Theorem \ref{nofast} (for downward deviations) to the cylinder $\mC_{\bdm}$ and obtain
\begin{align}
  P&\Big( T(x,y) \leq h(r) - t\sigma_r \text{ for some $x\in \Lambda_r\cap \VV$ and $y\in Y_{\bdm}$} \Big) \notag\\
  &\leq P\left( T(x,y) \leq h(|y-x|) - \left( t + \frac{\mu(jK)^2}{6} \right) \sigma_r 
    \text{ for some $x\in \Lambda_r\cap \VV$ and $y\in Y_{\bdm}$} \right) \notag\\
  &\leq C_{27}\exp\left( -C_{28}\left( t + \frac{\mu(jK)^2}{6} \right) \right).
\end{align}
Summing over $j\leq \tred{j_0} = \max\{j: jK\Delta_r \leq c_0(\log r)^{1/2} \Delta_r\}$ gives 
\begin{align}\label{medy}
  P&\Big( T(x,y) \leq h(r) - t\sigma_r \text{ for some $x\in \Lambda_r\cap \VV, y\in H_{[r,r+2]}$ with 
    $|y^*|\leq c_0(\log r)^{1/2} \Delta_r$} \Big) \notag\\
  &\leq P\Big( T(x,y) \leq h(r) - t\sigma_r \text{ for some $x\in \Lambda_r\cap \VV, y\in Y_{\bdm}$, 
    and $\bdm\in\mkM$}\notag\\
  &\hskip 2cm \text{with $d_\infty(Y_{\bdm},[r,r+2]\times\{0\}) \leq j_0K\Delta_r$} \Big) \notag\\
  &\leq \sum_{j=0}^{j_0} P\bigg( T(x,y) \leq h(r) - t\sigma_r 
    \text{ for some $x\in \Lambda_r\cap \VV, y\in Y_{\bdm}$, and $\bdm\in\mkM$} \notag\\
  &\hskip 2cm \text{with $d_\infty(Y_{\bdm},[r,r+2]\times\{0\}) = jK\Delta_r$} \bigg) \notag\\
  &\leq \sum_{j=0}^{j_0} (2j+2)^{d-1} C_{27}\exp\left( -C_{28}\left( t + \frac{\mu(jK)^2}{6} \right) \right)\notag\\
  &\leq c_7e^{-c_8t}.
\end{align}
With \eqref{angles}, \eqref{largey}, and \eqref{verylgy} this completes the proof of Lemma \ref{tubes}.
\end{proof}

Define the ``small box''
\[
  \tred{\mG_r} = [0,r]\times [-\Delta_r,\Delta_r]^{d-1}.
\]
We now tile a region of $\RR^d$ with translates of  $\mG_r$.  Fix \tred{$k,K$} to be specified and let $j=\tred{j(r,k,K)}$ be the least $j$ for which $(2j+1)\Delta_r \geq K\Delta(kr)$.  Define the ``big box''
\[
  \tred{\mG^+} = [0,kr] \times [-(2j+1)\Delta_r, (2j+1)\Delta_r]^{d-1},
\]
so the width of $\mG^+$ is near $2K\Delta(kr)$.  
For $\bdm\in\ZZ^d$ let \tred{$\mG_{r,\bdm},\Lambda_{r,\bdm}$} be $\mG_r,\Lambda_r$ translated by $(rm_1,2\Delta_r \bdm^*)$, and let $\tred{\mM} = \{\bdm\in\ZZ^d: \mG_{r,\bdm}\subset \mG^+ \}$; see Figure \ref{Thm1-4}. Then the number of small boxes comprising $\mG^+$ is 
\[
  |\mM| = k(2j+1)^{d-1} \leq 2k\left( \frac{K\Delta(kr)}{\Delta_r} \right)^{d-1} \leq c_0K^{d-1} k^{1+(d-1)\chi_2},
\]
and we have $\mG^+\supset G_{kr,K}\cap H_{[0,kr]}$.
Let \tred{$\mE_{\bdm}(t)$} be the event in \eqref{tubes1} translated to $\mG_{r,\bdm}$, that is, we replace $\Lambda_r$ with $\Lambda_{r,\bdm}$ and $H_r^+$ with $H_{(m_1+1)r}^+$. Fix \tred{$C$} to be specified; when $\omega\in \mE_{\bdm}(C\log k)$ we say $\Lambda_{r,\bdm}$ is a \emph{fast source}. Let $\tred{\mL(r)} = (h(r)-r\mu)/\sigma_r$, so 
\begin{equation}\label{Lbound}
  0\leq \mL(r)\leq C_{46}\log r
\end{equation}
by Proposition \ref{hmu}.  

\begin{figure}
\includegraphics[width=17cm]{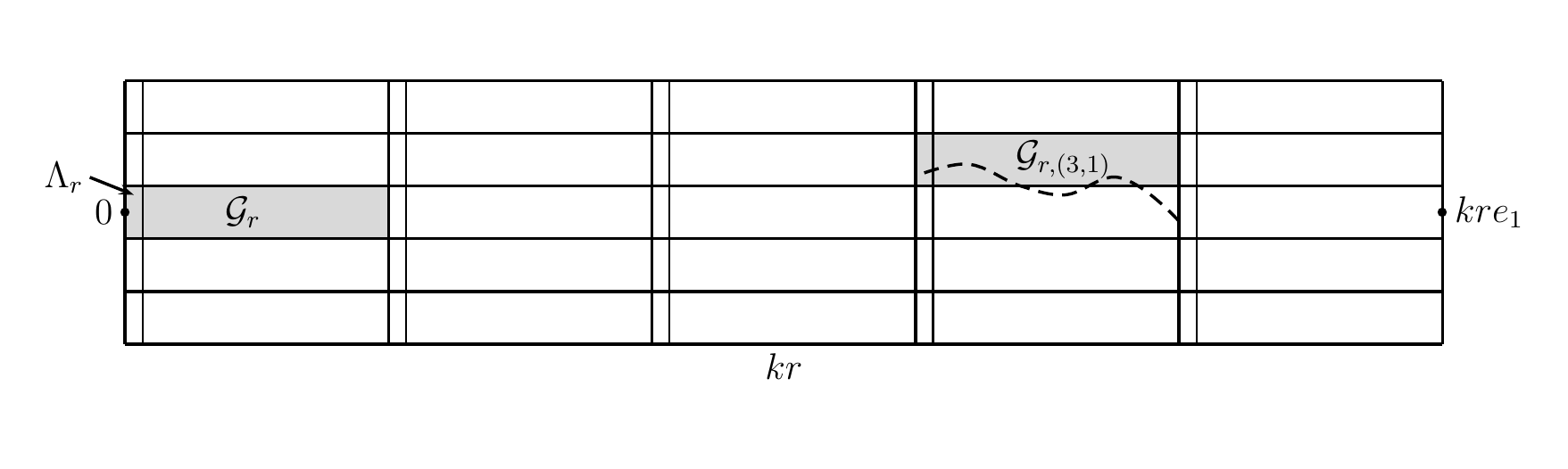}
\caption{ The big box $\mG^+$, of size $kr\times (4j+2)\Delta_r\approx kr\times K\Delta(kr)$, here with $k=5,j=2$. Each small box $\mG_{r,\bdm}$ has size $r\times 2\Delta_r$. $\Lambda_{r,(3,1)}$ is a fast source if some path like the dashed one is ``sufficiently fast'' relative to $h(r)$.}
\label{Thm1-4}
\end{figure}

We now consider the passage time between the ends of the big box.  If $\Gamma_{xy} \subset G_{kr,K}$ then $\Gamma_{0,kre_1}$ must intersect at least $k$ of the regions $\Lambda_{r,\bdm}$, one for each $0\leq m_1<k$.  If there are no fast sources, this means $T(0,kre_1) > kr\mu + k\mL(r)\sigma_r - C\sigma_rk\log k$.  Provided $K$ is large, it then follows from Lemmas \ref{xyfixed} and \ref{tubes} that 
\begin{align}\label{allslow}
  P\Big( T(0,kre_1) \leq kr\mu + k\mL(r)\sigma_r - C\sigma_rk\log k \Big)
    &\leq P\Big( \Gamma_{0,kre_1}\not\subset G_{kr,K} \Big) + P\Big( \cup_{\bdm\in\mM} \mE_{\bdm}(C\log k) \Big) \notag\\
  &\leq C_{75} e^{-C_{76}K^2} + C_{81} |\mM| e^{-C_{82}C\log k}\notag\\
  &\leq C_{75} e^{-C_{76}K^2} + c_1K^{d-1} k^{1+(d-1)\chi_2} e^{-C_{82}C\log k}.
\end{align}
By taking $K$ large and then $C$ large, we can make the right side of \eqref{allslow} less than 1/2 for all $k\geq 3$.  On the other hand, from \eqref{expbound2} for large $c>1$ we have
\[
  P\Big( T(0,kre_1) \leq kr\mu + \mL(kr)\sigma(kr) + c\sigma(kr) \Big) \geq 1-C_{24}e^{-C_{25}c} > \frac 12.
\]
From this and \eqref{allslow} it follows that
\begin{equation}\label{allr}
  (\mL(kr) + c)\sigma(kr) > k(\mL(r) - C\log k)\sigma_r \quad\text{for all $r$ large and } k\geq 3.
\end{equation}
Fix $k\geq 3$ large enough so $k^{1-\chi_2}\geq 3C_{23}c$, with $C_{23}$ from \eqref{powerlike}, and, to get a contradiction, suppose there exists $r$ arbitrarily large with $\mL(r)\geq 2C\log k$. By \eqref{powerlike} and \eqref{allr} we then have
\[
  \mL(kr) \geq \frac{\sigma_r}{\sigma(kr)} \frac{k\mL(r)}{2} - c \geq \frac{k^{1-\chi_2}}{3C_{23}}\mL(r) \geq c\mL(r).
\]
Then iteration and \eqref{Lbound} give that for all $n\geq 1$, $C_{46}(n\log k+\log r) \geq \mL(k^nr)\geq c^n\mL(r)$. Since this is false for large $n$, we must have $\mL(r)< 2C\log k$ for all large $r$, so $\mL(\cdot)$ is bounded, which completes the proof of Theorem \ref{ghbound}, and thus (3) of Remark \ref{strategy}.

\section{Proof of Theorem \ref{nofast}---upward deviations.}\label{Th1-2up}
We move on to (4) of Remark \ref{strategy}. The proof is similar to the LPP proof for $d=2$ in (\cite{BSS16} Proposition 10.1). 
As with downward deviations, we need only consider $x,y$ as in \eqref{startend}. Let $\mH_{xy}$ be as in the downward--deviations proof, and 
\[
  \tred{\mH_{xy}^{ter}} = \{\text{all terminal hyperplanes in } \mH_{xy}\},
\]
so $\mH_{xy}^{ter} = \{H_{s_1},\dots,H_{s_n}\}$ for some \tred{$n,s_1,\dots,s_n$} depending on $(x,y)$.  Note that $\mH_{xy}^{ter}$ does not depend on $\Gamma_{xy}$; the $j$--terminal hyperplane at each end of $[\hat x_1,\hat y_1]$ is just the second--closest $j$th--scale hyperplane to the endpoint.  We form a marked PG path
\[
  \tred{\Omega_{xy}: \hat x = u^0\to\cdots\to u^{n+1} = \hat y}
\]
by taking $u^i$ as the closest point to $\tred{w^i}=\Pi_{\hat x\hat y} \cap H_{s_i}$ in the $j$th--scale grid, when $H_{s_i}$ is a $j$--terminal hyperplane; this means $|u^i-w^i| \leq \sqrt{d-1}\beta^j\Delta_r$ and $n\leq 2j_1$ (from \eqref{j1}.)  Thus the CG approximation gets coarser as we move away from the endpoints of the interval. See Figure \ref{Scales}.

We proceed generally as in Step 8 of the proof for downward deviations.  Let $\tred{w_\perp^i}$ be the orthogonal projection of $u^i$ into $\Pi_{\hat x\hat y}$. We use the terminology $j$th--scale link, final link, and macroscopic link from there.  For $j\leq j_1$, for a $j$th--scale link $(u^{i-1},u^i)$ we have
\begin{align}\label{uw}
  |u^i-u^{i-1}| \leq |w_\perp^i - w_\perp^{i-1}| + \frac{(|u^{i-1}-w_\perp^{i-1}| + |u^i- w_\perp^i|)^2}{\delta^jr}
    \leq |w_\perp^i - w_\perp^{i-1}| + \frac{2(d-1)\beta^{2j}\Delta_r^2}{\delta^jr}.
\end{align}
Further, the angle \tred{$\alpha$} between $e_1$ and $\hat y - \hat x$ satisfies $\tan\alpha \leq 2K\Delta_r/\ep r$ and therefore
\begin{equation}\label{perpcl}
  |w^i - w_\perp^i| \leq c_0 \frac{K\Delta_r}{\ep r}|u^i-w^i| \leq c_1 \frac{K\beta^j}{\ep}\sigma_r,
\end{equation}
and similarly for $i-1$ in place of $i$.  We have also $\delta^jr/2 \leq (u^i-u^{i-1})_1 \leq 2\delta^jr$ so from \eqref{adddist1} and the bound on $\alpha$ we also get 
\[
  |w^i-w^{i-1}| - (w^i-w^{i-1})_1 = \frac{(w^i-w^{i-1})_1}{(\hat y - \hat x)_1}\Big( |\hat y - \hat x| - (\hat y - \hat x)_1 \Big)
    \leq \frac{2\delta^j}{\ep} \frac{2K^2}{\ep}\sigma_r.
\]
Combining this with \eqref{relsize}, \eqref{uw}, and \eqref{perpcl} we see that
\[
  |u^i-u^{i-1}| \leq (u^i-u^{i-1})_1 + \left( \frac{2(d-1)\beta^{2j}}{\delta^j} + \frac{2c_1K\beta^j}{\ep} 
    + \frac{4\delta^jK^2}{\ep^2} \right)\sigma_r
    \leq (u^i-u^{i-1})_1 + \frac{8\delta^jK^2}{\ep^2}\sigma_r
\]
and therefore by Theorem \ref{ghbound}, using \eqref{relsize} and the assumption $t\geq C_{26}K^2$,
\begin{align}
  h(|u^i-u^{i-1}|) &\leq h((u^i-u^{i-1})_1) + \frac{16\mu\delta^jK^2}{\ep^2}\sigma_r
    \leq \mu (u^i-u^{i-1})_1 + 2C_{33}\sigma(2\delta^jr) + \frac{16\mu\delta^jK^2}{\ep^2}\sigma_r \notag\\
  &\leq \mu (u^i-u^{i-1})_1 + \frac{c_2\delta^{j\chi_1}K^2}{\ep^2}\sigma_r
    \leq  \mu (u^i-u^{i-1})_1 + \frac{\lambda^j}{2} t\sigma_r.
\end{align}
It follows using Lemma \ref{connect} that 
\begin{align}\label{jth}
  P\Big( \hT(u^{i-1},u^i) - \mu (u^i-u^{i-1})_1 \geq \lambda^j t\sigma_r \Big)
    &\leq P\left( \hT(u^{i-1},u^i) - h(|u^i-u^{i-1}|) \geq \frac{\lambda^j}{2} t\sigma_r \right) \notag\\
  &\leq C_{44}\exp\left( -C_{45}\frac{\lambda^jt\sigma_r}{\sigma(2\delta^jr)} \right) \notag\\
  &\leq C_{44}\exp\left( -c_3\left( \frac{\lambda}{\delta^{\chi_1}} \right)^j t \right).
\end{align}
The same is valid for a final link, with $j=j_1$.  For macroscopic links it is valid with $\sigma(2\delta^jr)$ replaced by $\sigma_r$, giving
\begin{equation}\label{mac}
  P\Big( \hT(u^{i-1},u^i) - \mu (u^i-u^{i-1})_1 \geq \lambda t\sigma_r \Big)
    \leq C_{44}e^{-c_4\lambda t}.
\end{equation}

The numbers of possible links inside $G_r(K)$ are as follows:
\begin{itemize}
\item[(i)] $j$th--scale links: at most $c_5K^{d-1}/\delta^{2j}\beta^{2j(d-1)}$,
\item[(ii)] final links: at most $c_6K^{d-1}r^d$,
\item[(iii)] macroscopic links: at most $c_7K^{d-1}/\delta^2\beta^{2(d-1)}$.
\end{itemize}
Since $t\geq C_{26}K^2$, provided $C_{26}$ is taken large enough it follows from these bounds and \eqref{relsize2}, \eqref{jth}, \eqref{mac} that
\begin{align}\label{somelink}
  P&\Big( \text{for some $\hat x,\hat y \in G_r(K)$ and $j\leq j_1$, there is a $j$th--scale link $(v,w)$ in $\Omega_{xy}$} \notag\\   
    &\qquad \text{satisfying } \hT(v,w) - \mu(w-v)_1 \geq \lambda^jt\sigma_r \Big)
    \leq c_8\exp\left( -c_{10}\left( \frac{\lambda}{\delta^{\chi_1}} \right)^j t \right), \notag\\
  P&\Big( \text{for some $\hat x,\hat y \in G_r(K)$ there is a final link $(v,w)$ in $\Omega_{xy}$ } \notag\\   
  &\qquad \text{satisfying } \hT(v,w) - \mu(w-v)_1 \geq \lambda^{j_1}t\sigma_r \Big)
    \leq c_8\exp\left( -c_{10}\left( \frac{\lambda}{\delta^{\chi_1}} \right)^{j_1} t \right), \notag\\
  P&\Big( \text{for some $\hat x,\hat y \in G_r(K)$ the macroscopic link $(v,w)$ in $\Omega_{xy}$ } \notag\\   
  &\qquad \text{satisfies } \hT(v,w) - \mu(w-v)_1 \geq \lambda t\sigma_r \Big)
    \leq c_8e^{-c_{10}\lambda t}.
\end{align}
If $\omega$ is not in any of the events in \eqref{somelink}, and $\omega\notin J^{(0)}(c_{29})$, then for all $x,y$ as in \eqref{Qrunif1},
\begin{align*}
  T(x,y) = T(\hat x,\hat y) &\leq \sum_{i=1}^{n+1} \hT(u^{i-1},u^i) + c_{29}n\log r \notag\\
  &\leq \mu (y-x)_1 + c_{29}j_1\log r + (\lambda + \lambda^{j_1})t\sigma_r
    + \sum_{j=1}^{j_1} \lambda^j t\sigma_r \notag\\
  &\leq h(|y-x|) + \frac{2\lambda}{1-\lambda}t\sigma_r.
\end{align*}
Taking $\lambda<1/3$ it follows that
\[
  P\Big( T(x,y) - ET(x,y) \geq t\sigma_r \text{ for some $x,y \in G_r(K)$ with } |y-x|\geq \ep r \Big) 
    \leq c_{10}e^{-c_{11}t},
\]
which completes the proof of Theorem \ref{nofast} for downward deviations.

\section{Proof of Theorem \ref{tversethm} and Lemma \ref{sublin}.}\label{last}
We finish with (5) of Remark \ref{strategy} by proving Theorem \ref{tversethm}.  In the proof in section \ref{yxfixed} for fixed $x,y$, transverse wandering cases (i)--(iii) were dealt with uniformly over $x,y$, so we need only consider the case there:

(iv) $s\leq c_0(\log r)^{1/2}$ and there exist $x,y,u\in\VV$ with 
\begin{equation}
  x,y \in G_r(K),\quad 
    u \notin G_{r,s},\quad d(u,G_{r,s})\leq 2\quad |y-x| > r_0,\quad u\in\Gamma_{xy}, \quad u_1\in [x_1,y_1].
\end{equation}
See the dashed line in Figure \ref{Theorem1-5}.
As in section \ref{yxfixed}, this means that, for $C_{56}$ from Lemma \ref{monotoneE},
\[
  |u-x| + |y-u| - |y-x| \geq s^2\sigma_r, \quad\text{so}\quad h(|u-x|) + h(|y-u|) - h(|y-x|) \geq \frac{\mu s^2}{2} \sigma_r - C_{56}
  \]
while
\[
  T(x,u) + T(u,y) - T(x,y) = 0
\]
so by Theorem \ref{nofast} and Remark \ref{smalltubes},
\begin{align}
  P\Big( (iv) \text{ holds} \Big) &\leq P\left( \text{there exist $v,w \in G_r(2s)$ with } \big| T(v,w) - h(|w-v|) \big| \geq 
    \frac{\mu s^2}{7} \sigma_r \right) \notag\\
  &\leq C_{36} e^{-c_1s^2}.
\end{align}
Together with cases (i)--(iii) in section \ref{yxfixed}, this completes the proof of Theorem \ref{tversethm}.

\begin{proof}[Proof of Lemma \ref{sublin}]
Fix $M$ (large) and $\ep>0$, and define
\[
  \tred{f(r)} = \log \rho(e^r) - \chi r, \quad \tred{\beta_k} = \sup \{f(r): 2^{k-1}M<r\leq 2^kM\},
\]
so $f(r)=o(r)$ and hence $\beta_k=o(2^k)$.  Letting
\[
  \tred{a_k} = \frac{(2^{k-1}+2^k)M}{2}, 
\]
we define $\tilde f$ 
as follows.  
First let $\tilde f \equiv \beta_1$ on $(0,a_1]$.  Then for those $k\geq 1$ with $\beta_k\geq \beta_{k+1}$, define $\tilde f$ on $(a_k,a_{k+1}]$ by
\[
  \tred{\tilde f(t)} = \begin{cases} \beta_k &\text{if } t\in (a_k,2^kM] \\ \beta_{k+1} &\text{if } t=a_{k+1} \\
    \text{linear on } [2^kM,a_{k+1}]. \end{cases}
\]
For $k\geq 1$ with $\beta_k< \beta_{k+1}$, define $\tilde f$ on $(a_k,a_{k+1}]$ by
\[
  \tred{\tilde f(t)} = \begin{cases} \beta_{k+1} &\text{if } t\in [2^kM,a_{k+1}] \\ \beta_k &\text{if } t=a_k \\
    \text{linear on } [a_k,2^kM]. \end{cases}
\]
Since $\beta_k=o(2^k)$ it is easily seen that if we take $M$ sufficiently large, then the slope of the piecewise--linear function $\tilde f$ is never more than $\ep$, and the slope at $r$ approaches 0 as $r\to\infty$.  Defining $\trho(r)$ by
\[
  \log\tred{\trho(e^r)} = \tilde f(r) + \chi r
\]
it follows that for all $s\geq r>1$,
\[
  \frac{\log\trho(s)-\log\trho(r)}{\log s - \log r} = \frac{\tilde f(\log s) - \tilde f(\log r)}{\log s - \log r} + \chi
    \in [\chi-\ep,\chi+\ep],
\]
and the sublinearly powerlike property follows.  
\end{proof}

\vskip 2mm

\end{document}